\documentclass[12pt]{amsart}

\usepackage{color}
\usepackage{amsthm}
\usepackage{amsxtra}
\usepackage{amssymb}
\usepackage{graphicx}  
%\DeclareGraphicsExtensions{.pstex,.eps}
%\DeclareGraphicsExtensions{.pdf}

\setlength{\textheight}{8in} \setlength{\oddsidemargin}{0.0in}
\setlength{\evensidemargin}{0.0in} \setlength{\textwidth}{6.4in}
\setlength{\topmargin}{0.18in} \setlength{\headheight}{0.18in}
\setlength{\marginparwidth}{1.0in}
\setlength{\abovedisplayskip}{0.2in}

\setlength{\belowdisplayskip}{0.2in}

\setlength{\parskip}{0.05in}
\pagestyle{headings}

\newcommand{\hph}{{\widehat \varphi}}
\newcommand{\hG}{{\widehat G}}
\newcommand{\Op}{{\operatorname{Op}^{{w}}_h}}
\newcommand{\Opt}{{\operatorname{Op}^{{w}}_{\tilde h}}}
\newcommand{\be}{\begin{equation}}
\newcommand{\ee}{\end{equation}}

\newcommand{\ra}{\rangle}
\newcommand{\la}{\langle}

\newcommand{\cB}{{\mathcal B}}
\newcommand{\CI}{{\mathcal C}^\infty }
\newcommand{\CIc}{{\mathcal C}^\infty_{\rm{c}} }

\newcommand{\cE}{{\mathcal E}}

\newcommand{\cH}{\mathcal H}
\newcommand{\cK}{\mathcal K}
\newcommand{\cM}{{\mathcal M}}
\newcommand{\cO}{{\mathcal O}}
\newcommand{\Oo}{{\mathcal O}} 
\newcommand{\cR}{{\mathcal R}}
\newcommand{\cS}{{\mathcal S}}
\newcommand{\cT}{{\mathcal T}}
\newcommand{\UU}{{\mathcal U}}
\newcommand{\cU}{{\mathcal U}}
\newcommand{\VV}{{\mathcal V}}
\newcommand{\cV}{{\mathcal V}}
\newcommand{\cW}{{\mathcal W}}

\newcommand{\CC}{{\mathbb C}}
\newcommand{\NN}{{\mathbb N}}

\newcommand{\RR}{{\mathbb R}}
\newcommand{\IR}{{\mathbb R}}
\newcommand{\TT}{{\mathcal T}}
\newcommand{\SP}{{\mathbb S}}

\newcommand{\tA}{\widetilde A}
\newcommand{\tD}{\widetilde D}
\newcommand{\tF}{\widetilde F}
\renewcommand{\th}{\tilde h}
\newcommand{\tM}{\widetilde M}
\newcommand{\tcM}{\widetilde{\mathcal M}}
\newcommand{\tS}{\widetilde S}
\newcommand{\tT}{\widetilde T}
\newcommand{\tU}{\widetilde U}
\newcommand{\tVV}{\widetilde {\mathcal V}}
\newcommand{\tkappa}{\widetilde \kappa}
\newcommand{\tPi}{\widetilde \Pi}
\newcommand{\trh}{{\tilde \rho}}
\newcommand{\trho}{{\tilde \rho}}
\newcommand{\tv}{{\tilde \varphi}}
\newcommand{\tvarphi}{{\tilde \varphi}}
\newcommand{\ST}{{\widetilde S_{\frac12}}}
\newcommand{\PT}{{\widetilde \Psi_{\frac12}}}

\newcommand{\defeq}{\stackrel{\rm{def}}{=}}

\newcommand{\vol}{\operatorname{vol}}
\newcommand{\rank}{\operatorname{rank}}
\newcommand{\diag}{\operatorname{diag}}

\newcommand{\supp}{\operatorname{supp}}

\newcommand{\coker}{\operatorname{coker}}
\renewcommand{\dim}{\operatorname{dim}}
\newcommand{\WF}{\operatorname{WF}}
\newcommand{\WFh}{\operatorname{WF}_h}

\newcommand{\loc}{\operatorname{loc}}

\newcommand{\tr}{\operatorname{tr}}
\newcommand{\Spec}{\operatorname{Spec}}

\newcommand{\rest}{\!\!\restriction}

\renewcommand{\Re}{\mathop{\rm Re}\nolimits}
\renewcommand{\Im}{\mathop{\rm Im}\nolimits}
\newcommand{\ad}{\operatorname{ad}}

\newcommand{\neigh}{\operatorname{neigh}}
\newcommand{\esss}{\operatorname{ess-supp}}

\theoremstyle{plain}

\newtheorem{thm}{Theorem}
\newtheorem{prop}{Proposition}[section]

\newtheorem{lem}[prop]{Lemma}

\theoremstyle{definition}

\newtheorem{defn}[prop]{Definition} 

\numberwithin{equation}{section}

\def\bbbone{{\mathchoice {1\mskip-4mu {\rm{l}}} {1\mskip-4mu {\rm{l}}}
{ 1\mskip-4.5mu {\rm{l}}} { 1\mskip-5mu {\rm{l}}}}}

\def\squarebox#1{\hbox to #1{\hfill\vbox to #1{\vfill}}} 
\newcommand{\norm}[1]{\Vert#1\Vert}
\newcommand{\stopthm}{\hfill\hfill\vbox{\hrule\hbox{\vrule\squarebox 
                 {.667em}\vrule}\hrule}\smallskip}

\newcommand{\Id}{{ I}}

\newcommand{\eps}{{\epsilon}}
\newcommand{\vareps}{{\varepsilon}}
\newcommand{\gz}{{g_0}}

\usepackage{amsxtra}

\ifx\pdfoutput\undefined
  \DeclareGraphicsExtensions{.pstex, .eps}
\else
  \ifx\pdfoutput\relax
    \DeclareGraphicsExtensions{.pstex, .eps}
  \else
    \ifnum\pdfoutput>0
      \DeclareGraphicsExtensions{.pdf}
    \else
      \DeclareGraphicsExtensions{.pstex, .eps}
    \fi
  \fi
\fi

\title%[Grushin problem for Hyperbolic quantum monodromy operators]
{%Grushin problem 
%for 
Fractal Weyl law for open quantum chaotic maps}
\author[S. Nonnenmacher]
{St\'ephane Nonnenmacher}
\author[J. Sj\"ostrand]
{Johannes Sj\"ostrand}
\author[M. Zworski]
{Maciej Zworski}
\address{Institut de Physique Th\'eorique\\
CEA/DSM/PhT, Unit\'e de recherche associ\'ee au CNRS\\
CEA-Saclay\\
91191 Gif-sur-Yvette, France}
\email{snonnenmacher@cea.fr}
\address{Institut de Math\'ematiques de Bourgogne, UFR Science
et Techniques, 9 Avenue Alain Savary -- B.P. 47870, 21078 Dijon 
CEDEX, France} 
\email{jo7567sj@u-bourgogne.fr}
\address{Mathematics Department, University of California \\
Evans Hall, Berkeley, CA 94720, USA}
\email{zworski@math.berkeley.edu}

\begin{document}    

\maketitle   

%\tableofcontents

\section{Introduction}
\label{int}
In this paper we study semiclassical quantizations of Poincar\'e
maps arising in scattering problems with hyperbolic classical 
flows which have topologically one dimensional trapped sets. 
The main application is the proof of a fractal Weyl upper bound
for the number of resonances/scattering poles in small domains. 

The reduction of open scattering problems with hyperbolic classical 
flows to quantizations of open maps has been recently described by the 
authors in \cite{NSZ1}. In this introduction we show how
the main result of the current paper applies to the case of scattering by several
convex obstacles, and explain ideas of the proof in that particular setting.

\begin{center}
\includegraphics[scale=0.4]{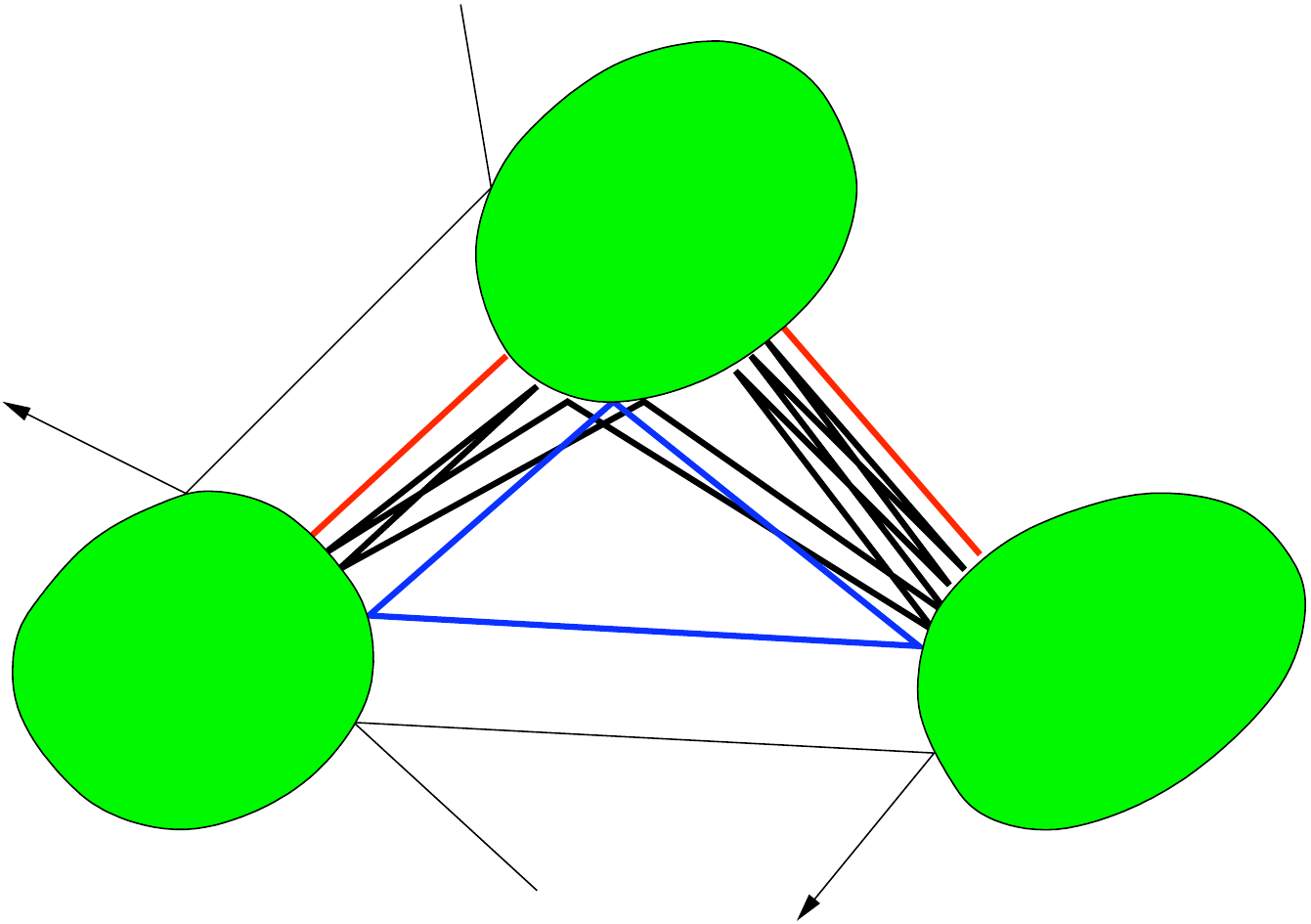}
\end{center}

Let $ {\mathcal O} = \bigcup_{j=1}^J {\mathcal O}_j $ where $ {\mathcal O}_j \Subset \RR^n $
are open, strictly convex, have smooth boundaries, and satisfy the {\em Ikawa condition}:
\begin{equation}
\label{eq:Ik}
  \overline {\mathcal O}_k \cap {\text{convex hull}}( 
\overline {\mathcal O}_j \cup \overline {\mathcal O}_\ell ) 
= \emptyset \,, \ \ j \neq k \neq \ell \,. 
\end{equation}
The classical flow on  $ ( \RR^n \setminus {\mathcal O} ) \times \SP^{n-1} $ (with the 
second factor responsible for the direction) is defined by free motion
away from the obstacles, and 
normal reflection on the obstacles -- see the figure above and 
also \S \ref{damb} for a precise definition. An important
dynamical object is the {\em trapped set}, $ K $, consisting of 
$ ( x , \xi) \in ( \RR^n \setminus {\mathcal O} ) \times \SP^{n-1} $ which do not 
escape to infinity under forward or backward flow.

The high frequency waves on $ \RR^n \setminus {\mathcal O} $ 
are given as solutions of the Helmholtz equation with Dirichlet boundary 
conditions:
\[ (- \Delta - \lambda^2)u=0,\quad u\in  
 H^2( {\RR}^n \setminus {\mathcal O} ) \cap H^1_{  0}( {\RR}^n
 \setminus {\mathcal O} ) ,\quad \lambda\in\IR\,.
% \longrightarrow L^2  ( {\RR}^n \setminus {\mathcal O} )  \,. 
\]
The scattering resonances are defined as poles of the meromorphic continuation of
\[  R ( \lambda ) = ( - \Delta - \lambda^2 )^{-1} \; : \; L^2_{\rm{comp}} 
( {\RR}^n \setminus {\mathcal O} ) \longrightarrow
L_{\rm{loc}}^2 ( {\RR}^n \setminus {\mathcal O} ) \]
to the complex plane for $ n $ odd and to the logarithmic plane
when $ n $ is even.

The multiplicity of a (non-zero) resonance $\lambda$ is given by 
\begin{equation}
\label{eq:mul}
m_R ( \lambda ) = \text{rank} \; \oint_{\gamma} R ( \zeta ) d \zeta \,, 
\ \ \gamma : t \mapsto \lambda + \epsilon e^{ 2 \pi i t}  \,,  \ 0 \leq t < 1 \,, \ \ 
0 < \epsilon \ll 1 \,. 
\end{equation}

Our general result for hyperbolic quantum monodromy operators (see
Theorem~\ref{th:bound} in \S\ref{upbd}) leads to the following 
result for scattering by several convex obstacles. 
\begin{thm}
\label{th:1}
Let $ {\mathcal O} = \bigcup_{j=1}^J {\mathcal O}_j $ be a union of
strictly convex smooth obstacles satisfying \eqref{eq:Ik}. Then for any fixed $ 
\alpha > 0 $,
\begin{equation}
\label{eq:t1} \sum_{\substack{- \alpha < \Im \lambda\\ r \leq |\lambda | \leq r + 1}}
m_R ( \lambda ) = {\mathcal O} (  r^{\mu+0 } ) \,, 
\ \ r \longrightarrow \infty \,, 
\end{equation}
where  $ 2 \mu + 1 $ is the box dimension of the 
{\em trapped set}.

If the trapped set is of
pure dimension (see \S \ref{upbd}) then the bound is $ {\mathcal O} (r^\mu ) $.
In the case of  $n = 2 $ the trapped set is always of pure dimension, which
is its Hausdorff dimension.
\end{thm}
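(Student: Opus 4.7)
The plan is to deduce \thmref{th:1} from the general fractal Weyl bound for open quantum monodromy operators, \thmref{th:bound}. The reduction proceeds in three main steps.

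First, following the framework developed in \cite{NSZ1}, I would introduce a Poincar\'e section for the billiard flow on $\RR^n\setminus\mathcal{O}$: a family of hypersurfaces $\Sigma_j$ transverse to the flow, naturally located on each obstacle boundary $\partial\mathcal{O}_j$ with the induced reduced coordinates $(q,p)$ (so that $\Sigma=\sqcup_j\Sigma_j$ carries a symplectic structure). The Ikawa condition \eqref{eq:Ik} guarantees that the return map $F:\Sigma\to\Sigma$, wherever defined, is a smooth symplectic map which is uniformly hyperbolic on the reduced trapped set $T = K\cap\Sigma$; this set is compact and Cantor-like in the stable and unstable directions, with box dimension $2\mu$.

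Second, I would build a semiclassical quantization of the open hyperbolic map $F$. The construction in \cite{NSZ1} produces, for each $\lambda$ in a strip $\{-\alpha<\Im\lambda<C\}$, a family of $h$-Fourier integral operators $M(\lambda,h):L^2(\Sigma)\to L^2(\Sigma)$ quantizing $F$, microlocally unitary near $T$ and microlocally $0$ away from $T$, with $h\sim 1/\Re\lambda$. The crucial analytic input, again from \cite{NSZ1}, is a Grushin problem for $-\Delta-\lambda^2$ with Dirichlet conditions on $\RR^n\setminus\mathcal{O}$ whose effective Hamiltonian is precisely $I-M(\lambda,h)$. Consequently, resonances in the window $r\le|\Re\lambda|\le r+1$, $-\alpha<\Im\lambda$, are identified, \emph{with their multiplicities} \eqref{eq:mul}, with the zeros of the characteristic function $\zeta\mapsto\det(I-M(\zeta,h))$ in the same window.

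Third, I would invoke \thmref{th:bound} on the monodromy operator $M(\lambda,h)$. That theorem gives a bound of the form $\cO(h^{-\mu-0})$ on the number of eigenvalues of $M$ in a fixed neighborhood of the unit circle, using the fact that the symbol of $M$ is effectively supported in an $h^{1/2}$-tube around $T\subset\Sigma$ whose symplectic volume is $\cO(h^{\dim\Sigma/2-\mu-0})$; a standard Jensen argument converts this eigenvalue bound into the same bound for the zeros of $\det(I-M)$, yielding \eqref{eq:t1}. Under pure dimension, the $h^{-0}$ loss is absorbed into an exact Minkowski content, and in $n=2$ the trapped set is the suspension of a Cantor set in a $2$-dimensional section, which is automatically of pure (Hausdorff) dimension.

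The main obstacle is not the (now rather standard) Grushin reduction, but rather the geometric and microlocal setup in step one: the Poincar\'e section must be chosen large enough that every trajectory in $\neigh(K)$ meets it transversally while avoiding grazing reflections, and its quantization must be constructed so that $M(\lambda,h)$ is microlocally trivial outside a small neighborhood of $T$, since it is precisely this localization that turns the naive $h^{-(\dim\Sigma)/2}$ Weyl count into the fractal count $h^{-\mu-0}$ reflecting the dimension of the trapped set.
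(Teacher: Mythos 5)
Your overall strategy -- reduce the obstacle problem to a hyperbolic open quantum map on a Poincar\'e section (the union of boundary cotangent balls $B^*\partial\Oo_j$) and then apply the abstract bound of Theorem~\ref{th:bound} via Jensen's inequality -- is indeed the architecture of the paper's proof. However, there is a concrete gap in step two: you invoke the Grushin reduction of \cite{NSZ1} to produce the monodromy operator $M(\lambda,h)$, but that construction (via the Schr\"odinger propagator) applies to operators such as $-h^2\Delta+V$ with $V\in\CIc$, \emph{not} to Dirichlet boundary value problems in $\RR^n\setminus\Oo$. For the obstacle case the paper builds its own reduction in \S\ref{assc}: the natural boundary operator is the matrix $\cM(z)$ of cross-terms $-\gamma_i H_j(z)$ formed from the Poisson operators $H_j(z)$, and Proposition~\ref{p:merM} shows that the resonance multiplicities are exactly the pole multiplicities of $(I-\cM(z))^{-1}$. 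This operator $\cM(z)$ is \emph{not} a priori a monodromy operator in the sense of Definition~\ref{def:HQMO}: near the glancing set $S^*\partial\Oo$ the Poisson kernels have Airy/diffractive behavior and are not $h$-Fourier integral operators.

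The missing work, which cannot be outsourced to \cite{NSZ1}, is exactly this glancing analysis. One must (i) use the propagation results of G\'erard and Stefanov--Vodev to show that the Ikawa condition \eqref{eq:Ik} pushes the trapped set strictly inside the hyperbolic region (so that $\cT\cap S^*\partial\Oo=\emptyset$, cf.\ \eqref{e:trapped-no-glancing}); (ii) conjugate $\cM(z)$ by a semiclassical escape weight $e^{g^w}$ with $g = T\log(1/h)\,g_0$ to gain the $\Oo(h^{N_0})$ smallness outside a fixed neighbourhood of $\cT$ demanded by \eqref{eq:Mlo}; and (iii) construct finite-rank projectors $\widetilde\Pi_{j,A},\widetilde\Pi_{j,D},\Pi_j^\#$ eliminating the glancing and elliptic contributions, verifying the cancellation $\cM_g\,\widetilde\Pi_j\,\cM_g = \Oo(h^\infty)$ of Lemma~\ref{l:cru}, so that Theorem~\ref{th:obs} yields a well-posed Grushin problem whose effective Hamiltonian $I-M_g$ has $M_g\in I^0_{0+}(\partial\Oo\times\partial\Oo, F')$ as required. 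Only then does Theorem~\ref{th:bound} apply. A minor further slip: Theorem~\ref{th:bound} counts the $z$ at which $I-M(z,h)$ fails to be invertible (i.e.\ zeros of the characteristic function), which is what is needed, not eigenvalues of $M$ near the unit circle; and the monodromy operator is not required to be microlocally unitary.
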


We should stress that even a weaker bound, 
\[ \sum_{\substack{- \alpha < \Im \lambda \\ 1 \leq |\lambda | \leq r}}
 m_R ( \lambda ) =  {\mathcal O} (r^{\mu + 1 + 0} ) \,, \]
corresponding the
standard Weyl estimate $ {\mathcal O} (r^n ) $ for frequencies 
of a bounded domain, %less than $ r $, 
was not known previously. Despite various positive indications
which will be described below no lower bound is known in this setting.

The study of counting of scattering resonances was initiated in 
physics by Regge \cite{Regg} and in mathematics  by 
Melrose \cite{Mel3} who proved a global bound in odd dimensions:
\[  \sum_{ |\lambda| \leq r   } 
m_R ( \lambda ) = {\mathcal O} (  r^{n } ) \,. \]
This bound is optimal for the sphere and for obstacles with certain 
elliptic trapped trajectories but the existence of a general 
lower bound remains open -- see
\cite{St1},\cite{St2} and references given there.
For even dimensions an analogous bound was established by Vodev \cite{Vod}

The fractal bound \eqref{eq:t1} for obstacles 
was predicted by the second author in \cite{SjDuke} where 
fractal upper bounds for the number of resonances were established for
a wide class of semiclassical operators 
with analytic coefficients (such as $ - h^2 \Delta + V $ with 
$ V $ equal to a restriction of a suitable holomorphic function in a complex neighbourhood
of $ \RR^n $); promising numerics were obtained in \cite{Lin02} for
the case of a three bump potential, and in \cite{Ramil+09} for the
H\'enon--Heiles Hamiltonian. The bound of the type \eqref{eq:t1} was first
proved for resonances associated to hyperbolic quotients by 
Schottky groups without parabolic elements 
(that is for the zeros of the Selberg zeta functions) in \cite{GLZ}, and
for a general class of semiclassical problems in \cite{SZ10}. 
Theorem \ref{th:bound} below provides a new proof of the result in \cite{SZ10}
in the case of topologically one dimensional trapped sets. The new proof is
simpler by avoiding the complicated second microlocalization procedure
of \cite[\S 5]{SZ10}. The reduction to Poincar\'e sections obtained using the
Schr\"odinger propagator \cite{NSZ1} replaces that step.
The only 
rigorous fractal lower bound was obtained in a special toy model of
open quantum map in  
\cite{NZ1}.  For some classes of hyperbolic surfaces lower bounds
involving the dimension were obtained in \cite{JN}. %,\cite{NZ12}. 

\begin{figure}
\begin{center}
\includegraphics[width=5in]{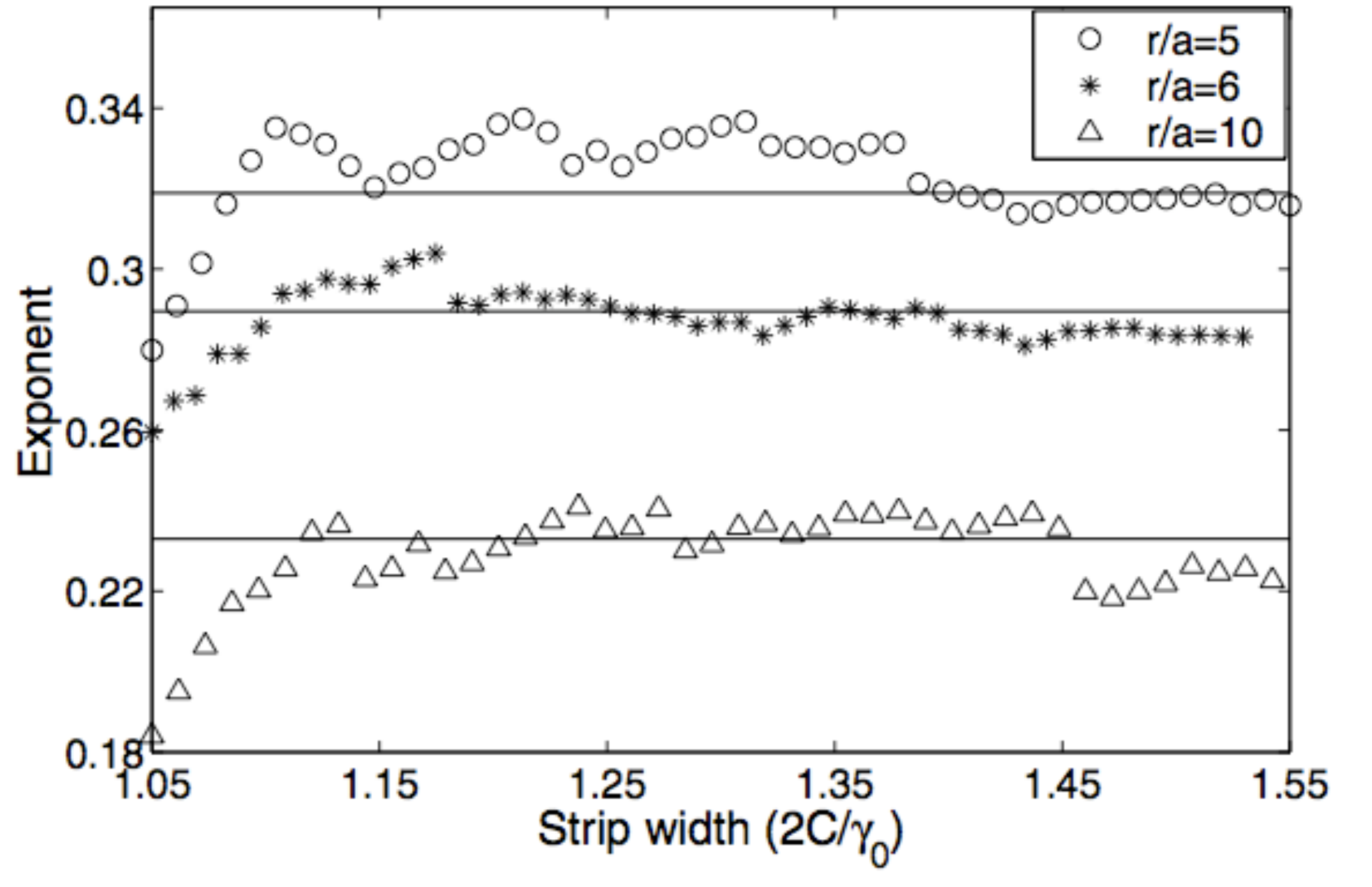} 
\caption{\label{f:lsz} This figure, taken from \cite{LSZ}, 
shows resonances computed using the semiclassical zeta 
function for scattering by three symmetrically placed discs of radii $ a $ and distances
$ r $ between centers. The horizontal axis represents $ 2 \alpha / \gamma_0 $ where
$ \gamma_0 $ is a classical rate of decay (the imaginary parts of resonances tend
to cluster at $ \Im \lambda \sim - \gamma_0 /2 $ --- see \cite{LSZ}) and the 
vertical axis corresponds to the best fit for the slope of 
$ \log N ( \alpha, r ) / \log r $, that is the exponent in the fractal Weyl law 
\eqref{eq:gl}. The three lines correspond to the three values of the Hausdorff 
dimension $ \mu $.}
\end{center}
\end{figure}

In the case of three discs in the plane, results of numerical experiments 
based on semiclassical zeta function calculations \cite{GaRi}
were presented in \cite{LSZ}. They suggest that a global version of the
Weyl law might be valid:
\begin{equation}
\label{eq:gl}
N( \alpha, r ) \defeq \sum_{ \Im \lambda > - \alpha, \, \,  |\lambda | \leq r   } 
m_R ( \lambda ) \sim   C ( \alpha )  r^{\mu +1 }  \,, 
\ \ r \longrightarrow \infty \,,
\end{equation}
see Fig.~\ref{f:lsz}. A similar study for the scattering by four
hard spheres centered on a tetrahedron in three dimensions was recently conducted in
\cite{EberMainWunn10}, and lead to a reasonable agreement with the
above fractal Weyl law, at least for $\alpha$ large enough. We stress however that the method of
calculation based on the zeta function, although widely accepted 
in the physics literature, does not have a rigorous justification
and may well be inaccurate. 
Experimental validity 
of the fractal Weyl laws is now investigated in the setting of 
microwave cavities \cite{PUSZ} --- see Fig.~\ref{f:mar}. 
The theoretical model is precisely the one for which 
Theorem \ref{th:1} holds. The fractal Weyl law has been considered (and numerically
checked) for various open chaotic quantum maps like
the open kicked rotator \cite{SchoTwor04} and the open baker's map
\cite{NZ1,Pedr+09}. Theorems \ref{t:2gr} and \ref{t:2gr2} below lead to a rigorous
fractal Weyl upper bound in this setting of open quantum maps with a
hyperbolic trapped set. 
Fractal Weyl laws have also been proposed in other types of chaotic scattering
systems, like dielectric cavities \cite{WierMain08}, as well as for
resonances associated with classical dynamical systems \cite{StrainZwo04,Chr05,ErmaShepe10}.

\begin{figure}
\begin{center}
\includegraphics[width=5in]{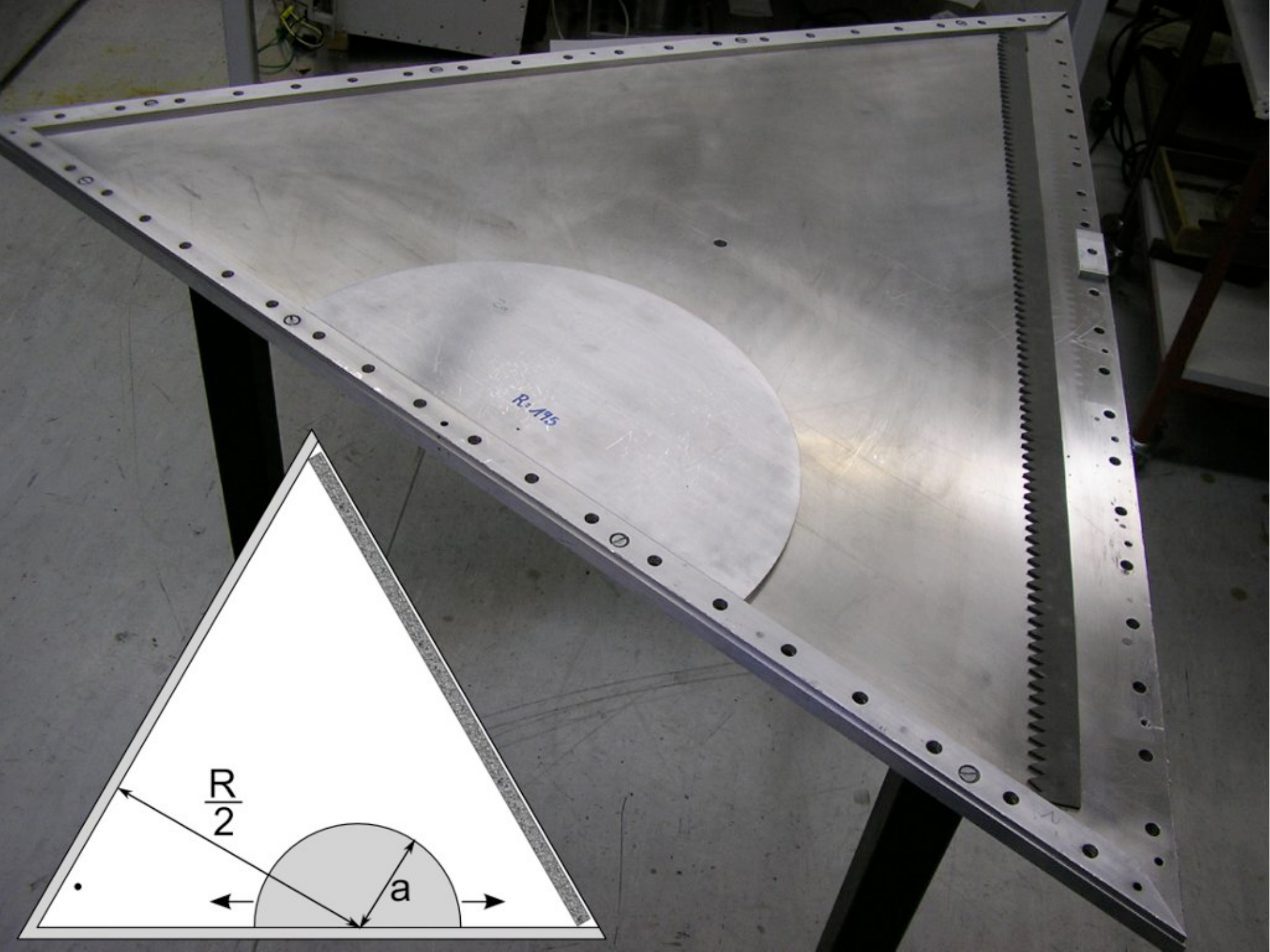} 
\caption{\label{f:mar} The experimental set-up of the Marburg
quantum chaos group {\tt http://www.physik.uni-marburg.de} for
the five disc, symmetry reduced, system. The hard walls 
correspond to the Dirichlet boundary condition, that is 
to odd solutions (by reflection) of the full problem. The
absorbing barrier, which produces negligible reflection at the 
considered range of frequencies, models escape to infinity.}
\end{center}
\end{figure}

The proof of Theorem \ref{th:1} uses Theorem \ref{th:bound} 
below, which holds for general {\em hyperbolic quantum monodromy 
operators} defined in \S \ref{2gr}. Here we will sketch how 
these operators appear in the framework of scattering by several convex bodies.
In the case of two obstacles they were already used in the precise
study of resonances conducted by Ch.~G\'erard \cite{Ge}. The detailed
analysis will be presented in \S \ref{assc}.

To connect this setting with the general semiclassical point 
of view, we write 
$$  z =\frac{i}{h} (h^2\lambda^2  - 1) \,,  \ \ h \sim |\Re
\lambda|^{-1} \,, \ \ z \sim 0 \,, $$ 
and consider the problem 
\[  P ( z )u = 0\,,\quad  P(z) \defeq   \left( \frac  i  h ( -h^2 \Delta - 1 ) - z \right) u ( x ) \,,  \ \ u \rest_{\partial \Oo} = 0 \,, \ \ 
\text{$ u $ outgoing.} \]
The precise meaning of ``outgoing'' will be recalled in \S \ref{assc}. 
This rescaling means that investigating resonances in $\{ r \leq |\lambda | 
\leq r + 1 ,\, \Im \lambda > - \alpha \} $, corresponds to investigating
the poles of the meromorphic continuation of $ P ( z )^{-1} $ in a
fixed size neighbourhood of $ 0 $ (in these notations the resonances are situated on the
half-plane $\Re z>0$).

The study of this meromorphic continuation can be reduced to the boundary 
through the following well-known construction. To each obstacle $\Oo_j$ we
associate a Poisson operator $ H_j ( z ) \; : \; 
\CI ( \partial \Oo_j ) \rightarrow \CI ( \RR^n \setminus \Oo_j ) $
defined by
\[  P ( z ) H_j ( z ) v ( x ) = 0\,, \ \ x \in 
\RR^n \setminus {\mathcal O}_j  \,, \ \ H_j( z ) v \rest_{\partial \Oo_j} 
=  v \,, \ \ 
\text{$ H_j ( z ) v  $ outgoing.} \]
Besides, let $ \gamma_j : \CI ( \RR^n ) \rightarrow \CI ( \partial \Oo_j ) $
be the restriction operators,  $ \gamma_j u \defeq  u \rest_{\partial \Oo_j} $.
As described in detail in \S\ref{bvp} the study of the resolvent $ P ( z)^{-1} $ 
can be reduced to the study of 
\[ ( \Id - \cM ( z,h ) )^{-1}  \; : \; \bigoplus_{j=1}^J \CI ( \partial \Oo_j ) 
\longrightarrow \bigoplus_{j=1}^J \CI ( \partial \Oo_j ) \,, \]
where 
\begin{equation}
\label{eq:MM}  
\left( \cM   (z,h ) \right)_{ij} = \left\{ \begin{array}{ll} 
- \gamma_i H_j ( z ) & i \neq j \,, \\
\ \ \ \ 0 & i = j \,. \end{array} \right. \end{equation}
The %semiclassical/microlocal 
structure of the operators $ \gamma_i H_j ( z ) $
is quite complicated due to diffractive phenomena.  In the
semiclassical/large
frequency regime and for complex values of $ z $, 
the operators $ H_j ( z ) $  have been analysed by G\'erard
\cite[Appendix]{Ge}  ($ \Im z > - C $) and by Stefanov-Vodev 
\cite[Appendix]{StVo} ($\Im z > - C \log ( 1/h ) $). We refer to these
papers and to \cite[Chapter 24]{Hor2} and \cite{MT} for more information 
about propagation of singularities for boundary value problems and for
more references. 

Using the propagation of singularities results obtained from the 
parametrix  (see  \S \ref{sep}) the issue of invertibility of $ (\Id - {\mathcal M} ( z , h ))  $ can 
be microlocalized to a neighbourhood of the trapped set, where the
structure of $\cM( z , h )$ is described using $ h$-Fourier integral operators --- see
\S \ref{fio} for the definition of these objects.

\begin{figure}
\begin{center}
\includegraphics[width=0.45\textwidth]{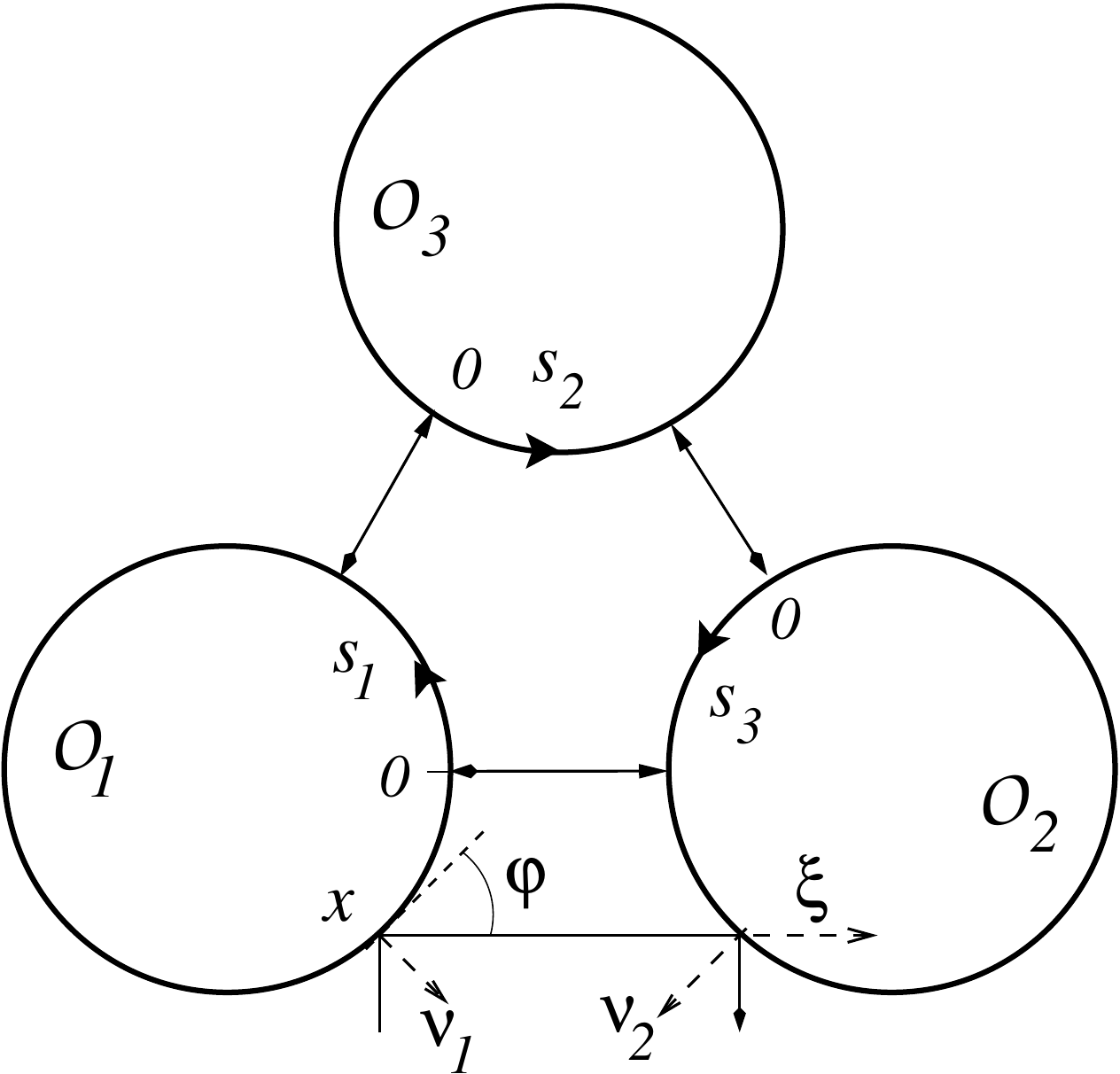}
\includegraphics[width=0.8\textwidth]{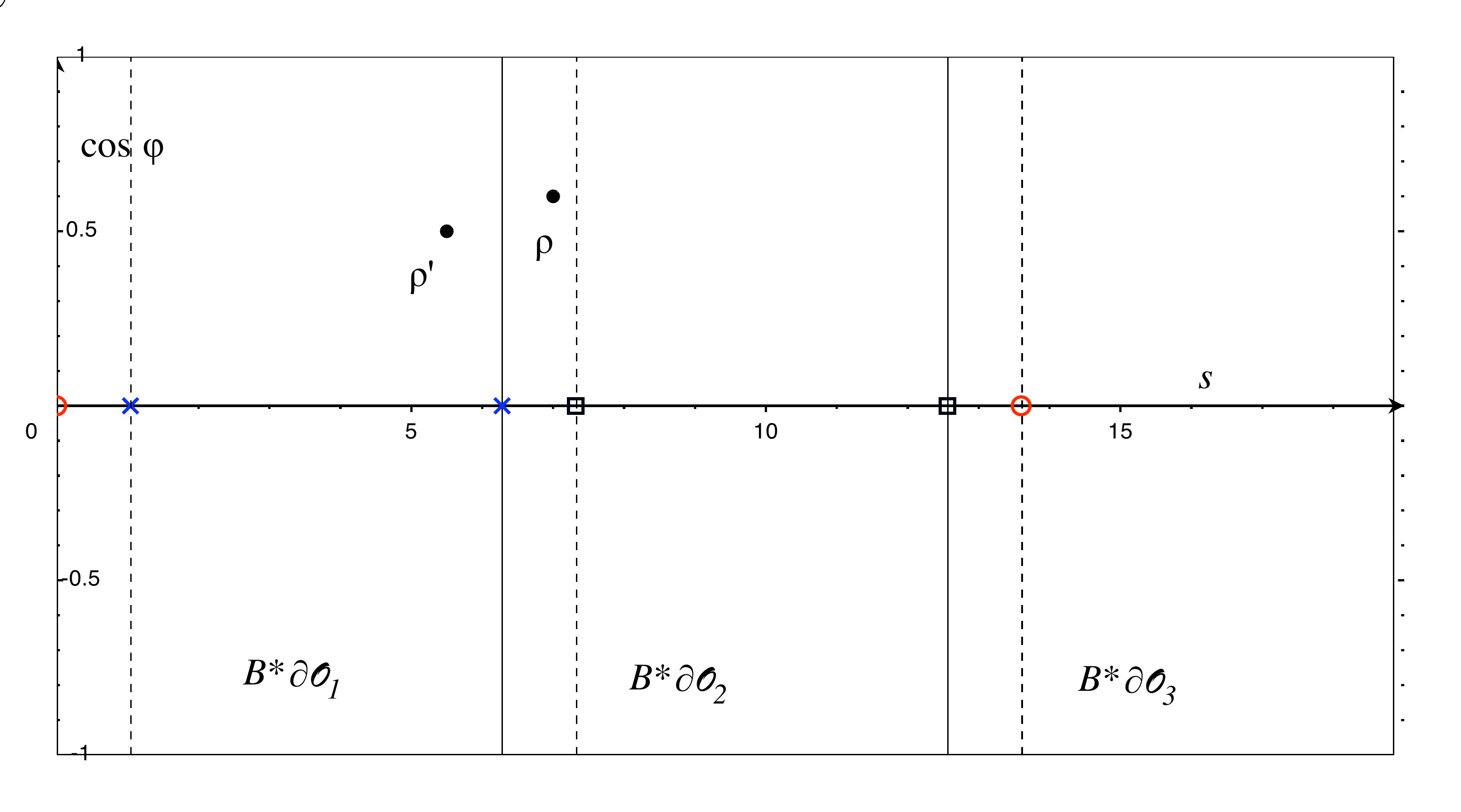}
\caption{\label{ott} (after \cite{Ott}) Reduction on the boundary
for the symmetric three disk scattering problem (the distance $D$
between the centers is $3$, the radius of the disks is $1$). 
Left: the trajectories hitting the obstacles can be
parametrized by position of the impact along the circles $\partial \Oo_i$,
(length coordinate $s_i\in[0,2\pi)$) and the angle between the velocity after impact and 
the tangent to the circle (momentum coordinate $\cos \varphi \in
[-1,1]$). We show three short periodic orbits, and a transient orbit.
%bounding the intervals $s_i\in[0,\pi/3]$.
Right: reduced phase space
$B^*\partial\Oo=\sqcup_{i=1}^3B^*\partial\Oo_i$ of the obstacles
(we concatenate the three length coordinates into $s\in [0,6\pi)$).
Each of the 3 periodic orbits is represented by 2 points on the
horizontal axis (circles, crosses, squares), while the transient orbit
is represented by the successive points $\rho$,
$\rho'=F(\rho)$. Vertical lines delimit the partial phase space used in Fig.~\ref{GGK}.}
%the boundaries of the departure sets $\widetilde{D}_{j1}$
%(thick lines) and arrival sets $\widetilde{A}_{1k}$ (thick dashed
%lines), as well as the boundary of $\TT_{+,1}$ (long dashed
%lines), to be compared with Fig.~\ref{GGK}.
%The strip $\{(s_1,cos\varphi),\,s_1\in[0,\pi/3]\}$, which contains the
%trapped set $\TT_1$,
%is indicated by the vertical dashed line. The two circles indicate the two
%short periodic orbits of the left plot.}
\end{center}
\end{figure}

At the classical level, the reduction of the flow to the boundary of the
obstacles proceeds using the standard construction of the billiard
map  on the reduced phase space 
\[ 
B^* \partial \Oo =
\bigsqcup_{j=1}^J B^* \partial \Oo_j \,,
\]
where $ B^* \partial \Oo_k $ is the co-ball bundle over 
the boundary of $ \Oo_k $ defined using the induced Euclidean 
metric (see Fig.~\ref{ott} in the two dimensional case). 
Strictly speaking, the billiard map is not defined on the whole
reduced phase space (e.g. in Fig.~\ref{ott} it is not defined on the
point $\rho'$); one can describe it as a 
symplectic relation $F \subset B^* \partial \Oo \times B^* \partial \Oo$,
union of the relations $ F_{ij} \subset B^* \partial \Oo_i \times 
B^* \partial \Oo_j $, $ i \neq j $ encoding the trajectory segments going from
$\Oo_j$ to $\Oo_i$:
\begin{gather}
\label{eq:bill}
\begin{gathered}
 ( \rho' , \rho) \in F_{ij} \subset B^* \partial \Oo_i \times B^* \partial \Oo_j 
 \\ \Longleftrightarrow \\ \exists \; t > 0 \,, \xi \in \SP^{n-1} \,, \
x  \in {\partial \Oo_j} \,, \  x + t \xi  \in 
{\partial \Oo_i}  \,,  \ \langle \nu_j ( x ) , \xi \rangle > 0 \,, \\ 
 \langle \nu_i ( x + t \xi ) , \xi \rangle <  0 \,, \
\pi_j ( x, \xi ) = \rho \,, \ \ \pi_i ( x + t  \xi , \xi) = \rho' \,.
\end{gathered}
\end{gather}
(here $ \pi_k: S_{\partial \Oo_k} ^*( \RR^n) 
\rightarrow B^* { \partial {\Oo_k}} $ is the natural orthogonal
projection.)

To the relation $ F $ we can associate various {\em trapped} sets:
\begin{equation}
\label{eq:defT}
  \cT_\pm \defeq \bigcap_{k=0}^\infty F^{\pm k} ( B^*\partial\Oo) \,, 
\qquad {\mathcal T} \defeq \cT_+ \cap \cT_- \,.
\end{equation}
Notice that $\cT$ is directly connected with the trapped set $K$ for
the scattering flow, defined in the beginning of this introduction:
\[  \cT \cap B^* \partial \Oo_j = \pi_j ( K \cap S^*_{\partial \Oo_j} 
( \RR^n ) ) \,.
\]
The sets $ \cT_\pm $ and $\cT $ for the 2D scattering problem of
Fig.~\ref{ott} are shown in
Fig.~\ref{GGK}.

\begin{figure}
\begin{center}
\includegraphics[scale=0.34]{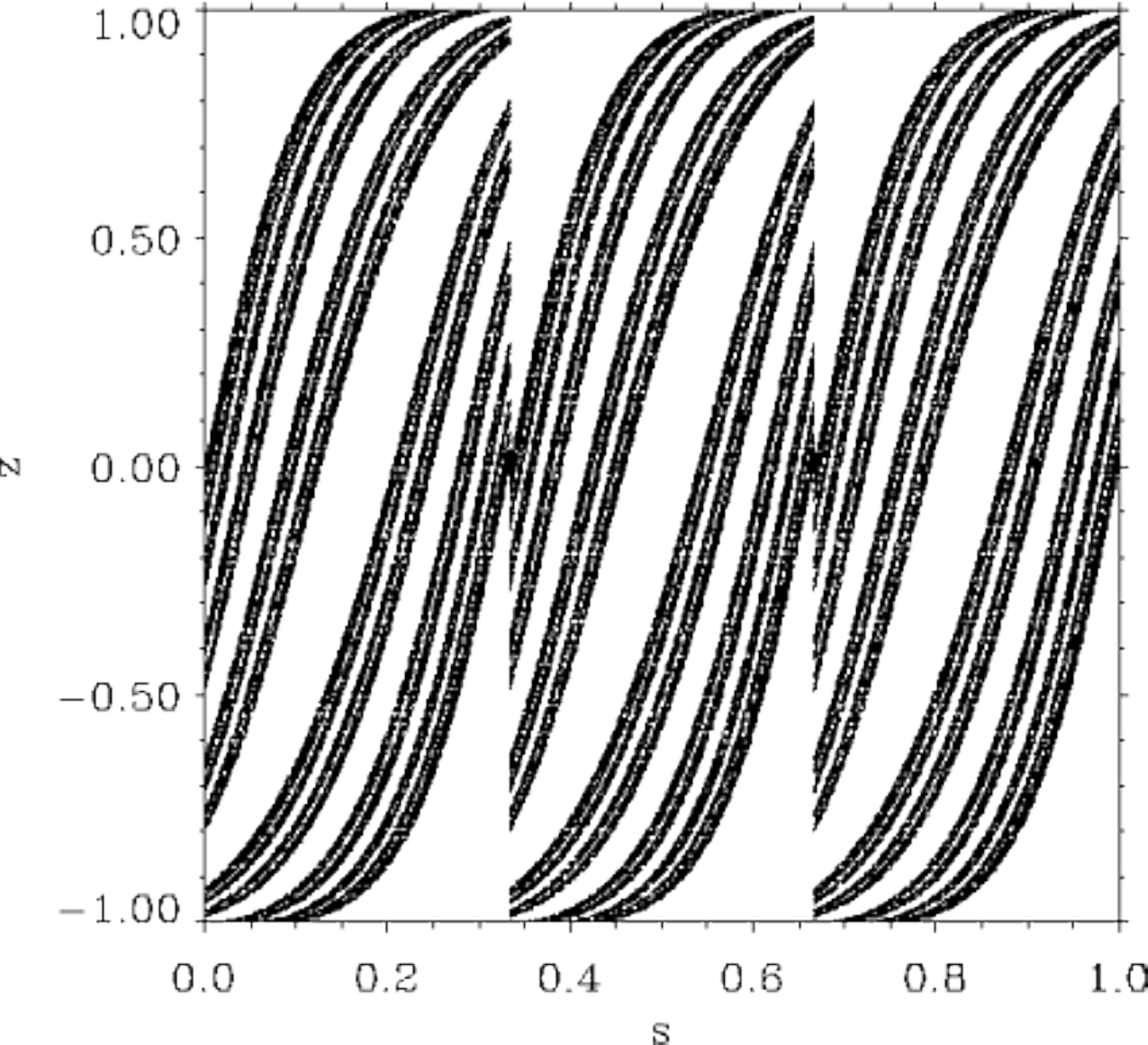} \hspace{0.05in}   \includegraphics[scale=0.34]{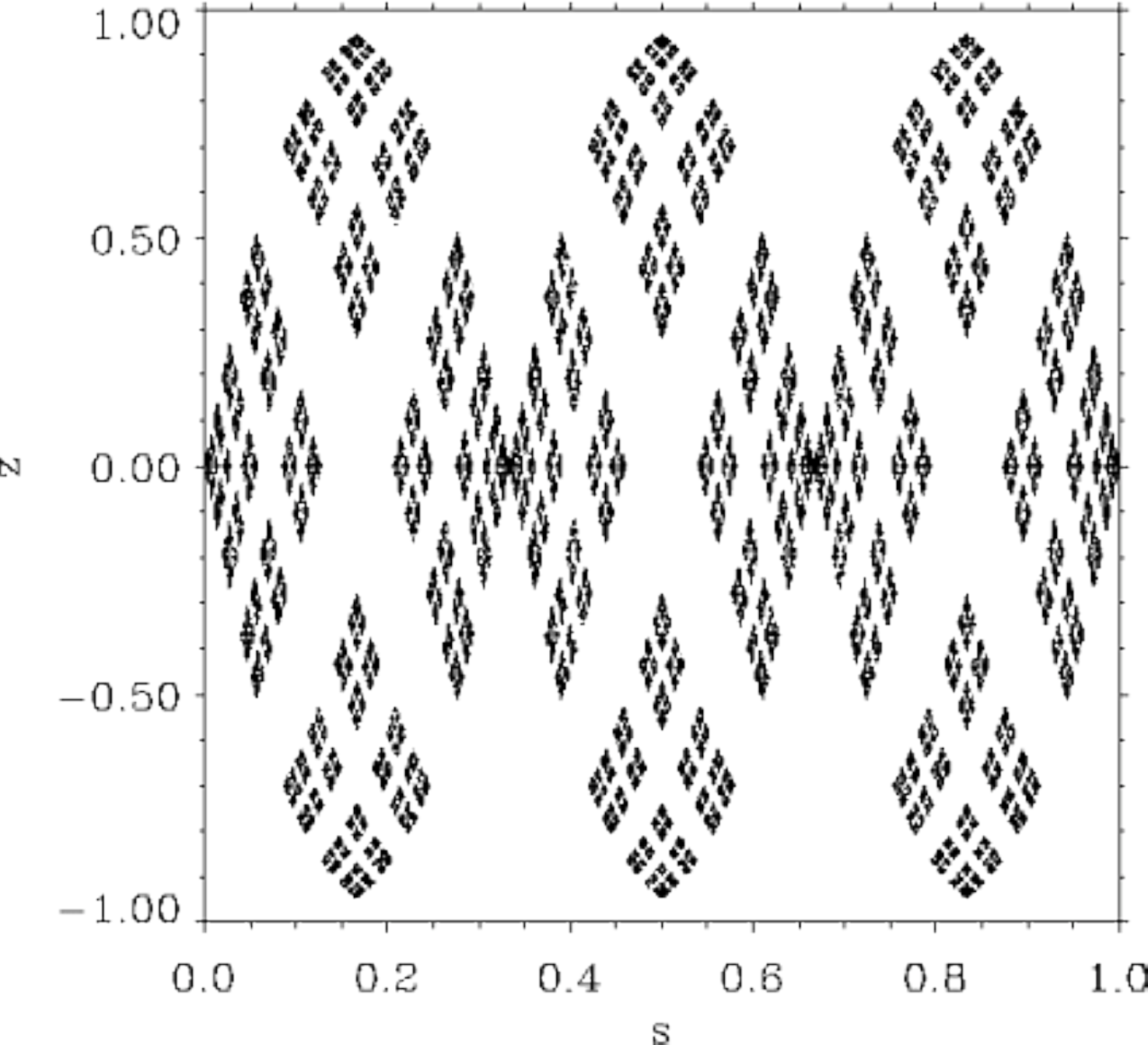}  \hspace{0.05in}    \includegraphics[scale=0.34]{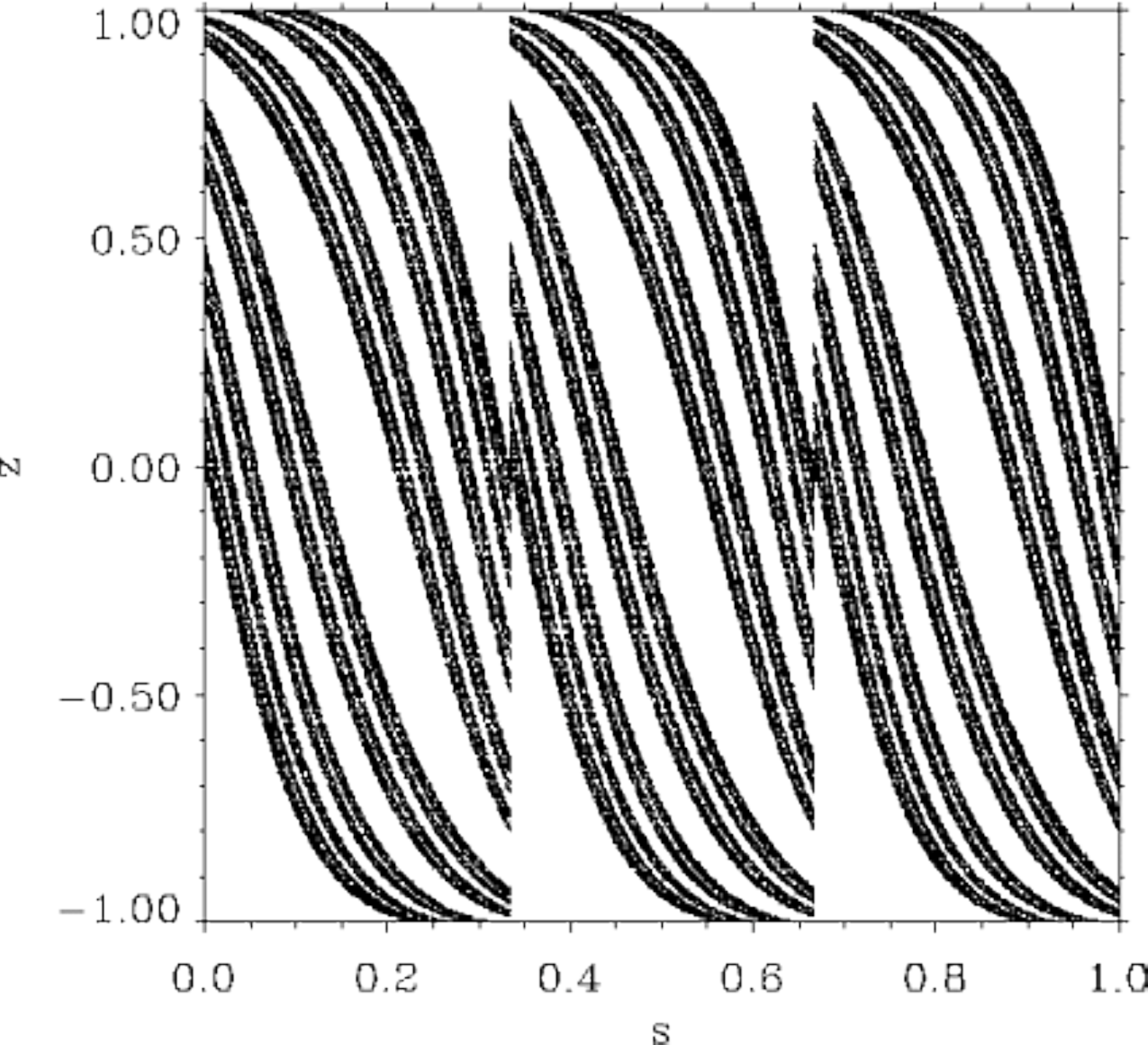}
\caption{\label{GGK} The unstable manifold, $ \TT_+ $, 
the trapped set, $ \TT $, and the stable manifold $ \TT_- $
and for the system of Fig.~\ref{ott}. 
Compared with the notations of Fig.~\ref{ott}, the horizontal
axis is given by the union of three segments of $\partial\cO_j$, parametrized by a
single coordinate $s=s_i/\pi+(i-1)/3$, $s_i\in[0,\pi/3]$, $i=1,2,3$
(this partial phase space is delimited by the vertical lines in
Fig.~\ref{ott}, right).
The sets $\TT_\pm$ are smooth along the unstable/stable
direction and fractal transversely, while $ \TT $ has the
structure of the
product of two Cantor sets. The left strip corresponds to the one
indicated in Fig.~\ref{ott}. (the figures are from \cite{Ott}).}
\end{center}
\end{figure}
The strict convexity of the obstacles entails that the
trapped rays are uniformly unstable: in the dynamical systems
terminology, $\cT$ is an invariant hyperbolic set for the relation $F$
(see \S\ref{2gr}). 

The boundary of $ B^* \partial \Oo_k $, which consists of covectors of length one, 
corresponds to rays which are tangent to $ \partial \Oo_k $ (the 
glancing rays), and these
produce complicated effects in the operators $ H_j ( z )$. However,
Ikawa's condition \eqref{eq:Ik} guarantees that none of these
rays belongs to the trapped set:
\begin{equation}\label{e:trapped-no-glancing}
\cT \cap S^* \partial \Oo = \emptyset\,.
\end{equation} 
%restriction
%\[ f : \cT  \longrightarrow \cT \,, 
%\qquad \mathrm{graph}(f) \defeq F\rest_{ \cT \times \cT}\]
%is a hyperbolic map in the sense of \S \ref{2gr}.

A the quantum level, the operator $\cM(z,h)$ has the properties of a
Fourier integral operator associated with the billiard map away from
the glancing rays, but its structure near these rays is more
complicated due to diffraction effects. Fortunately, the property
\eqref{e:trapped-no-glancing} implies that, as far as the poles of
$(I-\cM(z,h))^{-1}$ are concerned, these annoying glancing rays
are irrelevant. Indeed, 
we will show in \S\ref{assc} that the operator $\cM(z,h)$ can be replaced
by a truncated operator of the form
\begin{equation}
\label{eq:Mzh}
  M ( z, h  ) = \Pi_h  e^{-g^w}\, \cM ( z, h ) \,e^{g^w}\,\Pi_h 
+ \Oo_{L^2 ( \partial \Oo) \rightarrow L^2 ( \partial \Oo) }  ( h^N)\big) \Pi_h 
 \,,
\end{equation}
where $g(x,\xi)$ is an appropriate escape function, and $ \Pi_h$ is
an orthogonal projector of finite rank comparable with $h^{1-n}$,
microlocally corresponding to a compact neighbourhood of $\cT$ not containing the glancing rays.
The operator $M ( z, h  )$ is thus ``nice'': it is of the same type as the {\em hyperbolic quantum monodromy
operators} constructed in \cite{NSZ1} to study semiclassical
scattering problems such as $-h^2\Delta + V(x)$, with $V\in
C_c^\infty(\IR^n)$. The study of the poles of $(I-M(z,h))^{-1}$ can
then be pursued in parallel for both types of problems (see \S\ref{gp}).

%We should point out here that for scattering by several
%convex obstacles, the reduction of the problem to the family of operators $ \cM ( z ) $,
%as we have seen above, is
%straightforward, but the structure of these operators
%is complicated due to diffraction. We need to use either the Melrose-Taylor
%parametrix or propagation of singularities for boundary value problems
%to obtain the finite rank operators \eqref{eq:Mzh}.
%On the opposite, for the semiclassical scattering problem, the
%reduction to monodromy operators is complicated \cite{NSZ1}, but these
%operators do not present any diffractive problems.

\medskip

For simplicity let us assume that the trapped set $\cT$ is of pure
dimension $\dim {\mathcal T } =2\mu$.
The fractal Weyl upper bound \eqref{eq:t1} corresponds to the 
statement that, for any fixed $r>0$, there exists $C_r>0$ such that
\begin{equation}
\label{eq:arg}
-  \frac 1 { 2 \pi i } \tr \int_{ \{|z|=r\} } 
( I - M ( z , h ) )^{-1}\, \partial_z M ( z , h )\, dz \leq C_r\, h^{ - \mu }  \,. 
\end{equation}
To prove it we want to further modify the monodromy operator $ M ( z ,
h ) $, and replace it by 
\be\label{e:tilde-M}
\widetilde M ( z , h ) = \Pi_W \big( e^{-G^w} M ( z , h ) e^{G^w} + \Oo (\epsilon ) \big)  \Pi_W 
\,, \qquad  0 < \epsilon \ll 1\,.
\ee
Here $G$ is a finer escape function, and $ \widetilde \Pi_W $ is a
finer finite rank projector,
now associated with a much thinner neighbourhood of $\cT$,
of diameter $ \sim h^{\frac12} $, and therefore of volume comparable to 
$ h^{\frac 12 ( 2(n-1) - 2 \mu )} $ (see the definition of the dimension
in \eqref{eq:dimm} below). 
This projector, and the escape function $G$, will be constructed in
\S\ref{gp} using symbols in an exotic pseudodifferential class, and the
associated symbol calculus.
The rank of an operator microlocalized to a set in $T^*\RR^{n-1}$ of volume
$ v $
is estimated using the uncertainty principle by $  \Oo ( h^{-n+1}
v ) $. Hence, when $ v \sim h^{\frac 12 ( 2(n-1) - 2 \mu )} $ we obtain
the bound $ \Oo ( h^{- \mu } ) $ for the rank of $\Pi_W$. This in turn shows that 
\[ 
\log |\det ( I -  \widetilde M ( z , h ) )| =\Oo ( h^{-\mu} ) \,. 
\]
We also show that at some fixed point
$z_0$, the above expression is bounded from {\it below} by $-Ch^{-\mu}$. Jensen's inequality then 
leads to \eqref{eq:arg}.

In the case where $\cT$ is not of pure dimension, one just needs to replace
$\mu$ by $\mu\!+\!0$ in the above estimates.

\medskip

The paper is organized as follows. In \S \ref{2gr} we give a general
definition of hyperbolic open quantum maps and quantum monodromy
operators. This definition will be compatible with
both the case of several convex obstacles, and the case of semiclassical potential
scattering, after the reduction of \cite{NSZ1}. 
The technical preliminaries are given in \S \ref{pr}
where various facts about semiclassical microlocal 
analysis are presented. In particular, we investigate the properties
of exotic symbol classes, which will be necessary to 
analyze the weight (escape function) $G$ used to conjugate the
monodromy operator in \eqref{e:tilde-M}, and to construct the projection
$\Pi_W$. The weight $G$
is constructed in \S \ref{cef}, using our dynamical assumptions.
In \S\ref{gp} we construct the projection $\Pi_W$, and prove the
resulting fractal upper bounds on the number of resonances in the
general framework of hyperbolic quantum monodromy operators.
Finally, \S \ref{assc} is devoted to a detailed study of the
obstacle scattering problem: we show that the reduction to the
boundary operator $\cM(z)$, combined
with propagation of singularities, leads to a hyperbolic quantum monodromy
operator \eqref{eq:Mzh}, to which the results of the earlier sections can be applied.

\section{Hyperbolic open quantum maps}
\label{2gr}

In this section we start from a hyperbolic open map, and quantize it
into an open quantum map. The quantum monodromy operators constructed
in \cite{NSZ1} have the same structure, but also holomorphically
depend on a complex parameter $z$.

Let $ Y_j \Subset \RR^d $, $ j =1 , \cdots J $,
be open contractible sets, and let 
\[  Y \defeq  \bigsqcup_{j=1}^J Y_j \subset \bigsqcup_{j=1}^J \RR^d 
\,, \]
be their disjoint union. We also define a local phase space,
\[ \UU \defeq 
\bigsqcup_{j=1}^J U_j  \subset \bigsqcup_{j=1}^J T^* \RR^d 
\,, \ \ U_j\quad \text{open},\quad U_j \Subset T^* Y_j \,.  
\]
% where the open sets $ U_j $ are assumed to be
%{\em star shaped} (this assumption is made to avoid 
%topological peculiarities).$\ss$
Let $ \TT \Subset \UU $ be a compact subset  and suppose that 
\[  f \; : \; \TT \longrightarrow \TT  \]
is an invertible transformation which satisfies
\be\label{e:extension}
f \subset F \subset \UU \times \UU \,, \quad \text{($f$ is identified
  with its graph),}
\ee
where $ F $ is 
a {\em smooth Lagrangian relation} with boundary, and 
$ \partial F \cap f = \emptyset $. We assume that $ F $ is locally 
the restriction of a smooth symplectomorphism (see below for a more precise 
statement). 
In particular, $ F $ is at most single valued:
\[ ( \rho', \rho ) , ( \rho'' , \rho ) \in F  \; \Longrightarrow \;
\rho' = \rho'' \,, \]
and similarly for $ F^{-1} $. By a slight abuse of notation, we will
sometimes replace the graph notation,  $(\rho',\rho)\in F$, by the
map notation, $\rho'=F(\rho)$.
As a result of \eqref{e:extension}, for
any $ \rho \in \TT $ the tangent map
$ df_\rho : 
T_\rho \, \UU \longrightarrow T_{f(\rho)} \, \UU $ is well defined,
and so is its inverse. 

Such a relation $F$ can be considered as an 
{\em open canonical transformation} on $\UU$. Here, open means that the
map $F$ ($F^{-1}$) is only defined on a subset
$\pi_R(F)$ ($\pi_L(F)$, respectively)
of $\UU$; the complement $\UU\setminus \pi_R(F)$ can be thought of as
a hole through which particles escape to infinity.

We now make a stringent dynamical assumption on the transformation
$f$, by assuming it to be {\em hyperbolic}
(equivalently, one says that $\cT$ is a hyperbolic set for $F$).
This means that, at every $ \rho \in \TT $,
the tangent space $ T_\rho \, \UU $ decomposes as 
\be\label{eq:aa}
\begin{split}
i)\ & T_\rho \, \UU =  E^+ _ \rho \oplus
E^- _ \rho \,, \quad  \dim E^\pm _ \rho  = d  \,, \quad\text{with the properties}\\
ii)\ &  d f_\rho ( E^\pm_\rho ) = E^\pm _{ f ( \rho ) }\,, \\
iii)\ & \exists \; 0 < \theta < 1 \,,
\   \forall \,  v \in E^\mp_ \rho  \,, \ \forall n \geq 0 \,,
\quad  \| df^{\pm n}_\rho ( v ) \| \leq 
C \theta^n \| v \| \,.
\end{split}
\ee
This decomposition is assumed to be continuous in $ \rho $, and the
parameters $\theta,C$ can be chosen independent of $\rho$. It is then
a standard fact that \cite[\S 6.4,\S 19.1]{KaHa}
\be
\label{eq:aaa}
\begin{split}
iv)\ & \TT  \ni \rho \longmapsto E^{\pm}_ \rho  \subset
T_\rho (\UU ) \
\text{ is H\"older-continuous} \\
v)\ &\text{any }\rho\in \TT \
\text{admits local stable($-$)/unstable($+$) manifolds } W^{\pm}_{\loc}(\rho), \\
\ & \text{ tangent to } E^\pm_\rho
%\\vi)\ & \text{There exists a Lyapunov metric, $g_{\rm{Lyap}}$, on $ \UU $}
%\\ \ & \text{ such that one can take $C=1$ in}\ iii).
\end{split}
\ee

Let us now describe the relation $ F $ more precisely.
It is a disjoint union of symplectomorphisms
\[  
F_{ik}
 \; : \; \tD_{ ik}\subset U_{k} \longrightarrow \; 
F_{ik} ( \tD_{ik } ) 
=\tA_{ik} 
\subset U_i \,, \]
where $ \tA_{ik} \subset U_i $ and $ \tD_{ik } 
\subset U_k $ are open neighbourhoods of the arrival and departure 
subsets of $ \TT $,
\[ \begin{split}
&  A_{ik } \defeq \{ \rho \in \TT_i \; : \;  f^{-1} ( \rho ) \in \TT_k \} 
= \TT_i \cap f ( \TT_k ) \,, \ \ \TT_j \defeq \TT \cap U_j \,, \\ 
& D_{ ik} \defeq \{ \rho \in \TT_k \; : \; f( \rho ) \in \TT_i \} = 
\TT_k \cap f ^{-1} (\TT_i ) \,.  \end{split} \]
(see Fig.~\ref{f:3disks-AD} for a plot of the sets $\tA_{ik}$ and $\tD_{ik}$ for
the scattering by three disks, and Fig.~\ref{GGK}, center, for the
trapped set). We also 
write
\[    \tD_{k} \defeq \bigcup_j \tD_{jk} \,, \ \
\tD \defeq \bigsqcup_{k} \tD_k \,, \]
and similarly for the other sets above. Notice that we have $\tD=\pi_R(F)$, $\tA=\pi_L(F)$.

\medskip

On the quantum level we associate to $ F$ {\em hyperbolic open quantum
maps}  defined as follows:

\begin{defn}\label{def:HQMO}
{\em A {\rm hyperbolic open quantum map} $ M=M(h) $, is an $h$-Fourier 
integral operator quantizing a smooth Lagrangian relation $F$ of the
type described above,
\[  
M \in I_{0+} ( Y \times Y , F' ) \,, 
\]
(here $F'$ is the twisting of the relation $F$ so that $ F' $ becomes
Lagrangian in $ T^* (Y \times Y ) $ -- see \S \ref{fio} for the definition of this class of
operators). In particular, $M$ is microlocalized in the interior
of $F'$: its semiclassical wavefront set (see \eqref{e:WFh}) satisfies
\[ 
\WFh ( M ) \cap \partial F' = \emptyset \,. 
\]
We also assume that there exists
some  $a_M\in \CIc ( T^*Y ) $, with $ \supp a_M  $ contained 
in a compact neighbourhood $\cW$ of $ \TT $, $\cW\Subset \pi_R(F)$, and $a_M(\rho)\equiv 1$ in a
smaller neighbourhood $\cW'$ of $\TT$, such that
\begin{equation}
\label{eq:Mlo}  M ( I - A_M ) = \Oo_{ L^2 \to L^2 }
( h^{N_0} ) \,,\qquad A_M = \Op(a_M)\,,
\end{equation}
with $N_0\gg 1$,  independent of $h$. This means that $M(h)$ is very
small,  microlocally outside $\cW$. 

Assume $\Pi_h$ is an orthogonal projector of finite dimension comparable with
$h^{-d}$, with $\Pi_h$ equal to the identity
microlocally near $\cW$.
%($\Pi_h$ therefore satisfies $\Pi_h A_M, A_M \Pi_h = A_M+\Oo_{L^2\to L^2}(h^\infty)$).
We will then also call open quantum map (or truncated open quantum
map) the finite rank operator 
$$
\tM(h)=\Pi_h \,M(h)\,\Pi_h = M(h) + \Oo_{L^2\to L^2}(h^{N_0})\,.
$$
A {\rm hyperbolic quantum monodromy operator} is a family of hyperbolic open quantum
maps $\{M(z,h)\}$ (or their finite rank version $\tM(z,h)$) associated
with the same relation $F$, which depend holomorphically on
$z$ in 
\begin{equation}
\label{eq:omegah} 
\Omega=\Omega(h) \defeq [ - R ( h ) , R_1 ] + i [ -R_1 , R_1 ] \,, \ \
R ( h ) \stackrel{h \rightarrow 0 }{ \longrightarrow } \infty \,, \ \
R_1 > 0 \,, 
\end{equation}
as operators $L^2\to
L^2$.
%, uniformly in any fixed compact subset of $\CC$. 
%The domain
%$\Omega(h)$ is assumed to be semiclassically large: $\Omega(h)\supset
%D(0,R(h))$, with $R(h){\to}\infty$ as $ h \to 0 $.
Furthermore, there
exists a decay rate $\tau_M>0$ such that
\be\label{e:decayM}
\|M(z,h)\|\leq C\,e^{\tau_M\Re z}\,,\quad h<h_0,\ z\in\Omega(h) \,. 
\ee
We will also consider truncated monodromy operator $\tM(z,h)$.
The cutoff $A_M$, the projector $\Pi_h$ and the estimates
\eqref{eq:Mlo} are assumed uniform with respect to $z\in \Omega(h)$.}
\end{defn}

%$\ss$ We have assumed 
%the sets $ U_j \Subset T^* \RR^d $ to be star-shaped in order to simplify the 
%theory of Fourier integral operators with exotic weights.
%Since they can be shrunk without affecting \eqref{eq:Mlo}, this additional
%assumption does not restrict the generality of the presentation.{\bf NECESSARY?}

%\medskip

This long definition is taylored to include the monodromy operator $\tM(z,h)$ constructed for open
hyperbolic flows with topologically one dimensional trapped sets,
through a Grushin reduction --- see \cite{NSZ1}. That construction shows that the
exponent $ N_0$ in \eqref{eq:Mlo} can be taken arbitrary large, and
that we can take domain in \eqref{eq:omegah} with 
 $ R (h) = C \log(1/h)$, for some $C>0$, and $ R_1 $ large but fixed.

For the
scattering by several convex bodies described in the 
introduction, we will be able (using propagation of singularities) 
to transform the boundary operator
$\cM(z,h)$ of \eqref{eq:MM} 
into a monodromy operator $\tM(z,h)$ of the form above, with arbitrary $ N_0 $, 
see \S \ref{assc}. 

%That weight, $ G \sim M \log (1/h ) $
%is responsible for the finite order gain \eqref{eq:Mlo}. That gain
%can be adjusted by taking larger weights. However, the exponential weight
%$ \exp ( \log ( 1/h ) G_0^w ( x, h D )  ) $ changes the symbol classes
%of the amplitudes to $ S_\delta $, for any $ \delta > 0 $. That is 
%responsible for the mildly exotic class $ I_{0+} $ with effects only 
%away from $ \TT $

\section{Preliminaries}
\label{pr}

The general preliminary material and notation for this paper
are the same as in \cite[\S 2]{NSZ1}.
We specifically present properties of exotic symbols and weights necessary to
construct the escape function $G$ and the projector $\Pi_W$ in
\eqref{e:tilde-M}, and analyze their interaction with Fourier integral
operators. Some of the material is taken 
directly from 
\cite[\S 2]{NSZ1}, \cite[\S 3]{SZ10}, and some developed specifically for our needs.

\subsection{Semiclassical pseudodifferential calculus}
\label{sa}

We recall the following class of symbols on $T^*\RR^d$ (here
$m,k\in\RR$, $\delta\in [0,1/2]$):
\[ \begin{split} 
S^{m,k}_\delta ( T^* \RR^d ) =\Big\{ & a \in \CI( T^* \RR^d \times (0, 1]
  ) :\ \forall \alpha,\beta\in\NN^n,\\
& \ |\partial_x ^{ \alpha } \partial _\xi^\beta a ( x, \xi ;h ) | \leq
C_\alpha h^{-m-\delta ( | \alpha| + |\beta |) }
\langle \xi \rangle^{k-|\beta| } \Big\}  \,, 
\end{split} \]
where we use the standard notation  $\la \bullet\ra =
(1+\bullet^2)^{1/2}$.

The Weyl quantization $a^w(x,hD)$ of such a symbol is defined as
follows: for any wavefunction $u$ in the Schwartz space $\cS(\RR^d)$, 
\be\label{eq:weyl}
\begin{split}
a^w u(x) & = a^w(x,hD) u(x)  =  [\Op (a) u ] ( x) \\
& \defeq \frac1{ ( 2 \pi h )^d } 
  \int \int  a \Big( \frac{x + y  }{2}  , \xi \Big) 
e^{ i \la x -  y, \xi \ra / h } u ( y ) dy d \xi \,, 
\end{split} 
\ee
see \cite[Chapter 7]{DiSj} for a detailed discussion of 
semiclassical quantization, and \cite[Appendix]{SjR}, 
\cite[Appendix D.2]{EZB} for the semiclassical calculus for
the symbol classes given above, and its implementation on 
manifolds. When $ \delta = 0 $ or $ m  = k = 0 $ we will generally
omit to indicate those indices.
We denote by
$ \Psi_{\delta}^{m,k} ( \RR^d) $, $\Psi^{m,k} ( \RR^d )$, or 
$ \Psi ( \RR^d ) $  
the corresponding  classes of pseudodifferential operators.
%they can be characterized using commutators by 
%Beals's Lemma (see \cite[Chapter 8]{DiSj} and \cite[Lemma 3.5]{SZ10} for the
%$ S_\delta $ case):
%\begin{equation}
%\label{eq:beals} A \in \Psi_{\delta} ( X  )
%\; \Longleftrightarrow \; \left\{ \begin{array}{l} \| \ad_{x_{j_1}}
%\cdots  \ad_{x_{j_N} } \cdots \ad_{ \xi_{i_1}   } \cdots 
%\ad_{\xi_{i_M} } A \|_{ L^2 \rightarrow H_h^N } = {\mathcal O} ( h^{(1-\delta) (N+M) } )
% \\
%\text{ for sequences with $ i_k , j_\ell \in \{ 1 , \cdots , d \} $.}
%\end{array} \right. \end{equation}
%where $ \ad_B A = [ B , A ] $.

For a given symbol $ a \in S ( T^* \RR^d ) $ we follow \cite{SZ8} and 
say that its {\em essential support} is contained a given compact
set $ K \Subset T^*\RR^d $, 
$$
  {\text{ess-supp}}_h\; a \subset K \Subset T^*\RR^d\,,  
$$
if and only if   
$$
 \forall \, \chi \in S ( T^*\RR^d ) \,, \ \supp \chi \cap K=\emptyset 
\ \Longrightarrow
 \ \chi \, a \in h^\infty \cS ( T^* \RR^d) \,.
$$
Here $\cS$ denotes the Schwartz space. The essential support of $a$ is then the intersection of all such $K$'s.

For $ A \in \Psi ( \RR^d) $,  $  A = \Op ( a ) $, we call
\begin{equation}
\label{eq:WFA}
    \WFh ( A) =  \text{ess-supp}_h\; a 
\end{equation}
the semiclassical wavefront set of $A$. 
(In this paper we are concerned with a purely semiclassical 
theory and will only need to deal with {\em compact} subsets of $ T^* \RR^d $.)

Let $ u=u(h) $, $ \| u ( h ) \|_{ L^2} = {\mathcal O} ( h^{-N} ) $
(for some fixed $ N$) be a wavefunction microlocalized in 
a compact set in $ T^* \RR^d $, in the sense that for some
$ \chi \in \CIc ( T^* \RR^d ) $, one has $ u = \chi^w u + 
\Oo_{\cS} ( h^\infty ) $. The semiclassical wavefront set of $u$ is
then defined as:
\be\label{eq:defWF}
\WFh ( u ) =   \complement \big\{  ( x, \xi )\in T^*\RR^d
\; : \; \exists \, a \in S ( T^* \RR^d ) \,, \  \ a ( x, \xi ) =1
\,, \ \| a^w \, u \|_{L^2} = \Oo ( h^\infty) \big\}
 \,.
\ee

For future reference we record the following simple consequence of
this definition:

\begin{lem}
\label{l:WFr}
If $ u ( h ) = {\mathcal O}_{L^2 ( \RR^n ) } ( h^{-N} ) $ is a
wavefunction microlocalized in a compact subset of $ T^* \RR^n $ 
and 
\[  v ( h ) ( x' ) \defeq u ( h ) ( 0 , x' ) \,, \ \ ( x_1 , x' )  \in
\RR^n \]
then $ v ( h ) = {\mathcal O}_{L^2 ( \RR^n ) } ( h^{-N-1/2} ) $ and 
$ v ( h ) $ is microlocalized in a compact subset of $ T^* \RR^{n-1}
$.
In addition we have 
\be
\label{eq:WFr} \WFh ( v ) \subset \{ ( x', \xi' ) \in T^* \RR^{n-1} \; : \; 
\exists \, \xi_1 \in \RR \,, \ \ ( 0 , x', \xi_1, \xi' ) \in \WFh ( u )
\} \,. \ee
\end{lem}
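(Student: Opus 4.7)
I would split the argument into the norm bound (with compact microlocalization) and the wavefront inclusion built on top of it. Throughout, using $u = \chi^w u + \Oo_{\cS}(h^\infty)$ for a fixed $\chi \in \CIc(T^*\RR^n)$ equal to $1$ near $\WFh(u)$, one may assume $u$ is Schwartz modulo $\Oo_{\cS}(h^\infty)$ and that $\WFh(u)$ is contained in a fixed compact $K \subset \{|\xi_1| \leq M\}$.

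For the $L^2$ bound the natural tool is the semiclassical Fourier transform $\F_h$. Fourier inversion in the $x_1$-variable applied to $v(x') = u(0,x')$ yields the identity
\begin{equation*}
(\F_h v)(\xi') = (2\pi h)^{-1/2} \int_{\RR} (\F_h u)(\xi_1,\xi')\, d\xi_1.
\end{equation*}
Since $\F_h u$ is $\Oo_{\cS}(h^\infty)$ outside $\{|\xi_1|\leq M\}$, Cauchy--Schwarz on $|\xi_1|\leq M$ together with Plancherel gives $\|v\|_{L^2} \leq Ch^{-1/2}\|u\|_{L^2} + \Oo(h^\infty) = \Oo(h^{-N-1/2})$. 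The same formula shows that $v$ is microlocalized in a compact subset of $T^*\RR^{n-1}$: its frequency content is controlled by that of $\F_h u$, and a spatial cutoff is inherited from that of $u$.

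For the wavefront inclusion \eqref{eq:WFr}, suppose $(x'_0,\xi'_0)$ lies in the complement of the right-hand side, so that $\WFh(u)$ is disjoint from the line $\ell_0 = \{(0,x'_0,\xi_1,\xi'_0) : \xi_1 \in \RR\}$. By compactness of $\WFh(u)$ (in particular its compact $\xi_1$-support) one finds $\eps > 0$ and an open neighbourhood $V \ni (x'_0,\xi'_0)$ in $T^*\RR^{n-1}$ such that
\begin{equation*}
\WFh(u) \cap \{|x_1|<\eps,\ (x',\xi')\in V\} = \emptyset.
\end{equation*}
Pick $a \in \CIc(V)$ with $a(x'_0,\xi'_0) = 1$ and $\chi_1 \in \CIc((-\eps,\eps))$ with $\chi_1(0)=1$, and set $\tilde a(x,\xi) := \chi_1(x_1)\, a(x',\xi') \in S(T^*\RR^n)$. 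A direct Weyl-calculus computation, in which the $\xi_1$-integral produces an \emph{exact} $2\pi h\, \delta(x_1 - y_1)$, gives
\begin{equation*}
(\tilde a^w u)(x_1,x') = \chi_1(x_1)\, \bigl[ a^w\bigl( u(x_1,\cdot)\bigr) \bigr](x'),
\end{equation*}
so that $(\tilde a^w u)(0,\cdot) = a^w v$ on the nose. Since $\supp\tilde a \cap \WFh(u) = \emptyset$, the Moyal product $\tilde a \, \# \, \chi$ is $\Oo(h^\infty)$ as a symbol and hence $\tilde a^w u = \Oo_{L^2}(h^\infty)$. Feeding this into the $L^2$-restriction bound of the first step gives $\|a^w v\|_{L^2} = \Oo(h^\infty)$, which is exactly the condition $(x'_0,\xi'_0) \notin \WFh(v)$.

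The main technical point is that the symbol $\tilde a$ is chosen \emph{independent of $\xi_1$}: this is what makes the $\xi_1$-integration in the Weyl quantization produce an exact $\delta(x_1-y_1)$, yielding the clean identity $(\tilde a^w u)(0,\cdot) = a^w v$ with no remainder. Had one instead cut $\tilde a$ off in $\xi_1$ by some $\phi(\xi_1) \in \CIc(\RR)$, the same integral would yield $(\F\phi)(y_1/h)$, which is only an approximate $\delta$ and would leave an awkward convolution error to control. Compactness of $\WFh(u)$ in the $\xi_1$-direction is what allows the disjointness $\supp\tilde a \cap \WFh(u) = \emptyset$ despite the non-compact $\xi_1$-support of $\tilde a$.
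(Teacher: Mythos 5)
Your $L^2$-norm bound is essentially the paper's argument: partial semiclassical Fourier transform in $x_1$, Cauchy--Schwarz over the compact $\xi_1$-range forced by the compact microlocalization, then Plancherel. The paper estimates $|v(x')|$ pointwise before integrating, you work on the Fourier side with $\F_h v$, but these differ only cosmetically. For the wavefront inclusion \eqref{eq:WFr} the paper says only that ``similar arguments'' suffice, so your explicit argument is genuinely more detailed, and its central device --- taking the test symbol $\tilde a$ independent of $\xi_1$, so the $\xi_1$-integral in the Weyl formula produces an exact $2\pi h\,\delta(x_1-y_1)$ and hence the clean identity $(\tilde a^w u)(0,\cdot) = a^w v$ with no remainder --- is an attractive way to make restriction commute on the nose with quantization.

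One step, however, is not justified as written. You assert that $\supp\tilde a \cap \WFh(u) = \emptyset$ forces $\tilde a \,\#\, \chi = \Oo(h^\infty)$. That symbolic statement needs $\supp\tilde a \cap \supp\chi = \emptyset$, and your $\chi$ was fixed once and for all at the start as a cutoff ``equal to $1$ near $\WFh(u)$''; its support is in general strictly larger than $\WFh(u)$ and can perfectly well meet the cylinder $\supp\tilde a$. The \emph{conclusion} $\|\tilde a^w u\|_{L^2} = \Oo(h^\infty)$ is nevertheless correct, but it has to be obtained the standard way: either (a) note that, once $(x'_0,\xi'_0)$ is fixed, the compact set $\WFh(u)$ has positive distance to the closed cylinder $\supp\tilde a$, and shrink the support of $\chi$ (using a partition of unity and \eqref{eq:defWF}) to lie within that distance before applying the disjoint-support argument; or (b) cover the compact set $\supp\tilde a \cap \supp\chi$, which is disjoint from $\WFh(u)$, by finitely many open sets carrying symbols $b_j$ with $b_j \equiv 1$ there and $\|b_j^w u\| = \Oo(h^\infty)$, and conclude by elliptic division of $\tilde a\#\chi$ by the $b_j$. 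With either patch the proof is complete.
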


We point out that unlike in the case of classical wave front 
sets we do not have to make the assumption that $ \xi' \neq 0 $
when $ ( x',0,\xi_1, \xi' ) \in \WFh ( u ) $. 

\begin{proof}
Let $ \chi \in \CIc ( T^* \RR^n ) $ such that $ u = \chi^w u +
{\mathcal O}_{\mathcal S} ( h^\infty ) $ -- it exists by the
assumption 
that $ \chi $ is microlocalized in a compact set.

By choosing $ \psi \in \CI ( \RR ) $ such that $ \psi ( \xi_1 ) = 1
$ for $ ( x, \xi ) \in \supp \chi $ we  have 
$ \psi ( h D_{x_1} ) u = u + {\mathcal O}_{\mathcal S} ( h^\infty )
$ and we can simply replace $ u $ by $ \psi ( h D_{x_1} ) $.
Then 
\[ \begin{split}  | v (x') |^2 &  = \frac{ 1 } {\sqrt { 2 \pi h } }
\left|    \int_\RR \psi (
\xi_1  ) ({\mathcal F}_h )_{x_1 \mapsto \xi_1 } u ( \xi_1 , x' )
d\xi_1  \right|^2 
 \leq \frac{C_\psi }{ \sqrt h } \| 
 ({\mathcal F}_h )_{x_1 \mapsto \xi_1 } u ( \bullet , x' ) \|^2_{L^2(   \RR) } \,,
\end{split} \]
where $ {\mathcal F}_h $ is the unitary semiclassical Fourier transform 
(see \cite[Chapter 2]{EZB}).
Integrating in $ x'$ gives the bound $ \| v \|_{L^2 ( \RR^{n-1} ) } =
{\mathcal O} ( h^{-N-1/2} ) $. 

Similar arguments prove the remaining statements in the lemma.
\end{proof}

Semiclassical Sobolev spaces, $ H^s_h ( X) $ are defined using 
the norms
\begin{gather}
\label{eq:ssn}
\begin{gathered}
\| u \|_{H_s ( \RR^n ) } = \| ( \Id - h^2 \Delta_{\RR^n} )^{s/2} u
\|_{L^2 ( \RR^n) } \,,  \ \ X = \RR^n \,, \\
\| u \|_{H_s (X ) } = \| ( \Id - h^2 \Delta_{g} )^{s/2} u
\|_{L^2 ( X) } \,,  \ \ X \ \text{ a compact  manifold,} 
\end{gathered}
\end{gather}
for any choice of Riemmanian metric $ g $.

\subsection{$S_{\frac12} $ spaces with two parameters}
\label{s2p}

We now refine the symbol classes $S^{m , k }_{\frac12}$, by
introducing a second small parameter, $\tilde h\in(0,1]$, independent
of $h$.
Following \cite[\S 3.3]{SZ10} we define the symbol classes:
\begin{equation}
\label{eq:sthf}
a \in S^{m , {\widetilde m}, k }_{\frac12} ( T^* \RR^d ) 
\ \Longleftrightarrow \ |\partial_x^\alpha \partial^\beta_\xi 
a (  x, \xi ) | \leq C_{\alpha \beta} h^{-m } \tilde h^{-{\widetilde m}} 
\Big( \frac{\tilde h }{ h } 
\Big)^{\frac12 ( |\alpha | + |\beta| ) } \langle \xi \rangle^{k-|\beta|}
 \,,\end{equation}
where in the notation we suppress the dependence of $ a $ on $ h 
$ and $ \tilde h $.
When working on $ \RR^d $ or in fixed local coordinates, we will
use the simpler classes
\begin{equation}
\label{eq:sthfs}
a \in \widetilde{S} ( T^* \RR^d ) 
\ \Longleftrightarrow \ |\partial^\alpha
a | \leq C_{\alpha} \,,\qquad
a \in \ST ( T^* \RR^d ) 
\ \Longleftrightarrow \ |\partial^\alpha
a | \leq C_{\alpha} ( \th / h )^{\frac12  |\alpha | } \,. 
\end{equation}
%The standard results (see \cite[\S 9.3]{EZB}) show that if  
%$ a \in   S^{m , {\widetilde m},k }_{\frac12}$ and
%$ b \in   S^{m', \widetilde m', k'}_{\frac12} $ then 
%\[ a^w ( x , h D_ x ) \circ b^w ( x , h D_ x ) = c^w ( x , h D_ x  ) 
%\ \text{ with } \ 
%c \in   S^{m +m', {\widetilde m}+ {\widetilde m}' , 
%k+ k' }_{\frac12}\,.
%\]
%The presence of the additional parameter $ \tilde h $ allows us to 
%conclude that 
%\[ c \equiv \sum_{ |\alpha | <  M } \frac{1}{\alpha !} 
%\partial_\xi^\alpha a D_x^\alpha b \ \mod S^{m  + m ' , {\widetilde m} + {\widetilde m}' 
%- M , k + k' - M }_{\frac12} \,, \]
%that is, we have a symbolic expansion in powers of $ \tilde h $. 
We denote the corresponding classes of operators by $ \Psi_{\frac12}^{m , {\widetilde m}, k } 
(\RR^d ) $ or $ \PT $.

We recall \cite[Lemma 3.6]{SZ10} which provides explicit 
error estimates on remainders in the product formula: 
\begin{lem}
\label{l:Sjnew}
Suppose that 
$ a, b \in \ST $, 
and that $ c^w = a^w \circ b^w $. 
Then for any integer $N>0$ we expand
\begin{equation}
\label{eq:weylc}  c ( x, \xi) = \sum_{k=0}^N \frac{1}{k!} \left( 
\frac{i h}{2} \sigma ( D_x , D_\xi; D_y , D_\eta) \right)^k a ( x , \xi) 
b( y , \eta) \rest_{ x = y , \xi = \eta} + e_N ( x, \xi ) \,.
\end{equation}
The remainder $e_N$ is bounded as follows: for some integer $ M $ independent of $N$,
\begin{equation}
\label{eq:Sjnew1}
\begin{split}
& | \partial^{\alpha} e_N | \leq C_N h^{N+1}
\times \\
& \ \ 
\sum_{ \alpha_1 + \alpha_2 = \alpha} 
 \sup_{ T^*\RR^d \times T^* \RR^d } 
% \genfrac(){0cm}{0}{( x, \xi) \in T^* \RR^d } 
%{ ( y , \eta) \in T^* \RR^d } } 
\sup_{ \beta\in \NN^{4d} \,,
|\beta | \leq M  }
\left|
( h^{\frac12} 
\partial_{( x, \xi; y , \eta) } )^{\beta } 
(i  \sigma ( D) / 2) ^{N+1} \partial^{\alpha_1} a ( x , \xi)  
\partial^{\alpha_2} b ( y, \eta ) 
\right| \,,
\end{split} 
\end{equation}
where 
$$ \sigma ( D) = 
 \sigma ( D_x , D_\xi; D_y, D_\eta ) 
\defeq \langle D_\xi , D_y \rangle - \langle D_\eta , D_x \rangle \,, $$
is the symplectic form on $ T^* \RR^d \times T^* \RR^d$. 
\end{lem}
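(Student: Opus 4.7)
The plan is to follow the standard derivation of the Moyal product expansion, keeping careful track of the $\ST$ seminorms. The starting point is the operator identity
$$ c(x,\xi)=\Big[\exp\!\Big(\tfrac{ih}{2}\sigma(D_x,D_\xi;D_y,D_\eta)\Big)\,a(x,\xi)\,b(y,\eta)\Big]_{y=x,\,\eta=\xi}, $$
which holds for $a,b$ Schwartz and will be extended to $a,b\in\ST$ by a truncation/passage-to-the-limit argument. Applying Taylor's formula with integral remainder to $e^{z}$ at $z=\tfrac{ih}{2}\sigma(D)$ yields the expansion \eqref{eq:weylc} together with the explicit remainder
$$ e_N=\frac{1}{N!}\Big(\frac{ih}{2}\Big)^{N+1}\int_0^1(1-s)^{N}\,\Big[\,e^{\,is\tfrac{h}{2}\sigma(D)}\,\sigma(D)^{N+1}(a\otimes b)\Big]_{\mathrm{diag}}\,ds. $$
The remaining task is to bound, pointwise and uniformly in $s\in[0,1]$, the operator $T_s\defeq e^{is\frac{h}{2}\sigma(D)}$ acting on smooth functions of $4d$ phase-space variables.

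The core estimate is an $L^\infty\to L^\infty$ bound of the form
$$ \|T_s F\|_{L^\infty}\leq C\sum_{|\beta|\leq M}\big\|(h^{1/2}\partial)^\beta F\big\|_{L^\infty}, $$
uniformly in $s\in[0,1]$ and $h\in(0,1]$, with $M=M(d)$ fixed. To prove it, I would write $T_s$ as the partial Fourier multiplier with non-degenerate quadratic symbol $\exp(is\tfrac{h}{2}q(\Xi))$, where $q$ is the dual form to $\sigma$. Rescaling $\Xi=h^{-1/2}\widetilde\Xi$ and $Y-X=h^{1/2}\widetilde Y$ reduces the phase to an $h$-independent, non-degenerate quadratic. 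The resulting oscillatory kernel is then handled by integration by parts against the non-stationary part of the phase, which exchanges each power of the frequency $\widetilde\Xi$ for a rescaled derivative $h^{1/2}\partial$ of $F$; a finite number $M$ of such integrations produces an absolutely integrable kernel. The factor $h^{1/2}$ appearing in $(h^{1/2}\partial)^\beta$ is precisely the natural scale of $\ST$, so when $F=\sigma(D)^{N+1}(a\otimes b)$ with $a,b\in\ST$, the right-hand side remains controlled.

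Substituting this bound into the formula for $e_N$ gives $|e_N|\leq C_N h^{N+1}\sup_{|\beta|\leq M}\big\|(h^{1/2}\partial)^\beta\sigma(D)^{N+1}(a\otimes b)\big\|_{L^\infty}$. Differentiating \eqref{eq:weylc} in $(x,\xi)$ and using the Leibniz rule to distribute $\partial^\alpha$ between the two factors as $\partial^{\alpha_1}a\cdot \partial^{\alpha_2}b$ with $\alpha_1+\alpha_2=\alpha$ then produces \eqref{eq:Sjnew1}. The main technical obstacle is the core oscillatory-integral estimate for $T_s$: one must keep the loss $M$ of derivatives independent of $N$, $h$ and $s$, which ultimately rests on the non-degeneracy of the symplectic form $\sigma$ and the consequent uniform stationary-phase / integration-by-parts bounds for the rescaled kernel.
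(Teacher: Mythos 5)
The paper does not prove this lemma: it is quoted verbatim from \cite[Lemma 3.6]{SZ10}, where the proof is given. Your sketch is the standard argument and, as far as I recall, it is essentially the one in that reference: the exact Moyal composition formula plus Taylor's theorem with integral remainder, followed by an $L^\infty$ bound (with a fixed loss of rescaled derivatives) on the propagator $e^{is\frac{h}{2}\sigma(D)}$ obtained by the $h^{1/2}$-rescaling and integration by parts in the oscillatory integral. One point you gloss over but which matters for the claim ``$M$ independent of $N$, $h$ \emph{and} $s$'' is the uniformity of the kernel bound as $s\to 0$ (the quadratic form $s\sigma$ degenerates, so the crude nonstationary-phase constant $\sim s^{-\dim/2}$ blows up); the standard fix is to split the kernel integral at $|z-w|\sim\sqrt{s}$, using the trivial volume bound near the diagonal and integration by parts away from it, which gives a bound uniform in $s\in(0,1]$. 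With that point made explicit, the argument is correct and is what \cite[Lemma 3.6]{SZ10} does.
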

Notice that, due to the growth of the derivatives of $a,b$, the expression 
\eqref{eq:weylc} is really an expansion in powers of $\th$ rather than
$h$.
On the other hand, if $ a \in 
\ST ( T^* \RR^d)  $ and $ b $ is in the more regular class $S ( T^* \RR^d ) $, then 
\begin{gather*}
  c ( x, \xi) =
 \sum_{k=0}^N \frac{1}{k!} \left( 
i h \sigma ( D_x , D_\xi; D_y , D_\eta) \right)^k a ( x , \xi) 
b( y , \eta) \rest_{ x = y , \xi = \eta} + {\mathcal O} 
( h^{\frac{N+1}2 } \tilde h^{\frac{N+1}2} ) 
\,. 
\end{gather*}
We also recall \cite[Lemma 3.5]{SZ10} which is an easy adaptation of
the semiclassical Beals's lemma --- see \cite[Chapter 8]{DiSj} and 
\cite[\S 9.5]{EZB}. Because of a small modification of the statement
we present the reduction to the standard case:
\begin{lem}
\label{l:Beals}
Suppose that $ A \; : \; {\mathcal S} ( \RR^d ) \rightarrow 
{\mathcal S}' ( \RR^d ) $.
Then $ A = \Op ( a ) $ with $ a \in \ST $ 
 if and only if, 
for 
any $N\geq 0$ and any sequence $ \{ \ell_j \}_{j=1}^N $ of smooth functions
which are linear {\em outside a compact subset of}
$ T^* \RR^d $, 
\begin{equation}
\label{eq:Beals}
\| \ad_{\Op (\ell_1)} \circ \cdots \ad_{\Op (\ell_
N ) } A u \|_{L^2 ( \RR^d ) }
\leq C h^{   N/2} \tilde h^{ N/2} \| u \|_{ L^2 ( \RR^d )} \,, 
\end{equation}
for any $ u \in {\mathcal S} ( \RR^d ) $.
\end{lem}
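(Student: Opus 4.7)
The strategy is to reduce the statement to the standard semiclassical Beals lemma (\cite[Ch.~8]{DiSj}, \cite[\S 9.5]{EZB}), handling two separate small modifications: (i) the symbol class $\ST$ replaces the standard bounded class $S(T^*\RR^d)$, and (ii) the test symbols $\ell_j$ are linear \emph{outside} a compact set rather than globally linear. The first modification is absorbed by tracking the factor $(\tilde h/h)^{|\alpha|/2}$ through the standard derivation, so that each $h$ in the standard commutator bound is replaced by $(h\tilde h)^{1/2}$. We may thus use the intermediate statement: $a \in \ST$ if and only if, for every sequence of \emph{globally linear} symbols $\ell_j$, $\|\ad_{\Op(\ell_1)}\cdots\ad_{\Op(\ell_N)} A\|_{L^2\to L^2} \leq C (h\tilde h)^{N/2}$.

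For the reverse direction, globally linear $\ell_j$ form a special case of $\ell_j$ linear outside a compact set, so the hypothesis of the lemma immediately implies $a \in \ST$ via the adapted standard characterization above.

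For the forward direction, assume $A = \Op(a)$ with $a \in \ST$ and decompose each $\ell_j = \ell_j^{\mathrm{lin}} + \psi_j$, with $\ell_j^{\mathrm{lin}}$ globally linear and $\psi_j \in \CIc(T^*\RR^d) \subset S$. For the linear summand the Moyal product terminates exactly, yielding $[\Op(\ell_j^{\mathrm{lin}}), A] = (h/i)\Op(\{\ell_j^{\mathrm{lin}}, a\})$; the Poisson bracket is a first-order derivative of $a$, hence in $\ST$ with a gained factor $(\tilde h/h)^{1/2}$, so the commutator is $L^2$-bounded by $C(h\tilde h)^{1/2}$. For the compactly supported piece $\psi_j$, the mixed Moyal expansion noted just after Lemma~\ref{l:Sjnew} (for $S \times \ST$) gives
\[
[\Op(\psi_j), A] = (h/i)\Op(\{\psi_j, a\}) + R_j,\qquad \|R_j\|_{L^2\to L^2} = O((h\tilde h)^{3/2}),
\]
with leading part again of size $(h\tilde h)^{1/2}$. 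Iterating $N$ times and applying Lemma~\ref{l:Sjnew} together with its mixed-class variant at each step produces an operator of the form $\Op(b_N) + O_{L^2\to L^2}((h\tilde h)^{(N+1)/2})$ with $b_N \in \ST$ of symbol-size $(h\tilde h)^{N/2}$, hence of operator norm at most $C(h\tilde h)^{N/2}$, as required.

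The main difficulty is the bookkeeping through the $N$-fold iteration: at each step one must verify that the new Moyal remainder gains a full factor $(h\tilde h)^{1/2}$ relative to the principal symbol, and that the class $\ST$ remains stable under the relevant products with linear and Schwartz-type symbols. The explicit remainder bound \eqref{eq:Sjnew1} of Lemma~\ref{l:Sjnew}, which quantifies the cost of each application of $\sigma(D)$ in terms of $(\tilde h/h)^{1/2}$ factors together with the $h^{N+1}$ prefactor, is precisely the tool ensuring that these iterations compose with the required quantitative control.
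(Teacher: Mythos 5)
Your argument is correct and its skeleton matches the paper's, but the two proofs differ in how they handle the exotic class: the paper introduces the unitary rescaling $U_{h/\th}$ of \eqref{eq:resc}, which conjugates $\Op(a)$ with $a\in\ST$ into an $\th$-pseudodifferential operator $\Opt(\tilde a)$ with $\tilde a\in\widetilde S(1)$, so that the case of globally linear $\ell_j$ becomes literally the classical one-parameter Beals lemma for the $\th$-calculus (\cite[Prop.~8.3]{DiSj}), with no parameter bookkeeping to re-derive. You instead propose to re-track the factors $(\th/h)^{|\alpha|/2}$ through the standard Beals derivation to obtain your ``intermediate statement''; this is sound, but it is exactly the step the rescaling packages for free, and it is the part you leave as a black box. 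Your treatment of the forward direction (decomposing $\ell_j=\ell_j^{\mathrm{lin}}+\psi_j$, using the exact termination of the Moyal product against the linear piece, and the mixed $S\times\ST$ version of Lemma~\ref{l:Sjnew} for the compactly supported piece) makes explicit an assembly that the paper only sketches — the paper merely treats the two extreme cases ($\ell_j$ globally linear, $\ell_j\in S(1)$) and leaves the $N$-fold composition and stability of the class implicit; in the rescaled picture this is immediate because $\tilde\ell_j\in S(1)$ with derivatives of size $(h/\th)^{1/2}$, and the standard $\th$-calculus shows $\ad_{\Opt(\tilde\ell_j)}$ maps $\Opt(S_\th(1))$ into $(h\th)^{1/2}\Opt(S_\th(1))$. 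In short: you trade the paper's rescaling trick for direct two-parameter bookkeeping, which buys more explicitness in the iteration at the cost of having to re-verify estimates the rescaling reduces to cited results. Your one gap is that you do not actually carry out the symbol-level estimate showing the remainder of each mixed Moyal expansion lies in $(h\th)^{(N+1)/2}\ST$ (not just in operator norm), which is needed so that the iteration stays in $\Op(\ST)$; you flag this as the ``main difficulty'' and gesture at \eqref{eq:Sjnew1}, which does suffice, but the claim should be checked rather than asserted.
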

\begin{proof} 
We use the standard rescaling to eliminate $ h$:
$$  ( \tilde x , \tilde \xi ) = ( \tilde h / h )^{\frac12} ( x , \xi ) \,,$$
and implement it through the following unitary operator on $ L^2 ( \RR^d ) $:
\begin{equation}
\label{eq:resc}
 U_{ h / \th } u ( \tilde x ) = ( \th / h )^{\frac{d}{4}}
u ( ( h / \tilde h )^{\frac12}  \tilde x ) =  ( \th / h
)^{\frac{d}{4}}u(x) \,.
\end{equation}
One can easily check that
\[ \Op(a)= U_{ h / \th }^{-1} \Opt(\tilde a) U_{ h / \th} \,, 
\quad\text{where}\quad
\tilde a ( \tilde x , \tilde \xi ) = a (
( h / \tilde h)^{\frac12} ( \tilde x , \tilde \xi ) ) \,. \]
Notice that the symbol $\tilde a\in \widetilde{S}(T^*\RR^d)$ if $a\in \ST(T^*\RR^d)$.
In the rescaled coordinates, the condition \eqref{eq:Beals} concerns 
$\th$-pseudodifferential operators: it reads
\be\label{e:Beals-rescaled}
\| \ad_{ \Opt(\tilde \ell_1)} \circ \cdots \circ 
 \ad_{\Opt(\tilde \ell_N) } \Opt(\tilde a) u \|_{ L^2 } \leq 
C \tilde h ^N \| u \|_{ L^2 } \,. 
\ee
Let us prove the statement in the case where the
$\ell_j $'s are linear: then
\[  \tilde \ell_j = ( \tilde h / h )^{\frac12} 
\ell_j \]
are also linear, and Beals's lemma for $\th$-pseudodifferential
operators \cite[Prop. 8.3]{DiSj} states that \eqref{e:Beals-rescaled}
for any $N$ is equivalent with
$\tilde a\in \widetilde{S}(1)$.

We finally want to show that, if $\tilde a\in \ST$, then \eqref{e:Beals-rescaled} holds for $ \ell_j$'s
which are compactly supported. Actually,
for $ \ell_j \in S ( 1 ) $ we may use Lemma \ref{l:Sjnew} to see that 
\[  \ad_{ \Opt(\tilde \ell_j)} \Opt(\tilde a) = 
\Oo_{ L^2 \rightarrow L^2 } ( h^{\frac 12} 
\tilde h^{\frac 12} ) \,, \]
which implies \eqref{eq:Beals}.
\end{proof}

The rescaling \eqref{eq:resc} can be used to obtain analogues
of other standard results. Here is one which we will need below.
\begin{lem}
\label{l:stan1}
Suppose $ a \in \ST $ and that $ \sup_{T^*\RR^d} | a | > c > 0 $, with 
$ c $ independent of $ h $ and $ \tilde h $. 
Then 
\[  \| \Op(a)\|_{ L^2 \rightarrow L^2 } \leq \sup_{ T^* \RR^d } | a | 
 + \Oo ( \tilde h  ) \,. \]
\end{lem}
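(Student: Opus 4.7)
The plan is to reduce to the ordinary $\th$-calculus via the unitary rescaling already introduced in the proof of Lemma \ref{l:Beals}, and then apply the standard sharp G\aa{}rding inequality in that setting.

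First, I would apply the unitary rescaling $U_{h/\th}$ from \eqref{eq:resc}, which gives
\[
\|\Op(a)\|_{L^2\to L^2} = \|\Opt(\tilde a)\|_{L^2\to L^2}\,,
\qquad \tilde a(\tilde x,\tilde\xi)=a\big((h/\th)^{1/2}(\tilde x,\tilde\xi)\big).
\]
The assumption $a\in\ST(T^*\RR^d)$ translates to $\tilde a\in\widetilde S(T^*\RR^d)$, i.e.\ all derivatives of $\tilde a$ are uniformly bounded in $h,\th$, and crucially $\sup|\tilde a|=\sup|a|\defeq C$, with $C>c>0$ by hypothesis. So it suffices to show that, for any $\tilde a\in\widetilde S(T^*\RR^d)$ with $\sup|\tilde a|=C>c>0$, one has $\|\Opt(\tilde a)\|_{L^2\to L^2}\leq C + \Oo(\th)$.

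Second, I would use the standard sharp G\aa{}rding inequality in the $\th$-calculus on $\widetilde S(T^*\RR^d)$: since the symbol $p\defeq C^2-|\tilde a|^2\geq 0$ belongs to $\widetilde S$, one has $\langle p^w u,u\rangle \geq -C'\th\|u\|^2$. By the Moyal product expansion, $\overline{\tilde a}\#\tilde a = |\tilde a|^2 + \th r$ with $r\in\widetilde S$ real (by self-adjointness of $(\Opt(\tilde a))^*\Opt(\tilde a)$), and hence
\[
\|\Opt(\tilde a)u\|^2 = \langle (|\tilde a|^2)^w u,u\rangle + \Oo(\th)\|u\|^2 \leq \big(C^2 + \Oo(\th)\big)\|u\|^2 \,.
\]
Taking the square root and using $C>c>0$ to absorb the $\Oo(\th/C)$ into $\Oo(\th)$ yields the claimed bound.

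The main technical obstacle, though mild, is verifying that the sharp G\aa{}rding inequality applies in the class $\widetilde S(T^*\RR^d)$ of uniformly bounded symbols; this is standard (it only requires symbols with bounded derivatives of all orders) and is the reason why the rescaling step is essential---applying sharp G\aa{}rding directly to the $\ST$ class, with its $(\th/h)^{1/2}$ growth, would not produce the clean $\Oo(\th)$ remainder. The lower bound $\sup|a|>c$ is used only at the very end to convert the squared inequality into a linear one without losing a factor.
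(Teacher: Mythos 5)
Your proof is correct and follows essentially the same route as the paper's: rescale to the $\th$-calculus, apply the $T^*T$ trick together with sharp G\aa rding for symbols in $\widetilde S$, and use the lower bound $\sup|a|>c$ to convert the squared estimate into a linear one with an $\Oo(\th)$ remainder. The only cosmetic difference is that you spell out the final square-root absorption step, which the paper leaves implicit.
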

\begin{proof}
We first apply the rescaling \eqref{eq:resc}, so that $ \Op(a) $
is unitarily equivalent to $ \Opt(\tilde a)$,
where $ \tilde a \in \widetilde{S} ( T^*\RR^d ) $, 
and $ \sup | \tilde a | = \sup | a | $. We then note that
$$
\Opt(\tilde a^w)^*\, \Opt(\tilde a) 
= \Opt( |\tilde a|^2 )+ \Oo_{L^2 \rightarrow L^2 } ( \th ) 
\,, 
$$
and that the sharp G{\aa}rding inequality (see for instance 
\cite[7.12]{DiSj} or \cite[Theorem 4.21]{EZB}) shows that
\[ ( \sup |a|)^2 -   \Opt(\tilde a)^* \Opt(\tilde a) \geq - C \tilde h \,, \]
from which the lemma follows.
\end{proof}

\subsection{Exponentiation and quantization}
\label{eaq}

As in \cite{SZ10}, we will need to consider 
operators of the form $ \exp(G^w ( x, h D )) $, where $ G \in S^{0,0,-\infty}_{\frac 12}(T^*\RR^d) $. 
To understand the properties of the conjugated operators, 
\[ \exp( {-G^w ( x ,h D) } )\, P\, \exp( { G^w ( x, h D) } ) \,, \]
we will use 
a special case of a result of
% Beals \cite{Bea} and  
Bony and Chemin \cite[Th\'eor\`eme
6.4]{BoCh} --- see \cite[Appendix]{SZ10} or \cite[\S 9.6]{EZB}

To state it we need to recall a more general class of 
pseudodifferential operators defined using {\em order functions}.
A function $ m:T^*\RR^d\to \RR_+$ is an order function in 
the sense of \cite{DiSj} iff for some $N\geq 0$ and $C>0$, we have
\begin{equation}
\label{eq:orderm}
\forall \rho,\rho'\in T^*\RR^d,\qquad  m (\rho ) \leq C m ( \rho' )
\langle  \rho - \rho' \rangle ^N \,. 
\end{equation}
The class of symbols  corresponding to $ m $, denoted by $ \tS ( m ) $, is defined as
\[  a \in \tS ( m ) \ \Longleftrightarrow \ |\partial^\alpha_x \partial_\xi^\beta
a ( x , \xi ) | \leq C_{\alpha \beta} \, m ( x , \xi ) \]
(in this notation $\tS(1)$ is the class of symbols we had called $\widetilde{S}(T^*\RR^d)$).
If $ m_1$ and $ m_2 $ are order functions in the sense of \eqref{eq:orderm},
and $ a_j \in S (m_j ) $ then (we put $ h=1 $ here),
\[ 
a_1^w ( x, D )\, a_2^w ( x , D ) = b^w ( x , D) \,, \ \ b \in \tS ( m_1
m_2 ) \,,
\]
with $ b $ given by the usual formula
\begin{gather}
\label{eq:usual}
\begin{gathered} \begin{split} b ( x , \xi ) & = a_1 \; \sharp \; a_2 ( x, \xi ) \\
& \defeq
\exp ( i \sigma ( D_{x^1}, D_{\xi^1} ; D_{x^2 } , D_{\xi^2} )/2  ) \,
a_1 ( x^1 , \xi^1 ) \, a_2 ( x^2 , \xi^2 ) \rest_{ x^1 = x^2 = x , 
\xi^1 = \xi^2 = \xi } \,. \end{split} \end{gathered}
\end{gather}
Note that here we do not have a small parameter $ h $, so $a_1\sharp
a_2$ cannot be expanded as a power series. The value
of the following proposition lies in the calculus based on order functions. 
A special case of \cite[Th\'eor\`eme 6.4]{BoCh}, see \cite[Appendix]{SZ10}, gives 
\begin{prop}
\label{p:bbc}
Let $m $ be an order function in the sense of \eqref{eq:orderm}, and
suppose that $ g \in \CI ( T^* \RR^n; \RR  )  $ satisfies, uniformly
in $(x,\xi)\in T^*\RR^d$,
\begin{equation}
\label{eq:bc1}
 g ( x , \xi ) - \log m ( x , \xi )  = \Oo ( 1 ) \,, \ \
\partial_x^\alpha \partial_\xi^\beta g ( x, \xi ) = \Oo ( 1 ) \,,
 \ \ |\alpha | + |\beta| \geq 1 \,.
\end{equation}
Then for any $t\in\RR$, 
\begin{equation}
\label{eq:bc2} 
\exp ( t g^w ( x , D ) ) = B_t^w ( x , D ) \,, \quad B_t \in \tS ( m^t )\,.
 %\ \ B_t = \exp ( t G ) + {\mathcal O}_{\ST( m^t ) } ( \tilde h ) \,. 
\end{equation}
Here $ \exp ( t g^w ( x , D) ) u $, $ u \in \cS(\RR^d) $, is constructed 
by solving 
\[ \partial_t u ( t ) = g^w ( x , D ) u ( t) \,, \quad   u ( 0 ) = u \,. \]
The estimates on $ B_t \in \tS( m^t ) $ depend 
{\em only} on the constants in \eqref{eq:bc1} and in \eqref{eq:orderm}.
In particular they are independent of the support of $ g $.
\end{prop}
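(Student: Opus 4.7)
The plan is to prove this by establishing the symbol estimates through a combination of approximation and iterated-commutator (Beals-type) bounds, following the strategy of Bony--Chemin. Since $g$ is real with all derivatives $\partial^\alpha g$ bounded for $|\alpha|\geq 1$, the operator $g^w$ is essentially self-adjoint on $\cS(\RR^d)$ (a standard consequence of Calder\'on--Vaillancourt bounds applied to commutators), so the spectral theorem defines the one-parameter family $e^{tg^w}$ for all $t\in\RR$; in general it is unbounded on $L^2$ when $m$ is unbounded. To manipulate it as a pseudodifferential operator, I would introduce compactly supported truncations $g_R = \chi_R g$, with $\chi_R \in \CIc(T^*\RR^d)$ equal to $1$ on $|(x,\xi)|\leq R$ and with all derivatives bounded uniformly in $R$. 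Then $g_R^w$ is bounded, $e^{tg_R^w}$ is defined by a convergent power series, and its Weyl symbol $B_{R,t}$ is smooth. The goal is to prove symbol estimates for $B_{R,t}\in\tS(m^t)$ which are \emph{uniform in} $R$; passing to the limit $R\to\infty$ then produces $B_t\in\tS(m^t)$, and the quantizations agree on $\cS(\RR^d)$ by uniqueness of solutions of $\partial_t u = g^w u$.

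The heart of the matter is to produce the uniform symbol estimates. I would invoke the Beals-type characterization of the order-function class $\tS(m^t)$ in the Weyl--H\"ormander calculus: a tempered operator $A$ lies in $\Op\tS(m^t)$ if and only if, for every $N\geq 0$ and every collection $\ell_1,\dots,\ell_N$ of linear forms on $T^*\RR^d$,
\[
\bigl\|\ad_{\ell_1^w}\cdots\ad_{\ell_N^w} A\bigr\|_{L^2(m^t)\to L^2(1)} \leq C_N .
\]
To check this for $A = e^{tg^w}$, I would use the Duhamel identity
\[
[\ell^w, e^{tg^w}] = \int_0^t e^{(t-s)g^w}\,[\ell^w,g^w]\,e^{sg^w}\,ds,
\]
together with the exact Weyl-calculus identity $[\ell^w,g^w] = \tfrac{1}{i}\{g,\ell\}^w$ (which is exact for $\ell$ linear), where $\{g,\ell\}$ is a constant-coefficient linear combination of first partials of $g$, hence bounded by hypothesis \eqref{eq:bc1}. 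Iterating produces a sum over terms of the shape
\[
\int e^{s_0 g^w}\,P_1^w\,e^{s_1 g^w}\,P_2^w\cdots P_k^w\,e^{s_k g^w}\,ds,
\]
each $P_j$ being an iterated Poisson bracket of $g$ with the $\ell_i$'s and so bounded by a constant depending only on finitely many derivatives of $g$; the intermediate evolutions map $L^2(m^{s_{j-1}+\cdots+s_0})\to L^2(m^{s_{j}+\cdots+s_0})$ with bounds controlled by the induction hypothesis on $N$ and by \eqref{eq:orderm}.

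The main obstacle is precisely the combinatorial bookkeeping in the iterated-commutator expansion, coupled with the need to match weighted $L^2$ estimates on the intermediate factors $e^{s g^w}$: one must ensure that the constants $C_N$ depend only on finitely many bounded derivatives of $g$ and on the constants in \eqref{eq:orderm}, and not on the (possibly logarithmically large) size of $g$ itself --- this is exactly what makes the statement uniform in the support of $g$, permitting the truncation $g_R \to g$ to pass to the limit. This bookkeeping is the content of the admissibility of the flat metric $|dx|^2+|d\xi|^2$ with weight $m$ in the Bony--Chemin framework, so in practice I would either invoke \cite[Th\'eor\`eme 6.4]{BoCh} directly or recapitulate the relevant slice of the argument as in \cite[Appendix]{SZ10}.
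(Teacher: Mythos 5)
The paper offers no proof of this proposition: it is stated as a special case of Bony--Chemin \cite[Th\'eor\`eme 6.4]{BoCh}, with the flat-metric specialization recalled in \cite[Appendix]{SZ10}. Your sketch does contain the right ingredients of that proof --- truncation to compactly supported $g_R$, a Beals-type characterization of $\Op\tS(m^t)$, and a Duhamel/iterated-commutator expansion with uniform constants --- and you correctly identify the uniform-in-$R$ bookkeeping as the crux, ultimately falling back on citing \cite{BoCh}, which is all the paper does.

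The one place the sketch glosses over a genuine difficulty is the base case of the induction, and as written it is circular. To estimate $\ad_{\ell_1^w}\cdots\ad_{\ell_N^w}e^{tg^w}$ via Duhamel you must already control the intermediate factors $e^{s_jg^w}$ between weighted $L^2$ spaces; you attribute this to ``the induction hypothesis,'' but the $N=0$ statement --- that $e^{tg^w}$ is bounded $L^2(m^{s+t})\to L^2(m^s)$ uniformly in the truncation parameter --- is precisely the nontrivial content of the theorem and does not come from the commutator machinery. In Bony--Chemin and in \cite[Appendix]{SZ10} this is broken by a separate device: either one solves the symbolic transport equation $\partial_t B_t = g\,\sharp\,B_t$ with the ansatz $B_t=e^{tg}b_t$, observing that the top-order term of $g\,\sharp\,(e^{tg}b_t)$ cancels $\partial_t(e^{tg})b_t$ and leaves a remainder bounded in $\tS(1)$, so that $b_t$ stays bounded by Gronwall at the symbol level; or one proves the weighted boundedness for $|t|$ small by a convergent expansion and then iterates via $e^{(t+s)g^w}=e^{tg^w}e^{sg^w}$ using the multiplicativity of order functions. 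Without one of these, the induction has no foothold. Once the base case is secured, the commutator argument you describe is indeed how one finishes.
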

Since $ m^t $ is the order function $ \exp ( t \log m ( x , \xi ) ) $, 
we can say that, on the level of order functions, quantization 
commutes with exponentiation.

\medskip

This proposition will be used after applying the rescaling
\eqref{eq:resc} to the above formalism.
For the class $ \ST $, the order functions are defined by demanding
that for some $N\geq 0$,
\be
\label{eq:ord12}
   m ( \rho ) \leq C \, m ( \rho' ) \Big\langle \frac{ \rho - \rho' }
{ ( h / \th )^{\frac12} } \Big\rangle^N \,.
\ee
The corresponding class is defined by 
$$
a\in \ST(m)\ \Longleftrightarrow \ |\partial^\alpha
a ( \rho) | \leq C_{\alpha } ( \th / h )^{|\alpha|/2}\, m ( \rho )\,.
$$ 
We will consider order functions satisfying
\begin{equation}
\label{eq:ord}  m \in \ST ( m ) \,, \quad \frac{1} m \in 
\ST \left( \frac 1 m \right) \,.
\end{equation}
This is equivalent to the fact that the function 
\be\label{eq:G=logm}
G(x,\xi)\defeq\log m(x,\xi)
\ee
satisfies
\begin{gather}
\label{eq:ord1}
\begin{gathered}
   \frac{ \exp {G ( \rho )} } { \exp {G ( \rho' )} } 
 \leq C \Big \langle \frac{ \rho - \rho' }
{ ( h/\tilde h)^{\frac12} } \Big\rangle^N \,, \quad
\partial^\alpha G = \Oo ( (h/\tilde h )^{-|\alpha|/2} ) \,, 
\ | \alpha| \geq 1 \,. \end{gathered}
\end{gather}
Using the rescaling \eqref{eq:resc},
we see that Proposition \ref{p:bbc} implies that 
\begin{gather}
\label{eq:imG}
\begin{gathered}
\exp ( G^w ( x, h D) ) = B^w ( x, h D) \,, \ \
B \in \ST ( m ) \,, \\
 a \in \ST ( m ) \ \Longleftrightarrow 
\ \Op(a) = e^{G^w ( x, h D ) }\, \Op ( a_0 ) \,, \ \ a_0 \in \ST \,. 
\end{gathered}
\end{gather}
For future reference we also note the following fact: for 
$ A \in \Psi_\delta(\RR^d) $,
\be
\label{eq:FF}  A - e^{- G^w ( x, h D )}\, A \, e^{ G^w ( x, h D ) }= 
h^{ \frac 12 ( 1 - 2 \delta ) } \tilde h^{\frac12}\, a^w_1 ( x, h D ) \,,
\ \ a_1 \in \ST \,. 
\ee
The following lemma will also be useful when applying these weights to Fourier
integral operators.

\begin{lem}\label{l:G-support}
Let $U\Subset T^*\RR^d$, and let $\chi\in\CIc(U)$. Take $G_1,\,G_2$
two weight functions as in \eqref{eq:G=logm}, such that 
\[ (G_1-G_2)\rest_{U}=0 \,.\]
 Then, 
\begin{gather*}
e^{G_1^w(x,hD)}\chi^w = e^{G_2^w(x,hD)}\chi^w +
\Oo_{\cS'\to\cS}(h^\infty)\,,\\
\chi^w\,e^{G_1^w(x,hD)} = \chi^w\,e^{G_2^w(x,hD)} +
\Oo_{\cS'\to\cS}(h^\infty)\,.
\end{gather*}
\end{lem}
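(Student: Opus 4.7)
Set $ H \defeq G_1 - G_2 $, so that by hypothesis $H$ and all its derivatives vanish identically on $U \supset \supp \chi$. The starting point is the Duhamel identity
\begin{equation*}
\big(e^{G_1^w(x,hD)} - e^{G_2^w(x,hD)}\big)\chi^w = \int_0^1 e^{t G_1^w}\, H^w\, e^{(1-t)G_2^w}\,\chi^w\, dt \,,
\end{equation*}
obtained by differentiating $t\mapsto e^{tG_1^w}e^{(1-t)G_2^w}$; the operators $e^{tG_i^w}$ are well defined through the evolution equation of Proposition~\ref{p:bbc}, and map $\cS$ to itself continuously. It therefore suffices to prove that
$H^w\,e^{(1-t)G_2^w}\,\chi^w = \Oo_{\cS'\to\cS}(h^\infty)$ uniformly for $t\in[0,1]$, and the second identity of the lemma will follow by the same argument (or by adjunction, since $(G_i^w)^* = G_i^w$).

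The essential step is a disjoint-support argument in the $\ST$-calculus. Pick an auxiliary cutoff $\tilde\chi\in\CIc(U)$ with $\tilde\chi\equiv 1$ on a neighborhood of $\supp\chi$. Writing $\chi=\tilde\chi\chi$ and expanding $\tilde\chi\#\chi$ via Lemma~\ref{l:Sjnew}, every term is a product $\partial^\alpha\tilde\chi\cdot\partial^\beta\chi$ evaluated at a common point: on $\supp\chi$ all derivatives of positive order of $\tilde\chi$ vanish (so the sum collapses to $\tilde\chi\chi=\chi$), and outside $\supp\chi$ the factor $\partial^\beta\chi$ vanishes. Hence $\tilde\chi^w\chi^w = \chi^w + \Oo_{\cS'\to\cS}(h^\infty)$. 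By Proposition~\ref{p:bbc} together with \eqref{eq:imG}, $e^{(1-t)G_2^w}=B_2(t)^w$ for a symbol $B_2(t)\in\ST(m_2^{1-t})$ (uniform in $t$), and we now consider the triple composition $H\#B_2(t)\#\tilde\chi$. Every term in its iterated semiclassical expansion is, at a common point, a product of derivatives of $H$, of $B_2(t)$, and of $\tilde\chi$: on $\supp\tilde\chi\subset U$ all derivatives of $H$ vanish, while off $\supp\tilde\chi$ the last factor vanishes. To make Lemma~\ref{l:Sjnew} applicable to a genuinely bounded symbol, one first replaces $H$ by $\psi H$ where $\psi\in\CIb$ equals $1$ near $\supp\tilde\chi$ and is supported in a bounded neighborhood thereof (the tails $(1-\psi)H$ only contribute an operator decoupled from $\tilde\chi^w$ by a second disjoint-support estimate). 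Since every term in the expansion is exactly zero and one may take the order of expansion arbitrarily high, the Lemma~\ref{l:Sjnew} remainder bound, combined with the Schwartz kernel of $\tilde\chi^w$, yields $H^w\,e^{(1-t)G_2^w}\,\tilde\chi^w = \Oo_{\cS'\to\cS}(h^\infty)$ uniformly in $t$. Combined with the first reduction this gives the desired estimate, and the lemma follows upon integrating in $t$.

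The main obstacle lies in the two-parameter nature of the $\ST$ calculus: a bare application of Lemma~\ref{l:Sjnew} to symbols in $\ST$ only produces $\Oo(\tilde h^\infty)$ remainders, not $\Oo(h^\infty)$. What saves us is that the disjoint-support phenomenon forces \emph{every} term in the symbolic expansion of $H\#B_2(t)\#\tilde\chi$ to vanish identically (because $H$ is flat on $U$), so the expansion can be truncated at arbitrary order $N$, and the resulting $N$-th remainder decays as any negative power of $h$ once the supports are bounded; the smoothing (mapping $\cS'\to\cS$) character of the final operator is then inherited from the compactly supported cutoff $\tilde\chi$.
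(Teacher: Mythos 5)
Your approach follows the same basic route as the paper's: a Duhamel interpolation between the two exponentials, reducing the lemma to a disjoint-support estimate, with the second identity dispatched by adjunction. The parametrization differs slightly (the paper differentiates $e^{-tG_2^w}e^{tG_1^w}\chi^w$, you differentiate $e^{tG_1^w}e^{(1-t)G_2^w}\chi^w$), but both produce the same kind of formula.

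There is, however, a genuine gap in how you justify the core estimate $H^w\,e^{(1-t)G_2^w}\,\chi^w = \Oo_{\cS'\to\cS}(h^\infty)$. You argue that every term in the symbolic expansion of $H\#B_2(t)\#\tilde\chi$ vanishes (because $H$ is flat on $\supp\tilde\chi$) and that the Lemma~\ref{l:Sjnew} remainder can therefore be pushed to $\Oo(h^\infty)$. This is precisely what Lemma~\ref{l:Sjnew} does \emph{not} give: for $\ST$ symbols, each application of $(ih/2)\sigma(D)$ together with the two factors $(\th/h)^{1/2}$ picked up from the derivatives loses a net factor $\th$, so the $N$-th remainder is $\Oo(\th^{N+1})$, not $\Oo(h^{N+1})$. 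Vanishing of all expansion terms therefore only yields $\Oo(\th^\infty)$, which is useless once $\th$ has been fixed independently of $h$. You flag this issue yourself in the final paragraph, but the stated resolution ("once the supports are bounded") does not produce the missing powers of $h$; the remainder estimate already assumes bounded supports.

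What actually produces $\Oo(h^\infty)$ — used implicitly by the paper through the statement $\WFh(e^{tG_1^w}\chi^w)\subset\supp\chi$ — is a nonstationary-phase argument that operates at a different scale. In the composition integral, the nonoverlapping region forces the phase gradient to be $\gtrsim 1/h$ (with an $h$-independent lower bound from the fixed gap between supports), while each derivative of an $\ST$ symbol only costs $(\th/h)^{1/2}$; each integration by parts thus gains $(h\th)^{1/2}$, not $\th$. Iterating gives $\Oo((h\th)^{N/2}) = \Oo(h^\infty)$. This is the two-parameter analogue of the "disjoint essential supports kill the operator" fact, and it is strictly stronger than the internal $\ST$ remainder estimate of Lemma~\ref{l:Sjnew}. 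To make your argument rigorous you must either invoke the wavefront set property of $e^{(1-t)G_2^w}\tilde\chi^w$ (as the paper does, in one line) or carry out this nonstationary-phase estimate explicitly in place of the Lemma~\ref{l:Sjnew} remainder bound.
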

\begin{proof} We just give the proof of the first identity, the second
  being very similar.
Let us differentiate the operator
$e^{-tG_2^w(x,hD)}\,e^{tG_1^w(x,hD)}\chi^w$:
$$
\frac{d}{dt}e^{-tG_2^w(x,hD)}\,e^{tG_1^w(x,hD)}\,\chi^w =
e^{-tG_2^w(x,hD)}(G_1^w - G_2^w)\,e^{tG_1^w(x,hD)}\,\chi^w
$$
For each $t\in [0,1]$, the operator
$e^{tG_1^w(x,hD)}\,\chi^w\in\PT(m_1^t)$ is bounded on $L^2$, and 
has its semiclassical wavefront set contained in $\supp\chi$. Due to the
support property of $G_1-G_2$, we get
$$
(G_1^w - G_2^w)\,e^{tG_1^w(x,hD)}\,\chi^w=\Oo_{\cS'\to \cS}(h^\infty)\,,
$$
and the same estimate holds once we apply $e^{-tG_2^w(x,hD)}$ on the
left. As a result, 
$$
e^{-G_2^w(x,hD)}\,e^{G_1^w(x,hD)}\,\chi^w -\chi^w=\Oo_{\cS'\to \cS}(h^\infty)\,,
$$
from which the statement follows.
\end{proof}

\subsection{Fourier integral operators}
\label{fio}

We now follow \cite[Chapter 10]{EZB},\cite{SZ8} and review some 
aspects of the theory of semiclassical
Fourier integral operators. Since we will deal with operators in 
$ \PT $, the rescaling to $\tilde h$-semiclassical calculus, as in the proof
of Lemma \ref{l:Beals}, involves dealing with large $ h$-dependent
sets. It is then convenient to have a global point view, which
involves suitable extensions of locally defined canonical transformations
and relations.

\subsubsection{Local symplectomorphisms}
We start with a simple fact 
about symplectomorphisms of open sets in $ T^* \RR^d $.
\begin{prop}
\label{p:cl}
Let $U_0$
and $ U_1 $ be 
open neighbourhoods of $(0,0)$ and
${  \kappa} : U_0 \to U_1 $  a symplectomorphism such  
that $\kappa (0,0) = (0,0)$.
Suppose also that $ U_0 $ is {\em star shaped} with respect to the 
origin, that is, if $ \rho \in U_0 $, then 
$ t \rho  \in U_0 $ for $ 0 \leq t \leq 1 $. 

Then there exists a continuous,
piecewise smooth family
\[
\{\kappa_t\}_{0 \le t \le 1}
\]
of symplectomorphisms ${  \kappa}_t: U_0 \to U_t = \kappa_t(U_0)$, such that
\begin{equation}
\label{eq:pcl}
 \begin{split}
& {\rm (i)} \ \ \kappa_t(0,0) = (0,0)\,, \quad 0 \le t \le 1\,, 
\\
& {\rm (ii)} \ \ \kappa_1 = \kappa \,,  
\quad \kappa_0 = {\rm{id}}_{U_0} \,, \\
& {\rm (iii)} \ \ \frac {d}{dt} {  \kappa}_t = ({  \kappa}_t)_* H_ 
{q_t} \,,
\qquad 0 \leq t \leq 1\,, 
\end{split}
\end{equation}
where $ \{ q_t \}_{0 \le t \le 1}$ is a continuous, piecewise smooth
family of $ \CI (U_0 ) $ functions.
\end{prop}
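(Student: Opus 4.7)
The strategy is to split $\kappa_t$ into two successive pieces: a \emph{linear stage} on $[0,\tfrac12]$ deforming $\Id$ to the linear part $A\defeq d\kappa(0,0)\in Sp(2d,\RR)$, followed by a \emph{nonlinear stage} on $[\tfrac12,1]$ deforming $A$ to $\kappa$ via a scaling trick that crucially exploits the star-shaped hypothesis on $U_0$. The unifying principle is that, for a smooth family of symplectomorphisms $\kappa_s:U_0\to\kappa_s(U_0)$ fixing the origin, the generator $Y_s\defeq (d\kappa_s)^{-1}\tfrac{d}{ds}\kappa_s$ is automatically a symplectic vector field on $U_0$ vanishing at the origin; since $U_0$ is contractible, the Poincar\'e lemma produces a primitive $q_s\in\CI(U_0)$ of the closed $1$-form $i_{Y_s}\omega$, and $Y_s=H_{q_s}$ is precisely equivalent to $\tfrac{d}{ds}\kappa_s=(\kappa_s)_*H_{q_s}$.

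For the linear stage, the path connectedness of $Sp(2d,\RR)$ provides a piecewise smooth path $[0,\tfrac12]\ni t\mapsto A_t\in Sp(2d,\RR)$ with $A_0=\Id$, $A_{1/2}=A$. The element $A_t^{-1}\dot A_t\in\mathfrak{sp}(2d,\RR)$ has the form $JS_t$ with $S_t$ symmetric, so the quadratic Hamiltonian $q_t^{(1)}(\rho)\defeq\tfrac12\langle S_t\rho,\rho\rangle$ satisfies $H_{q_t^{(1)}}(\rho)=JS_t\rho=A_t^{-1}\dot A_t\,\rho$ and hence $\tfrac{d}{dt}A_t=(A_t)_*H_{q_t^{(1)}}$. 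For the nonlinear stage, set $\widetilde\kappa\defeq A^{-1}\circ\kappa$; this is a symplectomorphism from $U_0$ onto $A^{-1}(U_1)$, fixing the origin and tangent to the identity there. For $\rho\in U_0$ and $s\in(0,1]$ define
\[
\widetilde\kappa_s(\rho)\defeq\frac{1}{s}\,\widetilde\kappa(s\rho)\,.
\]
The star-shaped hypothesis guarantees $s\rho\in U_0$; the Taylor expansion $\widetilde\kappa(\sigma)=\sigma+\Oo(|\sigma|^2)$ then lets $\widetilde\kappa_s$ extend smoothly to $s=0$ with $\widetilde\kappa_0=\Id_{U_0}$. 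Since the dilation $\rho\mapsto s\rho$ pulls $\omega$ back to $s^2\omega$, the conjugation defining $\widetilde\kappa_s$ preserves $\omega$, so each $\widetilde\kappa_s$ is symplectic, and the general principle above furnishes $q_s^{(2)}\in\CI(U_0)$ with $\tfrac{d}{ds}\widetilde\kappa_s=(\widetilde\kappa_s)_*H_{q_s^{(2)}}$.

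Finally I concatenate: for $t\in[0,\tfrac12]$ set $\kappa_t\defeq A_{2t}$ and $q_t\defeq 2q_{2t}^{(1)}$; for $t\in[\tfrac12,1]$ set $\kappa_t\defeq A\circ\widetilde\kappa_{2t-1}$ and $q_t\defeq 2q_{2t-1}^{(2)}$. The two pieces match at $t=\tfrac12$ with $\kappa_{1/2}=A$, and using the chain rule together with the linearity of $A$ one checks that the defining ODE $\tfrac{d}{dt}\kappa_t=(\kappa_t)_*H_{q_t}$ continues to hold on the second interval. I expect the main technical point to lie in the nonlinear stage: verifying that $(s,\rho)\mapsto\widetilde\kappa_s(\rho)$ is genuinely smooth down to $s=0$ --- which is precisely what forces the preliminary linear reduction to arrange $d\widetilde\kappa(0)=\Id$ --- and checking that the Poincar\'e primitive $q_s^{(2)}$ can be chosen jointly smoothly in $(s,\rho)$ on $U_0$.
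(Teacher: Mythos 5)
Your proof is correct and follows essentially the same route as the paper's sketch: deform $\kappa$ to its linearization $d\kappa(0,0)$ via the dilation $\rho\mapsto\kappa(t\rho)/t$ (the step using the star-shaped hypothesis), and deform the linear part to the identity using path-connectedness of $Sp(2d,\RR)$. The only cosmetic difference is that you pre-compose with $A^{-1}=d\kappa(0,0)^{-1}$ so the scaling stage lands exactly on $\Id$, and you spell out the Poincar\'e-lemma construction of the generating Hamiltonians $q_t$, which the paper leaves implicit.
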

The last condition means that $\kappa_t$ is generated by the (time dependent)
Hamiltonian vector field $H_{q_t}$ associated with the Hamiltonian
$q_t$\footnote{This generation can also be expressed through the more
usual form $\frac{d\kappa_t}{dt}=H_{q'_t}$, where $q'_t=q_t\circ \kappa_t^{-1}$.}.

\medskip
\noindent
{\em Sketch of the proof:} Since the group of linear symplectic
transformations is connected, we only need to deform $\kappa$ to a linear
transformation. That is done by taking
$$ 
\tilde \kappa_t ( \rho ) \defeq \kappa ( t \rho ) / t \,,\quad t\in[0,1]\,,
$$
which requires the condition that
$ U_0 $ is star shaped. It satisfies $\tilde\kappa_0 = d\kappa(0,0)$.
\stopthm

As a consequence we have the possibility to globalize a locally defined symplectomorphism:
\begin{prop}
\label{p:cl1}
Let $U_0$ and $ U_1 $ be
open precompact sets in $ T^* \RR^d $ and let
${  \kappa} : U_0 \to U_1 $ be a symplectomorphism which 
extends to $ \widetilde U_0 \Supset U_0 $, an open 
star shaped set. Then $ \kappa $ extends to a symplectomorphism
\[ 
\tilde \kappa \; : \; T^* \RR^d \rightarrow T^* \RR^d \,, 
\]
which is equal to the identity outside of a compact
set. $\tilde\kappa$ 
can be deformed to the identity with $ q_t $'s
in (iii) of \eqref{eq:pcl} supported in a fixed compact set.
\end{prop}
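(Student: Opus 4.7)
The strategy is to apply Proposition \ref{p:cl} to $\kappa$ on the star shaped set $\widetilde U_0$, obtain a deformation to the identity generated by a time dependent Hamiltonian, and then cut this Hamiltonian off to produce a globally defined flow that is the identity outside a compact set. By composing $\kappa$ with appropriate affine translations in the source and target --- global symplectomorphisms generated by linear Hamiltonians that can themselves be localized (see below) --- we may assume $\widetilde U_0$ is star shaped with respect to the origin and $\kappa(0)=0$. Proposition \ref{p:cl} applied to the extension $\kappa:\widetilde U_0\to \widetilde U_1$ then furnishes a continuous piecewise smooth family $\{\kappa_t\}_{t\in[0,1]}$ of symplectomorphisms $\widetilde U_0\to \widetilde U_t\defeq \kappa_t(\widetilde U_0)$, with $\kappa_0=\mathrm{id}$, $\kappa_1=\kappa$, generated at the target by a time dependent Hamiltonian $Q_t\in \CI(\widetilde U_t)$.

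The key step is to cut $Q_t$ off to a compactly supported family on $T^*\RR^d$. The tube of trajectories
\[
 \cT_* \defeq \{(t,\kappa_t(\rho)):\, t\in[0,1],\ \rho\in\overline{U_0}\}
\]
is a compact subset of $[0,1]\times T^*\RR^d$ contained in the open set $\Omega\defeq\{(t,\rho):\rho\in \widetilde U_t\}$ on which $Q_t$ is defined. A partition of unity argument in time produces a cutoff $\chi\in \CIc([0,1]\times T^*\RR^d)$ with $\supp\chi\subset \Omega$, $\chi\equiv 1$ on a neighborhood of $\cT_*$, and $\bigcup_{t\in[0,1]}\supp\chi(t,\cdot)\subset K$ for some fixed compact $K\subset T^*\RR^d$. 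Setting $\tilde Q_t\defeq \chi(t,\cdot)\,Q_t$, extended by zero outside $\widetilde U_t$, yields a piecewise smooth, uniformly compactly supported Hamiltonian on $T^*\RR^d$.

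Let $\tilde\kappa_t$ denote the global time dependent Hamiltonian flow of $\tilde Q_t$. Since $\tilde Q_t$ vanishes outside $K$, $\tilde\kappa_t$ is the identity there for every $t$. Since $\tilde Q_t=Q_t$ on a neighborhood of $\cT_*$, uniqueness of the Hamiltonian ODE implies that, for each $\rho\in\overline{U_0}$, the trajectory $t\mapsto\tilde\kappa_t(\rho)$ coincides with $t\mapsto \kappa_t(\rho)$; in particular $\tilde\kappa_1\rest_{U_0}=\kappa$. The pulled back source Hamiltonian $\tilde q_t\defeq \tilde Q_t\circ \tilde\kappa_t$ has support in $\tilde\kappa_t^{-1}(K)$, which stays in a fixed compact set by continuity on $[0,1]$, verifying condition (iii) of \eqref{eq:pcl}. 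The initial translations used in the reduction are handled by the same localization procedure: cutting off their linear generating Hamiltonians $\omega(v,\cdot)$ outside a large compact set produces symplectomorphisms that agree with the translations on $\overline{U_0}$ (and its image) but are the identity far away. Combining these with $\tilde\kappa_t$ yields the desired global extension of the original $\kappa$.

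The main technical point is the construction of $\chi$ over the time dependent open set $\Omega$: each point of the compact set $\cT_*$ admits an open product neighborhood $I\times V\subset \Omega$ with $\overline V\Subset \widetilde U_t$ for all $t\in I$; by extracting a finite subcover and combining a partition of unity in $t$ with local cutoffs in $T^*\RR^d$, one obtains the required $\chi$.
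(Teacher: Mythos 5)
Your proof is correct, and it is more careful than the paper's own argument in a way that I believe is necessary. The paper cuts off the source-side Hamiltonian $\tilde q_t$ with a single time-independent cutoff $\chi\in\CIc(\widetilde U_0)$, $\chi\equiv 1$ on $U_0$, and asserts that the flow of $q_t=\chi\tilde q_t$ agrees with $\tilde\kappa_t^0$ on $U_0$. As written this does not follow: converting to the standard ODE $\dot\lambda_t=-H_{q_t}\circ\lambda_t$, $\lambda_0=\mathrm{id}$, for the inverse family $\lambda_t=\kappa_t^{-1}$, the requirement $\kappa_1\rest_{U_0}=\kappa$ forces the characteristic arriving at $\rho\in U_0$ at time $1$ to emanate from $\kappa(\rho)\in U_1$ and to lie entirely where $\chi\equiv 1$, but $U_1$ need not even meet $\widetilde U_0$. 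More bluntly, since $q_t$ vanishes off $\widetilde U_0$, the cut-off flow fixes $\complement\,\widetilde U_0$ pointwise, hence maps $\widetilde U_0$ to itself, which is incompatible with $\kappa_1\rest_{U_0}=\kappa$ whenever $\kappa(U_0)\not\subset\widetilde U_0$ (for instance $\kappa(x,\xi)=(x+10\xi^2,\xi)$, $U_0=B(0,1)$, $\widetilde U_0=B(0,2)$).

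Your time-dependent cutoff $\chi(t,\cdot)$ of the target-side Hamiltonian $Q_t=\tilde q_t\circ\kappa_t^{-1}$, equal to $1$ near the trajectory tube $\cT_*$ and supported in the time-dependent domain $\Omega$, is exactly the right fix: for $\rho\in\overline{U_0}$ the curve $t\mapsto\kappa_t(\rho)$ stays where $\tilde Q_t=Q_t$, so ODE uniqueness for $\dot{\tilde\kappa}_t=H_{\tilde Q_t}\circ\tilde\kappa_t$ gives $\tilde\kappa_1\rest_{U_0}=\kappa$, while $\tilde Q_t$ supported in a fixed compact $K$ forces $\tilde\kappa_t(K)=K$ (since $\tilde\kappa_t$ fixes $\complement K$), so the pulled-back source Hamiltonian $\tilde q_t=\tilde Q_t\circ\tilde\kappa_t$ is supported in $K$, as required by (iii) of \eqref{eq:pcl}. (This set-theoretic identity $\tilde\kappa_t^{-1}(K)=K$ is cleaner than the appeal to continuity on $[0,1]$, though both work.) The reduction to $\kappa(0)=0$ via localized translations, and the partition-of-unity construction of $\chi$ over the compact tube, are both handled correctly.
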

\begin{proof}
Let $ \tilde \kappa^0 $ be the extension of 
$ \kappa $ to $ \widetilde U_0 $. 
By Proposition \ref{p:cl} we can deform $ \tilde \kappa^0 $ to the identity,
with $ ( d/dt ) \tilde \kappa^0_t = (\tilde \kappa_t^0)_* H_{\tilde
  q_t} $,  $\tilde q_t\in\CI(\tilde U_0)$.
If we replace $ \tilde q_t $ by $ q_t = \chi \tilde q_t $ where 
$ \chi \in \CIc( \widetilde U_0 ) $, 
and $ \chi = 1 $ in % a star shaped neighbourhood of {\bf CAUTION}
$ U_0 $, the family of symplectomorphisms
of $ T^* \RR^d $ generated by $(q_t)_{0\leq t\leq 1}$ satisfies
\[  \kappa_t \rest_{U_0} = \tilde \kappa^0_t \rest_{U_0} \,, \ \ 
\kappa_0 = {\rm{id}} \,, \ \ \kappa_1 \rest_{U_0 } = \kappa \,, 
\ \ \tilde \kappa_t \rest_{\complement \, \widetilde U_0 } = 
{\rm{id}}_{\complement \, \widetilde U_0 } \,. \]
Then $ \tilde \kappa = 
 \kappa_1 $ provides the desired extension of $ \kappa $,
and the family $ \kappa_t $ the deformation with compactly supported
$ q_t$'s.
\end{proof}

The proposition means that, as long as we have some geometric freedom
in extending our symplectomorphisms, we can consider local symplectomorphisms
as restrictions of global ones which are isotopic to the identity 
with compactly supported Hamiltonians. We denote the latter class by 
$ \mathcal K $:
\[ {\mathcal K} \defeq \{ \kappa \; : \; T^* \RR^d \rightarrow T^* \RR^d \,, 
  \quad \text{\eqref{eq:pcl} holds with 
compactly supported $ q_t $.} \} \,. \]
We note that, except for $ d= 1, 2 $, it is not known whether every 
$ \kappa $ which is equal to the identity outside a compact set 
is in $ \cK $. 

For $ \kappa \in \cK $ we now define a class of (semiclassical) Fourier
integral operators associated with the graph of $ \kappa $. It
fits in the Heisenberg picture of quantum mechanics -- see 
\cite[\S 8.1]{Met} for a microlocal version and 
\cite[\S \S 10.1,10.2]{EZB} for a detailed presentation on the
semiclassical setting.

\medskip

\begin{defn}{\em Let $\kappa\in\cK$ as above, and let
$ 0 \leq \delta < 1/2 $. The operator $U:\cS(\RR^d)\to \cS'(\RR^d)$
belongs to the class of $h$-Fourier integral operators
\[  I_\delta ( \RR^d \times \RR^d , C' ) \,, \ \ 
C' = \{ ( x, \xi;  y , - \eta) \; : \; ( x, \xi) = \kappa ( y , \eta )   \}
\,, \]
if and only if there exists $ U_0 \in \Psi_{\delta}( \RR^d ) $, 
such that $ U=  U(1) $, where
\begin{gather}
\label{eq:u0}
\begin{gathered}
h D_t U ( t) +  U ( t) Q (t) = 0 \,, \quad Q ( t ) = \Op ( q_ t ) \,,  \quad
U( 0 ) = U_0 \,.
\end{gathered}
\end{gather}
Here the time dependent Hamiltonian $ q_t \in \CIc ( T^* \RR^d ) $
satisfies {\rm (iii)} of \eqref{eq:pcl}.}
\end{defn}
We will write
\[  
I_{ 0+ } ( \RR^d \times \RR^d , C' ) \defeq
\bigcap_{\delta > 0 }  I_\delta ( \RR^d \times \RR^d , C' )  \,. 
\]
For $ A \in I_{0+} ( \RR^d \times \RR^d , C' ) $ we define its
$h$-wavefront set
\be\label{e:WFh}
\WFh ( A) \defeq \{(x,\xi;y,-\eta) \in C' \; : \; (y,\eta) \in  
\WFh ( U_0 )  \}\subset T^*\RR^d\times T^*\RR^d \,, 
\ee
where $ \WFh ( U_0 ) $ is defined in \eqref{eq:WFA}.

We recall Egorov's theorem in this setting --- 
see \cite[Appendix a]{He-Sj1} or \cite[Theorem 10.10]{EZB}.

\begin{prop}
\label{p:Egorov}
Suppose that $ U = U ( 1 ) $ for  \eqref{eq:u0} with $ U_0 = I $, and
that $ A = \Op ( a ) $, $ a \in S ( T^*\RR^d ) $. 
Then 
\[   U^{-1} A U = B \,, \ \ B = \Op ( b) \,, \ \ b - \kappa^*a \in h^2 S ( T^*\RR^d ) \,. \]
An analogous result holds for $ a \in S^{m,k} ( T^* \RR^d ) $.
%with $ \sigma_{m,k} ( B ) = \kappa^* \sigma_{m,k} ( A ) $. 

More generally, if $ T \in I_\delta ( \RR^d \times \RR^d , C' ) $ and
$A$ as above, then 
\[ 
A T = T B + T_1 B_1 \,, 
\]
where $ B  = \Op ( b ) $ is as above, 
% with $ h^{2(1-2\delta)} S_\delta ( 1 ) $ in 
%place of $ h^2 S ( 1 )$,  
$ T_1 \in I_\delta ( \RR^d \times \RR^d , C' ) $,
and $ B_1 \in h^{1-2\delta}\Psi_\delta ( \RR^d ) $.
%or $ S_{ \delta}^{m-1,k-1-2\delta} $ respectively. 
\end{prop}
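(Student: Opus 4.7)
For the first assertion, set $A(t) \defeq U(t)^{-1} A\, U(t)$, $t \in [0,1]$; since $U(0) = I$ and $Q(t) = \Op(q_t)$ has real (compactly supported) Weyl symbol, $U(t)$ is unitary on $L^2(\RR^d)$, and \eqref{eq:u0} yields the Heisenberg equation $h\partial_t A(t) = i[Q(t), A(t)]$ with $A(0) = A$. The Moyal expansion of $\frac{i}{h}[Q(t), \Op(b)]$ for $b \in S^{m,k}(T^*\RR^d)$ takes the form
\[
\Op\bigl(\{q_t, b\} + h^2\, c_2(q_t, b) + h^4\, c_4(q_t, b) + \cdots\bigr)\,,
\]
only even powers of $h$ surviving because the odd-order bidifferential operators in $q_t\,\sharp\,b - b\,\sharp\,q_t$ are antisymmetric in $(q_t, b)$. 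I would then construct a formal solution $b(t) \sim b_0(t) + h^2 b_2(t) + \cdots$ of the induced symbolic Heisenberg equation: the leading term obeys the Liouville transport $\partial_t b_0 = \{q_t, b_0\}$, $b_0(0) = a$, so that $b_0(t) = \kappa_t^* a$ with $\kappa_t$ the family from Proposition \ref{p:cl}, and each $b_{2j}$ ($j \geq 1$) solves an inhomogeneous transport equation whose source is a finite polynomial in derivatives of $q_t$ and of $b_0, \ldots, b_{2j-2}$. Borel summation produces $b(t) \in S^{m,k}(T^*\RR^d)$ with $b(t) - \kappa_t^* a \in h^2 S^{m,k}$.

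To match $A(t)$ with $\Op(b(t))$, I truncate at order $h^{2N}$ to obtain $b^{(N)}(t)$ satisfying
\[
h\partial_t \Op(b^{(N)}(t)) - i[Q(t), \Op(b^{(N)}(t))] = h^{2N+2} R_N(t)\,, \qquad R_N(t) \in \Psi(\RR^d)\,,
\]
uniformly in $t\in[0,1]$. Duhamel's formula, combined with the uniform boundedness of $U(t)$ on $L^2$ (and on the semiclassical Sobolev spaces \eqref{eq:ssn} when $a \in S^{m,k}$), yields $A(t) - \Op(b^{(N)}(t)) = \Oo(h^{2N+2})$ in operator norm. Letting $N \to \infty$ and using that $b(t)$ is the Borel sum of the $b^{(N)}$ gives $A(1) - \Op(b(1)) = \Oo_{L^2 \to L^2}(h^\infty)$; this error can be absorbed into the symbol $b(1)$ via Beals's characterization (Lemma \ref{l:Beals}), so $B \defeq \Op(b(1))$ satisfies the stated estimate.

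For the second assertion, decompose $T = U_0\, V(1)$, where $V(t)$ solves \eqref{eq:u0} with $V(0) = I$ and $U_0 \in \Psi_\delta(\RR^d)$. The standard commutator estimate in the $\Psi_\delta$ calculus gives $A U_0 = U_0 A + h^{1-2\delta} R_0$ with $R_0 \in \Psi_\delta$, while the first part applied to $V(1)$ provides $A V(1) = V(1) B + \Oo_{L^2\to L^2}(h^\infty)$ with $B = \Op(b)$. Therefore
\[
A T = U_0\, A\, V(1) + h^{1-2\delta} R_0\, V(1) = T B + h^{1-2\delta}\, R_0\, V(1) + \Oo(h^\infty)\,.
\]
Applying the Egorov scheme of part one to $R_0 \in \Psi_\delta$ conjugated by $V(1)$ (with errors $\Oo(h^{2-2\delta})$, still subleading for $\delta < 1/2$) yields $R_0\, V(1) = V(1)\, \tilde R_0$ with $\tilde R_0 \in \Psi_\delta$. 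Setting $T_1 \defeq V(1) \in I_\delta(\RR^d \times \RR^d, C')$ and $B_1 \defeq h^{1-2\delta} \tilde R_0 \in h^{1-2\delta}\Psi_\delta(\RR^d)$ completes the proof.

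\textbf{Main obstacle.} The most delicate point is handling the $\xi$-growth when $a \in S^{m,k}$: although $\kappa_t$ differs from the identity only on a compact set, the inhomogeneous transport equations and the Duhamel estimates must be carried out with uniform $(m,k)$-seminorm control, which requires replacing the $L^2$ bound on $U(t)$ by uniform bounds on the semiclassical Sobolev spaces \eqref{eq:ssn}. These bounds follow from standard $\Psi$-calculus since $q_t$ is compactly supported, but tracking them through the iterative symbol construction is the technical crux.
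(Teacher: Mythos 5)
The paper cites Proposition \ref{p:Egorov} as known (from \cite[Appendix a]{He-Sj1} and \cite[Theorem 10.10]{EZB}) and gives no proof, so there is no in-paper argument to compare against directly; the closest comparisons are the exotic-class analogues, Lemma \ref{l:p1} and Proposition \ref{p:fiom}. Your proof is correct and standard, but it is worth noting how the route differs from the way the paper handles the $\ST$ versions. For the first part, Lemma \ref{l:p1} cannot Borel-sum because $\ST$ is only asymptotic in the second parameter $\tilde h$; the paper therefore applies Beals's lemma first, to show directly that the conjugated operator stays in $\PT$ via iterated commutators with deformed linear weights, and only afterwards computes the leading term by a single Duhamel integral (\eqref{B-int}). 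Your Moyal-hierarchy/Borel-sum/Duhamel argument is the classical alternative and works for $S^{m,k}$; it buys a full asymptotic expansion of the conjugated symbol, whereas the paper's scheme is structurally cleaner when such an expansion is unavailable. For the second part, you commute $A$ past the left factor $U_0$ in the defining decomposition $T = U_0 V(1)$, whereas the proof of Proposition \ref{p:fiom} first invokes Proposition \ref{p:Egorov} itself to rewrite $T = U\,U_1$ with the unitary on the left and $U_1 \in \Psi_\delta$ on the right, after which the $U$-conjugation is immediate; your route is preferable here since it does not presuppose the very statement being proved.

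Two small remarks on your write-up. The commutator $[A,U_0]$ of an $\Psi_0$ operator with a $\Psi_\delta$ operator actually lies in $h^{1-\delta}\Psi_\delta$, which is stronger than the $h^{1-2\delta}$ you invoke; both suffice for the stated inclusion $B_1 \in h^{1-2\delta}\Psi_\delta$, so this is not an error, just suboptimal. More importantly, the exact-equality form $AT = TB + T_1 B_1$ requires that the $\Oo_{L^2\to L^2}(h^\infty)$ remainder from the Egorov step be absorbed somewhere, and it must go into $T_1 B_1$ rather than $TB$, since $T$ may fail to be elliptic (this is the point of the remark following the proposition); this works because $T_1 = V(1)$ is unitary, so $T_1 B_1 + \Oo(h^\infty) = T_1\bigl(B_1 + V(1)^{-1}\Oo(h^\infty)\bigr)$ with the bracket still in $h^{1-2\delta}\Psi_\delta$. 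You should say this explicitly.
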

%\begin{proof} {\bf To be inserted for completeness?}
%\end{proof}

\noindent
{\bf Remark.}
The additional term $ T_1 B_1 $ is necessary, due to the fact that $T$
may not be elliptic.

\medskip

The main result of this subsection is an extension of this Egorov
property to operators $A$ in more exotic symbol classes.
\begin{prop}
\label{p:fiom}
Suppose that $ \kappa \in \cK $ and $ T \in I_\delta 
( \RR^d \times \RR^d ; C' ) $, where $ C $ is the graph of $ \kappa$. 
Take $ A = e^{G^w ( x, h D )} \Op ( a_0 ) $, where $  a_0 \in \ST (1) $ and $ G $ 
satisfies \eqref{eq:ord1}. Then 
\begin{gather}
\label{eq:fiom}
\begin{gathered}
A T = T B + T_1 B_1 \,, \\ 
B = e^{ ( \kappa^* G)^w ( x, hD ) } \Op (b_0 ) \,, \ \ b_0 - \kappa^* a_0 
\in h^{\frac12 } %( 1 - 3 \delta ) } 
\tilde h ^{\frac32} 
\ST ( 1 ) \,, \\
B_1 = h^{\frac 12 ( 1 - 2 \delta ) } \tilde h^{\frac 12} 
 \Op ( b_1 ) \,, \ \ b_1 \in \ST ( e^{\kappa^* G} ) \,, \ \
T_1 \in I_\delta ( \RR^d \times \RR^d , C' ) \,. 
\end{gathered}
\end{gather}
\end{prop}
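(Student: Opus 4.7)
The plan is to mimic the proof of Proposition~\ref{p:Egorov} (itself a Heisenberg-picture computation), but to carry it out in the exotic class $\PT$ with order functions, using Proposition~\ref{p:bbc} for the weight and Lemma~\ref{l:Sjnew} for the remainder estimates.

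First I would rewrite $A=\Op(a)$ with $a\in\ST(e^{G})$ and $a\equiv e^{G}a_{0}$ modulo $h^{1/2}\tilde h^{1/2}\ST(e^{G})$, by \eqref{eq:imG} and Proposition~\ref{p:bbc}. Using Proposition~\ref{p:cl1}, I would also factor $T=T_{0}V(1)$, where $T_{0}\in\Psi_{\delta}(\RR^{d})$ and $V(s)$ is the canonical FIO solving $hD_{s}V(s)+V(s)Q(s)=0$, $V(0)=I$, with $Q(s)=\Op(q_{s})$ the Hamiltonian generating $\kappa_{s}$. The composition identity $AT=T_{0}(AV(1))+[A,T_{0}]V(1)$, together with Lemma~\ref{l:Sjnew} applied to compute $[A,T_{0}]$ in the exotic calculus, then reduces the problem to proving \eqref{eq:fiom} with $V(1)$ in place of $T$.

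For that reduced problem, I would adopt the ansatz $B(s)=e^{(\kappa_{s}^{*}G)^{w}}\Op(b_{0}(s))$ with $b_{0}(0)=a_{0}$ and set $R(s)\defeq AV(s)-V(s)B(s)$, so $R(0)=0$. A direct computation yields
\begin{equation*}
hD_{s}R(s)+R(s)Q(s)=V(s)\bigl([Q(s),B(s)]-hD_{s}B(s)\bigr)\defeq V(s)E(s).
\end{equation*}
Expanding $[Q(s),B(s)]$ via Proposition~\ref{p:bbc} (to extract the weight from the exponential factor) and Lemma~\ref{l:Sjnew} (for the commutator with the bounded factor $\Op(b_{0}(s))$), and comparing with $hD_{s}B(s)$ --- whose leading contribution comes from the $s$-derivatives of $\kappa_{s}^{*}G$ and $b_{0}(s)$ --- one arrives at a transport equation along $\kappa_{s}$ whose leading solution is $b_{0}(s)=\kappa_{s}^{*}a_{0}$. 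Iteratively correcting $b_{0}(s)$ in powers of the small exotic parameter (which, after the rescaling \eqref{eq:resc} of Lemma~\ref{l:Beals}, is $\tilde h$) produces $b_{0}(s)\in\ST(1)$ with $b_{0}(s)-\kappa_{s}^{*}a_{0}\in h^{1/2}\tilde h^{3/2}\ST(1)$ and $E(s)=h^{(1-2\delta)/2}\tilde h^{1/2}\Op(e_{s})$ for some $e_{s}\in\ST(e^{\kappa_{s}^{*}G})$. Duhamel's formula applied to the ODE for $R(s)$, combined with the observation that $V(1)V(s)^{-1}\in I_{\delta}$ quantizes $\kappa\circ\kappa_{s}^{-1}$, then yields $R(1)=V_{1}B_{1}$ of the stated form.

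The main technical obstacle is the precise bookkeeping of orders within the two-parameter class: I must verify that each step of the iterative construction of $b_{0}(s)$ produces a correction of size exactly $\tilde h$ (and not merely $(h\tilde h)^{1/2}$) relative to the previous one, so that the final symbol error matches the sharp $h^{1/2}\tilde h^{3/2}$ bound in \eqref{eq:fiom}. This relies on the explicit remainder estimate \eqref{eq:Sjnew1}, on Proposition~\ref{p:bbc} to keep $e^{(\kappa_{s}^{*}G)^{w}}$ within the appropriate order-function class along the whole homotopy, and on Lemma~\ref{l:G-support} to localize the weights near the canonical relation so that $\kappa_{s}^{*}G$ is a well-defined exotic weight throughout the deformation.
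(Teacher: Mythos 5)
Your strategy is recognizably the same in spirit as the paper's: factor $T$ into a pseudodifferential part composed with a "pure" Hamiltonian-flow FIO, then run an Egorov-type argument to move $A$ past that FIO. But the organization differs in a way that matters technically. The paper writes $T=UU_1$ with $U=V(1)$ on the left and $U_1\in\Psi_\delta$ on the right, and then exploits the exact algebraic identity $U^{-1}e^{G^w}U=\exp(U^{-1}G^wU)$ to reduce the weight side to Egorov for the exponent $G^w$ itself, which is the content of Lemma~\ref{l:fioG}; the symbol side is Lemma~\ref{l:p1}; the commutator with $U_1$ then goes by \eqref{eq:FF}. You instead keep $T=T_0V(1)$ and feed the exponential weight directly into the transport ansatz $B(s)=e^{(\kappa_s^*G)^w}\Op(b_0(s))$. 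That ansatz requires differentiating $e^{(\kappa_s^*G)^w}$ in $s$, and since the operators $(\kappa_s^*G)^w$ do not commute for different $s$, one has $\partial_s e^{(\kappa_s^*G)^w}\neq(\partial_s\kappa_s^*G)^w e^{(\kappa_s^*G)^w}$; you would need the Duhamel formula $\partial_s e^{G_s^w}=\int_0^1 e^{\tau G_s^w}(\partial_sG_s)^w e^{(1-\tau)G_s^w}\,d\tau$ and then estimate the resulting commutators in the order-function calculus. This is doable, but it is precisely the step the paper's conjugation identity makes unnecessary, and you have not addressed it.

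A second, smaller issue is the bookkeeping of $\delta$. The transport remainder $E(s)=[Q(s),B(s)]-hD_sB(s)$ involves only $Q(s)\in\Psi$ and your $B(s)$, neither of which is in $\Psi_\delta$, so $E(s)$ cannot carry a $\delta$-dependent power of $h$; the gain you should find there is $h^{3/2}\tilde h^{3/2}$ (Weyl symmetry kills the would-be $\tilde h^0$ correction, exactly as in the remark after Lemma~\ref{l:p1}), which after Duhamel integration contributes $h^{1/2}\tilde h^{3/2}$ and gets absorbed into $b_0-\kappa^*a_0$. The $\delta$-dependent power $h^{(1-2\delta)/2}\tilde h^{1/2}$ should come exclusively from the commutator $[A,T_0]$ with the $\Psi_\delta$ factor, as in \eqref{eq:FF}; in your writeup these two error sources are conflated. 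Finally, $[A,T_0]V(1)$ is not yet in the form $T_1B_1$: you still need to move the pseudodifferential factor past $V(1)$, for which you again need the Egorov statements (both Lemma~\ref{l:p1} and Lemma~\ref{l:fioG}) that you are implicitly trying to reprove.
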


The proof is based on two lemmas. The first one is essentially 
Proposition \ref{p:fiom} with $ G = 1$, $ \delta = 0 $, and 
$ T $ invertible:
\begin{lem}
\label{l:p1}
Suppose $ U = U ( 1 ) $, where $ U(t) $ solves \eqref{eq:u0} with 
$ U(0)= I $. 
Then for $ A = \Op ( a ) $, $ a \in \ST ( 1 ) $, 
\begin{equation}
\label{eq:lp1}
U^{-1} A U = B \,,  \ \ B = \Op ( b ) \,, \ \ b - \kappa^* a \in h^{\frac12} 
\tilde h^{\frac32}  
\ST ( 1 ) \,. \end{equation}
\end{lem}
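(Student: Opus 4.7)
The approach is the classical Egorov scheme, namely approximately diagonalize the Heisenberg equation by transport along the Hamiltonian flow, with the twist that the symbol $a$ lives in the exotic class $\ST(1)$, so that error terms have to be tracked in the refined scale $(h,\tilde h)$.

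\emph{Step 1: transport equation.} Since $U(t)$ is unitary ($q_t$ is real-valued and compactly supported), $V(t):=U(t)^{-1}AU(t)$ is defined and satisfies the Heisenberg equation $hD_tV(t)=[Q(t),V(t)]$, $V(0)=A$. I would build an approximate solution by pullback: set $a_t:=\kappa_t^*a$, where $\{\kappa_t\}$ is the family from \eqref{eq:pcl} and $\kappa_1=\kappa$. Because each $\kappa_t$ is a diffeomorphism of $T^*\RR^d$ equal to the identity outside a fixed compact set and with uniformly bounded derivatives of all orders, pullback preserves the class $\ST(1)$, so $a_t\in\ST(1)$ uniformly in $t$, and $\partial_t a_t=-\{q_t,a_t\}$.

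\emph{Step 2: Moyal remainder.} Set $\tilde V(t):=\Op(a_t)$ and compute the symbol of $hD_t\tilde V(t)-[Q(t),\tilde V(t)]$ via Lemma \ref{l:Sjnew} applied to $q_t\sharp a_t-a_t\sharp q_t$. Even orders in the Moyal expansion cancel by antisymmetry; the order-$h$ term is the Poisson bracket $\frac h i\{q_t,a_t\}$, which exactly matches $hD_ta_t=\frac h i\partial_t a_t$ by the transport equation. The next surviving contribution is the cubic term, whose size is controlled by $h^{3}$ times derivatives of total order $3$ of $q_t$ and $a_t$. Since $q_t\in S(1)$ has bounded derivatives while $\partial^\alpha a_t=\Oo((\tilde h/h)^{|\alpha|/2})$, the worst case ($|\alpha|=3$ on $a_t$) yields $h^{3}(\tilde h/h)^{3/2}=h^{3/2}\tilde h^{3/2}$. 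More carefully, the explicit remainder bound \eqref{eq:Sjnew1} of Lemma \ref{l:Sjnew} with $N=1$ gives
\[
hD_t\tilde V(t)-[Q(t),\tilde V(t)]=\Op(r_t),\qquad r_t\in h^{3/2}\tilde h^{3/2}\,\ST(1),
\]
uniformly in $t\in[0,1]$.

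\emph{Step 3: Duhamel and Beals.} The difference $W(t):=V(t)-\tilde V(t)$ solves $hD_tW=[Q,W]-\Op(r_t)$ with $W(0)=0$, whence
\[
W(1)=\frac{i}{h}\int_0^1 U(1)^{-1}U(s)\,\Op(r_s)\,U(s)^{-1}U(1)\,ds.
\]
Unitarity of the propagator and the $L^2$-boundedness of $\Op(r_s)$ (the Calder\'on--Vaillancourt estimate for $\ST$) give $\|W(1)\|_{L^2\to L^2}=\Oo(h^{1/2}\tilde h^{3/2})$. To upgrade this to the symbolic statement $W(1)=\Op(w)$, $w\in h^{1/2}\tilde h^{3/2}\ST(1)$, I would apply Beals's lemma (Lemma \ref{l:Beals}). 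For any linear-outside-compact $\ell_1,\dots,\ell_N$, iterated commutators of the integrand with $\Op(\ell_j)$ either land on $\Op(r_s)$---where each commutator gains $h^{1/2}\tilde h^{1/2}$ by Lemma \ref{l:Sjnew}---or on the FIOs $U(1)^{\pm1}U(s)^{\mp1}$, in which case the standard Egorov result Proposition \ref{p:Egorov} gives a gain of $h$, which is at least as good as $h^{1/2}\tilde h^{1/2}$. Combining with the $L^2$ bound above yields the required estimate $\|\ad_{\Op(\ell_1)}\cdots\ad_{\Op(\ell_N)}W(1)\|\leq C h^{(N+1)/2}\tilde h^{(N+3)/2}$, so Beals gives $w\in h^{1/2}\tilde h^{3/2}\ST(1)$. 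The conclusion $U^{-1}AU=\Op(\kappa^*a+w)$ with $w$ of the claimed order then follows.

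The main obstacle is Step 3: verifying Beals's characterization rather than settling for a bare $L^2$ bound. This forces one to track commutators of the Duhamel propagator with linear symbols, which mixes the $\ST$ calculus with the FIO calculus---the interaction is handled by combining the quantitative Moyal remainder of Lemma \ref{l:Sjnew} with the classical Egorov Proposition \ref{p:Egorov} applied to each individual commutator.
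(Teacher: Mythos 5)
Your Steps 1 and 2 — taking $\widetilde A(t)=\Op(\kappa_t^*a)$, computing the Moyal bracket via Lemma~\ref{l:Sjnew}, and extracting the $h^{3/2}\tilde h^{3/2}$ remainder from the $\ST$ derivative losses — match the second half of the paper's argument (modulo a sign slip in the transport equation, which should read $\partial_t a_t=\{q_t,a_t\}$; this is immaterial). The gap is in Step 3.

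When you verify Beals's criterion for the Duhamel integrand $U(1)^{-1}U(s)\Op(r_s)U(s)^{-1}U(1)$, you distribute $\ad_{\Op(\ell_j)}$ by Leibniz over the factors and assert that a commutator landing on the Fourier integral operators $U(1)^{\pm1}U(s)^{\mp1}$ "gains $h$" by Egorov. This is not correct. Set $V=U(s)^{-1}U(1)$ and write
\[
[\Op(\ell),V]=V\bigl(V^{-1}\Op(\ell)V-\Op(\ell)\bigr)\,.
\]
By Proposition~\ref{p:Egorov}, $V^{-1}\Op(\ell)V=\Op(\kappa^*\ell)+\Oo_{L^2\to L^2}(h^2)$ for the relevant $\kappa\in\cK$, so $[\Op(\ell),V]=V\,\Op(\kappa^*\ell-\ell)+\Oo(h^2)$. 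Since $\kappa\neq\mathrm{id}$, the symbol $\kappa^*\ell-\ell$ is compactly supported (because $\kappa=\mathrm{id}$ outside a compact) but of size $\Oo(1)$, not $\Oo(h)$. Thus the commutator with the propagator does not gain anything, and your bookkeeping of iterated commutators does not close.

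The correct mechanism, which the paper establishes as a separate first step, is the exact algebraic identity
\[
\ad_{\Op(\ell_1)}\circ\cdots\circ\ad_{\Op(\ell_N)}\bigl(V^{-1}TV\bigr)
=V^{-1}\Bigl(\ad_{\Op(\ell_1^V)}\circ\cdots\circ\ad_{\Op(\ell_N^V)}T\Bigr)V\,,
\qquad \Op(\ell_j^V):=V\Op(\ell_j)V^{-1}\,,
\]
combined with the observation \eqref{eq:ellt} that $\ell_j^V$ is again linear outside a compact set up to an $h^2 S(1)$ error. With this, one first shows (by Beals) that conjugation $c\mapsto U(s)^{-1}\Op(c)U(s)$ preserves $\ST$; the Duhamel integrand is then manifestly in $\ST$, and the conclusion follows. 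Without this conjugation identity, or equivalently this intermediate invariance lemma, your Step 3 does not go through.
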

\begin{proof}
We will use the $ \ST $ variant of Beals's lemma given in Lemma \ref{l:Beals}
above. Let $ \ell_j $ be as in \eqref{eq:Beals}, $ j = 1, \cdots, N $, 
and denote
\[ \Op(\ell_j ( t ) ) \defeq U ( t ) \Op ( \ell_j ) U ( t )^{-1}  \,. \]
We first claim that $ \ell_j ( t ) $ are, to leading order, 
linear outside of a compact set:
\begin{equation}
\label{eq:ellt}
 \ell_j ( t ) = (\kappa_t^{-1} )^* \ell_j + h^2 r_t \,, \ \ r_t \in S ( 1 ) \,.
\end{equation}
To prove it, we see that the evolution \eqref{eq:u0} gives
\begin{equation}
\label{eq:st1}
 h D_t \ell_j ( t )^w  = [ \widetilde Q (t) , \ell_j ( t )^w ] \defeq L_j ( t ) \,, \end{equation}
where
\[ \widetilde Q ( t ) \defeq  U ( t ) \Op ( q_t )  U ( t )^{-1} = 
\Op ( \tilde q_t  + {\mathcal O}_{ S ( 1 ) } ( h^2) ) \,, \ \  \tilde q_t = (\kappa_t^{-1})^* q_t \,.  \]
Then, using Lemma \ref{l:Sjnew}, 
\[ L_j ( t ) - 
(h/i) \Op (H_{ \tilde q_t } \ell_j ( t )  ) 
\in h^3 S(1) \,. \]
Since $ (d/dt ) \kappa_t = H_{\tilde q_t } $,  \eqref{eq:st1} becomes
\[ \partial_t ( \kappa_t^* \ell_j ( t ) ) = {\mathcal O}_{S(1)} ( h^2 ) \,,\]
which implies \eqref{eq:ellt}.

For $ A $ in the statement of the lemma let us define $ A( t) $ as 
\[ A ( t) \defeq U( t)^{-1} A U( t )  \,, \ \ 
hD_t A( t ) = [ Q ( t) , A ( t) ]\,, \ \ A ( 0 ) = A \,. \]
We want to show that $ A ( t ) = \Op ( a ( t ) ) $ where $ a ( t) \in
\ST $. 
We will prove that using \eqref{eq:Beals}:
\begin{equation}
\label{eq:adlt}
\begin{split} &   \ad_{\Op (\ell_1)} \circ \cdots \ad_{\Op (\ell_N ) } A( t)  =  
%\\
%&   \ \ \ 
U ( t )^{-1}  \left( \ad_{\Op (\ell_1 ( t ) )} \circ \cdots 
\ad_{\Op (\ell_N ( t) ) }  A \right) U (  t) \,. 
\end{split} \end{equation}
In view of \eqref{eq:ellt} and the assumptions on $ A $ we 
see that the right hand side is 
\[  {\mathcal O} ( h^{N/2} \tilde h^{N/2} ) \, : \,  L^2 ( \RR^d ) 
\, \longrightarrow \, L^2 ( \RR^d ) \,, \]
and thus $ A ( t) = \Op ( a ( t) ) $, 
$ a ( t) \in \ST $. One has $B=A(1)$.

We now need to show that $ a ( t ) - \kappa_t^* a \in h^{\frac 12}
\tilde h ^{\frac 32} \ST $.  For that define 
\[
{\widetilde A}(t) := \Op({\mathbf \kappa}_t^*a).
\]
As in the proof of Egorov's theorem, we calculate
\begin{eqnarray*}
hD_t{\widetilde A}(t) &= &\frac {h}{i} \Op\left( \frac {d}{dt}
{\kappa}_t^*a\right) = \frac {h}{i}
\Op(H_{q_t} { \kappa}_t^* a) \\ &= &\frac {h}{i}
\Op(\{q_t, { \kappa}_t^*a\})
= [Q(t),{\widetilde A}(t)] + h^{\frac32} \tilde h^{\frac32} E (t),
\end{eqnarray*}
and Lemma \ref{l:Sjnew} shows that 
$E(t) = \Op ( e (t ) ) $, where $e ( t )  \in \ST $.
A calculation then shows that
\[ \begin{split}
hD_t(U(t){\widetilde A}(t){U}(t)^{-1}) &= 
%(hD_tU(t)){\tilde B}(t){F}(t)^{-1}
%+ U(t)(hD_t{\widetilde A}(t)){ F}(t)^{-1} \\
%& \ \ \ \ \   + U(t){\widetilde A}(t)hD_t({F}(t)^{-1} ) \\
%&= -U(t)Q(t){\widetilde A}(t){F}(t)^{-1}
%+ U(t)([Q(t),{\widetilde A}(t)] \\
%& \ \ \ \  \ + E(t)){ F}(t)^{-1}   + U(t){\widetilde A}(t)Q(t){F}(t)^{-1} \\
%&= 
U(t)E(t){U}(t)^{-1} \,, 
\end{split} \]
and consequently, 
\begin{equation}
\label{B-int}
A ( t) - \widetilde {A}(t) = i h^{\frac12} \tilde h^{\frac32}  \int_0^t U(t)^{-1}U(s) E (s)
{U}(s)^{-1} U ( t )  ds \,.
\end{equation}
We have already shown that conjugation by $ U (s) $ preserves
the class $ \ST $, hence the integral above is in 
$ \ST $.  
Then  $ A ( t ) - \tilde A ( t ) \in 
 h^{\frac12} \tilde h^{\frac 32} \ST $.
\end{proof}

\medskip

\noindent
{\bf Remark.} The use of Weyl quantization is essential for getting an
error of size $ h^{\frac12} \tilde 
h^{\frac32} $. To see this we take $ U $ to be 
a metaplectic transformation -- see for instance \cite[Appendix to Chapter 7]{DiSj}.
The rescaling \eqref{eq:resc} simply changes it to the 
same metaplectic tranformation, $ \tilde U $,
with $ \tilde h $ as the new Planck constant.
Then 
\[ \widetilde U^{-1} \tilde a^w ( \tilde x , \tilde h D_{\tilde x} ) \widetilde U 
= \left(\tilde \kappa^* \tilde a \right)^w
 ( \tilde x , \tilde h D_{\tilde x } ) \,, \]
that is we have no error term. Had we used right quantization, $ a (
x, h D ) $, we would have
acquired error terms of size $ \tilde h $, which could not be 
eliminated after rescaling back. For the invariance of
the $ \ST $ calculus see \cite[\S 3.3]{WuZ}.

The arguments of the previous lemma can be extended to encompass the
weight function $G$ of \eqref{eq:G=logm}, which may not be in $\ST$
(indeed, $G$ may be unbounded), but which has bounded derivatives.
\begin{lem}
\label{l:fioG}
Let $ U $ be as in Lemma \ref{l:p1}. For $ G $ satisfying 
\eqref{eq:ord1} we define
\[  {\mathcal G}_1  \defeq U^{-1} G^w ( x , h D ) U \; : \; {\mathcal S} 
\longrightarrow {\mathcal S} \,. \]
Then
\[ {\mathcal G}_1 = G_1^w ( x, h D) \,, \ \ G_1 - \kappa^* G \in 
h^{\frac12} \tilde h^{\frac 32} \ST ( 1) \,. \]
\end{lem}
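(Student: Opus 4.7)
The plan is to closely parallel the proof of Lemma~\ref{l:p1}, the only extra twist being that $G$ itself is not in $\ST(1)$: by \eqref{eq:ord1} it is only the derivatives of $G$ that satisfy the rescaled bounds. Introduce the Heisenberg-evolved operator $\mathcal G(t)\defeq U(t)^{-1}\,G^w\,U(t)$ and the reference operator $\widetilde{\mathcal G}(t)\defeq \Op(\kappa_t^*G)$; these coincide at $t=0$ with $G^w$. Since $U(t)\mathcal G(t)U(t)^{-1}=G^w$ is independent of $t$, $\mathcal G(t)$ satisfies the Heisenberg equation $hD_t\mathcal G(t)=[Q(t),\mathcal G(t)]$. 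The goal is to prove $\mathcal G(1)-\widetilde{\mathcal G}(1)\in h^{\frac12}\tilde h^{\frac32}\Op(\ST(1))$ and to set $G_1\defeq \kappa^*G+h^{\frac12}\tilde h^{\frac32}r$, with $r\in \ST(1)$, accordingly.

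First I would compute the Heisenberg defect of the reference operator. Using the footnote following \eqref{eq:pcl}, $\partial_t(\kappa_t^*G)=\{q_t,\kappa_t^*G\}$, and therefore $hD_t\widetilde{\mathcal G}(t)=\tfrac{h}{i}\Op(\{q_t,\kappa_t^*G\})$. To expand $[Q(t),\widetilde{\mathcal G}(t)]$ via Lemma~\ref{l:Sjnew} I need a bounded symbol, so I pick $\chi\in\CIc(T^*\RR^d)$ with $\chi\equiv 1$ on a neighbourhood of $\supp q_t$ and write
$$
[Q(t),(\kappa_t^*G)^w]=[Q(t),(\chi\,\kappa_t^*G)^w]+\Oo_{\cS'\to\cS}(h^\infty)\,,
$$
the error being negligible because the symbols $q_t$ and $(1-\chi)\kappa_t^*G$ have disjoint supports (cf.\ Lemma~\ref{l:G-support}). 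On $\supp\chi$ the function $G$ is bounded and its derivatives satisfy the rescaled estimates from \eqref{eq:ord1}, which is exactly the input Lemma~\ref{l:Sjnew} needs. The $k=1$ Moyal term reproduces $\tfrac{h}{i}\Op(\{q_t,\chi\,\kappa_t^*G\})=\tfrac{h}{i}\Op(\{q_t,\kappa_t^*G\})$ (since $\chi\equiv 1$ on $\supp q_t$), while the next odd term plus the remainder from \eqref{eq:Sjnew1} furnishes
$$
[Q(t),(\chi\,\kappa_t^*G)^w]-\tfrac{h}{i}\Op(\{q_t,\kappa_t^*G\})=h^{\frac32}\tilde h^{\frac32}\,\Op(e(t))\,,\qquad e(t)\in \ST(1)\,,
$$
the gain $h^{\frac32}\tilde h^{\frac32}$ coming from $h^3$ against three extra derivatives on $G$ each worth $(\tilde h/h)^{\frac12}$. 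Consequently $hD_t\widetilde{\mathcal G}(t)-[Q(t),\widetilde{\mathcal G}(t)]=-h^{\frac32}\tilde h^{\frac32}\Op(e(t))$.

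Second, set $B(t)\defeq \mathcal G(t)-\widetilde{\mathcal G}(t)$, which satisfies $hD_tB=[Q(t),B]-h^{\frac32}\tilde h^{\frac32}\Op(e(t))$ with $B(0)=0$. Apply Duhamel's principle: differentiate $C(t)\defeq U(t)B(t)U(t)^{-1}$ using $\partial_tU=-\tfrac{i}{h}UQ$; the $[Q,B]$-terms cancel and $\partial_tC(t)=-ih^{\frac12}\tilde h^{\frac32}\,U(t)\Op(e(t))U(t)^{-1}$, whence
$$
B(t)=-ih^{\frac12}\tilde h^{\frac32}\int_0^tU(t)^{-1}U(s)\,\Op(e(s))\,U(s)^{-1}U(t)\,ds\,.
$$
The composition $U(t)^{-1}U(s)$ is an $h$-FIO quantizing $\kappa_t^{-1}\circ\kappa_s\in \cK$, so Lemma~\ref{l:p1} applied in both directions shows that each conjugate of $\Op(e(s))\in \Op(\ST(1))$ stays in $\Op(\ST(1))$, uniformly in $s\in[0,1]$. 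Integration over the bounded interval preserves the class, and evaluating at $t=1$ yields the required conclusion.

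The main obstacle is justifying the Moyal expansion of $[Q(t),(\kappa_t^*G)^w]$, since Lemma~\ref{l:Sjnew} is stated for bounded $\ST$-type symbols while $\kappa_t^*G$ may be unbounded. The cutoff reduction to $\chi\,\kappa_t^*G$ circumvents this, relying crucially on the compact support of $q_t$. Beyond that, the argument is a straightforward combination of Duhamel's principle with the FIO-invariance of $\Op(\ST(1))$ already established in Lemma~\ref{l:p1}.
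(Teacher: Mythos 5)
Your proof is correct and follows essentially the same route the paper takes, namely replicating the Duhamel step of the proof of Lemma~\ref{l:p1} after noting that only derivatives of $G$ enter the relevant expansions. One remark: the cutoff $\chi$ is superfluous, and the claim that $\chi\,\kappa_t^*G$ becomes a bounded symbol is imprecise, since $G=\Oo(\log(\tilde h/h))$ is not uniformly bounded in $h,\tilde h$ even on compact sets, so $\chi\,\kappa_t^*G\notin\ST(1)$. What actually makes the expansion valid (with or without the cutoff) is the mechanism the paper emphasizes: in the commutator the $k=0$ term of \eqref{eq:weylc} cancels, and every surviving term as well as the remainder bound \eqref{eq:Sjnew1} puts at least one derivative on $\kappa_t^*G$, each of which lies in $(h/\tilde h)^{-|\alpha|/2}\ST$ by \eqref{eq:ord1}; the absolute size of $G$ never enters.
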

\begin{proof} 
Since $ U : {\mathcal S } \rightarrow {\mathcal S} $, 
the operator $ {\mathcal G}_1 $ maps $ {\mathcal S} $ to $ \mathcal S $.
We now proceed as in the proof of Lemma \ref{l:p1}, noting that
\[  
\partial^{\alpha }  ( \kappa_t^* G ) \in ( h / \tilde h )^{-|\alpha |/2)} 
\ST \,, \quad |\alpha| \geq 1 \,,\quad\text{uniformly for }t\in
[0,1]\,.
\]
Lemma \ref{l:Sjnew} shows that only terms involving derivatives
appear in the expansions, hence the same arguments apply.
\end{proof}
We now combine these various lemmas.

\medskip

\noindent
{\em Proof of Proposition \ref{p:fiom}:} Suppose that 
$ T \in I_\delta ( \RR^d \times \RR^d , C' ) $, meaning that $ T
=U_0U$, where $ U = U( 1 ) $ satisfies \eqref{eq:u0} with $ U(0)=I$. 
By Egorov we may write $T= U U_1 $, with
$ U_1 \in \Psi_{\delta} $. %If $ A = \Op ( a ) $,  $ a \in \ST ( m ) $ 
%then \eqref{eq:imG} shows that 
Then, in the notation of Lemma \ref{l:fioG},
\[ A T =  U ( U^{-1} A U ) U_1 = U  \exp {\mathcal G}_1 U^{-1} A_0 U U_1
= T B + U  B_1 \,, \]
where, using Lemma \ref{l:fioG},
\[  B =  \exp { \mathcal G}_1 U^{-1} A_0 U  = \exp ( ( \kappa^* G)^w ) B_0 
\,, \ \ B_0 \in \PT \,, \]
and, using \eqref{eq:FF}, 
\[ \begin{split} B_1 & = [ \exp ( ( \kappa^* G)^w ) , U_1 ] B_0 + 
\exp ( ( \kappa^* G)^w )  [ B_0 , U_1 ] \\
& = \exp ( ( \kappa^* G)^w )  \left( U_1 - \exp (- ( \kappa^* G)^w ) U_1
\exp ( ( \kappa^* G)^w ) + [ B_0, U_1 ] \right) \\ 
& \in  h^{ \frac12 ( 1 - 2 \delta ) } \tilde h ^{\frac12} 
\exp ( ( \kappa^* G)^w ) \PT \,. 
\end{split} 
\]
\stopthm

\subsubsection{Lagrangian relations}
We are now ready to extend the above semiglobal theory 
(``semi'' because of our special class of symplectomorphisms $ \cK $) into 
a construction of $h$-Fourier integral operators associated with an arbitrary smooth
Lagrangian relation on $F\subset T^*Y\times T^*Y$ (as introduced in \S
\ref{2gr}). This construction will naturally be done by splitting $F$
into local symplectomorphisms defined on (small) star shaped sets.

Eventually, we want consider the full setup of \S \ref{2gr}, that is
taking $F$ the disjoin union of $F_{ij}\subset U_i\times U_j$, 
defining our Fourier integral operator $ T \in I_{\delta} ( Y \times Y , F' ) $ as
a matrix of operators,
\[  T = ( T_{ij} )_{ 1\leq i, j \leq J } \,, \ \
T_{ij} \in I_{\delta} ( Y_i \times Y_j , F_{ij}' ) \,,\]
and finally take $I_{0+}(Y\times Y,F') = \bigcup_{\delta>0}I_{0+}(Y\times Y,F')$.

\medskip

To avoid too cumbersome notations, we will omit the indices $i,j$, and
consider a single Lagrangian relation
$F\subset U'\times U$, where $U\Subset T^*Y\Subset T^*\RR^d$,
$U'\Subset T^*Y'\subset T^*\RR^d$ two
open sets, and define the
classes $ I_{\delta} ( Y' \times Y ) $.

Fix some small $\vareps>0$. On $U$ we introduce two open covers of $U$,
$$
U \subset \bigcup_{\ell=1}^{L} U_{\ell}\,,\quad U_{\ell}\Subset\tU_{\ell}\,,
$$
such that each $\tU_{\ell}$ is star shaped around one of its
points, and has a diameter $\leq \vareps$.
We also introduce a
smooth partition of unity $(\chi_{\ell})_{\ell=1,\ldots,L}$
associated with the cover $(U_{\ell})$:
\be\label{e:partition}
\sum_{\ell}\chi_{\ell}(\rho) = 1,\qquad \rho\in {\rm
  neigh}(U),\quad \chi_{\ell}\in\CIc(U_{\ell},[0,1])\,.
\ee
$F$ can be seen as a
canonical map defined on the departure subset $\pi_R(F)\subset
U$, with range $\pi_L(F)\subset U'$. Let us call
$\tF_{\ell}=F\rest_{\tU_{\ell}}$ its restriction to $\tU_{\ell}$.

The set of interior indices $\ell$ such
that $\tU_{\ell}\subset \pi_R(F)$ will
be denoted by ${\mathcal L} $.

For each interior index $\ell$, the symplectomorphism 
$\tF_{\ell}$ is the extension of $F_{\ell}=F\restriction_{U_{\ell}}$,
so we may apply Proposition~\ref{p:cl}, and produce a 
global symplectomorphism $\tkappa_{\ell}\in\cK$, which coincides with $F_{\ell}$ on the
set $U_{\ell}$. 
The previous section
provides the family of $h$-Fourier integral operators 
$I_{\delta}(\RR^d\times \RR^d, C_{\ell}')$, where $C_{\ell}$ is
the graph of $\tkappa_{\ell}$. For each interior index
$\ell$ we consider a Fourier integral operator $\tT_{\ell}\in I_{\delta}(\RR^d\times
\RR^d,C_{\ell}')$, and use the
partition of unity \eqref{e:partition} to define 
$$
T_{\ell}\defeq \tT_{\ell}\, \chi_{\ell}^w(x,hD)\,.
$$
Due to the support properties of $\chi_{\ell}$, the operator
$T_{\ell}$ is actually associated with the restriction
$F_{\ell}$ of $\tkappa_{\ell}$ on $U_{\ell}$. The sum
$$
T^{\RR}\defeq\sum_{\ell\in {\mathcal L} } T_{\ell}
$$
defines a Fourier integral operator on $\RR^d$, microlocalized inside
$\pi_L(F)\times \pi_R(F)$, which we call $I_{\delta}(\RR^d\times\RR^d,F')$.

Finally, since we want the wavefunctions to be defined on
the open sets $Y$, $Y'$ rather than on the whole of $\RR^d$, we
use cutoffs $\Psi\in \CIc(Y,[0,1])$, $\Psi\in \CIc(Y,[0,1])$ such that $\Psi(x)=1$ on
$\pi(U)$, $\Psi'(x)=1$ on $\pi(U')$.

We will say that $T:{\mathcal D}' ( Y) \rightarrow \CI (
\overline Y' )$ belongs to the class 
$$
I_{\delta} ( Y' \times Y , F' )
$$
iff
\[
\Psi'\,T\,\Psi=\Psi'\,T^{\RR}\,\Psi\quad \text{for some}\quad
T^{\RR}\in I_{\delta}(\RR^d\times\RR^d,F'),
\]
and 
\[ T - \Psi'\,T\,\Psi = 
\Oo( h^\infty ) \; : \; {\mathcal D}' ( Y) \rightarrow \CI(
\overline Y' ) \,. 
\]
We notice that $\pi_R(\WFh(T))$ is
automatically contained in the support of 
$\sum_{\ell\in {\mathcal L}} \chi_{\ell}$, a strict subset of $\pi_R(F)$: in
this sense, the
above definition of $I_{\delta} ( Y' \times Y , F' )$ depends on the partition of unity
\eqref{e:partition}. However, for any subset of $W\Subset F$, one can always
choose a cover $(\tU_{\ell})$ such that $\bigcup_{j\in {\mathcal L}}\tU_{\ell}\supset
\pi_R(W)$.
In particular, the assumption $\cT\cap\partial F=\emptyset$ we made in \S\ref{2gr}
shows that such a subset $W$ may contain the trapped set $\cT$.

\subsubsection{Conjugating global Fourier integral operators by weights}

By linearity one can generalize Prop.~\ref{p:fiom} to a
Fourier integral operator $T\in I_{0+}(Y'\times Y,F')$, and by linearity to the full setup of
\S\ref{2gr}. 
The observable $a_0$ and weight $G$
are now functions on  $T^*Y'$ or on $T^*\RR^d$. 

If $G$ is supported inside $\pi_L(F)\subset T^*Y'$, then $F^*G=G\circ F$ is a smooth
function on $\pi_R(F)$, which can be smoothly extended (by zero)
outside. In each $U_{\ell}$ we apply Prop.~\ref{p:fiom} to
$T_{\ell}$, and obtain on the right hand side terms of the form 
$$
T_{\ell}\,e^{\tkappa_{\ell}^*G }\,\Op(b_0) + T_1\,
B_1\,,\quad T_1\in I_{\delta}(\RR^d\times\RR^d,F'),\quad 
B_1\in h^{1/2-\delta}\th^{1/2}\PT(e^{\tkappa_{\ell}^*G})\,,
$$
and $\WFh(T_1)\subset \WFh(T)$.

Since $\pi_R(\WFh(T_{\ell}))\Subset U_{\ell}$,
Lemma~\ref{l:G-support} shows that only the part of
$\tkappa_{\ell}^*G$ inside $U_{\ell}$ is relevant to the above
operator, that is a part where $\tkappa_{\ell}\equiv F$. Therefore we have
$$
T_{\ell}\,e^{(\tkappa_{\ell}^*G)^w} =
T_{\ell}\,e^{(F^*G)^w }+\Oo(h^\infty)\,,\qquad
B_1\in h^{1/2-\delta}\th^{1/2}\PT(e^{F^*G}) +\Oo(h^\infty)\,.
$$ 
This proves the generalization of Prop.~\ref{p:fiom} to the 
setting of the relation $F\subset T^*Y'\times T^*Y$, in case $\supp G\subset \pi_L(F)$.

\medskip

In case $G$ is not supported on $\pi_L(F)$, the notation $e^{(F^*G)^w}$
still makes sense microlocally inside $\pi_R(F)$. 
Indeed, take
$\chi,\tilde\chi\in\CIc(\pi_L(F))$, 
$\chi\equiv 1$ near $\pi_L(\WFh(T))$, $\tilde
\chi\equiv 1$ near $\supp\chi$.
Lemma~\ref{l:G-support}, with $V=\pi_L(F)$
implies that
$$
e^{G^w}\,T=e^{\tilde{G}^w}\,T+\Oo(h^\infty)\,,\qquad
\tilde{G}\defeq \tilde\chi\,G.
$$
The above generalization of Prop.~\ref{p:fiom} then shows that
$$
e^{\tilde{G}^w}\,a_0^w\,T+\Oo(h^\infty)
= T\,e^{F^*\tilde{G}^w}\,b_0^w + T_1\,B_1\,.
$$
The weight $F^*\tilde{G}$ is only relevant on $\pi_R(\WFh(T))\Subset
\pi_R(F)$, so it makes sense to write the first term on the above right
hand side as
$$
T\,e^{F^*G^w}\,b_0^w\defeq T\,e^{F^*\tilde{G}^w}\,b_0^w\,,
$$
emphasizing that this operator does not depend (modulo
$\Oo_{\cS'\to \cS}(h^\infty)$) of the way we have truncated $G$ into $\tilde{G}$.

For the same reason, the symbol class $\ST ( e^{F^* G} )$ makes sense if we
assume that the symbols are essentially supported inside
$\pi_R(F)$. We have just proved the following generalization of Prop.~\ref{p:fiom}:
\begin{prop}
\label{p:fiom-global}
Take $F$ a Lagrangian relation as described in \S\ref{2gr}, and 
$T\in I_\delta(Y\times Y,F')$ as defined above. Take $G\in\CIc(T^*Y)$
a weight function
satisfying \eqref{eq:ord1}, and $ A = e^{G^w ( x, h D )} \Op ( a_0 )
$, where the symbol $  a_0 \in \ST (1) $, $\esss a_0 \Subset \pi_L(F)$.

Then the following Egorov property holds:
\begin{gather}
\label{eq:fiom-global}
\begin{gathered}
A T = T B + T_1 B_1 \,, \\ 
B = e^{ ( F^* G)^w ( x, hD ) } \Op (b_0 ) \,, \quad b_0 - F^* a_0 
\in h^{\frac12 }\th ^{\frac32} \ST ( 1 ) \,, \quad \esss b_0 \Subset \pi_R(F)\,,\\
B_1 = h^{\frac 12 ( 1 - 2 \delta ) } \th^{\frac 12} 
 \Op ( b_1 ) \,, \quad b_1 \in \ST ( e^{F^* G} ) \,, \quad
T_1 \in I_\delta ( Y\times Y , F' ) \,. 
\end{gathered}
\end{gather}
\end{prop}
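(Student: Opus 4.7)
The plan is to reduce the global statement to the local Egorov property of Prop.~\ref{p:fiom} via linearity, exploiting the very definition of the class $I_\delta(Y\times Y,F')$ as a sum over a partition of unity. First I would write, modulo $\Oo(h^\infty)$ errors coming from the cutoffs $\Psi,\Psi'$, $T = \sum_{\ell\in\mathcal{L}} T_\ell$ with $T_\ell=\tilde T_\ell\,\chi_\ell^w$, where $\tilde T_\ell \in I_\delta(\RR^d\times\RR^d,C_\ell')$ is a Fourier integral operator associated to a global symplectomorphism $\tilde\kappa_\ell\in\cK$ that coincides with $F$ on $U_\ell$. Applying Prop.~\ref{p:fiom} to $A\,T_\ell$ then gives
\[
A T_\ell = T_\ell\,B^{(\ell)} + T_1^{(\ell)}\,B_1^{(\ell)},
\]
with $B^{(\ell)} = e^{(\tilde\kappa_\ell^* G)^w}\,\Op(b_0^{(\ell)})$, $b_0^{(\ell)} - \tilde\kappa_\ell^* a_0 \in h^{1/2}\th^{3/2}\ST(1)$, and the companion bound on $B_1^{(\ell)}$.

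The crucial step is then to replace each local pullback $\tilde\kappa_\ell^* G$ by the single globally defined $F^* G$. Assuming first that $\supp G\subset \pi_L(F)$, the function $F^* G$ is well defined on $\pi_R(F)$ and extends smoothly by zero. Since $\pi_R(\WFh(T_\ell))\Subset U_\ell$ and $\tilde\kappa_\ell\equiv F$ on $U_\ell$, the functions $\tilde\kappa_\ell^* G$ and $F^* G$ agree on a neighborhood of $\pi_R(\WFh(T_\ell))$; Lemma~\ref{l:G-support} therefore yields
\[
T_\ell\,e^{(\tilde\kappa_\ell^* G)^w}\,\Op(b_0^{(\ell)}) = T_\ell\,e^{(F^* G)^w}\,\Op(b_0^{(\ell)}) + \Oo(h^\infty),
\]
and similarly for the weight attached to $B_1^{(\ell)}$. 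Summing over $\ell$ reconstructs $T$ and packages the symbols $b_0^{(\ell)}$ into a single $b_0$ with the claimed $F^*a_0$--approximation in $\ST(1)$, while the $T_1^{(\ell)} B_1^{(\ell)}$ combine into an operator of the desired form (using the absorbing property $\WFh(T_1^{(\ell)})\subset \WFh(T_\ell)$).

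To handle the case where $\supp G$ is not contained in $\pi_L(F)$, I would use the truncation argument sketched just before the statement: pick $\chi,\tilde\chi\in\CIc(\pi_L(F))$ with $\chi\equiv 1$ near $\pi_L(\WFh(T))\cup \esss a_0$ and $\tilde\chi\equiv 1$ near $\supp\chi$, and set $\tilde G=\tilde\chi\,G$. A further application of Lemma~\ref{l:G-support} gives $e^{G^w}\Op(a_0)\,T = e^{\tilde G^w}\Op(a_0)\,T + \Oo(h^\infty)$, reducing matters to the compactly supported case treated above. The fact that on $\pi_R(\WFh(T))$ the pullback $F^*\tilde G$ is independent of the choice of $\tilde\chi$ (again by Lemma~\ref{l:G-support}) justifies the shorthand $e^{(F^*G)^w}$ appearing in the conclusion.

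The point I expect to require most care is the bookkeeping of wavefront set inclusions: one needs to verify $\pi_L(\WFh(T))\Subset \pi_L(F)$ (so that $\tilde\chi$ can be inserted without effect), $\pi_R(\WFh(T_\ell))\Subset U_\ell$ (so Lemma~\ref{l:G-support} applies with the right neighborhood), and that $\WFh(T_1^{(\ell)})\subset \WFh(T)$, so that the remainder $T_1$ still lies in $I_\delta(Y\times Y,F')$ rather than a larger class. Once these support conditions are tracked, the symbol estimates of Prop.~\ref{p:fiom} are additive and assemble cleanly into \eqref{eq:fiom-global}.
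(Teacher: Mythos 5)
Your proposal follows the same route as the paper's own argument: decompose $T=\sum_{\ell}T_{\ell}$ via the cover and partition of unity, apply Prop.~\ref{p:fiom} to each $T_{\ell}$ with the global symplectomorphism $\tkappa_{\ell}\in\cK$, invoke Lemma~\ref{l:G-support} together with $\pi_R(\WFh(T_{\ell}))\Subset U_{\ell}$ and $\tkappa_{\ell}\equiv F$ on $U_{\ell}$ to replace $\tkappa_{\ell}^*G$ by $F^*G$, and handle non-compactly-supported-in-$\pi_L(F)$ weights by the truncation $\tilde G=\tilde\chi\,G$. This is precisely the paper's proof, and you correctly identify the same support-tracking checks ($\pi_L(\WFh(T))\Subset\pi_L(F)$, $\pi_R(\WFh(T_{\ell}))\Subset U_{\ell}$, $\WFh(T_1^{(\ell)})\subset\WFh(T)$) that make the localization go through.
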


This proposition is the main result of our preliminary section on
exotic symbols and weights.
Our task in the next section will be to construct an explicit weight
$G$, adapted to the hyperbolic Lagrangian relation $F$.

\section{Construction of  escape functions}
\label{cef}

Escape functions are used to conjugate our quantum map (or
monodromy operator), so that the conjugated operator has nicer microlocal
properties than the original one, even though it has the same
spectrum. More precisely, an escape function $G(x,\xi)$ should have
the property to strictly increase along the dynamics, away from the
trapped set. An escape function $g(x,\xi)$ has already been used in to construct the
monodromy operators associated with the scattering problems in
\cite{NSZ1}: its effect was indeed to damp the monodromy operator by a
factor $\sim h^{N_0}$ outside a fixed neighbourhood of $\cT$.
Our aim in this section is to construct a more refined escape
function, the r\^ole of which is to damp the monodromy operator 
outside a semiclassically small neighbourhood of $\cT$, namely an 
$h^{1/2}$-neighbourhood. For this aim, it is necessary to
use the calculus on symbol classes $\ST(m)$ we have presented in \S\ref{pr}.

Our construction will be made in two steps: first in the vicinity the trapped set $\TT$, following 
\cite[\S 7]{SZ10} (where it was partly based on \cite[\S 5]{SjDuke}),
and then away from the trapped set, following an adaptation of
the arguments of \cite[Appendix]{GeSj}.
For the case of relations $ F $ as in \S \ref{2gr} which arise from
Poincar\'e maps of smooth flows, we could alternatively
use the flow escape functions given in \cite[Proposition 7.7]{SZ10}. 
However, the general presentation for open hyperbolic maps is simpler than that
for flows, and will also apply to the monodromy operators obtained
from the broken geodesic flow of the obstacle scattering problem.

\subsection{Regularized escape function near the trapped set}
\label{ref}

Let $ \TT_\pm $ be the outgoing and incoming tails given by 
\eqref{eq:defT} in the case of the obstacle scattering. For 
an open map $ F $ with properties described in \S \ref{2gr}, these
sets are defined by
$$ 
\TT_\pm = \{ \rho \; : \; F^{\mp n } ( \rho ) \in \, \UU \,, \
\forall \, n \geq 0 \}\,, 
$$
where $ F^{\mp n} = F^{\mp} \circ \cdots \circ F^{\mp } $ denotes the usual
composition of relations. We note that $ \TT_\pm $ are closed
subsets of $ \UU $, and due to the hyperbolicity of the flow, they are
unions of unstable/stable manifolds $W^{\pm}(\rho)$, $\rho\in\cT$.

\medskip

\noindent{\bf Remark} Before entering the construction, let us consider the simplest model
of hyperbolic map, namely the linear dilation 
$(x,\xi)\mapsto (\Lambda x,\Lambda^{-1}\xi)$ on $T^*\RR$, with
$\Lambda>1$. 
In that case, the trapped set is reduced to one point, the origin, and
the sets $\TT_{\pm}$ coincide with the position and
momentum axes. In this case, a simple escape function is given by
\be\label{e:escape-1}
G(x,\xi)=x^2-\xi^2 = d(\rho,\TT_-)^2 - d(\rho,\TT_+)^2\,,
\ee
and it can be used (after some modification) to analyse scattering
flows with a single hyperbolic periodic orbit \cite{Ge,GeSj}.

\medskip

The construction of $G(x,\xi)$ in the case of a more complex, but
still hyperbolic, trapped set, inspires itself from the expression
\eqref{e:escape-1} \cite{SjDuke}. Our first Lemma is a construction of
two functions related with, respectively, the outgoing and incoming
tails. It is a straightforward adaptation of \cite[Prop. 7.4]{SZ10}.
For a moment we will use a small parameter $\eps>0$, which will
eventually be taken equal to $h/\th$.
\begin{lem} 
\label{l:S70}
Let $ \tVV $ be a small neighbourhood of $ \TT $ and 
$ \tF : \tVV \rightarrow \tF ( \tVV ) $ be 
the symplectomorphic restriction of $ F $. Then, there exists $C_0>0$
and a neighbourhood $\VV\Subset\tVV$ of the trapped set, such that the
following holds.

For any
small $ \epsilon > 0 $ there
exist functions $ \hph_\pm \in \CI ( \VV\cup\tF(\VV) ; [\eps, \infty) ) $ such that
%in a neighbourhood of $ \TT $,
\begin{equation}
\label{eq:es1'}
\begin{split}
& \hph_\pm  ( \rho )  \sim d( \rho  , \TT_\pm )^2 + \epsilon
 \,, \\
& \pm (\hph_\pm ( \rho )  - \hph_\pm ( \tF ( \rho ) ) ) + C_0 \epsilon
\sim  \hph_\pm ( \rho )
\,, \quad \rho\in\cV\,,\\
& \partial^\alpha
 \hph_\pm (\rho ) =\Oo ( \hph_\pm ( \rho )^{ 1
- |\alpha|/2} )\,,  \\
&  \hph_+ ( \rho ) + \hph_- ( \rho )
\sim d( \rho ,  \TT)^2 +  \epsilon \,.
\end{split}
\end{equation}
Here and below, $ a \sim b $ means that there exists a constant $ C\geq 1 $ (independendent
of $ \epsilon $) such that $ b/C \leq a \leq C b $.
\end{lem}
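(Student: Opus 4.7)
The strategy is to follow the construction of regularized squared distances carried out in \cite[\S 5]{SjDuke} and \cite[Prop.~7.4]{SZ10}. The motivating picture is the toy linear model $F(x,\xi)=(\Lambda x,\Lambda^{-1}\xi)$, for which $\TT_+$ is the $\xi$-axis, $\TT_-$ the $x$-axis, and one may simply take $\hph_+(\rho)=x^2+\eps$, $\hph_-(\rho)=\xi^2+\eps$. The hyperbolicity assumption \eqref{eq:aa} yields a similar, but only Hölder-continuous, splitting near a general trapped set, and our $\hph_\pm$ will be smooth substitutes for $\dist(\rho,\TT_\pm)^2+\eps$ obtained by $\sqrt{\eps}$-scale mollification.

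To construct $\hph_+$, I would cover a neighbourhood of $\TT$ by finitely many charts adapted to the splitting $T_\rho\cU=E_\rho^+\oplus E_\rho^-$: in each chart centred at some $\rho_0\in\TT$, the local unstable manifolds $W^+_{\loc}(\rho')$ through nearby points $\rho'\in\TT$ foliate a piece of $\TT_+$, and can be written as graphs of Hölder functions over the $E^+_{\rho_0}$-direction. The transverse coordinate $y^-\in E^-_{\rho_0}$ provides a smooth candidate for $\dist(\rho,\TT_+)$ in the chart, and after patching by a partition of unity this yields a Lipschitz function $d_+$ with $d_+(\rho)\sim\dist(\rho,\TT_+)$. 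Setting
\[
\hph_+(\rho)=\big(\chi_{\sqrt{\eps}}\ast(d_+^2+\eps)\big)(\rho),
\]
where $\chi_{\sqrt{\eps}}$ is a mollifier at scale $\sqrt{\eps}$, gives a smooth function comparable to $d_+^2+\eps$; the derivative scaling of a mollified square-root-type function produces exactly the bound $\partial^\alpha\hph_+=\Oo(\hph_+^{1-|\alpha|/2})$, since each derivative beyond the first two costs $1/\sqrt{\eps}\leq 1/\sqrt{\hph_+}$. The construction of $\hph_-$ is identical after replacing $F$ by $F^{-1}$.

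For the dynamical property (ii), hyperbolicity \eqref{eq:aa}(iii) and the invariance $F(\TT_+)\subset\TT_+$ (on $\cV$) give $\dist(F(\rho),\TT_+)\leq\theta\,\dist(\rho,\TT_+)+\Oo(\dist(\rho,\TT_+)^2)$, so that $\hph_+(F(\rho))\leq \theta^2\hph_+(\rho)+C\eps$ on a suitably chosen $\VV\Subset\tVV$. Rearranging and using $\hph_+\geq\eps$ gives $\hph_+(\rho)-\hph_+(F(\rho))+C_0\eps\sim\hph_+(\rho)$ for $C_0$ large, with the analogous estimate for $\hph_-$ using $F^{-1}$. Property (iv) comes from the transversality $E^+_\rho\oplus E^-_\rho=T_\rho\cU$ at every $\rho\in\TT$: uniformly on the compact set $\TT$ this gives
\[
\dist(\rho,\TT_+)^2+\dist(\rho,\TT_-)^2\sim\dist(\rho,\TT)^2
\]
on a neighbourhood of $\TT$, and adding $\eps$ and the mollifications yields the desired equivalence.

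The main obstacle is the low regularity of $\TT_\pm$ transverse to the flow: these sets are genuinely fractal and only Hölder across $\TT$, so neither $\dist(\cdot,\TT_\pm)$ nor the leaves $W^\pm_{\loc}(\rho)$ depend smoothly on the base point. The $\sqrt{\eps}$-mollification is precisely what allows us to trade this Hölder obstruction for controlled losses in higher derivatives, giving a $\CI$ function in the exotic class whose derivatives scale with $\hph_\pm$ as in \eqref{eq:es1'}. This is the core technical issue addressed in \cite[\S 5]{SjDuke} and \cite[\S 7]{SZ10}, whose construction transfers to the open-map setting of \S\ref{2gr} without essential change.
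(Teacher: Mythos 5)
Your plan correctly identifies the template (\cite[\S 5]{SjDuke}, \cite[Prop.~7.4]{SZ10}) and the two pillars of the argument --- a regularized squared distance, and a dynamical contraction of $d(\cdot,\TT_\pm)$ under the map --- but the specific mechanisms you propose for both would not produce the estimates in \eqref{eq:es1'}.

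\textbf{The fixed-scale mollification does not yield the derivative bounds.} You mollify $d_+^2+\eps$ at the single scale $\sqrt{\eps}$ and claim each derivative ``costs $1/\sqrt{\eps}\leq 1/\sqrt{\hph_+}$.'' The inequality is backwards: since $\hph_+\geq\eps$, you have $1/\sqrt{\eps}\geq 1/\sqrt{\hph_+}$, so the mollification loss is generally worse than what $\Oo(\hph_+^{1-|\alpha|/2})$ allows. Concretely, at a point $\rho$ with $d(\rho,\TT_+)\sim D\gg\sqrt{\eps}$ the target bound for $|\alpha|>2$ is $D^{2-|\alpha|}$, while a $\sqrt{\eps}$-scale mollification of a $C^{1,1}$ function only gives $\eps^{-(|\alpha|-2)/2}$, which is much larger. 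The regularization scale must \emph{adapt} to the distance: the paper uses a Whitney covering (Lemma~\ref{l:whitney}), where the ball around $x_j$ has radius $\sim d(x_j,\Gamma)+\sqrt{\eps}$, so every derivative costs $(d(x_j,\Gamma)+\sqrt{\eps})^{-1}\sim\varphi^{-1/2}$ rather than $\eps^{-1/2}$. This adaptivity is what produces $\partial^\alpha\varphi=\Oo(\varphi^{1-|\alpha|/2})$ uniformly, and it is not recoverable from a single fixed-scale convolution.

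\textbf{Single-step contraction is not enough.} You invoke $d(F(\rho),\TT_+)\leq\theta\,d(\rho,\TT_+)+\Oo(d(\rho,\TT_+)^2)$ with $\theta<1$ to get $\hph_+\circ F\leq\theta^2\hph_+ +C\eps$. However, what hyperbolicity gives (Lemma~\ref{l:stan}, proved via the shadowing lemma and adapted charts) is only $d(\tF^k(\rho),\TT_+)\leq C\theta_1^k\,d(\rho,\TT_+)$ with a constant $C\geq 1$ coming from the H\"older coordinate changes; one cannot assume $C\theta_1<1$, so a single application of $F$ need not decrease $d(\cdot,\TT_+)$. The paper circumvents this by taking a \emph{Birkhoff sum}: $\hph_\pm(\rho)\defeq\sum_{k=0}^{K}\varphi_\pm(\tF^{\pm k}(\rho))$ for $K$ large enough that $C\theta_1^K\leq 1/2$. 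Then the dynamical property (ii) follows from pure telescoping, $\hph_+(\rho)-\hph_+(\tF(\rho))=\varphi_+(\rho)-\varphi_+(\tF^{K+1}(\rho))$, together with the $K$-step contraction, and the comparability $\hph_\pm\sim\varphi_\pm$ again uses the geometric decay across the $K$ terms. Your construction skips this summation and therefore cannot obtain property (ii) without an unjustified single-step contraction. Your treatment of property (iv) via uniform transversality of $W^\pm_{\loc}$ is correct and coincides with the paper's.
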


To prove this lemma we need two preliminary results. 
\begin{lem}
\label{l:whitney}
Suppose $ \Gamma \subset \RR^m $ is a closed set. For any $ \epsilon > 0 $
there exists $ \varphi  \in \CI ( \RR^m )$, such that
\be\label{e:Whitney} \varphi  \geq \epsilon \,, \ \
 \varphi \sim \epsilon +  d ( \bullet, \Gamma)^2 
  \,, \qquad
\partial^\alpha \varphi  = \Oo ( \varphi  ^{1-|\alpha|/2 } ) \,,
\ee
where the estimates are uniform on $ \RR^m $.
\end{lem}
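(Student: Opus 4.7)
\emph{Proof plan for Lemma \ref{l:whitney}.} The strategy is to regularize the Lipschitz function
$\rho(x) \defeq \sqrt{\epsilon + d(x,\Gamma)^2}$ into a smooth function $\tilde\rho$ with $\tilde\rho \sim \rho$ and $|\partial^\alpha \tilde\rho| \leq C_\alpha \tilde\rho^{1-|\alpha|}$, and then take $\varphi = \tilde\rho^2$. Indeed, $\rho$ is $1$-Lipschitz, bounded below by $\sqrt{\epsilon}$, and satisfies $\rho^2 \sim \epsilon + d(\bullet,\Gamma)^2$; once a good smooth companion $\tilde\rho$ is found, the Leibniz rule
\[
\partial^\alpha(\tilde\rho^2) = \sum_{\beta+\gamma=\alpha}\binom{\alpha}{\beta,\gamma} \partial^\beta\tilde\rho\cdot\partial^\gamma\tilde\rho
\]
will immediately give $|\partial^\alpha \varphi| \leq C_\alpha \tilde\rho^{2-|\alpha|} \sim \varphi^{1-|\alpha|/2}$, and $\varphi \geq \epsilon/C$ (which reduces to $\varphi\geq\epsilon$ after a harmless rescaling of the initial parameter).

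To produce $\tilde\rho$, I would use a standard Whitney-type construction. Choose $c_0>0$ small and cover $\RR^m$ by balls $B_j = B(x_j, r_j)$ with $r_j = c_0\, \rho(x_j)$, extracted so that the collection has uniformly bounded overlap (this is possible since the $1$-Lipschitz property of $\rho$ forces $\rho(x)\sim \rho(x_j)$ for $x\in B_j$, and a Besicovitch/Vitali argument yields the covering). Let $\{\chi_j\}$ be an associated smooth partition of unity with $\supp\chi_j \subset B_j$ and $|\partial^\alpha \chi_j| \leq C_\alpha r_j^{-|\alpha|}$. Set
\[
\tilde\rho(x) \defeq \sum_j \rho(x_j)\, \chi_j(x).
\]
Since $\rho(x)\sim \rho(x_j)$ whenever $x\in B_j$, we get $\tilde\rho \sim \rho$ pointwise. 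For derivative estimates, I exploit the cancellation $\sum_j \partial^\alpha \chi_j \equiv 0$ for $|\alpha|\geq 1$ (since $\sum_j \chi_j \equiv 1$) to rewrite, for any base point $x$,
\[
\partial^\alpha \tilde\rho(x) = \sum_j \bigl(\rho(x_j) - \rho(x)\bigr)\, \partial^\alpha \chi_j(x).
\]
Only the $O(1)$ indices $j$ with $x\in B_j$ contribute; for those, the Lipschitz bound yields $|\rho(x_j)-\rho(x)| \leq r_j \lesssim \rho(x)$, while $|\partial^\alpha \chi_j(x)|\lesssim r_j^{-|\alpha|} \lesssim \rho(x)^{-|\alpha|}$. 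Multiplying out gives $|\partial^\alpha \tilde\rho(x)| \leq C_\alpha\, \rho(x)^{1-|\alpha|}$, as required.

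The main (essentially only) obstacle is the construction of the Whitney cover with the required properties, but since $\rho$ is everywhere positive and $1$-Lipschitz, this is entirely standard --- no finer geometry of $\Gamma$ enters. The Lemma then follows by combining $\varphi=\tilde\rho^2$ with the Leibniz estimate described above. \stopthm
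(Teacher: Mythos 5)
Your proof is correct, but it takes a genuinely different route from the one in the paper, and the comparison is instructive. The paper also uses a Whitney-type cover, but of $\RR^m \setminus \Gamma$ only: it picks a maximal $\delta$-separated (relative to $d(\cdot,\Gamma)$) set $\{x_j\} \subset \RR^m\setminus\Gamma$, takes a single fixed bump $\chi$, and sets
\[
\varphi_\epsilon(x) \;=\; \epsilon + \sum_{d(x_j,\Gamma)>\sqrt\epsilon} d(x_j,\Gamma)^2\,
\chi\!\left(\frac{x - x_j}{d(x_j,\Gamma)+\sqrt\epsilon}\right).
\]
Those bumps are not a partition of unity; the derivative estimates come from the bounded-overlap property of the cover and the scaling of each summand, and the lower bound $\varphi\geq\epsilon$ comes from the explicit leading $\epsilon$. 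You instead regularize $\rho \defeq (\epsilon + d(\cdot,\Gamma)^2)^{1/2}$ in the style of Stein's ``regularized distance'': a Whitney cover of \emph{all} of $\RR^m$ with radii $r_j = c_0\rho(x_j)$ (nondegenerate because $\rho\geq\sqrt\epsilon$), a genuine partition of unity, the weighted sum $\tilde\rho = \sum_j \rho(x_j)\chi_j$, the cancellation $\sum_j\partial^\alpha\chi_j\equiv 0$ to get $|\partial^\alpha\tilde\rho|\lesssim \rho^{1-|\alpha|}$, and then $\varphi=\tilde\rho^2$ with Leibniz. Both approaches are valid; yours cleanly factors through the more classical intermediate object $\tilde\rho$ and does not need to treat a neighborhood of $\Gamma$ separately (the $\epsilon$-floor is automatic because $\rho(x_j)\geq\sqrt\epsilon$ and $\sum_j\chi_j\equiv 1$, so in fact $\varphi\geq\epsilon$ exactly, and the rescaling you mention is unnecessary), while the paper's construction builds $\varphi$ in one step without passing through a square root. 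One small clean-up: to get $\rho(x)\sim\rho(x_j)$ on $B_j$ with constants independent of $j$ you should explicitly fix $c_0<1$, say $c_0=1/2$, using $1$-Lipschitz continuity of $\rho$; you implicitly do this but it is worth stating since the uniformity in $\epsilon$ of all constants is the whole point of the lemma.
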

\begin{proof} For reader's convenience we 
recall the proof (see \cite[Lemma 7.2]{SZ10}) based on 
a Whitney covering argument (see \cite[Example 1.4.8, Lemma
1.4.9]{Hor1}).
For $ \delta \ll 1 $  choose a maximal sequence  $ x_j \in \RR^m \setminus \Gamma $
such that $ d ( x_j, x_i ) \geq \delta d ( x_i, \Gamma ) $ (here $ d $
is the Euclidean distance).  We claim that 
\begin{equation}
\label{eq:18}  \bigcup_j B ( x_j , d( x_j , \Gamma ) / 8 ) = \RR^m \setminus
\Gamma  \,. \end{equation}
In fact, if $ x $ is not in the sequence then, for
some $ j$,
$   d ( x, x_j ) < \delta d ( x_j, \Gamma )  $  or $ d( x,
x_j ) < \delta d ( x , \Gamma )$.
In the first case $ x \in B ( x_j , d ( x_j , \Gamma )/ 8 ) $ if $
\delta < 1/8 $. In the second case,  
$  d ( x, x_j ) <  \delta ( d ( x_j , \Gamma ) + d ( x , x_j ) )
$ which means that $ x  \in B ( x_j , d ( x_j , \Gamma )/8 ) $, 
if $ \delta / ( 1 - \delta ) < 1/8 $.  Hence \eqref{eq:18} holds
provided $ \delta < 1/9 $. 

We now claim that 
every $  x \in \RR^m \setminus \Gamma $ lies in at most $ N_0   =
N_0 ( \delta , m ) $ balls $ B ( x_j , d( x_j , \Gamma ) / 2 ) $.
To see this consider $ x $ and $ i \neq j $ such that $ d ( x , x_j )
\leq d ( x_j, \Gamma ) /2 $ and $ d ( x, x_i ) \leq d ( x_j , \Gamma )
/ 2 $.  Then simple applications of the triangle inequality show that
\[  d ( x_i, x_j ) \geq 2 \delta d (
 x, \Gamma ) / 3 \,, \ \   d ( x_j
, \Gamma ) \leq d ( x, \Gamma ) / 2 
\,. \]
Hence 
\begin{gather*}  B ( x_i , \delta d ( x ,  \Gamma)/3 ) \cap 
 B ( x_j , \delta d ( x ,  \Gamma)/3 )  = \emptyset \,, \\
B (  x_\ell, \delta d ( x ,  \Gamma)/3 )   \subset  B ( x , 4d (x,
\Gamma )/3 )  \,, \ \ \ell = i, j \,.\end{gather*}
Comparison of 
volumes shows that the maximal number of such $ \ell's $ is $ (4/\delta)^m  $. 

Let $ \chi \in \CIc ( \RR^m ; [0,1]) $ be supported in $ B (0 , 1/4 ) $,
and be identically one in $ B ( 0 , 1/8)$. We define
\[ \varphi_\epsilon ( x ) \stackrel{\rm{def}}{=} \epsilon
+ \sum_{ d( x_j , \Gamma ) > \sqrt \epsilon } d ( x_j , \Gamma )^2
\chi \left( \frac{  x - x_j  } { d ( x_j , \Gamma ) + \sqrt \epsilon }
\right) \]
We first note that the number non-zero terms in the sum is uniformly bounded
by $ N_0 $. In fact, $ d ( x_j , \Gamma) + \sqrt \epsilon < 2 d ( x_j , 
\Gamma) $, and hence if $ \chi( ( x - x_j ) / ( d ( x_j , \Gamma ) +     
\sqrt \epsilon ) ) \neq 0 $ then
$$ 1/4 \geq |x-x_j| / ( d ( x_j , \Gamma ) + \sqrt \epsilon )
\geq (1/2) |x - x_j | / d( x_j , \Gamma )  \,,$$
and $ x \in B ( x_j , d ( x_j , d ( x_j , \Gamma) )  / 2 ) $.
This shows that $ \varphi_\epsilon ( x )                                 
\leq 2 N_0 (\epsilon +  d ( x, \Gamma )^2 ) $, and
\[ \partial^\alpha \varphi_\epsilon ( x) =
{\mathcal O} ( ( d ( x, \Gamma )^2 + \epsilon )^{1 - |\alpha|/2 } )
\,,\\
\]
uniformly on compact sets.

To see the lower bound on $ \varphi_\epsilon $
we first consider the case when $ d ( x , \Gamma ) \leq C \sqrt \epsilon\
 $.
\[ \varphi_\epsilon ( x ) \geq \epsilon \geq ( \epsilon + d ( x , \Gamma\
)^2)
 / C'  \,.\]
If $ d ( x , \Gamma ) > C \sqrt \epsilon $ then for at least one
$ j $, $ \chi ( ( x - x_j ) / (d ( x_j , \Gamma ) + \sqrt \epsilon ) )   
= 1 $ (since the balls $ B( x_j , d ( x_j , \Gamma ) /8 ) $
cover the complement of $ \Gamma $, and $ \chi ( t ) = 1 $ if $ |t| \leq\
 1/8                                                                     
$). Thus
$$ \varphi_\epsilon ( x ) \geq \epsilon + d ( x_j , \Gamma ) ^2
\geq ( \epsilon + d ( x , \Gamma)^2 )/ C \,, $$
which concludes the proof. 
\end{proof}

The second preliminary result 
is essentially standard in the dynamical systems literature, resulting
from the hyperbolicity of the map $f$ on $\cT$.
\begin{lem}
\label{l:stan}
For $\tVV$ a small enough neighbourhood of $\TT$, 
there exist $0<\theta_{1}<1$
and $C>0$ such that, for any $ K \geq 0 $ and 
$\rho\in\tVV$ such that $\tF^{k}(\rho)$
remains in $\tVV$ for all $0\leq k \leq K$, we have 
\[
d(\tF^{k}(\rho),\TT_{+})\leq C\,\theta_{1}^{k}\,d(\rho,\TT_{+}),\quad 0\leq k\leq K.\]
The same property holds in the backwards evolution with $ \TT_+ $
replaced by $\TT_- $. 
\end{lem}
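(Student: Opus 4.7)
\emph{Proof proposal.} This is a standard consequence of the hyperbolicity \eqref{eq:aa}--\eqref{eq:aaa}: for $\tVV$ small, $\TT_+$ is locally a union of pieces of unstable manifolds tangent to $E^+$, so the distance to $\TT_+$ is measured in the stable direction $E^-$, which contracts exponentially under forward iteration.

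First I would shrink $\tVV$ so that the splitting $E^\pm$ extends continuously from $\cT$ to $\tVV$ (using the H\"older continuity \eqref{eq:aaa}), so that the local unstable manifolds $W^+_{\loc}(\rho^\ast)$ of \eqref{eq:aaa} depend continuously on $\rho^\ast\in\cT$ with uniform size and curvature, and so that $\tF(\tVV)\subset\UU$. Near each $\rho^\ast\in\cT$ I introduce adapted local coordinates $(x^+,x^-)\in\RR^d\times\RR^d$ in which $E^+_{\rho^\ast}$ and $E^-_{\rho^\ast}$ are the coordinate subspaces, and $W^+_{\loc}(\rho^\ast)$ is a graph $\{x^-=\psi(x^+)\}$ with $\psi(0)=0$ and $d\psi(0)=0$.

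Given $\rho\in\tVV$, I would pick $\rho_+\in\TT_+$ realizing $d(\rho,\TT_+)$; such a minimizer exists since $\TT_+$ is closed. For $\tVV$ sufficiently small, $\rho_+$ lies on some leaf $W^+_{\loc}(\rho^\ast)\subset\TT_+$, and $\rho-\rho_+$ is normal to this leaf, hence aligned with $E^-_{\rho_+}$ up to an error $\Oo(d(\rho,\rho_+)^2)$ coming from the curvature of $W^+_{\loc}$ and the H\"older variation of the splitting. The shadowing orbit $\{\tF^k(\rho_+)\}$ stays in $\TT_+$: indeed $E^\pm$ is $d\tF$-equivariant along $\TT_+$ (via the standard extension of the splitting to unstable manifolds of $\cT$), and $\tF^k(\rho_+)$ remains near $\tF^k(\rho)\in\tVV\Subset\UU$, ensuring $\tF^k(\rho_+)\in\UU$. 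Writing one step as
\[
\|\tF(\rho)-\tF(\rho_+)\|\leq C\theta\,\|\rho-\rho_+\|+\Oo(\|\rho-\rho_+\|^2),
\]
fixing $\theta_1\in(\theta,1)$ and shrinking $\tVV$ so that the quadratic error is absorbed into the spectral gap, induction yields $d(\tF^k(\rho),\tF^k(\rho_+))\leq C\theta_1^k\,d(\rho,\rho_+)$ for $0\leq k\leq K$. Since $\tF^k(\rho_+)\in\TT_+$, the lemma follows, and the statement for $\TT_-$ is obtained by running the same argument with $\tF^{-1}$.

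The main obstacle is to control the quadratic errors uniformly along the whole orbit of length $K$, where $K$ is arbitrary. I would handle this by a stable-cone bootstrap along the shadowing orbit $\{\tF^k(\rho_+)\}\subset\TT_+$: on this orbit the stable bundle is genuinely $d\tF$-invariant, so one can set up an invariant stable cone field of fixed opening around $E^-$, and verify inductively that $\tF^k(\rho)-\tF^k(\rho_+)$ remains in the cone at $\tF^k(\rho_+)$. The hypothesis $\tF^k(\rho)\in\tVV$ keeps this displacement below the cone threshold, so the rate $\theta_1$ is preserved for every $k\leq K$ independently of $K$.
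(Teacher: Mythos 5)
Your proposal is correct in spirit, but it takes a genuinely different route than the paper, and one of your intermediate claims needs to be stated more carefully.

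\textbf{Comparison with the paper.} The paper avoids working along an orbit in $\TT_+\setminus\cT$. It first invokes the shadowing lemma to produce a genuine orbit $\{N_k\}\subset\TT$ near the pseudoorbit of nearby points $M_k\in\TT$, then constructs a reference orbit $Q_k=\tF^k(Q_0)$ with $Q_0=W^-_{\loc}(N_0)\cap W^+_{\loc}(P_0)\in\TT$, where $P_0\in\TT$ is chosen so that the nearest unstable leaf to $\rho$ is $W^+_{\loc}(P_0)$. This places the reference orbit entirely inside $\TT$, where the hyperbolicity hypothesis \eqref{eq:aa} is directly available; the contraction is then read off from adapted coordinates $(u^k,s^k)$ at $Q_k$. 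You instead iterate the nearest point $\rho_+\in\TT_+$ directly, which avoids shadowing and the somewhat delicate intersection construction of $Q_0$, but requires extending the hyperbolic splitting and an invariant stable cone field from $\TT$ to (a neighbourhood of $\TT$ in) $\TT_+$. That extension is indeed standard for uniformly hyperbolic sets (Hadamard--Perron / graph transform, cf.\ \cite[\S6]{KaHa}), but it is an input that the paper's argument does not need. Both approaches then close the induction in essentially the same way, by ensuring that the reference orbit never leaves a controlled neighbourhood of $\TT$; your simultaneous induction (contraction estimate $\Rightarrow$ $\tF^k(\rho_+)$ stays in $\UU$ $\Rightarrow$ $\tF^k(\rho_+)\in\TT_+$) and the paper's shadowing step play the same role, and your cone bootstrap correctly addresses the uniformity in $K$.

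\textbf{One imprecision.} The claim that $\rho-\rho_+$ is ``aligned with $E^-_{\rho_+}$ up to an error $\Oo(d(\rho,\rho_+)^2)$'' is not accurate as stated: orthogonality to the leaf gives $\rho-\rho_+\perp E^+_{\rho_+}$, and since $E^+$ and $E^-$ need not be orthogonal, $(E^+)^\perp\neq E^-$ in general. What is true, and all you need, is that $\rho-\rho_+$ lies in a cone of fixed opening about $E^-_{\rho_+}$, the opening determined by the uniform lower bound on the angle between $E^+$ and $E^-$ over $\TT$. The curvature of $W^+_{\loc}$ contributes only to the quadratic error in the one-step estimate, not to the linear alignment. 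With this corrected, the cone bootstrap goes through and yields a $K$-independent prefactor $C$, exactly as you indicate.
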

\begin{proof} 
We want to use the fact that the map $\tF$ is strictly contracting in
the direction transerse to $\TT_+$ (unstable manifold). To state this
contractivity it is convenient to choose coordinate charts
adapted to the dynamics, and containing the points $\tF^k(\rho)$. 
\begin{figure}
\begin{center}
\includegraphics[angle=-90,width=0.8\textwidth]{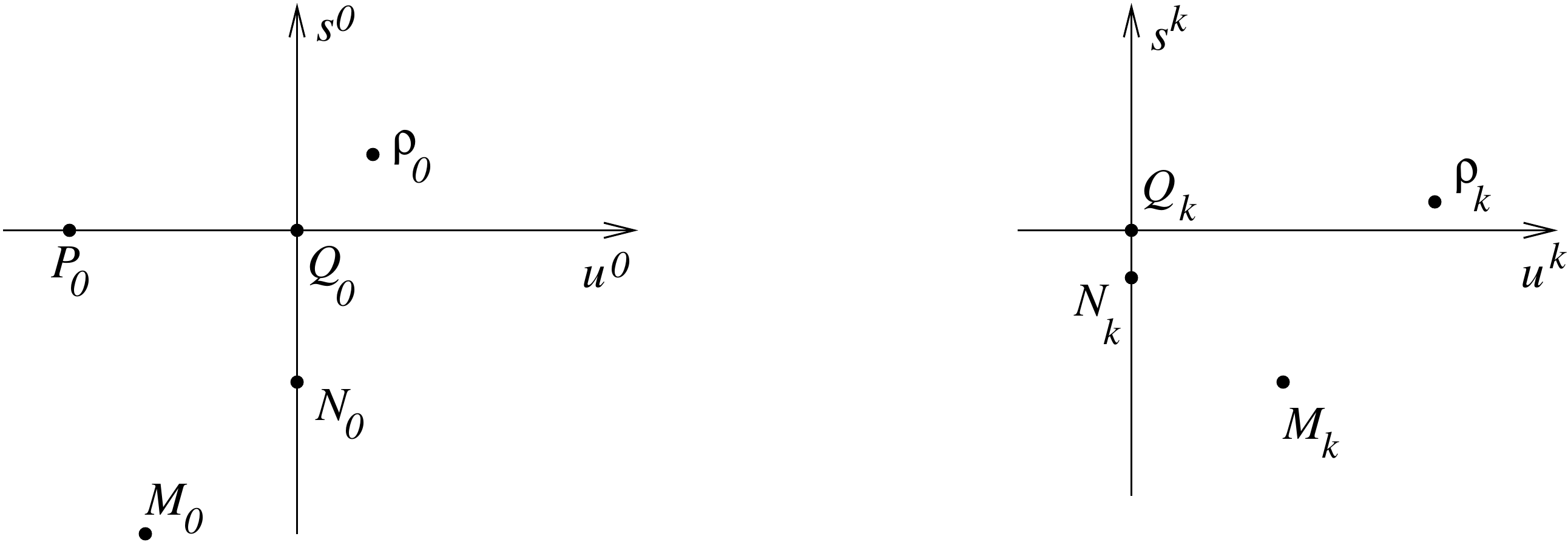}
\caption{\label{f:Lemma43}}
\end{center}
\end{figure}
Let us assume that $\tVV$ is an $\vareps$-neighbourhood
of $\cT$, with $\vareps>0$ small.
By assumption, each $\rho_{k}=\widetilde{F}^{k}(\rho_{0})$, 
$ \rho_0 = \rho$, 
lies in an $\vareps$-size neighbourhood 
of some $M_{k}\in\TT$, $ k \leq K $. 
As a result, the sequence $(M_{k})$
satisfies $d(F(M_{k}),M_{k+1})\leq C\vareps$: it is a $C\vareps$-pseudoorbit.
From the {\em shadowing lemma} \cite[\S 18.1]{KaHa}, there
exists an associated orbit 
\[ 
N_{k}=F(N_{k-1})\in\TT\,, \quad d(N_{k},M_{k})\leq\delta \,, 
\]
with $\delta$ small if $C\vareps$
is small. Hence, $d(N_{k},\rho_{k})\leq\delta+\vareps$.

Besides, the distance $d(\rho_0,\TT_+)$ is equal to the distance
between $\rho_0$ and a certain local unstable leaf $W_{loc}^+(P_0)$,
with $P_{0}\in\TT\cap\tVV$. We will consider the point $Q_{0}=W_{\rm{loc}}^{-}(N_{0})\cap
W_{\rm{loc}}^{+}(P_{0})\in\TT$ and its images $Q_k=\tF(Q_0)$ to
construct our
coordinate charts $(u^{k},s^{k})$, such that the local stable and unstable manifolds 
$ W_{\rm{loc}}^\pm ( Q_k ) $ (see \cite[\S 6.2]{KaHa}) are given by 
\[W_{\rm{loc}}^{-}(Q_{k})=\left\{ (0,s^{k})\right\} \,, \quad
 W_{\rm{loc}}^{+}(Q_{k})=\left\{ (u^{k},0)\right\} \,.
\] 
From the uniform transversality of stable/unstable manifolds, 
these coordinates can be chosen such that for the Euclidean norm we have
\[ \| u^k \|^2  + \| s^k \|^2 \sim d ( Q_k , \rho^k ) ^2\,,\]
uniformly for $ 0\leq k\leq K $. We also have $\|u^k\|,\ \|s^k\|\leq
C(\vareps+\delta)$.
See Fig.~\ref{f:Lemma43} for a
schematic representation.
In this coordinates, the point $\rho_{k}=(u^{k},s^{k})$ is mapped into 
\be
\tF(u^{k},s^{k})=(A_{k}u^{k}+\alpha_{k}(u^{k},s^{k}),{}^{t}A_{k}^{-1}s^{k}+\beta_{k}(u^{k},s^{k}))\label{eq:coordinates}\end{equation}
with $\alpha_k,\beta_k$ smooth functions, $\alpha_{k}(0,s)=\beta_{k}(u,0)=0$,
$d\alpha_{k}(0,0)=d\beta_{k}(0,0)=0$, and the contraction property  $\norm{ A_{k}^{-1}} \leq\nu <1$.

This contraction implies that 
\[
\norm{s^{k}} \leq (\nu+C(\delta+\vareps))^{k}\,
  \norm{s^{0}} ,\quad 0\leq k\leq K\,. 
\]
We can choose $\vareps,\delta$ small enough such that $\theta_{1}\defeq\nu+C(\delta+\vareps)<1$.
Finally, $d(\rho_{k},W_{\rm{loc}}^{+}(Q_{k}))\sim \norm{s^{k}} $
satisfies
\[
d(\rho_{k},\TT_{+})\leq d(\rho_{k},W_{\rm{loc}}^{+}(Q_{k}))\leq
C\,\theta_{1}^{k}\, d(\rho_{0},\mathcal{T}_{+})\,.
\]
\end{proof} 

\medskip

\noindent
{\em Proof of Lemma \ref{l:S70}.} 
We adapt the proof of \cite[Proposition 7.4]{SZ10}
to the setting of a discrete dynamical system.
Let $ \varphi_\pm $ be the functions provided by Lemma
\ref{l:whitney}, respectively for 
$ \Gamma = \TT_{\pm} $. As above we call $\tF$ the restriction of $F$
on $\tVV$, and similarly call $\tF^{-1}=F^{-1}\rest_{\tVV}$. 
For $ K\geq 1 $ to be determined below, we consider
the following neighbourhood of $\TT$:
\be\label{e:VV}
\VV=\bigcap_{k= -K-1}^{K+1} \tF^{k}(\tVV)\,,\quad\text{and define}\
\VV'\defeq \VV\cup \tF(\VV)\,.
\ee
In words, $\VV$ is the set of points $\rho\in\tVV$, whose orbit remains
in $\tVV$ in the time interval $[-K-1,K+1]$. 
The following functions are then well-defined on $\VV'$:
\[ 
\hph_\pm ( \rho ) \defeq \sum_{k=0}^K \varphi_\pm ( \tF^{\pm k } ( \rho )) \,.
\]
Lemma \ref{l:stan} shows that if $ \tVV $ (and therefore $\VV$) is small enough,
then there exist $ \theta_1\in(0,1) $ and $ C > 1 $, 
such that 
\be\label{eq:distpm}  
d ( \tF^{\pm k} ( \rho ) , \TT_\pm ) \leq 
C \theta_1 ^k d ( \rho, \TT_\pm )  \,, \qquad 0 \leq k\leq K \,.
\ee
It thus follows that 
\begin{gather}
\label{eq:gath}
\begin{gathered} \hph_\pm \; \sim \; 
\varphi_{\pm} ( \rho ) \; \sim \; d ( \rho, \TT_\pm )^2 +  \epsilon \,,
\end{gathered}
\end{gather}
with implicit constants independent of $K$.
This establishes the first statement in \eqref{eq:es1'}. 
To obtain the second statement we see that, for any $\rho\in\VV$,
\[ \begin{split}
& \hph_+ ( \rho )  - \hph_+  ( \tF ( \rho ) ) 
=   \varphi_+ ( \rho ) -  \varphi_+ (
\widetilde  F ^{K+1} ( \rho )) \\
& \hph_-  ( \tF ( \rho ) ) - \hph_- ( \rho ) 
= \varphi_- ( \tF ( \rho )) - \varphi_- ( \tF^{-K} ( \rho )) 
\,.
\end{split}
\]
In view of \eqref{eq:gath}, we find
\[ 
\begin{split} \varphi_+ ( \rho ) -  \varphi_+ 
( \tF ^{K+1} ( \rho )) 
& = \varphi_+ ( \rho ) + \Oo ( d ( \tF^{K+1} ( \rho ) , \TT_+ )^2 
+ \epsilon ) \\
& = 
\varphi_+ ( \rho ) + \Oo ( \theta_1^K d ( \rho  , \TT_+ )^2 
+ \epsilon  )  \\
& = \varphi_+ ( \rho ) ( 1 + \Oo ( \theta_1^K ) ) 
 + \Oo( \epsilon ) \,,
\end{split}\]
and similarly for $ \varphi_- $. Taking $K$ large enough so that $\Oo
( \theta_1^K )\leq 1/2$, we obtain, for some $ C_0>0 $, the required estimates:
\[ 
\pm (\hph_\pm ( \rho )  - \hph_\pm ( \tF ( \rho ) ) + C_0 \epsilon
\sim  \hph_\pm ( \rho ) \,.
\]
The estimate 
\[ \partial^\alpha
 \hph_\pm (\rho ) = {\mathcal O} ( \hph_\pm ( \rho )^{ 1
- |\alpha|/2} )\,,  \]
follows from the properties of $ \varphi_\pm $ stated in Lemma
\ref{l:whitney}. 
It remains to show that
\begin{equation}
\label{eq:distTT}   \hph_+ ( \rho ) + \hph_- ( \rho )
\sim d( \rho ,  \TT)^2 +  \epsilon \,. 
\end{equation}
This results \cite{SjDuke,SZ10} from the uniform transversality of the
stable and unstable manifolds (near the trapped set). 
Indeed, 
this transversality implies that, for any two
nearby points $\rho_1,\rho_-\in\cT$ and $\rho$ near them, we have 
\begin{equation}
\label{eq:trans} d ( \rho ,  W^+_{\rm{loc}}(\rho_1)  \cap 
W^{-}_{\rm{loc}}( \rho_2 )) ^2
\sim d ( \rho , W^+_{\rm{loc}} ( \rho_1 ) ) ^2 + 
d ( \rho , W^-_{\rm{loc}}( \rho_2 ) ) ^2  \,.
\end{equation}
Besides, since $\TT_+$ ($\TT_-$) is a union of local unstable
(stable, respectively) manifolds, 
for any $\rho$ near $\cT$ the distance $d(\rho,\TT_{\pm})$ is equal to  $d ( \rho ,
W^+_{\rm{loc}}( \rho_{\pm} ) )$ for some nearby points $\rho_{\pm}$.
We thus get 
\be
d( \rho ,  \TT_+)^2 +  d( \rho ,
 \TT_-)^2 \sim d ( \rho ,  W^+_{\rm{loc}}(\rho_+)  \cap 
W^{-}_{\rm{loc}}( \rho_- )) ^2 \geq d(\rho, \cT)^2\,.
\ee
On the other hand, $d(\rho,\cT)=d(\rho,\rho_0)$ for some
$\rho_0\in\cT$, so that 
\[\begin{split}
d(\rho,\cT)^2 &= d ( \rho ,  W^+_{\rm{loc}}(\rho_0)  \cap 
W^{-}_{\rm{loc}}( \rho_0 ))^2 \\
&\sim d ( \rho ,  W^+_{\rm{loc}}(\rho_0) )^2
+ d ( \rho ,  W^-_{\rm{loc}}(\rho_0) )^2\geq d(\rho,\TT_+) + d(\rho,\TT_-)\,.
\end{split}\]
We have thus proven the transversality result
\be\label{e:transversality}
 d( \rho ,  \TT)^2 \sim  d( \rho ,  \TT_+)^2 +  d( \rho ,
 \TT_-)^2\,, \qquad \rho\ \text{near }\cT\,.
\ee
The statement \eqref{eq:distTT} then directly follows from \eqref{eq:gath}.
\stopthm

From the properties of Lemma~\ref{l:S70}, and in view of the model
\eqref{e:escape-1},
it seems tempting to take
the escape function of the form $\hph_+(\rho)-\hph_-(\rho)$. Yet,
since we want $e^G$ to be an order function, $G$ cannot grow too fast
at infinity.
For this reason, following \cite[\S 7]{SZ10} we use a logarithmic flattening
to construct our escape function:
\begin{lem}
\label{l:S71}
Let $ \hph_\pm $ be the functions given by Lemma \ref{l:S70}.
For some $ M \gg 1 $ independent of 
$ \epsilon $, let us define the function
\begin{equation}
\label{eq:es2}
\hG \defeq
\log (  M \epsilon + \hph_- ) - \log (  M \epsilon + \hph_+ )
\end{equation}
on the neighbourhood $\VV'$ of the trapped set defined in Eq.~\eqref{e:VV}.

Then there exists $ C_1 > 0 $ such that
\begin{gather}
\label{eq:es4}
\begin{gathered}
\hG=\Oo(\log(1/\eps)),\quad \partial_\rho^\alpha \hG =
 \Oo (   \min (
\widehat \varphi_+ , \hph_- )^{-\frac{|\alpha|}2
} )  = \Oo (  \eps^{ -\frac{|\alpha|}2
} ) \,, \quad  |\alpha | \geq 1 \,, \\
\partial_\rho^\alpha ( \hG (\tF ( \rho ) ) - \hG ( \rho )) =
 \Oo(   \min (\hph_+ , \hph_- )^{-\frac{|\alpha|}2
} )  = \Oo(  \eps^{ -\frac{|\alpha|}2
} ) \,, \quad |\alpha | \geq 0 \,, \quad\rho\in \VV\,,\\
\rho \in \VV ,\  d ( \rho, \TT )^2 \geq C_1 \eps\ \ 
\Longrightarrow \  \hG (\tF ( \rho ) ) - \hG ( \rho ) 
\geq 1 / C_1  \,. 
\end{gathered}
\end{gather}
\end{lem}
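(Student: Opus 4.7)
The plan is to verify the four displayed estimates in \eqref{eq:es4} in succession, using the shorthand $f_\pm \defeq M\eps + \hph_\pm$, so that $\hG = \log f_- - \log f_+$ and $f_\pm \sim \hph_\pm$ on $\VV'$. The first estimate $\hG = \Oo(\log(1/\eps))$ is immediate: from the first line of \eqref{eq:es1'}, $\eps/C \leq \hph_\pm \leq C$ on the compact set $\VV'$, so each $\log f_\pm$ lies in $[\log(\eps/C'), \log C']$ and the claim follows. For the derivative estimate in (2), I would note that $\partial^\alpha f_\pm = \partial^\alpha \hph_\pm = \Oo(\hph_\pm^{1-|\alpha|/2})$ for $|\alpha|\geq 1$ by Lemma~\ref{l:S70}, and then apply Faà di Bruno to $\log \circ f_\pm$: each term in the expansion is a product of factors $\partial^{\beta_j} f_\pm / f_\pm$, and the combinatorics give $\partial^\alpha \log f_\pm = \Oo(\hph_\pm^{-|\alpha|/2})$. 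Subtracting yields $\partial^\alpha \hG = \Oo(\min(\hph_+,\hph_-)^{-|\alpha|/2})$, which is $\Oo(\eps^{-|\alpha|/2})$ since $\hph_\pm \geq \eps/C$.

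The derivative bound in (3) rests on the preliminary observation that the second line of \eqref{eq:es1'} (with the implicit constants controlled by choosing $K$ large, as in the proof of Lemma~\ref{l:S70}) provides the uniform bi-Lipschitz comparison $\hph_\pm(\tF\rho) \sim \hph_\pm(\rho)$ on $\VV$. Combined with the smoothness of $\tF$, Faà di Bruno applied to $\hG\circ \tF$ reproduces the same bound as in (2) for $\partial^\alpha(\hG\circ \tF)(\rho)$, and subtraction gives the derivative bound for $|\alpha|\geq 1$. For $|\alpha|=0$, the same bi-Lipschitz comparison gives $f_\pm(\tF\rho)/f_\pm(\rho) \sim 1$, so both logarithms appearing below are $\Oo(1)$.

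The main obstacle is the strict positivity in the last line of \eqref{eq:es4}, which requires a careful choice of $M$ and $C_1$. Decompose
\[ \hG(\tF\rho) - \hG(\rho) = \log\frac{f_-(\tF\rho)}{f_-(\rho)} + \log\frac{f_+(\rho)}{f_+(\tF\rho)}. \]
Using the second line of \eqref{eq:es1'} in the form $\hph_+(\tF\rho) \leq (1-c_1)\hph_+(\rho) + C_0\eps$ and $\hph_-(\tF\rho) \geq (1+c_1)\hph_-(\rho) - C_0\eps$, each summand above is bounded below by $-\Oo(1/M)$ once $M \geq 2C_0$; moreover, on the region $\{\hph_\pm(\rho) \geq C_1\eps\}$ the corresponding summand exceeds $\tfrac12\log(1 + c_1)$, provided $C_1$ is large enough that the additive $C_0\eps$ correction is dominated by the multiplicative $c_1\hph_\pm(\rho)$ term. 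Finally, the hypothesis $d(\rho,\TT)^2 \geq C_1\eps$ together with the fourth line of \eqref{eq:es1'} forces $\hph_+(\rho) + \hph_-(\rho) \gtrsim C_1\eps$, so that at least one of $\hph_\pm(\rho) \geq cC_1\eps$ holds. Choosing $C_1$ large enough that the positive contribution from this dominant side absorbs the $\Oo(1/M)$ loss from the other, we conclude $\hG(\tF\rho) - \hG(\rho) \geq 1/C_1$.
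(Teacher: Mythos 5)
Your proposal is correct and follows essentially the same approach as the paper's proof: the key decomposition of $\hG(\tF\rho)-\hG(\rho)$ into the two logarithms, the lower bound $-\Oo(1/M)$ on each summand via the second estimate of \eqref{eq:es1'}, and the use of the transversality estimate to force one of $\hph_\pm(\rho)\gtrsim C_1\eps$ all match the paper's argument. One small inaccuracy: the bi-Lipschitz comparison $\hph_\pm(\tF\rho)\sim\hph_\pm(\rho)$ does not actually follow from the second line of \eqref{eq:es1'} alone (which for $\hph_+$ only gives an upper bound on $\hph_+(\tF\rho)/\hph_+(\rho)$); it instead follows from the first line of \eqref{eq:es1'} together with the fact that $\tF$ is a diffeomorphism with Lipschitz inverse and $\TT_\pm$ is locally invariant under $\tF^{\pm1}$, but this does not affect the overall validity.
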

\begin{proof}
Only the last property in
\eqref{eq:es4} needs to be checked, the others following directly from
Lemma \ref{l:S70}. For this aim we compute
\be
\label{eq:st2}
\hG ( \tF ( \rho )) - \hG ( \rho )  = 
\log\Big( 1 + \frac{\hph_- ( \tF (\rho)) - \hph_- (\rho)} {M \eps + \hph_- ( \rho ) } \Big) 
+\log\Big( 1 + \frac{ \hph_+(\rho) - \hph_+ (\tF (\rho) ) }
{M \eps + \hph_+ ( \tF (\rho ) ) }\Big)  \,.
\ee
Using \eqref{e:transversality}, the condition
$ d ( \TT , \rho)^2 \geq C_1 \eps $ implies that 
$ d ( \TT_+ , \rho )^2 \geq C_2 \eps $ or 
$ d ( \TT_- , \rho )^2 \geq C_2 \eps $, and $ C_2 $ can be 
taken as large as we wish if $ C_1 $ is chosen large enough. 

Let us take care of the first term in \eqref{eq:st2}. For this we need
to bound from below the ratio
\be\label{e:ratio}
R_-(\rho)\defeq\frac{\hph_- ( \tF (\rho)) - \hph_- (\rho)} {M \eps + \hph_- ( \rho ) }\,.
\ee
Let us call $C_3\geq 1$ a uniform constant for the equivalences in
\eqref{eq:es1'}. The second equivalence shows that
$$
\hph_- ( \tF (\rho)) - \hph_- (\rho) \geq \hph_- (\rho)/C_3 - C_0 \eps\,.
$$
Since the function $ x \mapsto \frac{x/C_3 - C_0}{ x + M} $ 
is increasing for $x\geq 0$, the ratio \eqref{e:ratio} satisfies
$R_-(\rho)\geq -C_0/M$.
If we take $M$ large enough, we ensure that 
$$
\log(1+R_-(\rho))\geq -2C_0/M\,.
$$
Furthermore, 
in the region where $d(\rho,\TT_-)^2\geq C_2\eps$, the first
statement in \eqref{eq:es1'} shows that $\hph_-(\rho)\geq
\frac{C_2\eps}{C_3}$, so that
$$
R_-(\rho)\geq \frac{C_2/C_3^2 - C_0}{C_2/C_3+M}\defeq C_4\,.
$$
If we take $C_1$ (and thus $C_2$) large enough, $C_4$ is
nonnegative, and $\log(1+R_-(\rho))\geq C_4/2$. By increasing $M$ and
$C_1$ if necessary, we can assume that $C_4>6C_0/M$.

The same inequalities hold for the second term in 
\eqref{eq:st2} (the condition is now $ d ( \rho , \TT_+ )^2 \geq C_2
\eps$).

Finally, if $ d ( \rho , \TT )^2 \geq C_1 \epsilon$, we find the inequality
\[ \hG ( \tF ( \rho )) - \hG ( \rho ) \geq \frac{C_4}{2} -
\frac{2C_0}{M}>\frac{C_0}{M} \,. 
\] 
Increasing $C_1$ if necessary, the right hand side is $\geq 1/C_1$.
\end{proof}

\subsection{Final construction of the escape function}
\label{efa}

We now set up the
escape function away from the trapped set. We recall that $\tD$ is the departure, resp. arrival sets of the open relation $F$.
\begin{lem}\label{lem:G_0} 
Let $\cW_2$ be an arbitrary small neighbourhood of
the trapped set, and $\cW_3\Subset \tD$ large enough (in
particular, we require that $\cW_3\Supset \supp a_M$, where $a_M$ is the function in
Definition~\ref{def:HQMO}).
Then, there exists $\gz\in\CIc(T^*Y)$, and a neighbourhood
$\cW_1\Subset \cW_2$ of the trapped set, with the following
properties:
\begin{gather}\label{e:G_0}
\begin{gathered}
\forall \rho\in \cW_1,\quad \gz(\rho)\equiv 0,\quad \\
\forall \rho\in \cW_3,\quad \gz (F(\rho)) - \gz (\rho) \geq 0,\\
\forall \rho\in \cW_3\setminus \cW_2,\quad \gz (F(\rho)) - \gz (\rho) \geq 1\,.
\end{gathered}
\end{gather} 
\end{lem}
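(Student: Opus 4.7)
Following the classical escape-function construction for hyperbolic open maps \cite[Appendix]{GeSj}, the plan is to build $g_0$ as a finite telescoping sum of cutoffs along orbits of $F$. Fix a smooth cutoff $\chi\in\CIc(T^*Y;[0,1])$ with $\chi\equiv 1$ on $\cW_3$ and $\supp\chi\Subset \cW_3^+\Subset\tD$, where $\cW_3^+$ is a slight enlargement of $\cW_3$ still having $\cT$ as its maximal invariant set. The hypothesis $\cT\cap\partial F=\emptyset$ allows $\chi$ to be chosen so that every composition $\chi\circ F^{\pm k}$, extended by $0$ wherever the iterate of $F$ is undefined, is smooth and compactly supported on $T^*Y$.

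The key dynamical input is a uniform escape estimate: there exists $N\in\NN$ such that
\[
\chi(F^N(\rho))+\chi(F^{-N}(\rho))\leq 1, \qquad \forall\,\rho\in\cW_3\setminus\cW_2\,.
\]
This follows from a nested-compactness contradiction applied to the decreasing family $K_N\defeq\{\rho\in\overline{\cW_3}\setminus\cW_2^\circ:F^k(\rho)\in\overline{\cW_3^+}\ \forall |k|\leq N\}$: any limit point of $K_N$ as $N\to\infty$ has its full orbit inside $\overline{\cW_3^+}$ and so must lie in $\cT$ by local maximality, contradicting its position outside $\cW_2^\circ\supset\cT$. A minor refinement, using the trapping structure of $\cW_3^+$ (standard for locally maximal hyperbolic sets) to prevent orbits from re-entering after exit, upgrades ``exit at some $|k|\leq N$'' to ``exit at the specific time $|k|=N$'', giving the displayed estimate. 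Given such $N$, I define
\[
g_0(\rho)\defeq \sum_{k=-N}^{-1}\chi(F^k(\rho))\;-\;\sum_{k=0}^{N-1}\chi(F^k(\rho))\in\CIc(T^*Y)\,.
\]
A direct telescoping yields the exact identity $g_0(F(\rho))-g_0(\rho)=2\chi(\rho)-\chi(F^{-N}(\rho))-\chi(F^N(\rho))$. On $\cW_3$, $\chi(\rho)=1$ and each subtracted term lies in $[0,1]$, so the increment is $\geq 0$; on $\cW_3\setminus\cW_2$ the escape estimate improves this to $\geq 1$. Finally set $\cW_1\defeq\bigcap_{|k|\leq N}F^{-k}(\cW_3^\circ)$: an open neighbourhood of $\cT$ (by invariance of $\cT$ and $\cT\subset\cW_3^\circ$), which by the same nested-intersection argument applied with $\cW_3$ in place of $\cW_3^+$ satisfies $\cW_1\Subset\cW_2$ once $N$ is large enough. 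For $\rho\in\cW_1$, every iterate $F^k(\rho)$ with $|k|\leq N$ lies in $\{\chi=1\}$, so both sums in $g_0(\rho)$ equal $N$ and $g_0\equiv 0$ on $\cW_1$.

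The principal obstacle is the uniform escape estimate: the one-sided escape times $\tau_\pm(\rho)$ blow up as $\rho$ approaches the incoming, respectively outgoing, tails $\cT_\mp$, so neither alone can be uniformly bounded on $\cW_3\setminus\cW_2$. The symmetric form of $g_0$ is precisely what exploits the complementarity $\cT_+\cap\cT_-=\cT$: on $\cW_3\setminus\cW_2$, at least one of the two time directions must escape in bounded time, and compactness turns pointwise finiteness into uniformity. The separate care needed to pass from ``exit at some $|k|\leq N$'' to ``exit at the specific time $|k|=N$'' is handled by the standard trapping-repelling structure of an isolating block for a locally maximal hyperbolic set.
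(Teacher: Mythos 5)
Your telescoping construction is elegant and genuinely different from the paper's proof (which builds $g_0$ from finitely many bump functions, each following a single finite orbit segment), but two of the steps you flag as routine are real gaps.

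First, the smoothness of the iterates $\chi\circ F^{\pm k}$. You assert that $\cT\cap\partial F=\emptyset$ lets $\chi$ be chosen so that these maps, extended by zero where $F^{\pm k}$ is undefined, are smooth and compactly supported; but this does not follow. The hypothesis only controls the geometry near $\cT$, while $\chi\equiv 1$ on all of $\cW_3$, a fixed neighbourhood that must contain $\supp a_M$. After one or more iterates, $\supp(\chi\circ F^k)$ can abut the singular set $\partial\tD$ (the glancing rays in the obstacle case, where $F$ has only square-root regularity); as $\rho$ approaches a point where $F^{\pm k}$ ceases to be defined, $\chi(F^{\pm k}(\rho))$ need not tend to $0$, so the extension by zero can be discontinuous. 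One has to cut the orbits before they reach $\partial F$, which is exactly what the paper's local construction achieves by shrinking each $V_\rho$ so that the finitely many sets $F^j(V_\rho)$, $0\le j\le n_+(\rho)$, stay away from $\partial F$.

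Second, the uniform escape estimate at the specific time $|k|=N$. Your nested-compactness argument yields an $N$ such that every $\rho\in\overline{\cW_3}\setminus\cW_2^\circ$ exits $\overline{\cW_3^+}$ at \emph{some} $|k|\le N$; upgrading this to "exit at exactly $|k|=N$" would require that orbits cannot re-enter after leaving. You invoke "the trapping structure of $\cW_3^+$ (standard for locally maximal hyperbolic sets)", but that once-out-stays-out property is not a consequence of local maximality: an isolating neighbourhood only forbids orbits that stay inside for all time, not excursions that leave and return. For a general open relation $F$ as in \S\ref{2gr} (where the lemma must apply) there is no canonical choice of $\cW_3^+$ that is also a trapping/repelling block. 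The paper circumvents exactly this by assigning increasing weights $(k+1)\chi_{\rho,k}$ along each finite orbit segment and placing the final, large negative bump $\chi_{\rho,n_+}$ with support outside $\cW_4$; no uniform escape at a single fixed time is then needed. Your symmetric sum could be repaired by replacing $\chi\circ F^{\pm k}$ with nested products $\chi\cdot(\chi\circ F)\cdots(\chi\circ F^{\pm k})$, which vanish as soon as the orbit has ever left $\supp\chi$, but verifying the resulting increment estimate brings the argument back close to the paper's orbit-segment construction.
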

In \cite{NSZ1} such a
function $\gz$ was obtained as the restriction (on the
Poincar\'e section) of an escape function for the scattering flow, the latter being constructed in \cite[Appendix]{GeSj}. 

The proof of this Lemma for a general open map satisfying the
assumptions of \S\ref{2gr} will be given in the
appendix.%~\ref{app:G_0}. 
It is an adaptation of the construction of an escape function near the
outgoing tail performed in \cite{DatVas10}
and \cite{VasZw00}.

Now we want to glue our escape function $\hG$ constructed in
Lemma~\ref{l:S71}, defined in the small neighbourhood $\VV'$ of
$\TT$, with the escape function $\gz$ defined away from the trapped
set: the final escape function $G$ will
be a globally defined function 
on $ T^*Y $. One crucial thing is to check that this function is 
the logarithm of an order function for the $ \ST $ class --- see
\eqref{eq:ord} and \eqref{eq:ord1}. 

The following construction is directly inspired by \cite[Prop. 7.7]{SZ10}
\begin{prop}
\label{p:es2}
Let $\VV$, $\hG$ be as in Lemma~\ref{l:S71}, and choose the
neighbourhoods $\cW_i$ such that
$\TT 
% \Subset\cW_1
\Subset\cW_2\Subset \cV \Subset\cW_3\Subset \pi_R(F)$.
 
Take $ \chi\in \CIc ( \VV') $ equal to $ 1 $
in $\cW_2\cup F(\cW_2)\Subset \VV'$. 
Construct an escape function $\gz$ as in
Lemma~\ref{lem:G_0}, and define
\[ 
G \defeq \chi \hG + C_5\log(1/\eps)\, \gz\, \in \CIc( T^* Y ). 
\]
Then, provided  $C_5$ is chosen large enough, the function 
$G$ satisfies the following estimates:
\begin{gather}
\label{eq:es3}
\begin{gathered}
 |G(\rho)|  \leq C_6\,\log ( 1 / \epsilon ) \,, \quad
\partial^\alpha G  = 
\Oo( \eps^{-|\alpha|/2} ) \,, \quad |\alpha | \geq 1 \,, 
  \\
\rho\in \cW_2\;\Longrightarrow \; \quad G ( F ( \rho)  ) - G ( \rho )  \geq - C_7\,,
\\
\rho \in \cW_2\,, \quad
  d ( \rho, \TT)^2 \geq C_1 \eps
\;  \Longrightarrow \;  G ( F ( \rho)  ) - G ( \rho )  \geq 1 / C_1
\,,\\
\rho\in \cW_3\setminus \cW_2\;\Longrightarrow G(F(\rho)) - G (\rho)
\geq C_8\,\log(1/\eps)\,.
% \\  
% \rho \in \WFh ( A )  \setminus V \,, 
%\ \ ( \rho, \rho') \in 
%F  \;  \Longrightarrow \;  G ( \rho ) - G ( \rho' )  \geq C_0 \log 
%\left( \frac 1 \epsilon \right)  \,, 
\end{gathered}
\end{gather}
In addition we have
\be\label{eq:orderf}
 \frac{\exp G ( \rho ) }{ \exp G( \mu ) } \leq C_9 \left \langle
\frac{ \rho - \mu }{\sqrt \epsilon } \right \rangle^{N_1} \,,
\ee
for some constants $ C_9 $ and $ N_1 $.
\end{prop}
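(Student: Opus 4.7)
The construction combines the weight $\hG$ localised near $\TT$ (Lemma~\ref{l:S71}) with the globally-defined escape function $\gz$ (Lemma~\ref{lem:G_0}). The plan is to verify the listed properties one at a time, leaving the order function estimate \eqref{eq:orderf} for the end as it is the most technical ingredient.

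For the pointwise and derivative bounds in the first line of \eqref{eq:es3}, I combine $|\hG|=\Oo(\log(1/\eps))$ and $|\partial^\alpha\hG|=\Oo(\eps^{-|\alpha|/2})$ from Lemma~\ref{l:S71} with the fact that $\chi$ and $\gz$ are smooth with uniformly bounded derivatives. Leibniz then yields at once $|G|\leq C_6\log(1/\eps)$ and $|\partial^\alpha G|=\Oo(\eps^{-|\alpha|/2})$ for $|\alpha|\geq 1$, the $C_5\log(1/\eps)\partial^\alpha\gz$ contribution being dominated by the $\hG$ piece since $\log(1/\eps)\ll\eps^{-1/2}$.

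For the propagation estimates I consider the three regions separately. When $\rho\in\cW_2$, both $\rho$ and $F(\rho)$ lie in $\cW_2\cup F(\cW_2)$, which is the set where $\chi\equiv 1$; consequently $\chi\hG(F(\rho))-\chi\hG(\rho)=\hG(F(\rho))-\hG(\rho)$, which is bounded below by some constant $-C'$ via the $|\alpha|=0$ case of \eqref{eq:es4}. Since $\cW_2\subset\cW_3$, Lemma~\ref{lem:G_0} ensures the $\gz$-contribution is nonnegative, which gives the first estimate with $C_7:=C'$. Under the extra assumption $d(\rho,\TT)^2\geq C_1\eps$, Lemma~\ref{l:S71} sharpens this to $\hG(F(\rho))-\hG(\rho)\geq 1/C_1$, giving the second estimate. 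When $\rho\in\cW_3\setminus\cW_2$, Lemma~\ref{lem:G_0} provides $\gz(F(\rho))-\gz(\rho)\geq 1$, so the $\gz$-term contributes at least $C_5\log(1/\eps)$, while the $\chi\hG$-term is bounded below by $-2C_6\log(1/\eps)$ using only the uniform bound $|\chi\hG|\leq C_6\log(1/\eps)$ (with $\chi\hG$ extended by zero outside $\VV'$). Choosing $C_5>2C_6$ large enough, the third estimate holds with $C_8:=C_5-2C_6>0$.

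For the order function estimate \eqref{eq:orderf}, the key tool is a Peetre-type inequality on $m_\pm\defeq M\eps+\hph_\pm$. From \eqref{eq:es1'} and the elementary $d(\rho,\TT_\pm)^2\leq 2(d(\mu,\TT_\pm)^2+|\rho-\mu|^2)$, I deduce
\[
\frac{m_\pm(\rho)}{m_\pm(\mu)}\leq C\Big(1+\frac{|\rho-\mu|^2}{M\eps}\Big)\leq C\Big\langle\frac{\rho-\mu}{\sqrt{\eps}}\Big\rangle^2,
\]
from which $e^{\hG(\rho)-\hG(\mu)}\leq C\langle(\rho-\mu)/\sqrt{\eps}\rangle^4$ on $\VV'$. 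To promote this to an estimate for $e^G$ I split into three regimes of $|\rho-\mu|$. For $|\rho-\mu|\leq\sqrt\eps$, the derivative bound $|\nabla G|=\Oo(\eps^{-1/2})$ gives $|G(\rho)-G(\mu)|\leq C$ directly, so the ratio is bounded. For $|\rho-\mu|\geq 1$, $\langle(\rho-\mu)/\sqrt\eps\rangle^{N_1}\geq\eps^{-N_1/2}$, while $|G|\leq C_6\log(1/\eps)$ yields $e^{G(\rho)-G(\mu)}\leq\eps^{-2C_6}$, absorbed by taking $N_1\geq 4C_6$. The intermediate regime $\sqrt\eps<|\rho-\mu|<1$ is the main obstacle: there the Peetre estimate controls the principal piece $\chi(\rho)[\hG(\rho)-\hG(\mu)]$, whereas the boundary correction $[\chi(\rho)-\chi(\mu)]\hG(\mu)$ and the $\gz$-term each produce a factor of the form $\eps^{-C|\rho-\mu|}$ which, after a short computation following the template of \cite[Prop.~7.7]{SZ10}, is absorbed into $\langle(\rho-\mu)/\sqrt\eps\rangle^{N_1}$ provided $N_1$ is taken sufficiently large relative to $C_5$ and the Lipschitz constants of $\chi$ and $\gz$.
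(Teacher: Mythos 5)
Your proof is correct and follows essentially the same route as the paper: the propagation estimates are obtained directly from Lemmas~\ref{l:S71} and~\ref{lem:G_0} exactly as in the paper's proof, and the order-function estimate is built on the same Peetre-type inequality for $\hph_\pm + M\eps$. The only cosmetic difference is in the last step, where the paper compresses the boundary-layer and $\gz$ contributions into the single elementary bound $x\log(1/\eps)\leq C\log\langle x/\sqrt\eps\rangle + C$ valid for $0\leq x\leq 1$, whereas you split into three regimes of $|\rho-\mu|$ and relegate the intermediate one to a cited computation; both devices encode the same fact that $\eps^{-C|\rho-\mu|}$ is uniformly controlled by $\langle(\rho-\mu)/\sqrt\eps\rangle^{N_1}$.
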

Eventually we will apply this construction with the small parameter
\[  
\epsilon = \frac h \th \,. 
\]
In particular, the
condition \eqref{eq:orderf} shows that $ \exp G $ is an order function in 
the sense of \eqref{eq:ord} and \eqref{eq:ord1}. 

%\medskip

%\noindent{\bf Remark.}
%If $ \WF_h ( A ) $ in the definition in \S \ref{2gr} lies in a
%sufficiently small neighbourhood of $ \TT $ we can asssume that
%%(with $ \pi_2 $ the projection on the right factor, as above):
%\begin{equation}
%\label{eq:WFr}  \rho \in \WF_h ( A)  \,, \ \ 
%d ( \rho , \TT )^2 \geq C ( h / \tilde h )^{\frac12} \ 
%\Longrightarrow \ G ( F ( \rho ) ) - G ( \rho ) \geq 1/C \,. 
%\end{equation}

\medskip
\noindent
{\em Proof of Proposition \ref{p:es2}:}
The first three lines of \eqref{eq:es3} are
obvious, since
$\chi\equiv 1$ on $\cW_2\cup F(\cW_2)$, and $\gz\circ F-\gz\geq 0$.
To check the fourth line, we notice that outside $\cW_2$, we have 
$$
G ( F ( \rho)  ) - G ( \rho )=\Oo(\log(1/\eps)) + C_5 \log(1/\eps)
(\gz\circ F(\rho)-\gz(\rho))\geq \Oo(\log(1/\eps)) + C_5 \log(1/\eps)\,.
$$
If $C_5$ is chosen large enough, the right hand side is bounded from
below by $C_8 \log(1/\eps)$ for some $C_8>0$.

We then need to check \eqref{eq:orderf}. We first check it
for the function $\hG$: we want to show
\begin{equation}
\label{eq:ordph} \frac{ \hph_{\pm } ( \rho) + M\epsilon } {\hph_\pm ( \mu)
+ M\epsilon } \leq 
C_1 \left \langle
\frac{ \rho - \mu }{\sqrt \epsilon } \right \rangle^2 \,,
\end{equation}
with $C_1$ depending on $ M $. 
Since $  \hph_\pm + M \eps \sim \hph_\pm $, this is the same as 
\[ 
 \frac{ \hph_{\pm } (\rho ) } {\hph_\pm ( \mu )
 } \leq 
\tilde C_1 \Big\langle
\frac{ \rho - \mu }{\sqrt \eps } \Big \rangle^2 \,, \]
and that follows from the triangle inequality and the properties of 
$ \hph_\pm $:
%, $ \hph_\pm \sim d ( \bullet , \TT_\pm )^2 + \epsilon $, $ \hph_\pm \geq \epsilon $:
\[ \begin{split} 
\hph_\pm ( \rho )  & \leq C ( d ( \rho , \TT_\pm )^2 + \eps ) 
\leq C ( d ( \mu, \TT_\pm)^2  + | \mu - \rho|^2 + 
\eps ) \\
& \leq C' ( \hph_\pm ( \mu )  + | \mu - \rho|^2 
) = C' ( \hph_\pm ( \mu ) 
 + \eps \la ( \rho - \mu )/ \sqrt \eps \ra^2 ) \\
& \leq 2 C'   \hph_\pm ( \mu )   
  \la ( \rho - \mu )/ \sqrt \eps \ra^2 \,.
\end{split}
\]
Inserting the definition of $ \widehat G $, \eqref{eq:ordph} gives
\[ | \hG ( \rho ) - \hG ( \mu ) | \leq C + 2  \log 
\la ( \rho - \mu ) / \sqrt \eps \ra \,.\]
The estimate for 
$ G $ is essentially the same:
\[ \begin{split} | G ( \rho ) - G ( \mu ) | & \leq  
|  \chi ( \rho ) \widehat G ( \rho ) - 
 \chi ( \mu ) \widehat G ( \mu ) | + C_5\log(1/\eps)\,|\gz(\rho)-\gz(\mu)|\\
& \leq 
C | \rho - \mu | \log ( 1 / \eps ) + C \log \langle ( \rho - \mu ) /
\sqrt \epsilon \rangle + C \\
& \leq C' \log \langle ( \rho - \mu ) / \sqrt \epsilon \rangle  + C' 
\,.%,  \ \ \rho, \mu \in \supp \gz \Subset T^*Y \,. 
 \end{split} \]
The last estimate follows from 
\[   x \log \frac 1 \epsilon  \leq
C \, { \log \left\langle \frac x {\sqrt \epsilon} \right 
\rangle + C} \,, \qquad 0 \leq x \leq 1 
\,.  \]
%applied with $ x = | \rho - \mu|/ C'' $, $ C'' \gg 1 $, and $ \delta = 
%\epsilon^{\frac12} $.
\stopthm

The function $G(x,\xi)$ we have constructed will be used to twist the
monodromy operator $M(z,h)$, before injecting it inside a Grushin
problem. We first recall the structure (and strategy) of Grushin
problems.

\section{The Grushin problem}
\label{gp}

In this section we will construct a well posed Grushin problem for the operator 
$ I - M $, where $ M=M(z,h) $ is an abstract hyperbolic open quantum map (or monodromy 
operator) as defined in \S \ref{2gr}. We will treat successively the
untruncated operators $M$, and then the operators $\tM$ truncated by
the finite rank projector $\Pi_h$. The second case applies to the 
monodromy operators constructed as 
effective Hamiltonians for open chaotic 
systems \cite{NSZ1}, and to the operators constructed in \S\ref{assc}
to deal with obstacle scattering.

\subsection{Refresher on Grushin problems}
\label{rgp}

We recall some linear algebra facts related to the 
Schur complement formula. For 
any invertible square matrix decomposed into $4$ blocks, we have
\[  
\begin{bmatrix} a & b \\ c & d \end{bmatrix}^{-1} 
=  \begin{bmatrix} \alpha & \beta \\ \gamma & \delta \end{bmatrix} 
\ \Longrightarrow \ a^{-1} = \alpha - \beta \delta^{-1} \gamma \,, 
\]
provided that $ \delta^{-1} $ exists. As reviewed in \cite{SZ9} 
this formula can be applied to {\em Grushin problems} 
\[   \begin{bmatrix} P & R_- \\
R_+ &  \ 0 \end{bmatrix} \; : \; {\mathcal H}_1 \oplus {\mathcal H}_- 
\longrightarrow {\mathcal H}_2 \oplus {\mathcal H}_+ \,, \]
where $ P $ is the operator under investigation and $ R_\pm $ are suitably 
chosen.
When this matrix of operators is invertible we say that the 
Grushin problem is {\em well posed}.
If $  \dim \cH_- = \dim \cH_+ < \infty $, and $ P = P ( z ) $, it is
customary to write
\[  
\begin{bmatrix}  P (z)   & R_-  \\
R_+ &  0 \end{bmatrix}^{-1} = 
 \begin{bmatrix} E ( z ) & E_+ ( z ) \\
E_- (z )  &  E_{-+} ( z )  \end{bmatrix}\,, 
\]
and the invertibility of $ P( z):\cH_1\to\cH_2 $ is equivalent to the invertibility of the
finite dimensional matrix $ E_{-+} ( z ) $. 
For this reason, the latter will be called 
an {\em effective Hamiltonian}.

This connection is made more precise by the following standard result \cite[Proposition 4.1]{SZ9}:

\begin{prop}
\label{p:4}
Suppose that $ P = P ( z ) $ is a family of Fredholm operators 
depending holomorphically on $ z \in \Omega $,
where $ \Omega \subset \CC $ is a simply connected open set. 
Suppose also that
the operators $ R_\pm = 
R_\pm ( z ) $ are of finite rank, depend holomorphically on $ z \in \Omega $,
and the corresponding Grushin problem is well posed for $ z \in \Omega $.
%and that  $ E_{-+} ( z_0 )  $
%is invertible at some $ z_0 \in \Omega $.  
%Let $ g $ be  holomorphic 
% in $ \Omega$. 
Then for any smooth positively oriented $ \gamma  = \partial \Gamma $, 
$ \Gamma \Subset \Omega $, on 
which $ P ( z )^{-1} $ exists, the operator 
$ \int_\gamma \partial_z P ( z ) P ( z ) ^{-1} 
%g ( z ) 
dz $ is of trace class and 
we have
\begin{equation}
\label{eq:p4}
\begin{split}
\frac 1 { 2 \pi i } \tr \int_\gamma  P ( z )^{-1} \partial_z P ( z ) \,
%g( z ) 
 dz & = 
\frac 1 { 2 \pi i } \tr \int_\gamma  E_{-+} (z)^{-1}  \partial_z E_{-+} ( z ) \, %g ( z ) 
dz \\
& = \# \{ z \in \Gamma   : \det E_{-+} ( z ) = 0 \}  \,,
\end{split} 
\end{equation}
where the zeros are counted according to their multiplicities.
\end{prop}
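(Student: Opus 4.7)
The plan is to use the Schur complement formula and the cyclicity of the trace to reduce the operator-theoretic statement to the classical finite-dimensional argument principle. First I would recall the identity
\[
P(z)^{-1}=E(z)-E_+(z)\,E_{-+}(z)^{-1}\,E_-(z)\,,
\]
which holds on the set where $E_{-+}(z)$ is invertible, together with the block identities $PE_++R_-E_{-+}=0$, $E_-P+E_{-+}R_+=0$, $E_-R_-=I$, $R_+E_+=I$ coming from $\mathcal{P}\mathcal{E}=\mathcal{E}\mathcal{P}=I$. Writing
\[
\partial_zP\cdot P^{-1}=\partial_zP\cdot E-\partial_zP\cdot E_+\,E_{-+}^{-1}\,E_-\,,
\]
the first term is holomorphic in $z\in\Omega$ (both factors are), so Cauchy's theorem gives $\int_\gamma\partial_zP\cdot E\,dz=0$ as an operator. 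The remaining term $\partial_zP\cdot E_+\,E_{-+}^{-1}\,E_-$ factors through the finite-dimensional space $\mathcal{H}_\pm$, hence has finite rank and is trace class; this simultaneously justifies the trace class claim.

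Next I would apply cyclicity to get
\[
\tr\bigl(\partial_zP\cdot E_+\,E_{-+}^{-1}\,E_-\bigr)=\tr\bigl(E_-\,\partial_zP\cdot E_+\cdot E_{-+}^{-1}\bigr).
\]
Differentiating $PE_++R_-E_{-+}=0$ and multiplying by $E_-$ on the left yields
\[
E_-\,\partial_zP\cdot E_+=E_{-+}R_+\,\partial_zE_+-E_-\,\partial_zR_-\cdot E_{-+}-\partial_zE_{-+}\,,
\]
where I used $E_-P=-E_{-+}R_+$ and $E_-R_-=I$. Multiplying on the right by $E_{-+}^{-1}$ and taking traces (again using cyclicity within the finite-dimensional blocks) gives
\[
\tr\bigl(E_-\,\partial_zP\cdot E_+\cdot E_{-+}^{-1}\bigr)=\tr\bigl(R_+\partial_zE_+\bigr)-\tr\bigl(E_-\partial_zR_-\bigr)-\tr\bigl(E_{-+}^{-1}\partial_zE_{-+}\bigr).
\]

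The first two terms on the right are traces of operators on the finite-dimensional spaces $\mathcal{H}_\pm$ that are holomorphic on all of $\Omega$ (since $R_\pm,E_\pm$ are), so their integrals around the closed curve $\gamma\subset\Omega$ vanish. Combining everything,
\[
\frac{1}{2\pi i}\tr\int_\gamma P(z)^{-1}\partial_zP(z)\,dz
=\frac{1}{2\pi i}\int_\gamma\tr\bigl(E_{-+}(z)^{-1}\partial_zE_{-+}(z)\bigr)\,dz\,,
\]
which is the first equality in \eqref{eq:p4}. The second equality is the classical argument principle: by Jacobi's formula $\tr(E_{-+}^{-1}\partial_zE_{-+})=\partial_z\log\det E_{-+}(z)$, so the contour integral counts the zeros of the holomorphic scalar function $\det E_{-+}(z)$ inside $\Gamma$ with multiplicity.

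The only real technical point is justifying trace class and the vanishing of $\int_\gamma\partial_zP\cdot E\,dz$, but both follow at once from the Schur decomposition: the difference $P^{-1}\partial_zP-E\,\partial_zP$ is already finite rank, and the holomorphic piece $E\,\partial_zP$ integrates to zero as an element of $\mathcal{B}(\mathcal{H}_1,\mathcal{H}_2)$ before any trace is taken. After that the computation is purely algebraic manipulation of the Grushin identities together with cyclicity, so there is no genuine obstacle.
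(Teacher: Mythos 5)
Your proof is correct, and it follows the standard Schur-complement route: decompose $P^{-1}=E-E_+E_{-+}^{-1}E_-$, discard the globally holomorphic piece $\int_\gamma \partial_zP\cdot E\,dz=0$ by Cauchy's theorem, use cyclicity of the trace to collapse onto the finite-dimensional block, and absorb the remaining holomorphic corrections. The paper does not actually prove this proposition — it is quoted from \cite[Proposition 4.1]{SZ9} — but your argument is essentially the one given there (one small slip: $E\,\partial_zP\in\mathcal{B}(\mathcal{H}_1)$ and $\partial_zP\cdot E\in\mathcal{B}(\mathcal{H}_2)$, not $\mathcal{B}(\mathcal{H}_1,\mathcal{H}_2)$, though this does not affect the conclusion).
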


\subsection{A well posed Grushin problem}
\label{wpp}
Let $ Y \Subset \bigsqcup_{j=1}^J \RR^d $, $ \UU \Subset T^*Y $,
$ F \subset \UU \times \UU $, be a hyperbolic Lagrangian relation, and
$ M = M(z,h) \in I_{0+} ( Y \times Y , F') $
be an associated hyperbolic quantum monodromy operator as in
Definition~\ref{def:HQMO}. In this section it will be more convenient
to see $M$ as a Fourier integral operator acting on $L^2(\RR^d)^J$. Let $ G $ be the escape
function constructed in Proposition \ref{p:es2}, and $G^w(x,hD)$ the
corresponding pseudodifferential operator.

We will construct a well posed Grushin problem for the operator 
$ P ( z ) =  I - M_{tG}( z )  $, where
\be
\label{eq:conjM}
  M_{tG}(z)  \defeq e^{- t G^w ( x , h D ) }\, M(z)\, e^{ t G^w ( x ,
    h D ) } \,,\quad t>0\,.
\ee
For this aim we will need a
finite dimensional subspace of $L^2(\RR^d)^J$ microlocally covering an $ (h/\tilde h)^{\frac12} $
neighbourhood of the trapped set $\cT$. We will construct that subspace by
using an auxiliary pseudodifferential operator. 
\begin{prop}
\label{p:q}
Let $ \Gamma \Subset T^*\RR^d $ be a compact set. For the order function
\be\label{eq:Qel0}
m ( x, \xi ) = h/ \tilde h + d ( ( x, \xi) , \Gamma )^2 \,, % 1+ x^2 + \xi^2 ,
\ee
there exists
$ q \in \ST ( m ) $, so that
%and a constant $ c_0 > 0 $, so that 
\be
\label{eq:Qel}  
%\begin{gathered}
% d ( ( x , \xi ) , \Gamma ) \leq c_0 \ \Longrightarrow \ 
q ( x , \xi )  \sim m(x,\xi) 
\,,  \qquad 
% d ( ( x, \xi ) , \Gamma  ) \geq 
%c_0 \ \Longrightarrow \ q ( x , \xi ) \geq m ( x , \xi ) \,,  \\ 
 \partial^\alpha q = {\mathcal O } ( q^{1 - |\alpha|/2 } ) \,, 
%\end{gathered}
\ee
and such that, for $\th$ small enough, the operator $ Q \defeq q^w ( x , h D ) $ satisfies $ Q = Q^* \geq h/ 2 
\tilde h $.
\end{prop}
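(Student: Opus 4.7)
The plan is to directly invoke Lemma~\ref{l:whitney} with the closed set $\Gamma \Subset T^*\RR^d$ and the small parameter $\epsilon = h/\tilde h$. This produces a smooth real-valued function $q$ on $T^*\RR^d$ satisfying
\[
q \geq h/\tilde h, \qquad q \sim h/\tilde h + d((x,\xi),\Gamma)^2 = m(x,\xi), \qquad \partial^\alpha q = \Oo(q^{1-|\alpha|/2}),
\]
which are exactly the pointwise statements in \eqref{eq:Qel}. To check that $q \in \ST(m)$, combine $|\partial^\alpha q| \leq C_\alpha q^{1-|\alpha|/2} \leq C'_\alpha m^{1-|\alpha|/2}$ with $m \geq h/\tilde h$, which gives $m^{-|\alpha|/2} \leq (\tilde h/h)^{|\alpha|/2}$ and hence $|\partial^\alpha q| \leq C'_\alpha (\tilde h/h)^{|\alpha|/2} m$, the symbol estimate defining $\ST(m)$. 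Self-adjointness $Q = Q^*$ is then automatic from the reality of $q$ and the definition of the Weyl quantization \eqref{eq:weyl}.

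For the lower bound, I would apply the rescaling \eqref{eq:resc}: the operator $Q$ is unitarily equivalent to $\Opt(\tilde q)$ where $\tilde q(\tilde\rho) \defeq q((h/\tilde h)^{1/2}\tilde\rho)$. Setting $\hat q \defeq (\tilde h/h)\tilde q$, a short chain-rule computation yields $\hat q \geq 1$ pointwise together with $\partial_{\tilde\rho}^\alpha \hat q = \Oo(\hat q^{1-|\alpha|/2})$, so $\hat q$ is a standard-calculus symbol comparable to the order function $\hat m(\tilde\rho) \sim 1 + d(\tilde\rho,\tilde\Gamma)^2$. A sharp G{\aa}rding–type inequality applied to the nonnegative symbol $\hat q - 1$ in the $\tilde h$-semiclassical calculus gives $\Opt(\hat q) \geq 1 - C\tilde h$; undoing the rescaling then yields
\[
Q \geq \tfrac{h}{\tilde h}\bigl(1 - C\tilde h\bigr) \geq \tfrac{h}{2\tilde h},
\]
the last inequality being valid as soon as $\tilde h \leq 1/(2C)$. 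Notice that the factor $1/2$ in the statement of the proposition is exactly the slack needed to absorb the G{\aa}rding remainder.

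The one genuinely delicate point is the sharp G{\aa}rding step for $\hat q - 1$, because this symbol has quadratic growth in $\tilde\rho$ and the textbook statement (nonnegative symbol in $\widetilde S(1)$ gives lower bound $-C\tilde h$) does not apply off the shelf. My intended workaround is a square-root factorization: since $\hat q - 1/2 \geq 1/2$ is smooth and strictly positive with $\partial^\alpha (\hat q - 1/2) = \Oo((\hat q-1/2)^{1-|\alpha|/2})$, the function $b \defeq (\hat q - 1/2)^{1/2}$ lies in $\widetilde S(\la\tilde\rho\ra)$ for the $\tilde h$-calculus. The Weyl composition formula (Lemma~\ref{l:Sjnew}) gives $(\Opt b)^*\,\Opt b = \Opt(\hat q - 1/2) + \tilde h^2\, \Opt r$, with $r$ of strictly lower order; evaluating on $u \in \cS(\RR^d)$ produces the quadratic-form bound $\la \Opt(\hat q) u, u\ra \geq (\tfrac12 - C\tilde h)\|u\|^2$, from which the lower bound on $Q$ follows after rescaling back. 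Alternatively, one can invoke a Fefferman--Phong inequality adapted to the exotic class $\widetilde S(\hat m)$, which delivers the same conclusion.
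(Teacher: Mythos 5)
Your construction of $q$ is identical to the paper's: take $q = \varphi$ from Lemma~\ref{l:whitney} with $\epsilon = h/\tilde h$; the pointwise estimates and the membership $q \in \ST(m)$ follow immediately, and self-adjointness of $Q$ is automatic from the reality of the Weyl symbol.

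For the lower bound you rescale as the paper does, but then take a genuinely different route to deal with the unbounded symbol $\hat q=\tilde v$. The paper uses a resolvent argument tailored to the order-function framework of \S\ref{eaq}: by \eqref{e:Whitney} both $\tilde v$ and $1/\tilde v$ are order functions, $(\tilde v - \lambda)^{-1} \in \widetilde{S}(1/\tilde v)$ for $\lambda\leq 1/2$, the order-function calculus gives invertibility of $\Opt(\tilde v)-\lambda$ for $\tilde h$ small, and self-adjointness then places the spectrum in $(1/2,\infty)$. Your route via a square-root factorization is more elementary and self-contained (it avoids appealing to any unbounded-symbol G{\aa}rding inequality, which you rightly flag as not being in the paper's toolbox off the shelf), at the cost of a somewhat more explicit symbol computation.

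There is, however, one concrete arithmetic gap in the workaround as written. With $b=(\hat q-1/2)^{1/2}$, the Weyl composition $(\Opt b)^*\Opt b=\Opt(\hat q-1/2)+\tilde h^2\Opt r$ yields $\la\Opt(\hat q)u,u\ra\geq\bigl(\tfrac12-C\tilde h^2\bigr)\|u\|^2$, and undoing the rescaling gives $Q\geq\frac{h}{\tilde h}\bigl(\tfrac12-C\tilde h^2\bigr)$, which for any $\tilde h>0$ lies strictly \emph{below} $h/(2\tilde h)$, not above it. Your sentence that "the factor $1/2$ in the statement of the proposition is exactly the slack needed to absorb the G{\aa}rding remainder" has the direction backwards: that $1/2$ is the room you are allowed to lose, so the constant subtracted inside the square root must be taken strictly between $1/2$ and $1$. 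Choosing $b=(\hat q-c)^{1/2}$ with, say, $c=3/4$ keeps $\hat q-c\geq 1/4>0$ (and $\hat q - c\sim\hat q$ since $\hat q\geq 1$), so $\partial^\alpha b=\Oo(1)$ for $|\alpha|\geq 1$ uniformly; the composition then gives $\Opt(\hat q)\geq c-C\tilde h^2\geq 1/2$ once $\tilde h$ is small, and the rescaling delivers $Q\geq h/(2\tilde h)$ as stated. With that correction your argument closes.
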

\begin{proof}
Let $ \varphi $ be as in Lemma \ref{l:whitney} with 
$ \epsilon = h / \th$.
%Except for the modification away from $ \Gamma $, 
We will take $ q = \varphi $, $Q=\Op(q)$. From the reality of $q$,
this operator is symmetric on $L^2(\RR^d)$. The estimates \eqref{eq:Qel} are
automatic, as well as the uniform bound $q(\rho)\geq h/\th$.
Taking into account the compactness of
$\Gamma$, the estimates \eqref{e:Whitney} easily imply that $q\in \ST(m)$.

To prove the lower bound on $Q$, we use the rescaling \eqref{eq:resc}:
\[  \tvarphi (\trho ) = (  \th /h ) 
\varphi \big( (  h / \th )^{\frac 12} \trho \big)
\,, \qquad  \widetilde \Gamma = ( \th / h )^{\frac 12} \Gamma 
\,.  \]
Our aim is to show that $\Opt(\tv)\geq 1/2$ for $\th$ small
enough. This is a form of G{\aa}rding inequality, but for an
unbounded symbol. To prove it, we draw
from \eqref{e:Whitney}
\[ \frac{ \tv ( \trh_1) } { \tv ( \trh_2) } \leq C
\frac{1 +  d ( \trh_1 , \tilde \Gamma )^2  } 
{ 1 + d ( \trh_2 , \tilde \Gamma )^2 } \leq C ( 1 + d ( \trh_1 , \trh_2 ) ^2 ) 
 \,, \]
which shows that $ \tv $ is an order function in the sense of \eqref{eq:orderm},
and $ \tv \in \tS ( \tv ) $. Similarly, the uniform bound $\tv\geq 1$ implies
that 
\[  (\tv - \lambda)^{-1}    \in \widetilde{S} ( 1 /\tv ) \,, \quad
\lambda < \frac 2 3  \,, \]
and hence for $\th$ small enough, $ \Opt(\tv) - \lambda $ is
invertible on $L^2(\RR^d)$, uniformly for $ \lambda \leq 1/2 $
As a result, for $\th$ small enough we have the bound
\[ \Opt ( \tv) \geq 1/2\,.\]

%We also check that \eqref{eq:pvt} 
%implies
%\[  (\tv - \lambda)^{-1}    \in S ( 1 /\tv ) \,, \quad
%\lambda < \frac 2 3  \,, \]
%and hence $ \tv^w ( \tilde x , \th D_{\tilde x } ) - \lambda $ is
%invertible for $ \lambda \leq 1/2 $, provided $ \th $ is small enough.
%This gives 
%\[  \Spec(  \Opt ( \tv)  ) \subset [1/2 , \infty ] 
%\,.\]
%Now, let $ \chi \in \CIc ( T^* \RR^d , [0,1]) $ be equal to $ 1 $ in a
%neighourhood of $ \Gamma $ and to $ 0 $ outside a larger neighbourhood of $ \Gamma $, and take
% $\chi_1=1-\chi$. 

Recalling that 
\[  \Op( \varphi)  =   ( h / \th ) \, U_{ h , \th}^{-1} \,
 \Opt ( \tv ) \, U_{ h , \th} \,, \]
for $\th$ small enough we obtain the stated lower bound on $Q=\Op(q)=\Op(\varphi)$.
\end{proof}

The above construction can be straightforwardly extended to the
case where $ \Gamma = \TT \Subset \UU \Subset (T^*\RR^d)^J$: the
symbol $q=(q_j)_{j=1,\ldots,J}$ is then a vector of symbols $q_j\in \ST(m)$,
and similarly $Q=(Q_j)_{j=1,\ldots,J}$ is an operator on
$L^2(\RR^d)^J$.

Let $ K \gg 1 $ be a constant to be chosen later. We define the following 
finite dimensional Hilbert space and the corresponding orthogonal projection:
\be\label{e:spaceV}
 V \defeq 
 \bigoplus_{j=1}^J V_j,\quad V_j \defeq \bbbone_{ Q_j \leq K h/\th } L^2 ( \RR^d ) \,,
 \qquad
\Pi_V \defeq {\rm diag}(\bbbone_{ Q_j \leq K h/\th }) : L^2 ( \RR^d )^J 
\stackrel{\perp} {\longrightarrow} V \,.
\ee
Due to the ellipticity of $ Q $ away from $ \TT $,
see \eqref{eq:Qel0}, we have
\[  \dim V < \infty \,. \]
Lemma \ref{l:dim} below will give a more precise statement.

The operators $ R_\pm $ to inject in the Grushin problem are then defined as follows:
\begin{gather}
\label{eq:2Rpm}
\begin{gathered}
R_+ =  \Pi_V  \; : \; L^2 ( \RR^d )^J \longrightarrow 
 V  \,, \qquad
R_- \; : \; V \hookrightarrow L^2 ( \RR^d )^J \,.
\end{gathered}
\end{gather}

Before stating the result about the well posed Grushin problem for 
the operator $ M_{tG} $, we prove a crucial lemma based on the 
analysis in previous sections:
\begin{lem}
\label{l:prG}
Let $ M_{t G } ( z ) $ be given by \eqref{eq:conjM}, $ z \in \Omega (
h ) $, where $ \Omega ( h ) $ is given in \eqref{eq:omegah}. Then for $ \Pi_V $ given 
above, and any $ \varepsilon > 0 $, there exists $ t = t ( \varepsilon ) $, 
sufficiently large, and $ \th = \th (t, \vareps ) $, 
sufficiently small (independently of $ h \rightarrow 0 $) so that,
provided $N_0$ from \eqref{eq:Mlo} and $C_6$ from \eqref{e:C_6} satisfy
$N_0 > 4 C_6 t$, then for $h>0$ small enough one has
\begin{equation}
\label{eq:prG}
   \| ( I - \Pi_V ) M_{tG}  \|_{L^2( \RR^d )^J \rightarrow L^2 ( \RR^d )^J } < \varepsilon  \,, \ \ 
  \| M_{tG} ( I - \Pi_V ) \|_{L^2( \RR^d )^J \rightarrow L^2 ( \RR^d )^J } < \varepsilon \,.
\end{equation} 
\end{lem}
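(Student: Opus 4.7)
The approach relies on the fact that conjugation by $e^{tG^w}$ turns $M$ into a Fourier integral operator whose effective amplitude is $\sim e^{t(G-F^*G)}$, a quantity that the escape property of $G$ (Proposition \ref{p:es2}) renders $\leq e^{-t/C_1}$ on the region at distance $\gtrsim\sqrt{h/\th}$ from $\cT$ --- precisely where $I-\Pi_V$ is microlocally supported. First I would use the microlocal cutoff \eqref{eq:Mlo}: the bound $|G|\leq C_6\log(1/\vareps)$ (with $\vareps=h/\th$) together with Lemma \ref{l:stan1} gives $\|e^{\pm tG^w}\|\leq C\vareps^{-tC_6}$, hence
\[
M_{tG}=e^{-tG^w}MA_Me^{tG^w}+O_{L^2\to L^2}(h^{N_0}\vareps^{-2tC_6})\,.
\]
Under the hypothesis $N_0>4tC_6$ this error is $o(1)$ as $h\to 0$ with $\th$ fixed.

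Next, apply Proposition \ref{p:fiom-global} with $A=e^{-tG^w}$ and $T=MA_M\in I_{0+}(Y\times Y,F')$:
\[
e^{-tG^w}MA_M=MA_M\cdot e^{-t(F^*G)^w}b_0^w+T_1B_1\,,
\]
with $b_0-1\in h^{1/2}\th^{3/2}\ST(1)$, $T_1\in I_{0+}(Y\times Y,F')$, and $B_1=h^{1/2}\th^{1/2}\Op(b_1)$, $b_1\in\ST(e^{-tF^*G})$. Composing on the right by $e^{tG^w}$, Proposition \ref{p:bbc} together with the $\ST$-composition rules identify $E\defeq e^{-t(F^*G)^w}b_0^w e^{tG^w}$ as a pseudodifferential operator with symbol in $\ST(e^{t(G-F^*G)})$ of principal part $e^{t(G-F^*G)}$; thus $M_{tG}=MA_M\cdot E+T_1B_1e^{tG^w}+o(1)$.

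For the main term, pick a smooth majorant $\tilde\chi:\RR_+\to[0,1]$ of $\bbbone_{[Kh/\th,\infty)}$ vanishing on $[0,Kh/(2\th)]$, so that $I-\Pi_V\leq\tilde\chi(Q)$, and estimate via the sandwich
\[
\|(I-\Pi_V)MA_M E\|^2\leq\|E^*A_M^*M^*\tilde\chi(Q)MA_M E\|\,.
\]
By smooth functional calculus, $\tilde\chi(Q)$ is a pdo essentially supported where $d(\cdot,\cT)^2\gtrsim Kh/\th$; Egorov then makes $M^*\tilde\chi(Q)M$ a pdo essentially supported where $d(F\rho,\cT)^2\gtrsim Kh/\th$. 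Using that $F$ is Lipschitz near the invariant set $\cT$ (constant $L$), this support is contained in $\{d(\rho,\cT)^2\geq(K/2-1)/L^2\cdot h/\th\}$. Choose $K$ so that $(K/2-1)/L^2\geq C_1$. Since $\supp a_M\Subset\cW_3$, any $\rho$ in the essential support satisfies either $\rho\in\cW_2$ with $d(\rho,\cT)^2\geq C_1 h/\th$, so Proposition \ref{p:es2} gives $(G-F^*G)(\rho)\leq -1/C_1$, or $\rho\in\cW_3\setminus\cW_2$ where the much stronger bound $(G-F^*G)(\rho)\leq -C_8\log(1/\vareps)$ holds. In either case, the full sandwich symbol is $\leq Ce^{-2t/C_1}$ on its essential support, so Lemma \ref{l:stan1} (applied in the $\ST$-calculus) yields $\|(I-\Pi_V)MA_M E\|\leq Ce^{-t/C_1}+O(\th^{1/2})$.

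The remainder $(I-\Pi_V)T_1B_1e^{tG^w}$ is controlled by the same sandwich: the prefactor $h^{1/2}\th^{1/2}$ from $B_1$ produces an even smaller bound $C(h\th)^{1/2}e^{-t/C_1}+O(\th^{1/2})$. Choosing $t$ large enough that $e^{-t/C_1}<\vareps/2$, then $\th$ small, completes the first bound. The second estimate $\|M_{tG}(I-\Pi_V)\|<\vareps$ follows by applying the same argument to $M_{tG}^*=e^{tG^w}M^*e^{-tG^w}$, using that $M^*$ is an FIO for $F^{-1}$ and that $-G$ is an escape function with respect to $F^{-1}$. The main obstacle is the careful interplay between the Egorov Proposition \ref{p:fiom-global}, the smooth functional calculus of $Q$ (needed to handle the non-smooth spectral projector $\Pi_V$), and the $\PT$-calculus with weights growing polynomially in $1/\vareps$; in particular, ensuring Egorov remainders remain subdominant requires combining their $h^{1/2}\th^{1/2}$ prefactors with the restriction of essential supports to $\cW_3$, where $G-F^*G$ is bounded above by the fixed constant $C_7$.
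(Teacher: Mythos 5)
Your proposal is essentially correct and reaches the same estimate with the same underlying mechanism --- isolate $MA_M$ with the cutoff, invoke Egorov with weights (Prop.~\ref{p:fiom-global}), invoke the escape property $(G-F^*G)\leq -1/C_1$ away from an $\sqrt{h/\tilde h}$-neighbourhood of $\cT$, use $N_0>4C_6t$ to kill the $M(I-A_M)$ piece, and send $\tilde h\to 0$ --- but the technical route differs in two ways worth noting. First, to pass from the sharp spectral projector $\Pi_V$ to a pseudodifferential object, you use a smooth majorant $\tilde\chi$ with $I-\Pi_V\leq\tilde\chi(Q)$ and a $TT^*$ sandwich; the paper instead chooses $\psi\in\CIc([0,3K/4))$, $\psi\equiv 1$ on $[0,K/2]$, so that $\psi(\tilde h Q/h)=\Pi_V\psi(\tilde h Q/h)$ and hence $\|M_{tG}(I-\Pi_V)\|\leq\|M_{tG}(I-\psi(\tilde h Q/h))\|$ outright, with no sandwich and no extra square root to undo. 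Second, you move the cutoff across $M$ by estimating $M^*\tilde\chi(Q)M$ via Egorov and then relate $d(F\rho,\cT)$ to $d(\rho,\cT)$ via a Lipschitz bound; the paper instead packages the entire product into a single pseudodifferential operator $Mb_t^w$, with $b_t=a_M\,e^{-t(F^*G-G)}(1-\psi(\tilde h q/h))+\tilde h\,\ST(1)$, and encodes the required implication directly in the choice of $K$ via the two conditions in \eqref{e:largeK} --- one on $q(\rho)$, one on $q\circ F(\rho)$ --- thereby covering both $\|M_{tG}(I-\Pi_V)\|$ and $\|(I-\Pi_V)M_{tG}\|$ by a symmetric choice of constant rather than by a Lipschitz argument plus duality. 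Your duality step for the second estimate (using $M_{tG}^*=(M^*)_{-tG}$ and the escape property of $-G$ for $F^{-1}$) is valid in view of the symmetric construction of $\widehat G$, but it imports a change of constants that the paper avoids. In short: same skeleton, but the paper's choice of $\psi(\tilde h Q/h)$ and the two-sided condition \eqref{e:largeK} let it sidestep both the sandwich and the explicit Lipschitz step.
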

\begin{proof}
Since $ M ( z ) $ is uniformly bounded for $ z \in \Omega ( h ) $ --
see \eqref{e:decayM} -- we will work with a fixed $ z $ and 
normalize $ \| M ( z )  \|_{L^2 \rightarrow L^2 } $ to be $ 1 $. That
is done purely for notational convenience.

First, we can choose $ \psi \in \CIc ( [0, 3K/4 ), [0,1] ) $,
$\psi\rest_{[0,K/2]}\equiv 1$, so that 
\[  
\Pi_ V \, \psi  ( \th  Q  /h ) 
= \psi  ( {\th Q } /h )  \,, \qquad
\psi  ( {\th  Q } /h )   \in \PT \,,
\] 
and hence, 
\[ \begin{split} \|  M_{t G} ( I - \Pi_V ) \|_{L^2 \rightarrow L^2}  &= 
\|  M_{t G} ( I - \psi ( \tilde h Q / h ) )  ( I - \Pi_V ) 
\|_{L^2 \rightarrow L^2} \\
& \leq 
\| M_{tG }  ( I - \psi ( \tilde h Q / h ) ) 
\|_{L^2 \rightarrow L^2} \,. \end{split} \]
All we need to prove then is  \eqref{eq:prG} with $ \Pi_V $ 
replaced by the smooth cutoff $ \psi( \th Q / h ) $, which now puts the 
problem in the setting of \S \ref{fio}.
% In fact, $ e^{t G^w ( x , h D )} = B_t^w ( x , h D ) $, $ B_t \in \ST (e^{t G} ) $. 

Before applying the weights, we split $M=MA_M + M(1-A_M)$, using the
cutoff $A_M$ of Definition~\ref{def:HQMO}. We then apply
Proposition~\ref{p:fiom-global} to the Fourier integral operator
 $T=MA_M$, with the function $
a_0\equiv 1 $ 
%equal
%to $1$ near $\WFh(A_M))$. 
and replacing $G$ with $-tG$. We then get
\begin{equation}
\label{eq:fioM} 
\begin{split}
M_{tG}  = (MA_M)_{tG} + (M(I-A_M))_{tG} & 
= MA_M\, e^{ - t (F^*G)^w ( x, h D ) }\, e^{t G ^w ( x, h D ) } \\ 
& %\ \ \ + M\, h^{\frac12} \th^{\frac32} \PT(e^{t (G - F^* G )}) +
+ M_1 \,h^{\frac12( 1 - \delta) } \th ^{\frac 12} \PT ( e^{ t (G - F^* G )} )\\
& \ \ \ + \Oo_{L^2 \to L^2 } ( h^{N_0 - 4\,C_6 t } ) \,,
\end{split}
\end{equation}
where $ M_1 \in I_{0+} (  Y \times Y , F' ) $,
$\WFh(M_1)\subset F(\supp a_M)\times \supp a_M$.
 
The error term $\Oo_{L^2 \to L^2 } ( h^{N_0 - 4\,C_6 t } ) $
corresponds to $\norm{(M(I-A_M))_{tG}}$, it comes from \eqref{eq:Mlo}
together with the bound
\be\label{e:C_6}
e^{\pm t G^w ( x , h D ) } = \Oo_{L^2 \to L^2 } ( h^{-t C_6
  (1+\Oo(\th))} ) = \Oo_{L^2 \to L^2 } ( h^{- 2 t C_6} )\,,\quad \th\
\text{small enough}\,,
\ee
due to the first property in  \eqref{eq:es3}. 

By contrast, the second line in \eqref{eq:es3} shows that
$\exp(-t ( F^* G - G )(\rho)) \leq e^{t\,C_7}$ for $\rho\in\cW_3$.
We have assumed that  $\cW_3\Supset \supp a_M$: this implies
that for any $t\geq 0$, we have 
$$
\| A_M\,e^{ - t (F^*G)^w}\, e^{t G^w} \|_{L^2\to L^2},\ \ \ \| M_1 \Op(e^{ t (G - F^* G )}) \|_{L^2\to L^2}\leq C\,e^{2C_7 t} \quad \text{uniformly in $h $.}
$$ 
This is a crucial application of Proposition~\ref{p:bbc} and the 
properties of the escape function.
%, since it eliminates the possible $ \log (1/h ) $ growth in the exponential weight.

It remains to estimate the norm of 
\begin{gather*}    
MA_M\,e^{ - t  (F^*G)^w }\,e^{t G^w } ( I - \psi ( \th Q / h )) = 
M\,b_t ( x , h D )\,, \quad  b_t \in \ST(1) \,, \\
b_t = a_M\,e^{-t ( F^* G  - G )} ( 1 - \psi ( \th q  / h ) ) + \th\, \ST(1) \,, 
\end{gather*}
where $ q $ is as in \eqref{eq:Qel} and $a_M$ is the symbol of $A_M$.

Fixing $ \varepsilon > 0 $, we first choose $ t $ large enough so that
$$
e^{-t/C_1}<\vareps/4\,
$$ 
where $C_1$ is the constant appearing in \eqref{eq:es3}. 

At this point, we can select the constant $K>1$ in the definition
\eqref{e:spaceV}. In view of the estimates \eqref{eq:Qel}, we 
choose it large enough, so that 
\begin{align}
q(\rho)\geq \frac K 2(h/\th)\,&\Longrightarrow d(\rho,\TT)^2\geq
2\,C_1\, (h/\th)\,,\nonumber\\
\rho\in \pi_R(F),\ q\circ F(\rho)\geq \frac K 2(h/\th)\,&\Longrightarrow d(\rho,\TT)^2\geq
2\,C_1\, (h/\th)\,.\label{e:largeK}
\end{align}
As a consequence, all points $ \rho\in\supp(1-\psi(\th q/h))$ satisfy
$d(\rho,\TT)^2\geq 2\,C_1 (h/\th)$, and therefore 
$$
\forall \rho\in \cW_3,\quad
e^{ - t ( F^*G - G )(\rho) }(1-\psi(\th q(\rho)/h))\leq
\,e^{-t/C_1}<\vareps /4\,.
$$
It then follows, see 
Lemma~\ref{l:stan1}, that 
\[  
\| M\, b^w_t ( x , h D ) \|_{L^2\to L^2} \leq  \frac \varepsilon 2  + C(t) \th\,. 
\]
Altogether, we have obtained 
\[  
\begin{split} 
\|  M_{t G} ( I - \Pi_V ) \|_{L^2 \rightarrow L^2}  & \leq
\| M b_t ( x, h D ) \|_{L^2 \rightarrow L^2}   + \Oo ( h^{\frac 12}\th ^{\frac 12} ) + \Oo(h^{N_0 - 4 C_6 t})\\
& \leq \varepsilon/2 + \Oo_t ( \th) + \Oo ( h^{\frac 12}\tilde h ^{\frac 12} ) + \Oo(h^{N_0 - 4 C_6 t}) \,. 
\end{split} \]
The assumption
$N_0 > 4 C_6 t$ ensures that,
once we take $ \th < \th_0 ( t , \varepsilon ) $ and take $h>0$ small
enough, the above right hand side is $<\vareps$.

A similar proof provides the estimate for $ ( I - \Pi_V ) M_{tG} $,
replacing $\Pi_V$ by $\psi(\th Q/h)$, 
using Proposition~\ref{p:fiom-global} to bring $\psi(\th Q/h)$ to the
right of $M$, and using the assumption \eqref{e:largeK} to bound from
above $(1-\psi(\th q\circ F/h)) e^{-t(F^*G - G)}$.
\end{proof}

The invertibility of the Grushin problem is now a matter of linear algebra:
\begin{thm}
\label{t:2gr}
Suppose that $ M=M(z) $ is a hyperbolic quantum monodromy operator
in the sense of Definition~\ref{def:HQMO}, and $ M_{tG} $ the
conjugated operator \eqref{eq:conjM}.
We use the auxiliary operators of \eqref{eq:2Rpm} to define the
Grushin problem
\be\label{eq:gr2gr} 
\cM_{tG} \defeq \left( \begin{array}{ll} 
I - M_{tG} & R_- \\
\ \ R_+ &  \ 0 \end{array} \right) \; : \; L^2 ( \RR^d )^J \oplus V
\longrightarrow L^2 ( \RR^d )^J \oplus V\,, 
\ee
If $ t $ is large enough, $ \th $ is small enough, and $ N_0 $ from \eqref{eq:Mlo},
$C_6$ from \eqref{e:C_6} satisfy $ N_0 > 4 C_6 t $, then for
$h$ small enough the above Grushin problem
is invertible. Its inverse, $\cE_{tG} $, is
uniformly bounded as $ h \rightarrow 0 $, and the effective
Hamiltonian reads
\begin{equation}
\label{eq:emp}   E_{-+} =  - ( I_V - \Pi_V M_{tG} ) 
+ \sum_{k=1}^\infty \Pi_V   M_{tG}  [ ( I - \Pi_V )  M_{tG} ]^k 
\; : \; V \longrightarrow V \,,
\end{equation}
where the convergence of the series is guaranteed by \eqref{eq:prG}.
\end{thm}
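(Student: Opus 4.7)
The strategy is to invert $\cM_{tG}$ by hand, using the smallness estimates of Lemma~\ref{l:prG} to sum a geometric Neumann series. I would first fix some $\varepsilon<1$ and invoke Lemma~\ref{l:prG} to select $t$ large and then $\th$ small, after ensuring $N_0>4C_6 t$ (which is possible thanks to the freedom in choosing $N_0$ in Definition~\ref{def:HQMO}), so that
\[
\|(I-\Pi_V)M_{tG}\|_{L^2\to L^2}<\varepsilon\,.
\]

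Given data $(v,v_+)\in L^2(\RR^d)^J\oplus V$, I would look for $(u,u_-)$ solving $(I-M_{tG})u + R_- u_- = v$ and $R_+u=v_+$. The constraint $R_+u=\Pi_V u = v_+$ forces the decomposition $u = v_+ + w$ with $w\in V^\perp$. Applying the projectors $\Pi_V$ and $I-\Pi_V$ to the first equation, and using that $R_-u_- = u_- \in V$, the system decouples into
\[
[I - (I-\Pi_V)M_{tG}]\,w = (I-\Pi_V)v + (I-\Pi_V)M_{tG}\,v_+\,,
\]
\[
u_- = \Pi_V v - v_+ + \Pi_V M_{tG}(v_+ + w)\,.
\]
Since $(I-\Pi_V)M_{tG}$ has norm $<\varepsilon<1$, the operator $I - (I-\Pi_V)M_{tG}$ is invertible on $V^\perp$ by Neumann series:
\[
w = \sum_{k=0}^\infty [(I-\Pi_V)M_{tG}]^k\bigl((I-\Pi_V)v + (I-\Pi_V)M_{tG}\,v_+\bigr)\,,
\]
with operator norm bounded by $(1-\varepsilon)^{-1}$ independently of $h$. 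Substituting back determines $u_-$; this produces all four entries of $\cE_{tG}$ and the uniform bound $\|\cE_{tG}\|=\Oo(1)$ as $h\to 0$.

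To read off the effective Hamiltonian I would set $v=0$. Then $w = \sum_{k\geq 1}[(I-\Pi_V)M_{tG}]^k v_+$, and consequently
\[
E_{-+}v_+ = u_- = -v_+ + \Pi_V M_{tG}\,v_+ + \sum_{k=1}^\infty \Pi_V M_{tG}\,[(I-\Pi_V)M_{tG}]^k\,v_+\,,
\]
which is precisely \eqref{eq:emp}. There is no genuine obstacle beyond Lemma~\ref{l:prG} itself: the theorem reduces to linear algebra once the Neumann series converges. The only delicate point is the consistent ordering of the parameters — one fixes $N_0$ first large enough, then $\varepsilon$ small, then $t$ large subject to $N_0>4C_6 t$, then $\th$ small, and finally $h$ small — but each constraint is $h$-independent, so all the resulting bounds are uniform in the semiclassical limit.
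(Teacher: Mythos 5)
Your algebraic reduction is correct and is essentially equivalent to the paper's approximate-inverse construction: the two equations you derive from projecting by $\Pi_V$ and $I-\Pi_V$ are exactly what one gets by writing $\cE_{tG}=\cE^0_{tG}(\cdots)^{-1}$ and summing the Neumann series of $(I-\Pi_V)M_{tG}$, and the resulting formula for $E_{-+}$ matches \eqref{eq:emp}. However, there is a genuine gap in the last step, where you assert that the solution formulas produce the uniform bound $\|\cE_{tG}\|=\Oo(1)$.

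The problem is that the entry
\[
u_- \;=\; \Pi_V v - v_+ + \Pi_V M_{tG}\,(v_+ + w)
\]
contains the operator $\Pi_V M_{tG}$, which is not controlled by Lemma~\ref{l:prG}. That lemma only bounds $(I-\Pi_V)M_{tG}$ and $M_{tG}(I-\Pi_V)$ by $\varepsilon$; it says nothing about the ``near'' piece $\Pi_V M_{tG}$. Moreover $M_{tG}$ itself is very far from uniformly bounded: by \eqref{e:C_6} the conjugating weights $e^{\pm tG^w}$ are only $\Oo(h^{-2tC_6})$, so a priori $\|M_{tG}\|$ grows like a negative power of $h$, and there is no reason your Neumann series argument should tame the $\Pi_V M_{tG}$ factor. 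The paper closes this gap with a separate estimate \eqref{e:boundM}: one inserts a cutoff $\psi_1\in\CIc(T^*Y)$ supported near $\cT$ with $\supp F^*\psi_1\Subset\cW_2$, uses $\Pi_V\psi_1^w=\Pi_V+\Oo_{L^2\to L^2}(h^\infty)$ (since $\Pi_V$ is microlocalized in a $(h/\th)^{1/2}$-neighbourhood of $\cT$), and then bounds $\|\psi_1^w M_{tG}\|$ by $e^{3C_7 t}$ using the decomposition \eqref{eq:fioM} together with the escape-function lower bound $F^*G-G\geq -C_7$ on $\cW_2$ from Proposition~\ref{p:es2}. Without this ingredient — which uses the escape function in a qualitatively different way from Lemma~\ref{l:prG}, controlling where $G$ is allowed to \emph{decrease} rather than where it must increase — the claimed uniform boundedness of $\cE_{tG}$, and hence of the effective Hamiltonian $E_{-+}$, is unproved. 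This boundedness is not cosmetic: it is used downstream in Theorem~\ref{th:bound}, where one needs $\log|\det E_{-+}(z)|\leq C\dim V$ uniformly for the Jensen argument.
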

\begin{proof}
We first construct an approximate inverse,
\[  
\cE^0_{tG} \defeq \left( \begin{array}{ll} \Id - \Pi_V & 
\ \ \ \ \ \ \ R_- 
%- (\Id - \Pi_V ) M_{tG} 
\\ \ \ \ \ \Pi_V & \ - ( I - \Pi_V M_{tG} ) \end{array} \right)\,,\]
for which we check that
\begin{equation}
\label{eq:apE}  \cM_{tG} \, {\mathcal E}_{tG}^0 = \Id_{ L^2 ( \RR^d )^J \oplus V } -
\left( \begin{array}{ll} M_{tG} ( I - \Pi_V ) & ( I - \Pi_V ) M_{tG} \\
\ \ \  \  \ 0 & \ \ \ \ \ 0 \end{array} \right) \,. 
\end{equation}
The theorem now follows from Lemma \ref{l:prG} and the Neumann series 
inversion:
\begin{gather*}
  \cM_{tG} \, \cE_{tG}= \Id_{ L^2 ( \RR^d )^J \oplus V  } \,, \\ 
 \cE_{tG}  = {\mathcal E}_{tG}^0 
\left( \begin{array}{ll} \Id +  R  & ( \Id + R  ) 
( \Id - \Pi_V ) M_{tG } \\
 \ \ \ 0 & \ \ \ \ \ \ \ \ \Id_V \end{array} \right) \,,  \ \ 
R \defeq \sum_{k=1}  [ M_{tG}  ( \Id - \Pi_V )] ^k \,.
\end{gather*}
We finally show that the operator $\Pi_VM_{tG}$, and thus the whole
inverse $\cE_{tG}$, is bounded uniformly
in $h$. Consider a cutoff $\psi_1\in \CIc(T^*Y) $ supported
inside a small neighbourhood of $\cT$, equal to unity in a smaller
neighbourhood of $\cT$.
In particular, using the notations of Proposition~\ref{p:es2} we
assume that $\supp F^*\psi_1 \Subset \cW_2$. Since $\Pi_V$ is
microlocalized in a semiclassically thin neighbourhood of $\cT$, we
have
$$
\Pi_V \psi^w_1(x,hD) = \Pi_V + \Oo_{L^2\to L^2}(h^\infty)\,.
$$ 
We are then lead to estimate the norm of the operator $\psi^w_1
M_{tG}$. Using the decomposition \eqref{eq:fioM} and the fact that
$(F^*G-G)(\rho)\geq -C_7$ for $\rho\in\cW_2$, we obtain for $\th$, $h$ small enough:
\be\label{e:boundM}
\forall t>0,\quad
\| \psi_1^w\,M_{tG}\|_{L^2\to L^2}\leq\,e^{3C_7t}\Longrightarrow \| \Pi_V\,M_{tG}\|_{L^2\to L^2}\leq C'\,e^{3C_7t}\,.
\ee
\end{proof}

This achieves the reduction of the monodromy operator $M(z,h)$ to the finite rank operator $E_{-+}(z)$.
In the next section we perform the same task, starting from a
monodromy operator $\tM(z,h)$ which is already of finite rank.

\subsection{Quantum monodromy operators acting on finite dimensional
spaces}
\label{qfd}
So far we have been considering quantum maps or monodromy operators
given by smooth $h$-Fourier integral operators of infinite 
rank. The quantum monodromy operator constructed in \cite{NSZ1} 
and providing an effective Hamiltonian for operators
$ (i/h) P -z $, $ z \in D ( 0 , R ) $, 
was given by the restriction of such a Fourier integral 
operator to a finite dimensional space $W\subset L^2(\RR^d)^J$
microlocalized on some bounded neighbourhood of the trapped set.
%$\cW\Subset \pi_R(F)\cap\pi_L(F)$:
\begin{gather}
\label{eq:tEmp}
\begin{gathered}
 W \subset L^2 ( \RR^d )^J \,, \quad \dim W < \infty \,, 
\quad \Pi_W   :  L^2 ( \RR^d )^J  \stackrel{\perp}{\longrightarrow} W \,,
\\
 M_W = \Pi_W M \Pi_W + \Oo_{W \rightarrow W }( h^{N_0}  )  \,, 
\\\end{gathered}
\end{gather}
where $ M\in I_{0+}(Y\times Y,F')$ is a smooth Fourier integral operator. Compared with the
notations in Definition~\ref{def:HQMO}, we have
$\Pi_W=\Pi_h$ and $M_W=\tM + \Oo_{W\to W}(h^{N_0})$. 

Let us now consider the Grushin problem for $(\Id_W-M_W)$.
\begin{thm}
\label{t:2gr2}
Suppose that $ M_W $ is given by \eqref{eq:tEmp}, with 
$M$ a monodromy operator as defined in Definition~\ref{def:HQMO}. The space
$V$, and the auxiliary operators $ R_\pm $ are as in
Theorem \ref{t:2gr}. We construct a weight $ G $ as in
Prop.~\ref{p:es2}, and such that $\Pi_W\equiv \Id $
near $\supp G$.

%Assume that the  $(A_M,G,\Pi_V,\Pi_W)$ are $N_0$-compatible, for $N_0$
%as in \eqref{eq:Mlo}.

If $ t $ is large enough, $ \th $ is small enough, and $ N_0 $
satisfies $ N_0 > 4 C_6 t $, with $C_6$ from \eqref{e:C_6}, then the operator 
\[ 
\tcM_{tG}\defeq \left( \begin{array}{ll} \ \ \ \  \ \ \ I_W - M_W   & 
\Pi_W e^{ t G^w ( x, h D ) } R_- \\
\ & \ \\
\ \ R_+ e^{- tG^w ( x , h D ) } \Pi_W  & \ \ \   \ \ \ \  \
0 \end{array} \right)
 \; : \; W \oplus V \; \longrightarrow \; 
W \oplus V\,, \]
is invertible, with the inverse
\[
 \widetilde{\cE}_{tG} = 
\left( \begin{array}{ll} \ \widetilde{E} & \ \widetilde{E}_+ \\ 
\widetilde{E}_- & \widetilde{E}_{-+} \end{array} \right) = 
\Oo ( h^{-4tC_6 } ) \; : \; W \oplus V \; \longrightarrow \;
W \oplus V \,. 
\]
Furthermore, the effective Hamiltonian is uniformly bounded: 
\[  %\widetilde{E}_{-+} = E_{-+} + \Oo_{V\to V}(h^{N_0 - 4C_6 t}) 
%( \Id_V - \Pi_V M_{tG} )  \|_{V \rightarrow V } < \varepsilon \,\quad \text{hence}\
 \| \widetilde{E}_{-+}\|_{V\to V}=\Oo_{\vareps}(1)\,. 
\]
\end{thm}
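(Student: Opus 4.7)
The plan is to reduce the finite-rank Grushin problem $\widetilde{\cM}_{tG}$ to an analogue of the infinite-dimensional problem treated in Theorem~\ref{t:2gr}, restricted to $W\oplus V$, via an explicit conjugation by a weight operator acting purely on $W$. Concretely, I would introduce
\[
A\defeq \Pi_W\,e^{tG^w(x,hD)}\,\Pi_W\;:\;W\longrightarrow W\,.
\]
The hypothesis that $\Pi_W\equiv\Id$ microlocally near $\supp G$, together with Lemma~\ref{l:G-support}, shows that $\Pi_W$ commutes with $e^{\pm tG^w}$ modulo $\Oo(h^\infty)$, so $A$ is invertible on $W$ with $A^{-1}=\Pi_W e^{-tG^w}\Pi_W+\Oo_{W\to W}(h^\infty)$; the bound \eqref{e:C_6} then gives $\norm{A}_{W\to W},\norm{A^{-1}}_{W\to W}=\Oo(h^{-2tC_6})$.

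The next step is to verify the conjugation identity
\[
\begin{pmatrix}A^{-1}&0\\0&I_V\end{pmatrix}\widetilde{\cM}_{tG}\begin{pmatrix}A&0\\0&I_V\end{pmatrix}=\cM'_{tG}+\Oo_{W\oplus V\to W\oplus V}(h^{N_0-4tC_6})\,,
\]
where
\[
\cM'_{tG}\defeq\begin{pmatrix}I_W-\Pi_W M_{tG}\Pi_W & R_-\\R_+ & 0\end{pmatrix}\,.
\]
This reduces to three block computations: $A^{-1}M_W A=\Pi_W M_{tG}\Pi_W+\Oo(h^{N_0-4tC_6})$, obtained from $M_W=\Pi_W M\Pi_W+\Oo(h^{N_0})$ combined with the weight bounds on $e^{\pm tG^w}$; and $A^{-1}\Pi_W e^{tG^w}R_-=R_-+\Oo(h^\infty)$ together with $R_+e^{-tG^w}\Pi_W A=R_++\Oo(h^\infty)$, both exploiting $V\subset W$ so that $\Pi_W R_-=R_-$ and $R_+\Pi_W=R_+$.

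I would then invert $\cM'_{tG}$ on $W\oplus V$ by mimicking the Neumann series argument behind \eqref{eq:apE}. With the approximate inverse constructed as in Theorem~\ref{t:2gr}, but with $I$ replaced by $\Pi_W$ in the top-left entry, the residual perturbation has nonzero blocks of the form $\Pi_W M_{tG}(I-\Pi_V)\Pi_W$ and $(I-\Pi_V)\Pi_W M_{tG}\Pi_W R_-$. Lemma~\ref{l:prG} bounds these by $\vareps$, modulo the cross-term $(I-\Pi_W)M_{tG}\Pi_W$, which after commuting $\Pi_W$ past $e^{\pm tG^w}$ (modulo $\Oo(h^\infty)$) reduces to $(I-\Pi_W)M\Pi_W=\Oo(h^{N_0})$, since $M=MA_M+\Oo(h^{N_0})$ and $F(\supp a_M)$ is microlocally captured by $\Pi_W$. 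Altogether the error has size $\Oo(h^{N_0-4tC_6})$, so the Neumann series converges and yields $\norm{(\cM'_{tG})^{-1}}_{W\oplus V\to W\oplus V}=\Oo_\vareps(1)$.

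Reversing the conjugation,
\[
\widetilde{\cE}_{tG}=\begin{pmatrix}A&0\\0&I_V\end{pmatrix}(\cM'_{tG})^{-1}\begin{pmatrix}A^{-1}&0\\0&I_V\end{pmatrix}+\Oo(h^\infty)\,,
\]
whose overall norm is $\Oo(h^{-4tC_6})$, dictated by the two factors $\norm{A},\norm{A^{-1}}=\Oo(h^{-2tC_6})$. Crucially, the $(-+)$-block of this product involves no $A$ factor, so $\widetilde{E}_{-+}$ coincides with the $(-+)$-block of $(\cM'_{tG})^{-1}$, and is therefore $\Oo_\vareps(1)$, as claimed. The main difficulty is the bookkeeping of composite $\Oo(h^\infty)$ and $\Oo(h^{N_0})$ errors through the weight conjugations: each rearrangement must preserve a total error of size $\Oo(h^{N_0-4tC_6})$, and the hypothesis $N_0>4tC_6$ is precisely what guarantees that this error remains negligible compared to the $\vareps$ appearing in the Neumann estimate.
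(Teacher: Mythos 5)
Your approach is correct, and it is genuinely structured differently from the paper's proof, though it rests on the same underlying ingredients: the commutation of $\Pi_W$ with $e^{\pm tG^w}$, $\Pi_V$, and $A_M$ modulo small errors (\eqref{e:PiW-A_M}--\eqref{e:PiW-PiV}), the contraction estimate of Lemma~\ref{l:prG}, and the hypothesis $N_0 > 4tC_6$. The paper does not conjugate the finite-rank problem at all; it instead undoes the weight on $\cE_{tG}$ to get $\cE^0_{tG}$, feeds $(v,v_+)\in W\oplus V$ through it, projects the first component by $\Pi_W$, and verifies directly that $\widetilde{\cM}_{tG}(\Pi_W u^0,u_-^0)=(v,v_+)+\Oo(h^{N_0-4tC_6})$, with the inverse then produced by a Neumann correction. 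You instead factor out the weight as an explicit outer conjugation by $\diag(A,I_V)$ with $A=\Pi_We^{tG^w}\Pi_W$, reducing $\widetilde{\cM}_{tG}$ to the "projected, already-conjugated" Grushin problem $\cM'_{tG}$ plus $\Oo(h^{N_0-4tC_6})$, re-run the Neumann argument on $\cM'_{tG}$, and undo the conjugation. The payoff of your route is transparency: the bound $\norm{\widetilde{\cE}_{tG}}=\Oo(h^{-4tC_6})$ falls out as $\norm{A}\cdot\norm{A^{-1}}$ times the $\Oo_\vareps(1)$ from the Neumann series, and the $(-+)$-block sees no $A$ factors, so $\norm{\widetilde{E}_{-+}}=\Oo_\vareps(1)$ is immediate. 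The paper's route is more economical, reusing $\cE_{tG}$ as a black box rather than re-deriving an inverse for the projected problem.

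One point worth scrutinizing: you invoke the cross-term $(I-\Pi_W)M\Pi_W=\Oo(h^{N_0})$, which indeed requires $\Pi_W\equiv\Id$ microlocally near $F(\supp a_M)$, a slightly stronger assumption than the $A_M(I-\Pi_W)=\Oo(h^\infty)$ (i.e. $\Pi_W\equiv\Id$ near $\supp a_M$) that the paper's computation uses. Note however that in the Neumann residual for $\cM'_{tG}$ the term that actually arises is $M_{tG}(\Pi_W-I)$, not $(I-\Pi_W)M_{tG}\Pi_W$, and this is already $\Oo(h^{N_0-4tC_6})$ from $A_M(I-\Pi_W)=\Oo(h^\infty)$ and $M(I-A_M)=\Oo(h^{N_0})$; and the conjugation identity $A^{-1}M_WA=\Pi_WM_{tG}\Pi_W+\Oo(h^{N_0-4tC_6})$ only needs $\Pi_We^{\pm tG^w}\Pi_W = e^{\pm tG^w}\Pi_W+\Oo(h^\infty)$ and $M_W=\Pi_WM\Pi_W+\Oo(h^{N_0})$. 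So your additional assumption, while harmless (it holds in all the applications), is in fact unnecessary if you track which $\Pi_W$ ends up on which side.
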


%\noindent
%{\bf Remark.} The conditions on the errors in \eqref{eq:t2gr2} can 
%clearly be relaxed. We state them in this strong way since that is the 
%way they will appear in our applications of Theorem \ref{t:2gr2}.

%\medskip
\begin{proof}
We start by proving three estimates showing that the projector $\Pi_W$
does not  interfere too much with the Grushin problem. 
Firstly, in the Definition~\ref{def:HQMO}
we have assumed that $\Pi_W$ is equal to the identity, microlocally near the support of $a_M$:
this has for consequence the estimate
\be\label{e:PiW-A_M}
A_M  = \Pi_W A_M + \Oo_{L^2 \to L^2 } ( h^{\infty}) 
= A_M \Pi_W + \Oo_{L^2 \to L^2 } ( h^{\infty} ) \,.
\ee
Secondly,
let us notice that in Proposition~\ref{p:es2} we
required the weight to satisfy $\supp a_M\Subset \cW_3\Subset \supp
G$. Since we also know that $\Pi_W\equiv \Id$ some neighbourhood $\cW_4$
of $\supp a_M$, it is indeed possible construct the weight $G$ such that
$\cW_3\Subset \supp G \Subset \cW_4$. This has for consequence the
estimate
\be\label{e:PiW-G}
\forall t\geq 0,\qquad
e^{-tG^w ( x, h D )}\, \Pi_W\, e^{ tG^w ( x, h D ) } = \Pi_W + \Oo_{L^2 \to L^2 } ( h^{\infty}) \,.
\ee
(the fact that we are dealing with $ \ST $ symbol classes does
not affect the result, see for instance \cite[Theorem 4.24]{EZB}).
Thirdly, the definition of the subspace $V$ in \eqref{e:spaceV} shows
that the projector $\Pi_V$ is microlocalized inside an $h^{1/2}$
neighbourhood of $\cT$, while $\Pi_W\equiv \Id$ in a fixed
neighbourhood. This induces the estimate
\be\label{e:PiW-PiV}
 \Pi_V \Pi_W = \Pi_V    + \Oo_{L^2 \to L^2 } ( h^{\infty}) \,.
\ee
We now want to solve the problem
$$
\tcM _{tG}  \left( \begin{array}{l} u \\ u_- \end{array} \right) = 
 \left( \begin{array}{l} v \\ v_+ \end{array} \right)  \,.
$$
We first consider the approximate solution
$$
\begin{pmatrix} u^0\\u_-^0\end{pmatrix} = \cE^0_{tG} 
\begin{pmatrix} v\\v_+\end{pmatrix},\qquad
\cE_{tG}^0\defeq \begin{pmatrix}e^{tG^w}& 0\\0&\Id_V\end{pmatrix}\cE_{tG} 
\begin{pmatrix}e^{-tG^w}& 0\\0&\Id_V\end{pmatrix}\,,
$$
%\[   \left( \begin{array}{l} e^{- t G^w ( x, h D) } u^0 \\ 
%\ \ \ \ \ \ \ u_-^0 \end{array} \right) = {\mathcal E}_{tG} 
% \left( \begin{array}{l} e^{- t G^w ( x, h D) } v \\ 
%\ \ \ \ \ \ \ v_+ \end{array} \right) \,, \]
where 
$ \cE_{tG} $ is the inverse of the Grushin problem in 
Theorem \ref{t:2gr}. In particular,
\[  ( I - M ) u^0 + e^{t G^w ( x , h D ) } R_- u_-^0 = v \,. \]
The estimate \eqref{e:PiW-A_M} implies that 
\[   \Pi_W M \Pi_W   =  M + \Oo_{ L^2 \to L^2 }  ( h^{N_0} ) \,,\]
and hence ($  v  = \Pi_W v $ as $ v $ is assumed to be in $ W $),
\be
\label{eq:te1} (\Id_W - M_W ) \Pi_W u^0 + \Pi_W e^{ t G^w ( x, h D) } u_-^0 = v 
+ \Oo_W ( h^{N_0} \| u^0 \|_{L^2}  ) \,. 
\end{equation}
Since $ R_+ e^{ -t G^w ( x , h D ) } u^0 = v_+ $, the definition of $
R_+ = \Pi_V $ and the estimates \eqref{e:PiW-G}, \eqref{e:PiW-PiV} imply that
\be\label{eq:te2}
\begin{split} 
  R_+ e^{-t G^w ( x, h D) } \Pi_W u^0  & = \Pi_V \left( e^{-t G^w (x , h D ) } 
\Pi_W e^{ t G^w ( x , h D ) } \right) e^{-t G^w ( x, h D ) } u^0 \\
& = 
v_+ + {\mathcal O}_V ( h^{N_0} \| e^{ - t G^w ( x, h D ) } u^0\|_{L^2 } ) \,. 
\end{split}
\ee
Since $ \cE_{tG} $ is uniformly bounded, we obtain the bound
\[  
\begin{split} \|e^{-tG^w} u^0 \|_{L^2} & 
\leq \norm{\cE_{tG}}\,\big(\norm{e^{-tG^w}v} + \norm{v_+} \big)\\
& \leq C \, (h^{-2 C_6 t} \,\| v \|_W + \|v_+\|_V ) \\
\Longrightarrow 
\|u^0 \|_{L^2}  & \leq 
C \| e^{  t G^w  } \|_{L^2 \rightarrow L^2 } \|e^{-tG^w} u^0 \|_{L^2}\\
& \leq C  ( h^{-4 C_6 t} \| v \|_W + h^{-2 C_6 t} \|v_+\|_V ) \,.
\end{split}
\]
This bound, together with \eqref{eq:te1} and \eqref{eq:te2}, gives
\[ \tcM _{tG}  \begin{pmatrix} \Pi_W u^0 \\ u_-^0 \end{pmatrix} = 
\begin{pmatrix} \Id_{W} + \Oo(h^{N_0 - 4 C_6 t })& \Oo(h^{N_0 - 2 C_6 t })\\\Oo(h^{N_0 - 2 C_6 t }) &
\Id_V +  \Oo(h^{N_0 })\end{pmatrix}
\begin{pmatrix} v \\ v_+ \end{pmatrix} \,. 
\]
The assumption $ N_0 > 4C_6 t $ implies that the operator on the right hand side
can be inverted for $ h $ small enough, and proves the existence of 
$$ 
\widetilde{\cE}_{tG}
= \begin{pmatrix}\Pi_W&0\\0&\Id_V\end{pmatrix}\cE_{tG}^0 \begin{pmatrix}
  \Id_{W} + \Oo(h^{N_0 - 4 C_6 t })& \Oo(h^{N_0 - 2 C_6 t })\\ \Oo(h^{N_0 - 2 C_6 t }) &
\Id_V +  \Oo(h^{N_0 })\end{pmatrix}\,.
$$ 
From this expression, we deduce the estimate for $\|\widetilde{\cE}_{tG}\|$, as well as the uniform boundedness of 
\be\label{e:E-tildeE}
\widetilde{E}_{-+}= E_{-+} + \Oo_{V\to V}(h^{N_0 - 4C_6 t})\,.
\ee
\end{proof}

\noindent{\bf Remarks.} 1)  The projector $ \Pi_W $ is typically 
obtained by taking $ \Pi_W = \diag(\Pi_{W,j})$, $\Pi_{W,j}=\bbbone_{Q_{0,j} \leq C }$ where $
Q_{0,j}=\Op(q_{0,j})\in \Psi(\RR^d) $ is
elliptic. This way,
$\Pi_W$ is a microlocal projector associated with the compact region 
$$
\cW_5 \defeq \sqcup_{j=1}^J\{ \rho \in T^*\RR^d \; : \; q_{0,j} ( \rho) \leq C \}\,.
$$
Then the estimates \eqref{e:PiW-A_M}, \eqref{e:PiW-G} hold if
$ \supp a_M  \Subset \supp G  
% \{ \rho \in T^*Y \; : \; q ( \rho) < \epsilon \} \, 
 \Subset\, \cW_5$.
%where $ q=(q_j) $ are the symbols used to define the subspace $ V $.

\medskip

\noindent
2) The requirement that $ N_0 > 4C_6 t $, where the constant $ C_6 $ depends on $ G $, and
$ t= t(\vareps) $ has to be chosen large enough, seems awkward in the abstract
setting ($N_0$ is the power appearing in \eqref{eq:Mlo}). In 
practice, when constructing the monodromy operator $M$ we can take $N_0$
arbitrary large, independently of the weight $G$ (see
\cite{NSZ1}, or the application presented in \S \ref{assc}).

\subsection{Upper bounds on the number of resonances}
\label{upbd}

Let us first recall the definition of the {\em box} or {\em
Minkowski} dimension of a compact subset $ \Gamma \Subset \RR^{k} $:
\be\label{eq:dimm}
\dim_M\Gamma = 2 \mu_0 \defeq k  - \sup\{ \gamma \; : \; \limsup_{ \epsilon \rightarrow 0 }
\epsilon^{-\gamma} \vol ( \{ \rho \in \RR^k \; : \; d ( \rho , \Gamma ) <
\epsilon \} ) < \infty \} \,.
\ee
The set $\Gamma$ is said to be of pure dimension if
\[  \limsup_{ \epsilon \rightarrow 0 }
\epsilon^{-k + 2\mu_0}
\vol ( \{ \rho \in \RR^k \; : \; d ( \rho , \Gamma ) <
\epsilon \} ) < \infty \,. \]
In other words, for $ \epsilon $ small
\[  \vol ( \{ \rho \in \RR^k \; : \; d ( \rho , \Gamma ) <
\epsilon \} ) \leq C \epsilon^{k - 2\mu} \,, \ \ \mu > \mu_0 \,, \]
with $ \mu $ replaceable by $ \mu _0 $ when $ \Gamma $ is of pure
dimension.

In the case of a compact set $\Gamma=\sqcup \Gamma_j\Subset \sqcup_{j=1}^J\RR^k$, its 
Minkowski dimension is simply
\be\label{e:dimJ}
\dim_M \Gamma=\max_{j=1,\ldots,J}\dim_M \Gamma_j\,.
\ee

The following lemma expresses the intuitive idea that a domain in $T^*\RR^d$ of
symplectic volume $ v $ can support at most $ h^{-d} \, v $ quantum states.
\begin{lem}
\label{l:dim} Let $\Gamma$ be a compact subset of $T^*\RR^d$, of Minkowski
dimension $2\mu_0$. Using the operator  $ Q $ constructed in
Proposition~\ref{p:q}, we take $K\gg 1$ and define the subspace 
\[ 
V \defeq  \bbbone_{ Q \leq K h/\tilde h } L^2 ( \RR^{d } )  \,. 
\]

Then, for any $ \mu > \mu_0 $ there exists $ C = C_\mu $, such that
\begin{equation}
\label{eq:ldim}
\dim V \leq C  \th ^{-d } \left( \frac{ \th } h \right)^{\mu } \,.
\end{equation}
When $ \Gamma $ is of pure dimension we can take $ \mu = \mu_0 $ in 
\eqref{eq:ldim}.
\end{lem}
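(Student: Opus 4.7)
The geometric idea is that $V$ is microlocalized on a $\sqrt{h/\tilde h}$-neighbourhood of $\Gamma$, and a projector microlocalized on a region of symplectic volume $v$ in $T^*\RR^d$ has rank at most $Ch^{-d}v$. The Minkowski dimension hypothesis then controls the volume of this neighbourhood.

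First, I would apply the rescaling $U_{h/\tilde h}$ of \eqref{eq:resc}, which turns $Q=q^w(x,hD)$ into the unitarily equivalent $\tilde h$-quantization $\tilde Q=\tilde q^w(\tilde x,\tilde h D)$ with rescaled symbol
\[
\tilde q(\tilde \rho)=(\tilde h/h)\,q\bigl((h/\tilde h)^{1/2}\tilde\rho\bigr)\sim 1 + d(\tilde\rho,\widetilde\Gamma)^2,\qquad \widetilde\Gamma\defeq (\tilde h/h)^{1/2}\Gamma.
\]
By Proposition~\ref{p:q}, $\tilde q\in \widetilde S(\tilde m)$ with order function $\tilde m(\tilde\rho)=1+d(\tilde\rho,\widetilde\Gamma)^2$ and $\tilde q\geq 1$, so $\tilde Q$ is self-adjoint with compact resolvent, and the condition $Q\leq Kh/\tilde h$ becomes $\tilde Q\leq K$. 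Hence $\dim V$ equals the number of eigenvalues of $\tilde Q$ in $[1,K]$.

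Next, I would establish the Weyl-type upper bound
\[
\dim V \leq \tr f(\tilde Q) \leq C\,\tilde h^{-d}\,\vol\{\tilde q \leq 2K\}
\]
where $f\in\CIc(\RR;[0,1])$ equals $1$ on $(-\infty,K]$ and is supported in $(-\infty,2K]$. The first inequality is the min--max principle. For the second, one applies the Helffer--Sj\"ostrand functional calculus adapted to order functions (available from Proposition~\ref{p:bbc} and its rescaled version) to write $f(\tilde Q)=b^w(\tilde x,\tilde h D)$ with $b\in\CIc(T^*\RR^d)$, $\supp b\subset\{\tilde q\leq 2K\}$, and $b=f(\tilde q)+\Oo(\tilde h)$; the standard trace formula for compactly supported symbols then yields the bound.

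Finally, I would estimate the volume. Since $\tilde q\sim 1+d(\tilde\rho,\widetilde\Gamma)^2$, the set $\{\tilde q\leq 2K\}$ is contained in a fixed-size neighbourhood of $\widetilde\Gamma$, whose volume, after undoing the rescaling $\tilde\rho=(\tilde h/h)^{1/2}\rho$ (which multiplies volumes by $(\tilde h/h)^d$), equals
\[
\vol\{\tilde q\leq 2K\}\leq (\tilde h/h)^d\,\vol\bigl\{\rho\in T^*\RR^d:d(\rho,\Gamma)\leq C\sqrt{h/\tilde h}\bigr\}.
\]
The definition \eqref{eq:dimm} of Minkowski dimension gives, for any $\mu>\mu_0$, the bound $\vol\{d(\rho,\Gamma)<\epsilon\}\leq C_\mu\,\epsilon^{2d-2\mu}$, and with $\mu=\mu_0$ in the pure dimension case. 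Taking $\epsilon=C\sqrt{h/\tilde h}$ and combining,
\[
\vol\{\tilde q\leq 2K\}\leq C_\mu\,(\tilde h/h)^d\cdot (h/\tilde h)^{d-\mu}= C_\mu\,(\tilde h/h)^\mu,
\]
which plugged into the trace bound yields $\dim V\leq C_\mu\,\tilde h^{-d}(\tilde h/h)^\mu$, as claimed.

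The main technical obstacle is the trace estimate in the second step: since $\tilde q$ is unbounded, one cannot directly use the Weyl expansion for $S(1)$ symbols, and must rely on the pseudodifferential calculus with order functions (Bony--Chemin) to make sense of $f(\tilde Q)$ as a compactly supported-symbol operator with the expected principal symbol. Once this is in place, the volume computation is purely geometric and the Minkowski bound applies directly, with equality cases handled by simply replacing $\mu$ by $\mu_0$.
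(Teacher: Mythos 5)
Your approach is genuinely different from the paper's. The paper constructs an explicit low-rank perturbation $A$ via a Whitney-type covering of the $(h/\th)^{1/2}$-neighbourhood of $\Gamma$ by $N(h,\th)$ balls, uses shifted harmonic-oscillator spectral projectors to produce rank-$\Oo(\th^{-d})$ pieces, controls the sum of remainders by Cotlar--Stein, and then invokes the min--max inequality $\langle Qu,u\rangle+\Re\langle Au,u\rangle\geq(Kh/\th+\delta)\|u\|^2$. You instead bound $\dim V\leq\tr f(\tilde Q)$ and pass to a phase-space integral via functional calculus. Conceptually your route is closer to the original counting argument of \cite{SjDuke}; the paper's route has the advantage of avoiding any invocation of functional calculus for the order-function classes, at the cost of the more elaborate covering construction.

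There is, however, a genuine gap in your second step. The symbol $b$ of $f(\tilde Q)$ obtained from Helffer--Sj\"ostrand is \emph{not} compactly supported in $\{\tilde q\leq 2K\}$: the leading term $f(\tilde q)$ is, but the corrections, coming from the $\th$-expansion of $(\tilde Q-z)^{-1}$, only exhibit rapid decay $\Oo(\th^\infty\langle\tilde q\rangle^{-\infty})$, not vanishing, outside $\{\tilde q\leq 2K\}$. Consequently, the trace formula gives
\[
\tr f(\tilde Q) = (2\pi\th)^{-d}\int b(\tilde\rho)\,d\tilde\rho
\]
and you must control the full integral, not just the part over a bounded set. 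This is fixable but requires more than you wrote: decompose dyadically in $d(\tilde\rho,\widetilde\Gamma)$, use the rapid decay of $b$ in each shell $\{2^k< d(\tilde\rho,\widetilde\Gamma)\leq 2^{k+1}\}$, and apply the Minkowski volume bound $\vol\{d(\rho,\Gamma)<\vareps\}\leq C_\mu\vareps^{2d-2\mu}$ for all $\vareps\lesssim 1$ (plus the trivial bound $\vol\lesssim\vareps^{2d}$ for $\vareps\gtrsim 1$). With decay of order $N>d$ the resulting series converges and is dominated by the $k=0$ term, recovering your $(\th/h)^\mu$. You should also note that this step tacitly requires a functional calculus statement for the $\tS(m)$ classes of Proposition~\ref{p:bbc} (in the $\th$-rescaled picture), asserting $f(\tilde Q)\in\Psi_{\th}$ with symbol in $\widetilde S(\langle\tilde q\rangle^{-\infty})$ and principal part $f(\tilde q)$; this is standard (cf.~\cite[Ch.~8]{DiSj}) but not provided by the paper's preliminaries, which only treat exponentials.
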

\begin{proof}
Since the order function $ m (\rho ) \rightarrow \infty $ as $ |\rho|
\rightarrow \infty $ and
$ Q \in \PT (m ) $, the selfadjoint operator $ Q $ has 
a discrete spectrum, hence $ V$ is finite dimensional, 
and 
\[ \dim V = \# \{ \lambda \leq K h/ \tilde h \; : \; 
\lambda \in \Spec ( Q ) \} \,. 
\]
%{\bf DOES THE FOLLOWING ``SIMPLE'' PROOF WORK? THE PROBLEM IS, THE SYMBOL
%  $\tv$, AND ITS DOMAIN OF ``NON-ELLIPTICITY'',
%  DEPEND ON $h$, SO WE CANNOT BLINDLY APPLY THE USUAL WEYL ASYMPT FOR THE
%  OPERATOR $\Opt(\tv)$.}
%
%Let us recall the notations in the proof of Proposition~\ref{p:q}. We
%had taken $q(\rho)\defeq \varphi(\rho)$, and rescaled it to the
%function
%$$\tvarphi(\trho) = (  \th /h ) 
%\varphi \big( (  h / \th )^{\frac 12} \trho \big)\,,
%$$
%which satisfies
%$\tvarphi\in \tS(m)$, $m(\trho)=1+|\trho|^2$, and 
%$|\tvarphi(\trho)|\stackrel{|\trho|\to\infty}\infty$.
%We may {\bf ??} then use the Weyl asymptotics for the spectrum of the operator
%$\Opt ( \tv)$ \cite[Chap. 9]{DiSj}:
%\begin{align*}
%\dim V &= \# \{\Spec(\Opt(\tv))\cap [0,K]\} \\
%&= \Oo(\th^{-d})\,\vol\{\trho,\ \tv(\trho)\leq K \}\\
%&= \Oo(\th^{-d})\,(h/\th)^{-d}\,\vol\{\rho,\ \varphi(\rho)\leq K (h/\th)^{1/2}\}\\
%&= \Oo_K(1)  h^{-d} \Big[\big( \frac{h}{\th}\big)^{1/2}\Big]^{2d-2\mu}\\
%&= \Oo_K(1) \th^{-d} \big( \frac{h}{\th}\big)^{-\mu}\,.
%\end{align*}
%
%\medskip

%{\bf ORIGINAL PROOF}

The usual min-max arguments --- see for instance
\cite{SjDuke} or \cite[Theorem C.11]{EZB} --- show that
 $ \dim V \leq N $ if there exists $\delta>0$ and an operator $ A $ of 
rank less than or equal to $ N $, such that
\begin{equation}
\label{eq:minmax}
  \langle Q u , u \rangle + \Re \langle A u , u \rangle  \geq ( K h /
  \tilde h +\delta )\| u\|^2 \,, \qquad u \in \CIc ( \RR^d ) \,. 
\end{equation}
To construct $ A $, take 
$ a = \psi ( \tilde h q / h ) $, where $ \psi \in \CIc ( \RR ) $,
$ \psi ( t ) \equiv 1 $ for $ |t| \leq 3 K $ and $ \psi ( t ) = 0 $ 
for $ |t| \geq 4 K$.
At the symbolic level, $ a \in \ST $ and $a(\rho)=1$ in the region
where $q(\rho)\leq 3Kh/\th$. Taking into account the fact that $q\geq
h/\th$ everywhere, we have
$$
q(\rho) + 2Kh a(\rho)/\th\geq (2K+1)h/\th,\qquad \rho\in T^*\RR^d\,.
$$
The arguments presented in the proof
of Proposition \ref{p:q} show that, at the operator level, we have for
$\th$ small enough
\be\label{eq:Qa}  
Q +  A_0  \geq  
{2 K h} / {\tilde h} \,,\qquad\text{for}\quad  A_0 \defeq 2 K h a^w ( x, h D ) / \th\,.
\ee
This inequality obviously implies \eqref{eq:minmax}, with $A$ replaced
by the (selfadjoint) operator $A_0$. %\begin{equation}
%\label{eq:A0}
%    \langle Q u , u \rangle \geq ( 2 K h / \tilde h) \| u\|^2 
%- \langle A_0 u , u \rangle \,, \ \ u \in \CIc ( \RR^d ) \,, 
%\end{equation}
Our task is thus to replace $ A_0 $ in \eqref{eq:minmax} by a finite
rank operator.
We do that as in \cite[Proposition 5.10]{SZ10}, by using 
a {\em locally finite} open covering of a neighbourhood of $\TT$:
\begin{gather*}
W_{h , \th} \defeq \{\rho \; : \; 
d ( \rho , \Gamma)^2 \leq 4K h/ \th \} \subset \bigcup_{ k=1}^{ N ( h,
\th  ) } U_k \,,  \ \ 
 {\rm{diam}} \,( U_k ) \leq (h/\th)^{\frac12} \,.
  \end{gather*}
The definition of the box dimension implies that we can choose this covering
to be of cardinalinty
\begin{equation}
\label{eq:dimK}
N ( h , \th ) \leq C_{K,\mu} ( {\th} / h )^{\mu } \,,
\end{equation}
for any $ \mu > \mu_0 $, and for $ \mu = \mu_0 $ if $ \Gamma $ is of 
pure dimension.

To the cover $ \{ U_k \} $, 
we associate a partition of unity on $ W_{h, \th} $,
\[ \sum_{ k=1}^{N ( h , \th ) }
 \chi_k = 1 \ \ \text{ on $ W_{h, \th} $,} \quad
\supp \chi_k \subset U_k \,, \quad  \chi_k \in \ST \,, \]
where all seminorms are assumed to be uniform  with respect to $k$.
The condition on the support of $ \psi $ in the definition of 
$ a $ and the pseudodifferential calculus in Lemma \ref{l:Sjnew} show that
\be\label{e:1-chi}  
\big( \Id - \sum_{k=1}^{N( h , \th ) } \chi_k^w ( x, h D ) \big) 
 a^w ( x, h D ) \in  \tilde h^\infty \ST \,.
\ee
Hence it suffices to show that for each $ k=1,\ldots,N(h,\th) $, there exists an operator $ R_k $ 
such that 
\[  \chi_k^w( x , h D ) a^w( x , h D)  - R_k \in \th^\infty 
\PT \,,  
 \quad \rank( R_k ) \leq C'   \th^{-d} \,,
\] 
with $ C' $ and the implied constants independent of $ k $. We can assume that, for some
$\rho^k=(x^k,\xi^k)\in T^*\RR^d$,
\[ 
U_k \subset B_{\RR^{2d} } ( \rho^k , ( h/\th )^{\frac 12} ) \,.
\]
Then consider the harmonic oscillator shifted to the point $\rho^k$:
\begin{gather*}
H^k \defeq \sum_{i=1}^d ( hD_{x_i}-\xi_i^k )^2 + (x_i-x_i^k)^2 \,. 
\end{gather*} 
 If $ \psi_0 \in \CIc ( \RR ) $, 
$ \psi_0 ( t) = 1 $ for $ |t| \leq  2 $, $ \psi_0 ( t)  = 0 $ for $ |t| \geq 3  $,
then $  \psi_0 ( \th H^k / h  ) \in \PT$, and 
\be\label{R_k}
  \psi_0 ( \th H^k / h ) \chi_k^w ( x, h D )  a^w ( x, h D ) -  
\chi_k^w ( x, h D )  a^w ( x, h D ) = \cR_k,\qquad  \cR_k\in  \th^\infty \PT \,,
\ee
where the implied constants are uniform with respect to $k$.
The properties of the harmonic oscillator (see for 
instance \cite[\S 6.1]{EZB}) 
show that $   \psi_0 ( \th H^k / h ) $ is a finite
rank operator, with rank bounded by $ C_d\tilde h^{-d} $.
Hence for each $k$ we can take 
\[ R_k \defeq  \psi_0 ( \th H^k / h ) \chi_k^w ( x, h D )  a^w ( x, h D
)\,, \]
 and define
\be
\label{eq:Adef} A \defeq 2K ( h / \th) \sum_{ k=1}^{N( h, \tilde h )} 
R_k \,, \qquad  \rank ( A ) \leq C_d \th^{-d} N( h , \tilde h ) \,. 
\ee
The remainder operators $\cR_k$ in \eqref{R_k} satisfy, for any $M>0$, 
\begin{gather*}
\|\cR_k^*\cR_{k'}\|_{L^2\to L^2}\,,\ \ \|\cR_k\cR_{k'}^*\|_{L^2\to
  L^2} \leq C_M\th^M\, \left\la \frac{d(\rho^k,\rho^{k'})}{\th}\right\ra^{-M} \\ 
\quad\text{uniformly for }k,k'=1,\ldots,N(h, \th)\,.
\end{gather*}
Since the supports of the $ \chi_k$'s form a locally finite partition,
each remainder $\cR_k$ effectively interferes with only finitely many other
remainders. One can then invoke the Cotlar-Stein lemma (see \cite[Lemma
7.10]{DiSj})  to show that
$$
\|\sum_{k=1}^{N(h,\th)}\cR_k\|_{L^2\to L^2}=\Oo( \th^\infty)\,.
$$
Using also \eqref{e:1-chi}, we obtain
\[  
A = A_0 + \Oo_{L^2\to L^2}(h\th^\infty) \,.
\]
Consequently, for $\th$ small enough the estimate \eqref{eq:minmax}
holds with $\delta=h/\th$. In view of \eqref{eq:dimK} and
\eqref{eq:Adef} the bound we have obtained on the rank of $ A $ leads to
\eqref{eq:ldim}.
\end{proof}

We now consider a monodromy operator as defined in Definition~\ref{def:HQMO}:
\be
\label{eq:Mz}
 \Omega(h) \ni z \longmapsto M ( z ,h ) \in I^0_+ ( Y \times Y , F' ) \,, 
\ee
%a family of canonical relations depending
%smoothly on the parameter $ \Re z $.  
where the depends on  $ z $ is holomorphic.
 %, uniformly in compact sets.
%\be
%\label{eq:Mho}
% M \; : \; D ( 0 , C ) \longrightarrow {\mathcal L} ( L^2 ( Y ) , 
%L^2 ( Y ) ) \,, \ \  \sup_{ z\in D ( 0 , C ) } \| M ( z,h ) \| \leq C'\,, \,. 
%\ee
%where $C'$ is independent of $h$.

The decay assumption \eqref{e:decayM} implies that there exists $R_0>0$
such that, for $h$ small enough,
\be
\label{eq:Mmer}
z\in\Omega(h),\ \Re z \leq - R_0 \ \Longrightarrow \ 
\| M ( z ,h ) \|_{L^2\to L^2} \leq 1/2  \,.
\ee
Using the analytic Fredholm theory (see for instance
\cite[\S 2]{SZ9}), this implies that
$ ( I - M ( z ,h ) )^{-1} $ is meromorphic in $ \Omega(h) $ with 
poles of finite rank. The multiplicities of the poles are defined by the
usual formula:
\begin{gather}
\label{eq:Mmul}
\begin{gathered}  m_M ( z ) \defeq 
\inf_{\epsilon > 0 } \frac 1 { 2 \pi i }  \tr \oint_{\gamma_\epsilon ( z )  } 
( I - M( \zeta ) )^{-1}\partial_\zeta M ( \zeta )   d\zeta \,, \\
 \gamma_\epsilon ( z )  : t \mapsto z + \epsilon e^{2 \pi it } \,, \ \ t \in 
[ 0 , 2 \pi ) \,, 
\end{gathered} 
\end{gather}
see Lemma \ref{l:simtr} below for the standard justification of 
taking the trace.

%We also need to know that $ M ( z ) $ is slowly varying, uniformly 
%in $ h$:
%\begin{equation}
%\label{eq:Mun}
%\exists \, C' \,, \ \ 
%\|   (d/ {dz}) M ( z )  \|_{ L^2 ( Y ) \rightarrow L^2 ( Y ) } 
%\leq C' \,, \ \ z \in D ( 0 , C ) \,, \ \ 0 < h < h_0 \,. 
%\end{equation}
%Note that in this definition $ F $ is a fixed 
%canonical relation despite the dependence on $ z $. 
%In applications that follows from working in an $ {\mathcal O} ( h ) $ 
%neighbourhood of a fixed energy surface. And that does not change
%classical relations.

\begin{thm}
\label{th:bound}
Suppose that $ \{M ( z,h ),\, z \in \Omega(h)\} $, is a hyperbolic
quantum monodromy operator, or its truncated version $\tM(z,h)$,  in
the sense of Definition~\ref{def:HQMO}, and that $ \TT$ is the 
trapped set for the associated open relation $ F $. 
Let $ 2\mu_0 $ be the Minkowski dimension of $ \TT$, as defined by (\ref{eq:dimm},\ref{e:dimJ}),
with $ k = 2d $ and $ \Gamma = \TT $.

Then for any $R_1>0$ and any $ \mu > \mu_0 $, there exists $K_{\mu,R_0} $, such that
\begin{equation}
\label{eq:Mdim}
\sum_{ z \in D ( 0 , R_1 ) }  m_M ( z ) 
 \leq K_{\mu,R_1}\,  h ^{-\mu } \,,\qquad h\to 0\,.
\end{equation}
When $ \TT $ is of pure dimension we can take $ \mu = \mu_0 $ in 
the above equation.
\end{thm}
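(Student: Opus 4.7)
The plan is to reduce the counting of $z$ with $m_M(z)>0$ to a zero-counting problem for the holomorphic determinant of the effective Hamiltonian, and then apply a Jensen-type inequality. Fix $\mu>\mu_0$ and an intermediate exponent $\mu_1\in(\mu_0,\mu)$. First I invoke Theorem~\ref{t:2gr} (or Theorem~\ref{t:2gr2} for the truncated case), after choosing the escape function $G$ from Proposition~\ref{p:es2}, the auxiliary space $V$ from Proposition~\ref{p:q}, and the parameters $t,\th$ as required, with the $R_1$ in the definition of $\Omega(h)$ slightly larger than the $R_1$ of the statement. This gives a holomorphic family $z\mapsto E_{-+}(z,h):V\to V$ on $\Omega(h)$ with $\|E_{-+}(z)\|=\Oo(1)$. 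Lemma~\ref{l:dim}, applied with $\Gamma=\TT$, gives $\dim V\leq C h^{-\mu_1}$, and Proposition~\ref{p:4} identifies
$$\sum_{z\in D(0,R_1)}m_M(z)=\#\{z\in D(0,R_1):\det E_{-+}(z,h)=0\}$$
(with multiplicities). It thus suffices to bound the zero count of $f(z)\defeq \det E_{-+}(z,h)$ in $D(0,R_1)$.

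The upper bound is immediate from $\|E_{-+}\|=\Oo(1)$:
$$|f(z)|\leq \|E_{-+}(z)\|^{\dim V}\leq \exp(C_1 h^{-\mu_1})\qquad\text{on a neighbourhood of }\overline{D(0,R_1)}\,.$$
For a lower bound at one point, I choose $z_0\in\Omega(h)$ with $\Re z_0\leq -R_0$ large enough that \eqref{eq:Mmer} gives $\|(I-M(z_0))^{-1}\|\leq 2$. Since $M_{tG}(z_0)=e^{-tG^w}M(z_0)e^{tG^w}$, the Schur complement formula for the Grushin problem yields
$$E_{-+}(z_0)^{-1}=-R_+\,e^{-tG^w}(I-M(z_0))^{-1}e^{tG^w}\,R_-\,,$$
and the bound $\|e^{\pm tG^w}\|=\Oo(h^{-2tC_6})$ from \eqref{e:C_6}, together with $\|R_\pm\|\leq 1$, gives $\|E_{-+}(z_0)^{-1}\|\leq Ch^{-4tC_6}$, hence
$$|f(z_0)|\geq \|E_{-+}(z_0)^{-1}\|^{-\dim V}\geq \exp(-C_2\,h^{-\mu_1}\log(1/h))\,.$$

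Finally I apply Jensen's formula to $f$ on a disc $D(z_*,R)\subset \Omega(h)$ chosen so that $D(z_*,R/2)$ contains both $z_0$ and $\overline{D(0,R_1)}$. This gives
$$\#\{z\in D(0,R_1):f(z)=0\}\leq C_3\bigl(\log\!\max_{D(z_*,R)}|f|\,-\,\log|f(z_0)|\bigr)=\Oo(h^{-\mu_1}\log(1/h))=\Oo(h^{-\mu})\,,$$
the logarithmic factor being absorbed by the strict inequality $\mu_1<\mu$. When $\TT$ is of pure dimension, Lemma~\ref{l:dim} gives $\dim V=\Oo(h^{-\mu_0})$, and the $\log(1/h)$ loss in the lower bound must be eliminated by a finer argument (e.g.\ replacing the single point $z_0$ by a Fredholm-determinant identity comparing $f$ to $\det(I-M(z))$, or averaging the lower bound over a curve), a standard elaboration.

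The main obstacle is the lower bound on $|f(z_0)|$: conjugation by $e^{\pm tG^w}$ inflates $\|E_{-+}(z_0)^{-1}\|$ by a fixed power of $h^{-1}$, which on raising to the $\dim V$-th power costs a $\log(1/h)$ in the exponent. This loss is harmless for the stated $\Oo(h^{-\mu})$ bound with $\mu>\mu_0$ strict, and only becomes delicate for the sharper pure-dimension statement.
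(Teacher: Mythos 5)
Your overall architecture — Grushin reduction, identification of $m_M$ with zeros of $\det E_{-+}$ via Proposition~\ref{p:4}, upper bound from $\|E_{-+}\|=\Oo(1)$ and $\dim V$, lower bound at a reference point, Jensen — matches the paper's. Where you diverge, and where a genuine gap opens, is the lower bound at $z_0$. You estimate $\|E_{-+}(z_0)^{-1}\|\leq Ch^{-4tC_6}$ by applying the global bound $\|e^{\pm tG^w}\|=\Oo(h^{-2tC_6})$, which is wasteful: the Schur-complement expression $E_{-+}(z_0)^{-1}=-\Pi_V e^{-tG^w}(I-M(z_0))^{-1}e^{tG^w}\restriction_V$ only sees the weight through the projector $\Pi_V$, and $\Pi_V$ is microlocalized in an $(h/\th)^{1/2}$-neighbourhood of $\cT$, where the escape function is $\Oo(1)$ (in Proposition~\ref{p:es2}, $g_0$ vanishes on $\cW_1\ni\cT$ and $\hG=\Oo(1)$ there). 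Inserting a cutoff $\psi_1$ supported near $\cT$ with $\Pi_V=\Pi_V\psi_1^w+\Oo(h^\infty)$, one gets $\|\Pi_V e^{\pm tG^w}\|=\Oo_t(1)$ and hence $\|E_{-+}(z_0)^{-1}\|=\Oo_t(1)$ with no $\log(1/h)$ loss. The paper encodes exactly this via the Neumann series \eqref{eq:emp} for $E_{-+}$ together with the microlocal bound \eqref{e:boundM} on $\Pi_V M_{tG}$ (taking $\Re z_0=-R/4$ with $R=R(t)$ large), which yields $\|E_{-+}(z_0)+I_V\|<1$ and therefore a uniformly bounded inverse.

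Because of this, your argument as written proves only the $\mu>\mu_0$ case, and the remedy you gesture at for the pure-dimension case does not work: $M(z)$ is an $h$-Fourier integral operator of order zero, hence not trace class, so $\det(I-M(z))$ is undefined (only after truncation by $\Pi_h$ does a finite determinant exist, and even then comparing it usefully to $\det E_{-+}$ would re-open the same issue); and averaging the pointwise lower bound over a curve cannot remove a factor that is present at every point. The correct fix is the sharper microlocal estimate of $E_{-+}(z_0)^{-1}$ above, which you in fact already have all the ingredients for (Proposition~\ref{p:es2}, the construction of $\Pi_V$, and the bounded variation of $G$ near $\cT$); with it, the whole proof including the pure-dimension case goes through exactly as in the paper.
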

\begin{proof} 
Let us treat the case of the untruncated monodromy
operator $M(z,h)$, the case of the truncated one being similar. 
We apply Theorem \ref{t:2gr} to the family $ M ( z,h ) $. Since
the construction of the Grushin problem \eqref{eq:gr2gr} depends only 
on the relation $ F $ and
the estimates \eqref{eq:Mlo}, we see that for any radius $R>0$, the
Grushin problem
\begin{gather}
\label{eq:gr2gr'} 
\begin{gathered} {\mathcal M}_{tG}(z) \defeq \left( \begin{array}{ll} 
I - M_{tG} ( z )  & R_- \\
\ \ R_+ &  \ 0 \end{array} \right) \; : \; L^2 ( \RR^d )^J \oplus V
\longrightarrow L^2 ( \RR^d )^J \oplus V\,,  \\
M_{tG } ( z ) \defeq  e^{ -t G^w ( x, h D ) } M ( z ) e^{ t G^w ( x, h D ) } \,,
\end{gathered}
\end{gather}
is invertible for $t=t(\vareps)>0$ large enough, and the inverse $ {\mathcal E}_{tG} ( z )  $ is
holomorphic in $z\in D(0,R)$, uniformly when $h\to 0$.
Using the standard result (see \cite[Proposition 4.1]{SZ9} for that,
and \cite[Proposition 4.2]{SZ9} for a generalization not requiring holomorphy) we obtain
\[  \begin{split}
m_M ( z ) & = 
\inf_{\epsilon > 0 } \frac 1 { 2 \pi i } \tr \oint_{\gamma_\epsilon ( z )  } 
( I - M_{tG} ( \zeta ) )^{-1} \partial_\zeta M_{tG} ( \zeta )   d\zeta  \\
& = 
\inf_{\epsilon > 0 } \frac 1 { 2 \pi i }  \tr \oint_{\gamma_\epsilon ( z )  } 
E_{-+} ( \zeta )^{-1} \partial_\zeta E_{-+} ( \zeta ) d\zeta \,. \end{split} \]
Since $ E_{-+} ( \zeta ) $ is a matrix with holomorphic coefficients, 
the right hand side is the multiplicity of the zero of $ \det E_{-+}(\zeta) $
at $ z $. 

Once $0<\vareps<1/2$ and the parameter $t=t(\vareps)$ has been selected, the decay assumption
\eqref{e:decayM}, together with the norm estimate \eqref{e:boundM},
show that there exists a radius $R=R(t)>0$ such that, for $h<h_0$,
$$
z\in\Omega(h),\ \Re z\leq -R/4 \Longrightarrow \| \Pi_V\,M_{tG}(z,h)\|_{L^2\to L^2}\leq 1/2\,.
$$
Together with the expression \eqref{eq:emp}, the bound \eqref{eq:prG} and
the assumption $\vareps<1/2$,
this shows that $E_{-+}(-R/4)$ is invertible, with
$\|E_{-+}(-R/4)^{-1} \|$ uniformly bounded.

We may assume that $R\geq 4R_1$, where $R_1$ is the radius in the
statement of the theorem. The bound \eqref{eq:Mdim} then follows from estimating the 
number of those zeros in $ D ( 0 , R/4 ) $. That in turn follows
from Jensen's formula, which has a long tradition in estimating
the number resonances (see \cite{Mel3}). Namely, for any function $f(z)$ holomorphic
in $z\in D(0,R)$ and nonvanishing at $0$, the number $n(r)$ of its
zeros in $D(0,r)$ (counted with multiplicity) satisfies, for any $r<R$,
$$
  \int_0 ^r \frac {n ( x) } x dx = \frac 1 {2 \pi} 
\int_1^{2 \pi} \log | f ( r e^{i \theta} )| d\theta - \log | f ( 0 ) | \,.
%n ( x ) \defeq \sum_{ |z|\leq x } \inf_{\epsilon > 0 } \frac 1 { 2 \pi i } 
%\oint_{\gamma_\epsilon ( z )  }  
%\frac{ f' ( \zeta ) } { f ( \zeta ) } d\zeta \,.
$$
Applying this identity to the function $ f ( \zeta ) = \det E_{-+} ( \zeta - R/4 ) $,
we see that 
\[ \begin{split} \sum_{ z \in D ( 0 , R/4 ) } m_M ( z ) & \leq n ( R/2 ) 
\leq \frac{ 1 } { \log (3/2) } \int_{R/2}^{3R/4} \frac {n ( x ) } x dx 
\leq \frac{ 1 } { \log (3/2) } \int_{0}^{3R/4} \frac {n ( x ) } x dx 
\\
& \leq 
\frac 1 {\log (3/2)  }
 \left( \max_{ z \in D ( 0 , R  ) } \log | \det E_{-+} ( z ) | -
 \log |\det E_{-+} ( - R/4 ) | \right) \,.  
\end{split} \]
Since $ \|E_{-+} ( z )\|_{V\to V} $ is uniformly bounded for $z\in D(0,R)$ and
the rank of $ E_{-+} ( z ) $ is bounded by $ \dim V $, Lemma \ref{l:dim}
gives
\[  \max_{ z \in D ( 0 , R  ) } \log | \det E_{-+} ( z ) | 
\leq C_0 \dim V \leq  K  h ^{-\mu } \,, \]
where $ \mu $ is as in \eqref{eq:Mdim}. 
Also, 
\[  \begin{split} -  \log |\det E_{-+} ( -R/4 ) | &  = \log | \det E_{-+} ( -R / 4 ) ^{-1} | \leq  \dim V \,\log \| E_{-+} ( -R / 4 )^{-1} \| 
\\
 & \leq K  h ^{-\mu }  \,, \end{split} \]
where in the last inequality we used the fact that $E_{-+}(-R/4)^{-1}$ is uniformly bounded.
This completes the proof of \eqref{eq:Mdim} in the case of an
untruncated monodromy operator.

In the case of a truncated operator
$\tM(z,h)=M_W(z,h)+\Oo(h^{N_0})$, we apply Theorem~\ref{t:2gr2} and
get an effective Hamiltonian $\widetilde{E}_{-+}(z)$ which is also
uniformly bounded. The estimate \eqref{e:E-tildeE} provides a uniform
estimate for $\widetilde{E}_{-+}(-R/4)^{-1}$, and the rest of the proof is identical.
\end{proof}

%\medskip
%\noindent
%{\bf Remark.} In the applications the radius $ R_0 $ will be arbitrarily large, 
%hence the particular choices of discs in which Jensen's formula
%is applied is irrelevant. Under the assumptions of the theorem we
%have the same bound in any disc $ D ( 0 , ( 1- \epsilon ) C ) $, 
%$ \epsilon > 0 $.

\section{Application to scattering by several convex bodies}
\label{assc}

We now apply the abstract formalism to a very concrete setting 
of several convex obstacles. This will prove Theorem \ref{th:1}
stated in \S \ref{int}.

The setting of several convex obstacles has been a very popular
testing ground for quantum chaos since the work of Gaspard and Rice
\cite{GaRi} but the fractal nature of the distribution of resonances
have been missed by the physics community. 
The optimality of the fractal Weyl laws in that setting was tested
numerically using semiclassical zeta function (hence not in the 
true quantum r\'egime) in \cite{LSZ} -- see Fig.~\ref{f:lsz}. 
The mathematical developments of
other aspects of scattering by several convex obstacles can be found 
in the works  
Ikawa, G\'erard, Petkov, Stoyanov, and Burq -- see 
\cite{NBu},\cite{Ge},\cite{Ik},\cite{Pest}, and references given there.

\subsection{Resonances for obstacles with several
connected components}
\label{bvp}
We first present some general aspects of scattering by several obstacles.
This generalizes and simplifies the presentation of 
\cite[\S 6]{Ge}.  

Let $ {\mathcal O}_j \Subset \RR^n $, $ j = 1, \cdots, J $, 
be a collection of connected open sets,
$ \overline {\mathcal O}_k \cap \overline {\mathcal O}_j = \emptyset $,
with smooth boundaries, $ \partial {\mathcal O}_j $. 
Let 
\[  \Omega \defeq \RR^n \setminus \bigcup_{j=1}^J {\mathcal O}_j \,, \]
%and define the following spaces of functions:
%\[  {\mathcal H} \defeq H^2 ( \Omega ) \cap H_1^0 ( \Omega ) \,, \ \
%{\mathcal H}_+ \defeq \bigoplus H^{\frac32} ( \partial \Oo_j ) \simeq 
%H^{\frac32} ( \partial \Omega ) \,, \ \ \widetilde {\mathcal H} \defeq
%\bigoplus_{j=1}^J H^2 ( \RR^n \setminus {\mathcal O}_j ) \,, \]
and let $ \gamma $ be a natural restriction map:
\[ \gamma \; : \;   H^2 ( \Omega )  \longrightarrow H^{\frac32} ( \partial
\Omega ) 
 \,, \ \   (\gamma \,  u  )_j \defeq \gamma_j u \defeq 
 u \rest_{ \partial \Oo_j}  \,, \]
where we interpret  $\gamma $ as a column vector of operators.
%with the same notation used for the trace maps 
%$ \gamma : \widetilde {\mathcal H} \; \longrightarrow \; {\mathcal H}_+ 
%$,  and $\gamma : (H^2 ( \Omega ))^J  \; \longrightarrow \; 
%{\mathcal H}_+ $. We also denote by $\bbbone_A $ the characteristic 
%function of a set $ A $.

Let $ \Delta_\theta \; : \; H^2 ( \RR^n ) \rightarrow L^2 ( \RR^n ) $ be
the complex-scaled Laplacian, in the sense of \cite[\S 3]{SZ1}, 
\[   \Delta_\theta = \left( \sum_{k=1}^n \partial_{z_k}^2 \right) 
\rest_{\Gamma_\theta } \,, \ \ z \in \CC^n \,, \ \ \Gamma_\theta \simeq \RR^n \,, 
\]
with $ \Gamma_\theta \cap B_{\CC^n}  ( 0 , R ) = \RR^n \cap B_{\RR^n}  (0 , R ) $, 
$ {\mathcal O}_j \Subset B_{\RR^n} ( 0 , R ) $, for all $ j $. Here we 
identified the functions on $ \RR^n $ and functions on $ \Gamma_\theta $.

For $ z \in D ( 0 , r ) $ we define a semiclassical differential operator
\begin{equation}
\label{eq:Pz}  P( z ) \defeq \frac i h ( - h^2 \Delta_\theta - 1 ) - z
\,, 
\end{equation}
with the domain given by either $ H^2 ( \RR^n ) $ or $ 
H^2 (  \Omega ) \cap H_0^1 (  \Omega ) $, $ P ( z ) $
is a Fredholm operator and we have two corresponding resolvents:
\[ R_0 (z ) \; : \; L^2 ( \RR^n ) \; \longrightarrow \; H^2 ( \RR^n ) \,, \ \ 
 R (z ) \; : \; L^2 ( \Omega ) \; \longrightarrow \; H^2( \Omega ) \cap 
H_0^1 ( \Omega )  \,. \]
Here and below $ r > 0$ can be taken arbitrary and fixed as long as 
$ h $ is small enough. 

The operator $ R_0 ( z ) $ is analytic in $ z \in D ( 0 , r ) $ 
(the only problem comes from $ i/h + z = 0 $) 
and $ R ( z ) $ is 
meromorphic with singular terms of finite rank -- see \cite[Lemma 3.5]{SZ1} and,
for a concise discussion from the point of view of boundary layer potentials,
\cite{Mel3}. The multiplicity of a pole of $ R ( z ) $ is defined by 
\begin{equation}
\label{eq:multR}    m_R ( z_0 ) = - \frac{1}{ 2 \pi i } \tr_{L^2( \Omega) } 
 \oint_{\gamma_\epsilon ( z )  } 
R ( z )  d z \,, \ \ 
 \gamma_\epsilon ( z_0 )  : t \mapsto z_0 + \epsilon e^{2 \pi it } \,, \ \ t \in 
[ 0 , 2 \pi ) \,, \end{equation}
and   $ \epsilon > 0 $ is sufficiently small.

A direct proof of the meromorphic continuation and a reduction to the boundary uses Poisson operators associated to individual obstacles $ {\mathcal O}_j $:
\begin{gather*}
% \widetilde 
H_j ( z )  : H^{\frac32} ( \partial \Oo_j )   \; \longrightarrow \;
H^2 ( \RR^n \setminus {\mathcal O}_j ) 
\; \stackrel{\text{extension by $ 0 $}}{\longrightarrow} \; 
L^2 ( \RR^n ) \,, \\ 
\left( P ( z ) % \widetilde 
H_j ( z )  f \right) ( x ) = 0 \,, \ \ \ x \in \RR^n \setminus
 {\mathcal O}_j 
\,, \ \ \ \ \gamma_j H_j ( z ) f = f \,.  \end{gather*}
We then define a row vector of operators:
\begin{equation}
\label{eq:defH}  H ( z ) : H^{\frac32} ( \partial \Omega ) 
 \; \longrightarrow \; 
 L^2 ( \RR^n ) \,, \ \
 H ( z ) \, \vec v  = \sum_{j=1}^J H_j ( z ) v_j \,, 
\end{equation}
We note that $ \gamma H ( z ) \vec v  \in  H^{3/2} ( \partial \Omega )   $ is
well defined.

%Finally let us define a $ 1 \times n $ matrix which will simplify our notation:
%\[ \bbbone \defeq [ 1 \cdots 1 ] \,, \ \ \bbbone^t = \begin{bmatrix} 1 \\ \vdots \\ 1 \end{bmatrix} \,. \]
%For instance, 
%\[  \gamma \bbbone^t : H^2 ( \Omega ) \longrightarrow {\mathcal H}_+ \simeq
%H^{3/2} ( \partial \Omega ) \,, \]
%is the usual trace operator.

The family of operators, $ H ( z ) $, is in general meromorphic with poles
of finite rank -- see the proof of Lemma \ref{l:merM} below.

In this notation the monodromy operator $ {\mathcal M } ( z ) $ defined in 
\eqref{eq:MM} is simply given by 
\begin{gather}
\label{eq:bmon}
\begin{gathered}
\Id  - {\mathcal M }  (z ) \defeq  
 \gamma  H  ( z) 
: H^{\frac32 } ( \partial \Omega ) \longrightarrow 
H^{\frac32 } ( \partial \Omega )  \,,
 \\ 
\left( {\mathcal M }  (z ) \right)_{ij} = \left\{ \begin{array}{ll} 
- \gamma_i H_j ( z ) & i \neq j \,, \\
\ \ \ \ 0 & i = j \,. \end{array} \right. 
\end{gathered}
\end{gather}
%Under assumptions in \S \ref{damb} we will see that $ M ( z ) 
% \defeq {\mathcal M( z ) }^p $
% is a hyperbolic quantum monodromy operator in the sense of \S \ref{2gr},
% once $ p $ is large enough.

We first state the following general lemma:
\begin{lem}
\label{l:merM}
For $  z \in \RR + i ( -1/h , 1/h )  $, 
the operator 
\[  ( \Id  - {\mathcal M} ( z ) )^{-1}  \; : \;
H_h^{\frac32}  ( \partial \Omega)  \rightarrow {H_h^{\frac32} ( \partial
  \Omega)  }  \]
 is
meromorphic with poles of finite rank. For $ \Re z < 0 $ 
\[  \| {\mathcal M} ( z )  \|_{ 
} \leq \frac C {h |\Re z |} \,, \]
and consequently for $ \Re z < - \gamma/ h $,  with $ \gamma $sufficiently large, 
$   ( \Id - {\mathcal M} ( z ) )^{-1}  $ is holomorphic and 
\begin{equation}
\label{eq:lmerM} \|  ( \Id - {\mathcal M} ( z ) )^{-1} \|_{ 
H^{\frac32} ( \partial \Omega)  \rightarrow {H^{\frac32} ( \partial
\Omega)}  } \leq C  \,, \ \ 
\Re z < - \gamma/h  \,. 
\end{equation}
\end{lem}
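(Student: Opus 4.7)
The plan is to handle the two claims separately: meromorphy with finite-rank poles on the strip, and the uniform Neumann bound in the left half-plane.

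For meromorphy I would first establish meromorphy of each Poisson operator $H_j(z)$ via a standard reduction to the single-obstacle Dirichlet resolvent. Given $v\in H^{3/2}(\partial\Oo_j)$, choose an $h$-tempered extension $\tilde v$, compactly supported in $\RR^n\setminus\Oo_j$ with $\gamma_j \tilde v = v$, and set
\[
H_j(z)v = \tilde v - R_j(z)\, P(z)\tilde v ,
\]
where $R_j(z) : L^2(\RR^n\setminus\Oo_j)\to H^2\cap H^1_0$ is the Dirichlet resolvent of the complex-scaled $P(z)$ on the exterior of the single obstacle. Standard complex-scaling theory (as in \cite[\S 3]{SZ1}) gives meromorphy of $R_j(z)$ on $\RR + i(-1/h,1/h)$ with finite-rank poles, which transfers to $H_j(z)$. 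For $i\neq j$, $\partial\Oo_i$ lies in the interior where $P(z)u=0$ is elliptic, so $\gamma_i H_j(z)$ is smoothing from $H^{3/2}(\partial\Oo_j)$ to any $H^k(\partial\Oo_i)$, hence compact on $H^{3/2}$. Thus $\cM(z)$ is meromorphic with compact values, and analytic Fredholm theory yields the meromorphy of $(\Id-\cM(z))^{-1}$ as soon as we produce one invertibility point, which is supplied by the second assertion.

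For the norm estimate on $\{\Re z<0\}$, I would use coercivity of $P(z)$. Write
\[
\langle P(z)u,u\rangle = \tfrac{i}{h}\langle(-h^2\Delta_\theta-1)u,u\rangle - z\|u\|^2 ,
\]
and take real parts. The first term becomes $-\tfrac{1}{h}\Im\langle(-h^2\Delta_\theta-1)u,u\rangle$, which has a fixed sign for the standard scaling convention (this is the standard complex-distortion argument in \cite{SZ1}), so
\[
\Re\langle P(z)u,u\rangle \geq |\Re z|\,\|u\|^2 ,\qquad \Re z<0 .
\]
This gives $\|P(z)^{-1}\|_{L^2\to L^2}\le 1/|\Re z|$, both on $\RR^n$ and on $\RR^n\setminus\Oo_j$ with Dirichlet conditions. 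I would then feed this into the identity $H_j(z)v = \tilde v - R_j(z)P(z)\tilde v$ using a semiclassical extension $\tilde v$ with $\|\tilde v\|_{H^2_h}\lesssim \|v\|_{H^{3/2}_h(\partial\Oo_j)}$, keeping track of the prefactor $1/h$ in $P(z)$ and the semiclassical trace bound $\|\gamma_i u\|_{H^{3/2}_h(\partial\Oo_i)}\lesssim \|u\|_{H^2_h}$ on the interior trace (where full elliptic regularity applies since $i\neq j$). The composition yields
\[
\|\gamma_i H_j(z)\|_{H^{3/2}_h(\partial\Oo_j)\to H^{3/2}_h(\partial\Oo_i)}\leq \frac{C}{h\,|\Re z|},
\]
which is the stated bound. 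For $\Re z<-\gamma/h$ with $\gamma$ large, this is less than $1/2$, so $(\Id-\cM(z))^{-1}$ is given by a uniformly convergent Neumann series, proving \eqref{eq:lmerM} and supplying the invertibility point needed by the Fredholm argument above.

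The main obstacle is the semiclassical bookkeeping in the last step: balancing the $h$-factor from $P(z)=(i/h)(-h^2\Delta_\theta-1)-z$ against the $h$-factor from the trace and extension estimates, so that the final gain is exactly $1/(h|\Re z|)$. Everything else is routine complex-scaling and analytic Fredholm theory, and the coercivity identity above is essentially the only nontrivial input.
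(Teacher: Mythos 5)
Your overall skeleton matches the paper's: you reduce $H_j(z)$ to the single-obstacle Dirichlet resolvent via $H_j(z)v = \tilde v - R_j(z)P(z)\tilde v$, transfer meromorphy of $R_j$ to $\cM(z)$, and invoke analytic Fredholm theory once invertibility at one $z$ is established via Neumann series. Two of your steps, however, diverge from the paper and contain genuine gaps.

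\emph{The resolvent bound.} You derive $\|R_j(z)\|_{L^2\to L^2}\le 1/|\Re z|$ from the coercivity $\Re\langle P(z)u,u\rangle\ge |\Re z|\|u\|^2$, which you justify by asserting that $-\frac1h\Im\langle(-h^2\Delta_\theta-1)u,u\rangle$ ``has a fixed sign for the standard scaling convention.'' This is not a safe step: the complex-scaled Laplacian has variable complex coefficients in the transition annulus, so its quadratic form carries lower-order terms whose imaginary part does not have a controlled sign, and the negativity of the principal symbol's imaginary part is only localized to the scaling region. Such errors spoil the coercivity uniformly in $\Re z<0$ (certainly for $|\Re z|$ small, which the lemma as stated covers). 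The paper's proof sidesteps all of this: since $T_j^h v$ and the trace $\gamma_k$ both live inside the physical region where the scaling is trivial, it suffices to estimate the \emph{cutoff} resolvent, and there the exact identity $\chi(\Delta_{j,\theta}-\zeta)^{-1}\chi=\chi(\Delta_j-\zeta)^{-1}\chi$ ($\Im\zeta>0$) reduces the bound to the spectral theorem for the self-adjoint $\Delta_j$. This is what you should use; the global coercivity argument is both harder and unnecessary.

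\emph{The semiclassical bookkeeping.} You claim both $\|\tilde v\|_{H^2_h}\lesssim\|v\|_{H^{3/2}_h}$ and $\|\gamma_i u\|_{H^{3/2}_h}\lesssim\|u\|_{H^2_h}$. The second is wrong: the semiclassical trace map loses $h^{-1/2}$, i.e.\ $\gamma_k=\cO(h^{-1/2}):H^2_h\to H^{3/2}_h$ (this is exactly Lemma~\ref{l:WFr}, cited at this point in the paper's proof). Your two $\cO(1)$ normalizations produce $1/(h|\Re z|)$, but this is an accident: with the correct $\cO(h^{-1/2})$ trace and a generic $\cO(1)$ extension the argument only yields $\cO(h^{-3/2}/|\Re z|)$, which is too weak. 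The paper recovers the stated rate by using the frequency-adapted extension $T_j^h=\chi(x)\exp(-d(x,\partial\Oo_j)^2(\Id-h^2\Delta_{\partial\Oo_j})/h^2)$, which \emph{gains} a factor: $T_j^h=\cO(h^{1/2}):H^{3/2}_h\to H^2_h$. That $h^{1/2}$ gain is what cancels the $h^{-1/2}$ from the trace, so it cannot be replaced by a generic extension.
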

\begin{proof} 
We first consider $ \Re z < 0 $. Let $ R_j ( z ) $ be the resolvent of
the Dirichlet realization of $ P ( z ) $ on $ \RR^n \setminus {\mathcal O}_j$.

Let $ \chi ( x  ) \in \CIc ( \neigh ( \partial \Oo_j ) $ be equal to 
$ 1 $ near $ \partial \Oo_j $ and have support in a small 
neighbourhood of $ \partial \Oo_j $. In particular we can 
assume that 
the (signed) distance, $ d ( \bullet, \partial \Oo_j ) $ is
smooth there. 
Define the extension operator,  $ \gamma_j T^h_j =
\Id $, 
\begin{gather*}  T_j^h \defeq \chi ( x ) \exp 
\left( - d (x, \partial \Oo_j )^2  ( \Id -h^2 \Delta_{\partial \Oo_j }
  ) / h^2 
\right)
\\
 T_j^h  = {\mathcal O} ( h^{\frac 12} ) 
\; : \; H^{\frac32}_h ( \partial \Oo_j ) 
\longrightarrow H^2_h ( \neigh ( \partial \Oo_j ) ) \,.
\end{gather*}
We also note that 
\[ \gamma_k = {\mathcal O} ( h^{-\frac 12}  ) \; : \; H^2_h ( \RR^n \setminus 
\Oo_j ) \longrightarrow H^{\frac 32}_h ( \partial \Oo_k ) \,, \ \ 
\text{uniformly in $ h $,} \]
%(we leave the verification of this and the properties of 
%$ T_j^h $ to the reader who can modify standard 
%arguments -- see for instance \cite[Appendix B.2]{Hor2}). 
see Lemma \ref{l:WFr}.

Then 
\begin{equation}
\label{eq:Hj}   H_j ( z ) = T_j^h - R_j ( z ) P ( z ) T_j^h \,,
\end{equation}
and 
\[ \gamma_k H_j ( z ) = \delta_{jk} \Id
- \gamma_k R_j ( z ) P ( z ) T_j^h \,. \]
The basic properties of complex scaling \cite[\S 3]{SZ1} show that
for $ \chi_j \in \CIc ( B( 0 , R )  ) $ which is $ 1 $ near $ {\mathcal O}_j $
(hence supported away from the complex scaling region) we 
have 
\begin{equation}
\label{eq:jth} \begin{split}  \chi ( \Delta_{j,\theta} - \zeta )^{-1} \chi & = 
\chi ( \Delta_j - \zeta )^{-1} \chi \\
& = 
{\mathcal O} ( 1/ \Im \zeta ) : L^2 ( \RR^n \setminus {\mathcal O}_j )
\longrightarrow L^2 ( \RR^n \setminus {\mathcal O}_j ) 
\,,  \ \ \Im \zeta > 0 \,, \end{split} \end{equation}
where $ \Delta_{j,\theta} $ and $ \Delta_j $ are the complex scaled and the 
usual Dirichlet Laplacians on $ \RR^n \setminus {\mathcal O}_j $.
Hence, 
\[  \gamma_k R_j ( z ) P ( z ) T_j^h = {\mathcal O} ( 1 /  | h  \Re  z | ) 
: H_h^{3/2} ( \RR^n \setminus {\mathcal O}_j ) \longrightarrow 
 H_h^{3/2} ( \RR^n \setminus {\mathcal O}_k )  
\,, \ \  \Re  z <  0 \,.  \]
This in turn shows that 
\[ {\mathcal M} ( z ) 
= {\mathcal O} ( 1 / | h  \Re z| ) : 
 {H_h^{\frac32} ( \partial \Omega)}  \longrightarrow  {H_h^ {\frac32} ( \partial \Omega)}  \,, \ \  \Re z <  0 \,, \]
and consequently that \eqref{eq:lmerM} holds. 

We know (see for instance \cite[Lemma 3.2]{SZ1}) that $ R_j ( z )$ 
is meromorphic in $ D ( 0 , r ) $, and using \eqref{eq:Hj}  we conclude
that so is $ \Id - {\mathcal M}( z ) $. Analytic Fredholm theory 
(see for instance \cite[\S 2]{SZ9}) shows that invertibility of $ 
\Id - {\mathcal M } ( z ) $ for $ \Re z <  -\gamma $ guarantees the meromorphy
of its inverse, with poles of finite rank.
\end{proof}

We recall the following standard result which is already behind the 
definition \eqref{eq:multR}:
\begin{lem}
\label{l:simtr}
Let $ H_1 $  and $ H_2 $ be Hilbert spaces 
and let $ z \mapsto A( z ) \in {\mathcal L} ( H_1 , H_2 ) $,
$ z \mapsto 
B ( z ) \in {\mathcal L} ( H_2 , H_1 ) $, $ z \in D \subset \CC $,
be holomorphic families of bounded operators. Suppose that
$ z \mapsto C ( z) \in {\mathcal L} ( H_2, H_2 ) $,
 $ z \in D$,  is a meromorphic family
of bounded operators, with poles of finite rank. Then for any smooth closed curve
$ \gamma \subset D $, the operator
\[  \oint_\gamma A ( z ) B ( z ) C ( z ) dz \,, \]
is of trace class and 
\begin{equation}
\label{eq:cytr} \tr_{H_2}  \oint_\gamma A ( z ) B ( z ) C ( z ) dz 
= \tr_{H_1}  \oint_\gamma B ( z ) C ( z ) A ( z ) dz 
\,. \end{equation}
\end{lem}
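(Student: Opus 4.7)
The plan is to exploit the special structure of $C(z)$ to reduce the integral to a finite-rank operator, then invoke pointwise cyclicity of the trace. Inside the compact region enclosed by $\gamma$ the meromorphic family $C(z)$ has only finitely many poles $z_1,\dots,z_N$, and at each $z_j$ its singular part has the form
\[
S_j(z)=\sum_{k=1}^{m_j}\frac{C_{j,k}}{(z-z_j)^k},\qquad \rank C_{j,k}<\infty,
\]
by the finite-rank hypothesis on the polar parts. Writing $C=C_{\mathrm{hol}}+\sum_{j=1}^N S_j$, with $C_{\mathrm{hol}}$ holomorphic in a neighbourhood of the closure of the region enclosed by $\gamma$, the Banach-space-valued Cauchy theorem (applied to the $\mathcal{L}(H_2,H_2)$-valued, holomorphic integrand $A(z)B(z)C_{\mathrm{hol}}(z)$) forces that piece to contribute $0$. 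Hence
\[
\oint_\gamma A(z)B(z)C(z)\,dz=\sum_{j=1}^N\sum_{k=1}^{m_j}\oint_\gamma\frac{A(z)B(z)C_{j,k}}{(z-z_j)^k}\,dz,
\]
and each summand factors on the right through $\mathrm{Range}(C_{j,k})$, so is of finite rank. This already gives the trace-class statement.

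For the cyclicity identity, I would verify that $z\mapsto A(z)B(z)C_{j,k}$ is continuous from $\gamma$ into the trace ideal $\mathcal{S}^1(H_2)$. Writing $C_{j,k}=\sum_\ell u_\ell\otimes v_\ell$ as a finite sum of rank-one operators, the trace norm of $A(z)B(z)C_{j,k}$ is bounded by $\sum_\ell \|A(z)B(z)u_\ell\|\,\|v_\ell\|$, which depends continuously on $z$. Therefore the contour integrals converge in trace norm, and the trace functional commutes with $\oint_\gamma$ by continuity. Pointwise, the standard cyclicity of the trace (valid because $C_{j,k}$ is trace class and $A(z),B(z)$ are bounded) gives
\[
\tr_{H_2}\bigl[A(z)B(z)C_{j,k}\bigr]=\tr_{H_1}\bigl[B(z)C_{j,k}A(z)\bigr]\qquad\text{for every }z\in\gamma.
\]

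Summing over $j,k$, moving the trace back inside the integrals on both sides, and reinserting the holomorphic piece $C_{\mathrm{hol}}$ (which contributes $0$ symmetrically, since $B(z)C_{\mathrm{hol}}(z)A(z)$ is likewise holomorphic in the enclosed region and the same Banach-valued Cauchy theorem applies) yields the desired identity $\tr_{H_2}\oint_\gamma ABC\,dz=\tr_{H_1}\oint_\gamma BCA\,dz$. The only delicate point is the interchange of trace and contour integral, which is handled by the trace-norm continuity argument above; there is no substantive obstacle, as the whole proof is a residue computation supplemented by cyclicity of the trace on trace-class operators.
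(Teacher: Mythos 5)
Your proof is correct and follows essentially the same route as the paper's: decompose $C(z)$ into a holomorphic part (whose contour integral vanishes by the operator-valued Cauchy theorem) plus finite-rank-coefficient Laurent singular parts, observe the remaining integral is finite rank, and then apply cyclicity of the trace pointwise. The paper simply reduces by a WLOG to a single pole rather than summing over all poles inside $\gamma$, and leaves the trace--integral interchange implicit, but these are presentational rather than substantive differences.
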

\begin{proof}
Without loss of generality we can assume that the winding number
of $ \gamma $ is nonzero with respect to only one pole of $ C ( z ) $, 
$ z_0 \in D $. We can write
\[ C ( z ) = C_0 ( z ) + \sum_{k=1}^K \frac{C_k}{ ( z - z_0 )^k } \,, \]
where $ C_0 ( z ) $ is holomorphic near $ z_0 $, and $ C_k $ are finite
rank operators. Consequently,  
\begin{equation}
\label{eq:calc}  \oint_\gamma A ( z ) B ( z ) C ( z ) dz = 
\oint_\gamma \left( \sum_{k=1}^K \frac{ A ( z ) B ( z ) C_k   }{ ( z - z_0 )^k } 
\right) dz \,,\end{equation}
is a finite rank operator, and 
\[ \begin{split} \tr_{H_2}  \oint_\gamma A ( z ) B ( z ) C ( z ) dz & = 
\oint_\gamma \left( \sum_{k=1}^K \frac{ \tr_{H_2} 
A ( z )  B ( z )  C_k }{ ( z - z_0 )^k } 
\right) dz  \\
& = 
\oint_\gamma \left( \sum_{k=1}^K \frac{\tr_{H_1}  B ( z ) C_k A ( z ) }
{ ( z - z_0 )^k } \right) dz 
%& = \tr_{H_1}  \oint_\gamma B ( z ) C ( z ) A ( z ) dz  
\,, \end{split} \]
where we used the cyclicity of the trace: $ \tr ST = \tr TS $ when $ S $ is
of trace class, and $ T $ is bounded. Same calculation as in \eqref{eq:calc}
gives \eqref{eq:cytr}.
\end{proof}

The main result of this section is the following
\begin{prop}
\label{p:merM}
Suppose that the family of operators, $  z \mapsto
H ( z ) $, defined in \eqref{eq:defH} is holomorphic for $ z \in D ( 0 , r_0) 
$, 
%and that \eqref{eq:merM} holds for $ {\mathcal M} ( z ) $ defined in 
%\eqref{eq:bmon}. 
Then the resonances, that is the poles of $ R ( z ) $, 
agree with multiplicities with the poles of $ \Id - 
{\mathcal M} ( z ) $:
\begin{equation}
\label{eq:pmerM}   m_R ( z_0) = - \frac{1}{ 2 \pi i } 
\tr_{L^2 ( \partial \Omega )} 
 \oint_{\gamma_\epsilon ( z )  } 
( \Id - {\mathcal M } ( z ) )^{-1} \frac{d}{dz} 
{\mathcal M } ( z ) dz \,, 
 \end{equation}
where $ \gamma_\epsilon ( z_0 )  : t \mapsto z_0 + \epsilon e^{2 \pi it } $, $ t \in 
[ 0 , 2 \pi ) $, for sufficiently small $ \epsilon > 0 $, and the multiplicity 
$ m_R ( z_0 ) $ is defined by \eqref{eq:multR}.
\end{prop}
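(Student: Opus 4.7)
The approach is to derive a representation formula expressing $R(z)$ as a holomorphic piece plus $H(z)(\Id-\cM(z))^{-1}$ sandwiched between holomorphic factors, and then to use the cyclicity of the trace (Lemma~\ref{l:simtr}) to transfer the contour integral of $R(z)$ onto $(\Id-\cM(z))^{-1}\partial_z\cM(z)$.

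First I would establish, for $f\in L^2(\Omega)$ extended by zero to $\iota f\in L^2(\RR^n)$, the representation
\[
R(z)f \;=\; R_0(z)\iota f\big|_\Omega \;-\; H(z)\big(\Id-\cM(z)\big)^{-1}\gamma R_0(z)\iota f \quad\text{on }\Omega\,.
\]
This is verified by noting that $P(z)H(z)\phi=0$ on $\Omega$ for any $\phi$ (so the right-hand side solves $P(z)u=f$ on $\Omega$), that $\gamma H(z)=\Id-\cM(z)$ (so the boundary trace cancels), and that all terms are outgoing on $\Gamma_\theta$. Since $R_0(z)$ and $H(z)$ are holomorphic on $D(0,r_0)$, integrating around $z_0$ kills the $R_0$-contribution, and Lemma~\ref{l:simtr} gives
\[
\tr_{L^2(\Omega)}\!\oint_{\gamma_\epsilon(z_0)}\! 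R(z)\,dz \;=\; -\tr_{H^{3/2}(\partial\Omega)}\!\oint\big(\Id-\cM(z)\big)^{-1}\gamma R_0(z)H(z)\,dz\,,
\]
using that $H(z)v$ vanishes inside the obstacles so that $(\gamma R_0(z)\iota)\circ(H(z)|_\Omega)=\gamma R_0(z)H(z)$.

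The heart of the argument is the operator identity
\[
\gamma R_0(z)H(z) \;=\; \big(\Id-\cM(z)\big)D(z) \;-\; \partial_z\cM(z)\,, \qquad D(z)\defeq\diag\big(\gamma_j R_0(z)H_j(z)\big)\,,
\]
where $D(z)$ is holomorphic. To derive it, I would differentiate $P(z)H_j(z)v_j=0$ on $\RR^n\setminus\Oo_j$ (using $\partial_zP(z)=-\Id$) to get $P(z)\partial_zH_j(z)v_j=H_j(z)v_j$ with $\gamma_j\partial_zH_j(z)v_j=0$; since $R_0(z)H_j(z)v_j$ satisfies the same equation on $\RR^n\setminus\Oo_j$, uniqueness of outgoing solutions (by the Fredholm theory for $R_j(z)$ in the complex scaled setup) yields
\[
R_0(z)H_j(z)v_j \;=\; \partial_zH_j(z)v_j \;+\; H_j(z)\gamma_j R_0(z)H_j(z)v_j \quad\text{on }\RR^n\setminus\Oo_j\,.
\]
Applying $\gamma_i$ and summing over $j$, separating the diagonal term $j=i$, and using $\gamma_iH_j=-\cM_{ij}$ and $-\gamma_i\partial_zH_j=\partial_z\cM_{ij}$ for $i\neq j$, gives the claimed identity.

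Left-multiplying by $(\Id-\cM(z))^{-1}$ and observing that $D(z)$ is holomorphic, so its contour integral contributes nothing, one obtains
\[
\tr\oint R(z)\,dz \;=\; \tr\oint\big(\Id-\cM(z)\big)^{-1}\partial_z\cM(z)\,dz\,,
\]
and dividing by $-2\pi i$ together with \eqref{eq:multR} produces \eqref{eq:pmerM}. The main obstacle is the key identity above: the diagonal self-interaction term $D(z)$ is absent from $\cM(z)$ by convention ($\cM_{jj}\equiv 0$) yet reappears in $\gamma R_0(z)H(z)$, and the clean separation of this holomorphic piece from the genuine meromorphic part $-\partial_z\cM(z)$ must be handled with care; the outgoing-uniqueness step is standard once complex scaling is in place.
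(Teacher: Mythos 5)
Your proposal follows the same overall strategy as the paper (the representation formula for $R(z)$, the differentiation identity for $\partial_z\cM$ obtained from the uniqueness of outgoing solutions, cyclicity of the trace, and discarding the holomorphic diagonal piece $D(z)$). The derivation of the key identity $\gamma R_0(z)H(z)=(\Id-\cM(z))D(z)-\partial_z\cM(z)$ is correct and coincides with the paper's Lemma~\ref{l:derM}.

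There is, however, a genuine gap in the step where you cycle the trace and then drop the cutoff to $\Omega$. You assert that $(\gamma R_0(z)\iota)\circ(H(z)|_\Omega)=\gamma R_0(z)H(z)$ ``using that $H(z)v$ vanishes inside the obstacles.'' This is false when $J\geq 2$: by construction each $H_j(z)v_j$ is extended by zero inside $\Oo_j$, but it is generically nonzero inside $\Oo_k$ for $k\neq j$. Consequently $H(z)v=\sum_j H_j(z)v_j$ does \emph{not} vanish on $\RR^n\setminus\Omega=\bigcup_k\Oo_k$, and the cycled trace you obtain involves $\gamma R_0(z)\,\bbbone_\Omega\,H(z)$, not $\gamma R_0(z)H(z)$. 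The discrepancy is the extra term
\[
\sum_{j=1}^J \tr\oint(\Id-\cM(z))^{-1}\,\gamma R_0(z)\,\bbbone_{\Oo_j}\,H(z)\,dz\,,
\]
and showing that this vanishes is not automatic: the integrand is a holomorphic operator sandwiched against the meromorphic factor $(\Id-\cM(z))^{-1}$. The paper handles this by proving the identity
\[
\gamma R_0(z)\,\bbbone_{\Oo_j}=(\Id-\cM(z))\,\pi_j\,\gamma R_0(z)\,\bbbone_{\Oo_j}\,,
\]
which follows from $\bbbone_{\RR^n\setminus\Oo_j}R_0(z)\bbbone_{\Oo_j}=H_j(z)\gamma_j R_0(z)\bbbone_{\Oo_j}$ (uniqueness of outgoing solutions on $\RR^n\setminus\Oo_j$), so that $(\Id-\cM(z))^{-1}\gamma R_0(z)\bbbone_{\Oo_j}=\pi_j\gamma R_0(z)\bbbone_{\Oo_j}$ is holomorphic and the extra term integrates to zero. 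Your argument needs to be supplemented with precisely this step; once it is added, the rest goes through as you describe.
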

\begin{proof}
We first recall, for instance from \cite[\S 6]{Ge},
that $ R ( z ) $ can be expressed using the inverse
of $ \Id - {\mathcal M}( z ) $:
\begin{equation}
\label{eq:exRM} R ( z ) = \bbbone_\Omega R_0 ( z) - 
 \bbbone_\Omega  H (z ) (
\Id - {\mathcal M}( z ) )^{-1} \gamma R_0 ( z ) 
 \,,
\end{equation}
where $ R_0 ( z ) $ acts on functions $ L^2 ( \Omega ) \hookrightarrow 
L^2 (\RR^n )$ extended by $ 0 $.
Indeed,
\[ P  \left( \bbbone_\Omega R_0  f - 
 \bbbone_\Omega  H  (
\Id - {\mathcal M} )^{-1} \gamma R_0  f 
\right) = P  \bbbone_\Omega R_0  f = f 
\,, \ \ \text{ in $\Omega$. }  \]
The Dirichlet boundary condition is satisfied as 
\[ \gamma \left( \bbbone_\Omega R_0 a  - 
 \bbbone_\Omega  H  (
\Id - {\mathcal M} )^{-1} \gamma R_0 
\right) = 
\gamma R_0  - \gamma H  ( \gamma H)^{-1} 
\gamma R_0  = 0 \,. \]
Hence by the uniquenss of the outgoing solution (using complex
scaling that is simply the Fredholm property of $ P ( z ) $) 
\eqref{eq:exRM} holds.

Since $ R_0 ( z) $ is
holomorphic in $ z $, \eqref{eq:exRM} and Lemma \ref{l:simtr} show that
\begin{equation}
\label{eq:awf} 
\begin{split}
  m_R ( z ) &  = \frac{1}{ 2 \pi i } \tr \oint_{\gamma_\epsilon ( z )  } 
 \bbbone_\Omega H ( z ) ( \Id 
- {\mathcal M } ( z ) )^{-1} \gamma R_0 ( z )  \,  dz \\
%& = 
%-\frac{1}{ 2 \pi i } \tr \oint_{\gamma_\epsilon ( z )  } 
% ( \Id 
%- {\mathcal M } ( z ) )^{-1} 
%\gamma \bbbone ^t \bbbone R_0 ( z ) 
%\bbbone_\Omega H ( z )  \,  dz \,. 
\end{split}
\end{equation}

We want to compare it to the right hand side in \eqref{eq:pmerM} and for that
we will use the following
\begin{lem}
\label{l:derM}
With the notation above we have
\begin{equation}
\label{eq:lderM}
 \frac{d}{dz} {\mathcal M} ( z ) = - \gamma R_0 ( z ) H ( z ) 
+ (I - {\mathcal M} ( z ) ) \diag \left( \gamma_k R_0 ( z ) 
H_k ( z ) \right) \,. 
 \end{equation}
\end{lem}
\begin{proof} 
The definition \eqref{eq:bmon} gives $ ( d/dz) {\mathcal M} ( z ) = -
\gamma H' ( z ) $, and by differentiating 
\[  P ( z ) H_k ( z ) = 0 \,, \ \ \
 \gamma_k H_k ( z ) = \Id \,, \]
we obtain
\[ P ( z ) H_k'( z )  = H_k( z ) \ \    \text{ in $ \RR^n \setminus  \Oo_k $} 
 \,, \ \  \ \ \gamma_k H_k' ( z ) = 0 \,. \]
Arguing as in the case of \eqref{eq:exRM} we obtain
\[ H_k'( z) = \bbbone_{ \RR^n \setminus \Oo_k} R_0 ( z ) H_k ( z ) - 
H_k ( z ) \gamma_k R_0 ( z ) H_k ( z ) \,, \]
%(this is the case of \eqref{eq:exRM} with $J =1$), 
and hence 
\[  \frac{d}{dz} {\mathcal M} ( z ) = 
- \gamma R_0 ( z )  H ( z ) + \gamma 
 H ( z ) 
\diag \left( \gamma_k R_0 ( z ) H_k ( z ) \right) \,,
\]
which is the same as \eqref{eq:lderM}.
\end{proof}
Lemmas \ref{l:simtr} and \ref{l:derM}, and the assumption of holomorphy of 
$ H ( z ) $ in $ D ( 0 , r_0 ) $,  show that
\begin{gather}
\label{eq:Sjtr}
\begin{gathered}  - \tr_{L^2 ( \partial \Omega ) } 
 \oint_{\gamma_\epsilon ( z )  } 
( \Id - {\mathcal M } ( z ) )^{-1} \frac{d}{dz} 
{\mathcal M } ( z ) dz 
= \ \\  \tr_{L^2 ( \RR^n ) }  \oint_{\gamma_\epsilon ( z )  } 
 H ( z ) ( \Id - {\mathcal M } ( z ) )^{-1} 
\gamma  R_0 ( z )   dz 
\,, \end{gathered}
\end{gather}
which is awfully close to \eqref{eq:awf}. The difference 
of the right hand sides of \eqref{eq:Sjtr} and \eqref{eq:awf}
is equal to 
\begin{equation}
\label{eq:Sjtr2}
\begin{split} 
& \sum_{j=1}^J  \tr 
\oint_{\gamma_\epsilon ( z )  } 
\bbbone_{\mathcal O_j}  H ( z ) ( \Id - {\mathcal M } ( z ) )^{-1} 
\gamma  R_0 ( z )   dz = 
\\
& \sum_{j=1}^J  \tr 
\oint_{\gamma_\epsilon ( z )  } 
 H ( z ) ( \Id - {\mathcal M } ( z ) )^{-1} 
\gamma  R_0 ( z )  \bbbone_{\mathcal O_j} \,  dz \,. 
\end{split}
\end{equation}
Now observe that 
$$
\bbbone_{\RR^n \setminus \Oo_j} R_0(z) \bbbone_{{\mathcal O}_j }= H_j (z)\gamma _j R_0(z)
\bbbone_{{\mathcal O}_j},
$$
which implies 
$$
\gamma_k R_0(z) \bbbone_{{\mathcal O}_j}= \gamma_k 
H_j (z)\gamma _j R_0(z)
\bbbone_{{\mathcal O}_j}\,.
$$
This in turn shows that
\[ \gamma R_0 ( z ) \bbbone_{{\mathcal O}_j} = 
\gamma H( z ) 
 \pi_j \gamma
 R_0(z) \bbbone_{{\mathcal O}_j } = 
( \Id - {\mathcal M } ( z ) ) 
 \pi_j \gamma
 R_0(z) \bbbone_{{\mathcal O}_j } \,, 
\]
where $\pi_j:{\CC}^J\to {\CC}^J$ is the orthogonal
projection onto ${\CC}e_j$, $e_j$ denoting the $j$th
canonical basis vector in ${\CC}^J$. Hence 
$$
 H(z)  ( \Id - {\mathcal M } ( z ) )^{-1} 
\gamma R_0(z)\bbbone_{{\mathcal O}_j}=H (z)\pi_j
\gamma R_0(z)\bbbone_{{\mathcal O}_j},
$$  This expression is holomorphic
in $z$ since we assumed that $H(z)$ has no poles in the region of
interest. That proves that the trace in \eqref{eq:Sjtr2} vanishes
and completes the proof of \eqref{eq:pmerM}.
\end{proof}

\subsection{Semiclassical structure of the Poisson operator for convex
obstacles}
\label{sep}
We now review the properties of the operator $ H ( z ) $ given in 
\eqref{eq:defH} where it is given in terms of Poisson operators
$ H_j ( z ) $ for individual convex obstacles. These properties
are derived from results on propagation of singularities 
for diffractive boundary value problems (see \cite[\S 24.4]{Hor2} 
and references given there) and
from semiclassical parametrix 
constructions \cite[A.II]{Ge}, \cite[A.2-A.5]{StVo}. They are based
on ideas going back to Keller, Melrose, and Taylor -- see 
\cite{MT} and references given there. 
The main result we need is stated in Proposition \ref{p:mt} below.  

To orient the reader we first present a brief discussion of a model
case and then use the parametrix to prove the general resuls.

\subsubsection{A model case}
We will review this parametrix in a special model case where
it is given by an explicit formula. 
Using Melrose's equivalence of glancing hypersurfaces
\cite{Mel} this model can be used to analyze the general case but 
due to the presence of the boundary that is quite involved 
\cite[\S 7.3, Appendix A]{MSZ} (see also \cite[Chapter 2]{MSZ} for a
concise presentation of diffractive geometry).

%\begin{figure}
%\begin{center}
%\includegraphics[width=5in]{gla} 
%\caption{\label{gla} The trajectories in the model of diffraction given 
%in \eqref{eq:modd}. The
%black curves correspond to the boundaries $ x_2 = 0 $, $ x_2 = g( x_1 ) $
%with $ g ( x_1 ) = 2 ( x_1^2 + 1 ) $, and the red curves to different 
%types of trajectories: glancing at both boundaries, glancing at the 
%second boundary only, transversal to both boundaries, and missing
%one of the boundaries.}
%\end{center}
%\end{figure}

The model case (in two dimensions for simplicity) is provided by 
the {\em Friedlander model} \cite{FGF},\cite[\S 21.4]{Hor2}: 
\begin{equation}
\label{eq:modd} P_0 = ( h D_{x_2} )^2 - x_2 + h D_{x_1} \,, \ \ p_0 = 
\xi_2^2 - x_2 + \xi_1 \,,
\end{equation} 
with the boundary $ x_2 = 0 $, the Poisson operator $ H_ 0$.
The surface to which $ H_0 u $ is restricted to can be written as 
$ x_2 = g ( x_1 ) $. The Hamilton flow of $ p_0 $ is
explicitely computed to be 
\[  ( x, \xi ) \longmapsto ( x_1 + t , x_2 - \xi_2^2 + ( t + \xi_2 )^2  ;
\xi_1 , \xi_2 + t ) \,, \]
and the trajectories on the energy surface $ p_0 = 0 $ tangent to the
boundary $ x_2 = 0 $ correspond to $ \xi_1 = 0 $.
% -- see Fig.\ref{gla}.
The bicharacteristic concavity of a region $ q_0 ( x)  > 0 $  (modelling the concavity of $ \RR^n \setminus \partial \Oo_j $) is given by the condition
$ H_{p_0}^2 q_0 > 0 $: that is automatically satisfied for $ q_0 ( x) = x_2  $ 
and holds for $ q_0 ( x) =  g ( x_1 ) - x_2 $ if 
$ g'' ( x_1 ) > 2 $. 
% -- see Fig.~\ref{gla} for an example. 

For 
$ v \in \CIc ( \RR_{x_1} ) $ 
%(see \cite[\S 2.2]{NSZ1} for that notation)y, 
 the problem
\begin{equation}
\label{eq:P0}
  P_0 u ( x ) = 0 \,, \ \ x_2 > 0 \,, \ \  u ( x_1, 0 ) = v ( x_1 ) \,,
\end{equation}
has 
an explicit solution:
\begin{equation}
\label{eq:H0}  u ( x ) = H_0 v ( x ) \defeq \frac{ 1 } { 2 \pi h } 
\int \! \! \! \int \frac { A_+ ( h^{-2/3} ( \xi_1 - x_2 ) ) } { A_+ ( h^{-2/3} \xi_1 ) }
e^{\frac i h ( x_1 - y_1 ) \xi_1 } v ( y_1 ) d y_1 d \xi_1 \,, 
\end{equation}
where $ A_+ $ is the Airy function solving $ A_+''(t) = t A_+( t ) $
and having the following asymptotic behaviour:
\[  A_+ ( t ) \sim \ \begin{cases} \pi^{-\frac12} t^{-\frac14} e^{\frac 23 t^{3/2} } 
& t \rightarrow + \infty  \,, \\
\pi^{-\frac12} (- t)^{-\frac14} e^{\frac 23 i (-t)^{3/2} + i \frac\pi 4  } & t 
\rightarrow - \infty  \,. 
\end{cases} \]
Different asymptotic behaviours corresponds to different classical regions:
\[
\begin{array}{ll}
 \xi_1 < 0\,, & \text{ hyperbolic region: trajectories transversal 
to the boundary,} \\
\xi_1 = 0\,, & \text{ glancing region: trajectories tangent to the boundary,} 
\\
\xi_1 > 0\,, & \text{ elliptic region: trajectories disjoint from the
boundary.} 
\end{array} \]
If $ v $ is microlocally concentrated in the hyperbolic region,
$ \WFh ( v ) \Subset \{ \xi_1 < 0 \} $, then
\[ H_0 v ( x ) = 
\frac{ 1 } { 2 \pi h } 
\int \! \! \! \int e^{ \frac i h \varphi( x , \xi_1) - \frac i h y_1 \xi_1 } 
a ( x, \xi_1 ) v ( y_1 ) d y_1 \, d \xi_1  + { \mathcal O} ( h^\infty ) 
\| v \|_{L^2 }  \,, \]
where 
$ \varphi ( x, \xi_1 ) 
= \frac 23 (
- ( - \xi_1 )^{\frac32} + ( - \xi_1 + x_2)^{\frac32} )  +  x_1  \xi_1 $.
That means that $ H_0 $ 
is microlocally an $h$-Fourier integral operator in the
hyperbolic region, with the canonical relation given by 
\[ \begin{split} 
{\mathcal C}_0  & \defeq \{ ( ( x_1 , x_2 ; \partial_{x_1} \varphi , \partial_{x_2} \varphi ), 
( \partial_{\xi_1} \varphi, \xi_1 ) ) \,, \ \xi_1 < 0 \}  \\
& = \{  ( ( x_1 , x_2 ; \xi_1 ,   ( - \xi_1 + x_2)^{\frac12} ) , 
( x_1 -   ( - \xi_1 + x_2)^{\frac12} + ( -\xi_1)^{\frac12} , \xi_1 ) ) 
\,, \ \xi_1 < 0 \} 
\\ & = \{ (( x_1 , ( x_1 - y_1 + ( - \xi_1)^{\frac12} )^2 + \xi_1 ; \xi_1 , x_1 - y_1 
+ ( - \xi_1)^{\frac12}  ) , ( y_1 , \xi_1  )) \,, \ \xi_1 < 0 \}  \,. 
%\\ & \subset T^* \RR^2 \times T^* \RR \,. 
\end{split} \]
This corresponds to outward trajectories starting at $ ( y_1 , 0 ) $
and explains why
this choice of an Airy function gives the outgoing
solution to \eqref{eq:P0}. 

The propagation of semiclassical wave front sets is given by 
taking the closure of this relation which is smooth for 
$ \xi_1 <  0 $ only: 
\begin{equation} 
\label{eq:WF}
\WFh ( H_0 v ) \cap \{ x_2 > 0 \}= \overline {\mathcal C }_0 ( \WFh ( v ) \cap \{ \xi_1 \leq 0 \} )  \cap \{ x_2 > 0 \}\,. 
\end{equation}
This can be proved using \eqref{eq:H0}.
Strictly speaking the wave front set on the left hand side 
is defined only in the region
$ \{ x_2 > 0 \} $ because of the presence of the bounday $ x_2 = 0 $.

We now consider $ \gamma_1 u ( x_1 ) \defeq u ( x_1 , g ( x_1 )) $
and (putting $ x = x_1 $),
\begin{equation}
\label{eq:gamH} \gamma_1 H_0 v ( x ) = 
\frac{ 1 } { 2 \pi h } 
\int \! \! \! \int \frac { A_+ ( h^{-2/3} ( \eta - g( x) ) ) } 
{ A_+ ( h^{-2/3} \eta ) }
e^{\frac i h ( x - y ) \eta } v ( y ) d y \, d \eta \,. 
\end{equation}
When acting on functions with $ \WF_h ( v ) \Subset \{ (y , \eta) : 
\eta < 0 \} $, we can again use asymptotics of $ A_+ $ and that
shows that, microlocally for $ \eta < -c < 0 $, 
$ \gamma_1 H_0 $ 
is an $h$-Fourier integral operator with a canonical relation 
with a fold
\cite[\S 21.4]{Hor2},\cite[\S 4]{MT}: 
\[  {\mathcal B}_0 \defeq 
\{  ( x , \eta + g'( x ) ( - \eta + g ( x ) )^{\frac12} ; x + 
( - \eta)^{\frac12}  - ( - \eta + g ( x ) )^{\frac12} , \eta ) \,, \ \eta < 0 \} \,.
\]
This means that the map $ f : {\mathcal B}_0 \rightarrow T^* \RR $, 
the projection on the second factor, $ f ( x , \xi, y , \eta ) = 
( y , \eta ) $, at every point at which
\[   g'(x) = 2 ( - \eta + g ( x ) )^{\frac 12}\,, \]
satisfies $ \dim \ker f' = \dim \coker f'= 1 $, and 
has a non-zero Hessian, 
\[  \ker f' \ni X \longmapsto \langle f'' X , X \rangle 
\in \coker f' \,.\]
The Hessian condition is equivalent to $ g'' ( x ) \neq 2 $
which is satisfied as we assumed that $ g''(x ) > 2 $.
This corresponds to the tangency of the trajectory 
$ x_2 = ( x_1 - y + ( -\eta)^{1/2} )^2 + \eta $, to the boundary 
$ x_2 = g ( x_1 ) $. % -- see Fig.~\ref{gla}. 
Using either an explicit calculation or general results on folds
(see \cite[Theorem 21.4.2 and Appendix C.4]{Hor2}) we can write
\be\label{e:B_0+}  {\mathcal B}_0 = {\mathcal B}_0^+ \cup {\mathcal B}_0^- \,,
\ee
where $ {\mathcal B}_0^\pm $ correspond to trajectories 
entering ($+$) and leaving ($-$) $ x_2 > g( x_1 ) $. Using
\eqref{eq:gamH} one can show a propagation result similar to
\eqref{eq:WF}:
\begin{equation} 
\label{eq:WF1}
\WFh ( \gamma_1 H_0 v ) = \overline {\mathcal B }_0 ( \WFh ( v ) 
\cap \{ \xi_1 \leq 0 \}) \,. 
\end{equation}

\medskip

\subsubsection{Arbitrary convex obstacle}
To handle the general case we introduce the following notation
\begin{gather*}
S^*_{\partial \Oo_k} \RR^n = \{ ( x , \xi) \in T^* \RR^n \; : \;
x \in \partial \Oo_k \,, \ |\xi | = 1 \} \,, \\ 
\ S^* \partial \Oo_k = \{ ( y , \eta ) \in T^* \partial \Oo_j \,, \ \ 
| \eta | = 1 \} \,, \\
  B^* \partial \Oo_k = \{ ( y , \eta ) \in T^* \partial {\Oo_k} \; : \; 
| \eta| \leq 1 \} \,, \ \  \pi_k: S_{\partial \Oo_k} ^*( \RR^n) 
\longrightarrow B^* { \partial {\Oo_k}}  \,. 
\end{gather*} 
where  $ | \bullet | $ is the induced Euclidean metric, and 
 $\pi_k $ is the orthogonal projection.

We first recall a result showing that when considering $ \gamma_k H_j
( z ) $ we can restrict our attention to a neighbourhood
of $ B^* \partial \Oo_k \times B^* \partial \Oo_j $:

\begin{lem}
\label{l:S1}
Suppose that $ \partial \Oo_\ell $, $ \ell = j, k $, $ \Oo_j \cap \Oo_k =
\emptyset $,  are smooth and 
that $ \Oo_j $ is strictly convex.  If $ \chi_\ell \in S^{0,0} ( T^* \partial \Oo_\ell ) $ 
satisfy $ \chi_\ell \equiv  1 $ near $ B^* \partial \Oo_\ell $, 
$ \ell = j, k $, then for $ z \in D ( 0  , r ) $,  
\begin{equation}
\label{eq:S1}
\begin{split}
&  \gamma_k H_j (z ) ( 1 - \chi_j^w ) = {\mathcal O } ( h^\infty ) \; : \;
L^2 ( \partial \Oo_j ) \longrightarrow C^\infty ( \partial \Oo_k ) \,,
\\
&  ( 1 - \chi_j^w ) \gamma_k H_j (z)  = {\mathcal O } ( h^\infty ) \; : \;
L^2 ( \partial \Oo_j ) \longrightarrow C^\infty ( \partial \Oo_k ) \,.
\end{split} 
\end{equation}
\end{lem}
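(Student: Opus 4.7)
The plan is to construct an elliptic boundary parametrix for $H_j(z)$ on the microsupport of $(1-\chi_j^w)$, where the tangential cotangent has length strictly greater than one. In that regime the Dirichlet problem is elliptic, so its solution decays exponentially at scale $h$ into the interior. Since $\partial\Oo_k$ sits at positive distance from $\partial\Oo_j$, this decay forces $\gamma_k H_j(z)(1-\chi_j^w)$ to be $\Oo(h^\infty)$.

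Work in normal geodesic coordinates $(y',y_n)$ about $\partial\Oo_j$, with $y_n=\dist(\,\cdot\,,\partial\Oo_j)>0$ outside the obstacle. The principal symbol of $-h^2\Delta-1$ takes the form $\xi_n^2 + r(y,\eta')-1$, with $r(y',0,\eta')$ the induced boundary metric. On $\supp(1-\chi_j)\subset\{r\geq 1+\delta\}$, the characteristic equation $\xi_n^2 = 1 - r(y,\eta') - ihz$ has two complex roots, and the outgoing one satisfies $\Im\xi_n\sim\sqrt{r-1}>0$. Standard WKB construction (cf.\ \cite[A.II]{Ge}, \cite[A.2--A.5]{StVo}) yields an operator
\[
Ev(y) = \frac{1}{(2\pi h)^{n-1}}\int\!\!\int e^{\frac{i}{h}\Phi(y,y'_0,\eta')}\,a(y,\eta';h)\,\tilde\chi_0(y_n)\,(1-\chi_j)(y'_0,\eta')\,v(y'_0)\,dy'_0\,d\eta',
\]
with $\Im\Phi\sim y_n\sqrt{r(y',0,\eta')-1}$, amplitude $a\in S^{0,0}$ obtained from the eikonal and transport equations, and cutoff $\tilde\chi_0\in\CIc$ supported in a thin collar of $\partial\Oo_j$ disjoint from $\partial\Oo_k$ (possible since $\overline\Oo_j\cap\overline\Oo_k=\emptyset$). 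By construction $P(z)E=\Oo_{L^2\to\CI}(h^\infty)$, $\gamma_j E = (1-\chi_j^w)+\Oo(h^\infty)$, and $\gamma_k E\equiv 0$.

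Setting $w = H_j(z)(1-\chi_j^w)v - Ev$, $w$ is outgoing and satisfies $P(z)w=\Oo(h^\infty)v$, $\gamma_j w = \Oo(h^\infty)v$. Using the outgoing Dirichlet resolvent $R_j(z)$ (uniformly bounded on compact subsets of $D(0,r)$ away from its finitely many poles via complex scaling, cf.\ \eqref{eq:jth}), one corrects the boundary data by an $h^\infty$ lift and concludes $w=\Oo(h^\infty)v$ in $\CI$ on any compact set avoiding $\overline\Oo_j$, hence $\gamma_k w = \Oo(h^\infty)v$; this proves the first identity. The second identity (interpreted as $(1-\chi_k^w)\gamma_k H_j(z)=\Oo(h^\infty)$) follows from semiclassical elliptic regularity: $u=H_j(z)v$ solves $(-h^2\Delta-1-ihz)u=0$ in $\RR^n\setminus\Oo_j$ with principal symbol elliptic off $\{|\xi|=1\}$, so $\WFh(u)\subset\{|\xi|=1\}$, and by Lemma~\ref{l:WFr} applied in normal coordinates around $\partial\Oo_k$ we obtain $\WFh(\gamma_k u)\subset B^*\partial\Oo_k$, where $\chi_k\equiv 1$. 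The main technical point is the smooth continuation of the complex phase $\Phi$ through the glancing boundary $\{r=1\}$; this is bypassed by restricting to $\supp(1-\chi_j)$, which lies uniformly inside the elliptic region $\{r\geq 1+\delta\}$.
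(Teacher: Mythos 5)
Your proposal takes essentially the same route as the paper: an elliptic parametrix in the region $\{|\eta'|>1\}$ to get the first estimate, and the semiclassical elliptic regularity / $\WFh(H_j(z)v)\subset\{|\xi|=1\}$ argument combined with Lemma~\ref{l:WFr} for the second. You also correctly identify that the second line must read $(1-\chi_k^w)$ rather than $(1-\chi_j^w)$, since that operator lives on $\partial\Oo_k$. The paper simply cites \cite[Prop.~A.II.9]{Ge} and \cite[(A.24)--(A.26)]{StVo} for the first estimate, whereas you sketch the underlying WKB/Airy parametrix.

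One small looseness in your correction step: to conclude $w=\Oo(h^\infty)v$ you need a polynomial-in-$1/h$ bound on the Dirichlet resolvent $R_j(z)$ for $z$ ranging over all of $D(0,r)$, but the bound you cite, \eqref{eq:jth}, only covers $\Re z<0$. For $\Re z\geq 0$ you actually need the non-trapping resolvent estimate for a single strictly convex obstacle (no resonances near the real axis, polynomial growth) — the paper proves a version of this via Vainberg's method only later, in Proposition~\ref{p:S2}, and otherwise absorbs it into the cited parametrix results of G\'erard and Stefanov--Vodev. Since those references already contain the full estimate, this is a matter of correct attribution rather than a genuine gap, but as written your chain of references does not close.
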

\begin{proof}
The first estimate follows from the parametrix construction in the 
elliptic region -- see \cite[Proposition A.II.9]{Ge} and
\cite[\S A.-A.5]{StVo}. From \cite[(A.24)-(A.26)]{StVo} we see that 
for $ \psi \in \CIc ( \RR^n \setminus \overline { \Oo_j } ) $
\[  \psi( x ) H_j  ( z ) ( 1 - \chi_j^w )  = {\mathcal O}_{L^2
  \rightarrow \CI } ( h^\infty ) \,. \] 
and the first estimate in \eqref{eq:S1} follows.
As a consequence we can extend $ \gamma_k H_j ( z ) : H^{\frac 32}
( \partial \Oo_j ) \rightarrow  H^{\frac 32}
( \partial \Oo_k )  $ to 
\begin{equation}
\label{eq:HL2}  \gamma_k H_j ( z ) \; : \; L^2 ( \partial \Oo_j ) \longrightarrow
L^2 ( \partial \Oo_k ) \,. 
\end{equation}

Once $ \gamma_k H_j ( z ) $ is defined on $ L^2 $, the second part of 
\eqref{eq:S1} follows from the fact that $ ( - h^2 \Delta - 1 ) H_j (
z ) v = 0 $ and hence $ \WFh( \psi H_j ( z ) ) \subset \{ | \xi |  = 1
\} $. We simply apply Lemma \ref{l:WFr}.
\end{proof}

The next proposition establishes boundedness properties:

\begin{prop}
\label{p:S2} 
Suppose that $ \partial  \Oo _\ell $, $ \ell = j , k $ are
smooth and that $ \Oo_j $ is strictly convex. Then 
\begin{equation}
\label{eq:S2} \gamma_k H_j = {\mathcal O} ( 1/h ) \; : \; L^2 ( \partial
\Oo_j ) \longrightarrow L^2(  \partial  \Oo_k ) \,.
%, \ \ \text{  uniformly in $ h $.} 
\end{equation}
\end{prop}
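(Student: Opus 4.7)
My plan is to reduce to a microlocalized version of $\gamma_k H_j(z)$ using Lemma \ref{l:S1}, and then split the phase space over $\partial\Oo_j$ into a hyperbolic part (where standard $h$-Fourier integral operator theory applies) and a small glancing part (which must be handled with the Airy parametrix directly).

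First, by Lemma \ref{l:S1} it is enough to bound $\chi_k^w\gamma_k H_j(z)\chi_j^w$, where $\chi_j,\chi_k\in\CIc(T^*\partial\Oo_\ell)$ are equal to $1$ on compact neighbourhoods of $B^*\partial\Oo_j$ and $B^*\partial\Oo_k$ respectively, since the difference is $\Oo(h^\infty)$ on $L^2$. I would then further decompose $\chi_j = \chi_j^{\rm hyp} + \chi_j^{\rm glanc}$, with $\chi_j^{\rm hyp}$ supported in the hyperbolic region $\{|\eta|\le 1-\delta\}\Subset B^*\partial\Oo_j$ and $\chi_j^{\rm glanc}$ supported in a small neighbourhood of the glancing set $\{|\eta|=1\}$, and estimate the two contributions separately.

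For the hyperbolic contribution, the parametrix constructions in \cite[A.II]{Ge} and \cite[A.2--A.5]{StVo} (as reviewed in the model case via \eqref{eq:H0}) show that $H_j(z)\chi_j^{{\rm hyp},w}$ is, modulo $\Oo(h^\infty)$, an $h$-Fourier integral operator associated with outgoing rays leaving $\partial\Oo_j$. Composition with $\gamma_k$ yields an $h$-FIO associated with the portion of the billiard relation $F_{kj}$ lying over $|\eta|\le 1-\delta$; this relation is a smooth canonical relation (possibly with a fold of the type \eqref{e:B_0+}, controlled since $\Oo_k$ is convex and bounded away from $\Oo_j$), so standard $L^2$-continuity of $h$-FIOs gives an $\Oo(1)$ bound, well below the $\Oo(1/h)$ target.

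For the glancing contribution, the Airy parametrix \eqref{eq:H0} gives a kernel of the form $(2\pi h)^{-1}$ times an oscillatory integral with Airy ratio factor $A_+(h^{-2/3}(\eta - g))/A_+(h^{-2/3}\eta)$, where $g$ encodes the defining function of $\partial\Oo_k$ in coordinates adapted to $\partial\Oo_j$. The prefactor $h^{-1}$ already matches the claimed loss, so it suffices to show that the remaining oscillatory integral has an $L^2\to L^2$ norm that is $\Oo(1)$. I would do this via Schur's test (or a $TT^*$ / Cotlar--Stein decomposition over $h^{2/3}$-size boxes in the $\eta$-variable, as in \cite[A.5]{StVo}), using the uniform upper bound $|A_+(h^{-2/3}(\eta-g))|\lesssim h^{-1/6}\la h^{-2/3}(\eta-g)\ra^{-1/4}\exp(\tfrac23(h^{-2/3}(\eta-g))_+^{3/2})$ in the numerator together with the pointwise lower bound $|A_+(h^{-2/3}\eta)|\gtrsim h^{-1/6}\la h^{-2/3}\eta\ra^{-1/4}\exp(\tfrac23(h^{-2/3}\eta)_+^{3/2})$ in the denominator; since $g\ge 0$ strictly away from $\partial\Oo_j$ (as $\Oo_j,\Oo_k$ are disjoint), the exponentials give uniform control, and only the algebraic factors survive.

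The main obstacle will be the glancing estimate in Step 3: the operator is not a smooth $h$-FIO there, and one must invoke the sharp pointwise bounds on Airy functions to keep the Airy ratio bounded uniformly, and then combine them with a Schur- or almost-orthogonality argument to avoid picking up any loss beyond the explicit $h^{-1}$ coming from the parametrix normalization. Everything in the hyperbolic region is standard, and Lemma \ref{l:S1} has already eliminated the elliptic region.
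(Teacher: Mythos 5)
Your proposal and the paper's proof take genuinely different routes. The paper does not attempt to estimate $\gamma_k H_j$ directly from the parametrix at glancing. Instead, it writes $\gamma_k H_j = \delta_{jk} - \gamma_k R_j(z) P(z) T_j^h$ using the single-obstacle cutoff resolvent $R_j(z)$, reduces the whole statement to the compactly cutoff bound $\varphi_k(-\Delta_j-\zeta^2)^{-1}\varphi_j = \Oo(1/|\zeta|)$, and then obtains this from Vainberg's theory together with the smoothing of $\chi U_j(t)\chi$ for $t\ge T_\chi$, which follows from diffractive propagation of singularities and non-trapping for a single convex body. This deliberately bypasses the Airy parametrix estimate; the authors even remark that they cannot quote G\'erard's version of the estimate because it invokes the Lax--Phillips theory, and they substitute Vainberg precisely to avoid that. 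Your route, by contrast, is the one followed by G\'erard and Stefanov--Vodev: work directly with the parametrix. That route exists, but it is substantially heavier, and as sketched it has gaps.

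The clearest gap is in the glancing estimate. Your bookkeeping uses the Friedlander model kernel, which carries a $(2\pi h)^{-1}$ prefactor because the boundary in that model is one-dimensional. For an obstacle in $\RR^n$ the boundary is $(n-1)$-dimensional, so the corresponding parametrix carries a $(2\pi h)^{-(n-1)}$ prefactor. Your logic ``prefactor $h^{-1}$ times an $\Oo(1)$ oscillatory integral'' then yields $\Oo(h^{-(n-1)})$, which misses the target $\Oo(h^{-1})$ as soon as $n\ge 3$. To repair this one must exploit that the Airy degeneracy lives in a single frequency variable (conormal to the glancing set) while the remaining $n-2$ tangential variables carry a symbol with uniformly bounded derivatives; the tangential part is handled by Calder\'on--Vaillancourt, and only the one-dimensional Airy factor needs the Cotlar--Stein or Hilbert--Schmidt argument. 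This is in fact what \cite[A.5]{StVo} do, but it has to be said, and it contradicts the flat claim that the ``prefactor $h^{-1}$ already matches the claimed loss''.

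Two further issues are worth flagging. First, your reduction to the Friedlander model for a general strictly convex obstacle requires Melrose's equivalence of glancing hypersurfaces together with the boundary-value-problem version of the normal form; the paper itself warns that this step is ``quite involved''. Second, the bound in Proposition~\ref{p:S2} is needed uniformly for $z$ in the logarithmic region $\Omega_0$ with $\Re z$ as low as $-C_0\log(1/h)$, and your sketch never touches this: the model Airy operator is $z$-independent, so the uniformity must come from the eikonal/transport pieces of the true parametrix (cf.\ \eqref{eq:fio11}). None of these issues is unfixable, and the overall structure (Lemma~\ref{l:S1} to cut the elliptic region, then split hyperbolic vs.\ glancing) is sound, but the argument as written does not yet establish the claimed uniform $\Oo(1/h)$ bound in dimension $n\ge 3$.
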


We cannot quote the results of \cite{Ge} directly since for 
similar estimates in \cite[A.II.1]{Ge} the Lax-Phillips
odd dimensional theory is invoked. As in \cite{Ge} our proof
is based on propagation of singularities for the
time dependent problem, but it uses the more flexible
method due to Vainberg \cite{Vai}. 

\begin{proof}
Let $ H_h^s ( \partial \Oo_\ell ) $ denote semiclassical 
Sobolev spaces defined in \eqref{eq:ssn}.

As in the proof of Lemma \ref{l:merM} we will use the 
resolvent of the Dirichlet Laplacian on $ \RR^n \setminus
\Oo_j $, denoted below by $\Delta_j$.  We also use the extension operator $ T_j^h $ defined there.

Following \eqref{eq:Hj} we write
\[  \gamma_k H_j = \delta_{jk} - \gamma_k R_j ( z ) P ( z ) T_j^h \; :
\; H^{\frac32}_h ( \partial \Oo_j ) \longrightarrow H^{\frac32}_h
  ( \partial \Oo_k ) \,, 
\]
and we need to show that for $ z \in [ - C_0 \log ( 1/ h ) , R_0] + i
[-R_0, R_0]  $ 
the bound is $ {\mathcal O} ( 1/h) $. In view of the discussion above that means
showing that for $ \varphi_\ell \in \CIc ( \neigh ( \partial \Oo_\ell
)) $, 
\be\label{e:est-R_j}  \varphi_k  R_j ( z ) \varphi_j = {\mathcal O} ( 1 ) : L^2 ( \RR^n
\setminus \Oo_j ) \longrightarrow H^2_h ( \RR^n ) \,. 
\ee
We recall that  $ \varphi_k R_j ( z)\varphi_j  = \varphi_k P_j( z)^{-1}
\varphi_l $, where 
\[ P_j ( z ) = ( ( i/h) ( - h^2 \Delta_{j,\theta} - 1 ) -
z )  = {\mathcal O} ( 1/h) : H^2_h ( \RR^n \setminus \Oo_j ) 
\longrightarrow L^2 ( \RR^n \setminus \Oo_j ) \,, \]
and   $ \Delta_{j, \theta }  $ is the complex scaled Dirichlet
Laplacian on $ \RR^n \setminus \Oo_j $  -- see \eqref{eq:Pz}.
The rescaling involved in the definition shows that we need
\begin{equation}
\label{eq:Vai}
\varphi_k ( ( - h^2 \Delta_{j, \theta } - 1 ) - hz/i )^{-1} \varphi_j = 
{\mathcal O}( 1/h ) \; : \;
{L^2 ( \RR^n \setminus  \Oo_j ) 
\longrightarrow H^2_h ( \RR^n  ) }  \,, \end{equation}
%Since $  \|  v \|_{H^2_h} \simeq \| v \|_{L^2} + \| ( - h^2 \Delta_ \theta - 1 ) v \|_{ L^2} $, the equation shows that \eqref{eq:Vai}
%will follow from 
%Using the operator  $- h^2 \Delta_{j, \theta } $ $\ss$ this will in
%turn follow from 
%\[ \varphi_k ( ( - h^2 \Delta_{j, \theta } - 1 ) - hz/i )^{-1} \varphi_j = 
%{\mathcal O}( 1/h ) \; : \;
%{L^2 ( \RR^n \setminus \Oo_j ) 
% \longrightarrow L^2( \RR^n ) }  \,. \]
%which is equivalent to 
%\[\varphi_k  ( - \Delta_{j , \theta} - \zeta^2 )^{-1} \varphi_j = 
%{\mathcal O}_{L^2\to L^2} ( 1/ | \zeta| ) \,, \ \  \Im \zeta > - C
%\,, \ \ \Re \zeta > C \,. \]
which follows from 
\begin{equation}
\label{eq:Sz}
\varphi_k  ( - \Delta_j - \zeta^2 )^{-1}  \varphi_j = 
{\mathcal O}_{L^2\to L^2} ( 1/ | \zeta| ) \,, \ \  \Im \zeta > - C \,, \ \ \Re
\zeta > C \,,
\end{equation}
where, as in \eqref{eq:jth} we have replaced $\Delta_{j,\theta} $ by
the unscaled operator.

To establish \eqref{eq:Sz} we use Vainberg's theory as presented
in \cite[Section 3]{SjN} and \cite[Section 3]{TZ}. For that we need results about the 
wave propagator.

Let $ U_j ( t ) \defeq \sin t \sqrt {-\Delta_j } / \sqrt { - \Delta_j
} $ be the Dirichlet wave propagator. Then for $ \chi \in \CIc ( \RR^n ) $
\begin{equation}
\label{eq:Ujt} \chi U_j ( t ) \chi : L^2 ( \RR^n \setminus \Oo_j )
\longrightarrow  \overline{ \mathcal C }^\infty ( \RR^n \setminus \Oo_j ) \,, \ \ t
\geq T_\chi \,, \end{equation}
where $ \overline{ \mathcal C }^\infty $ denotes the space of extendable smooth
functions.  
%Second, 
%\[  \forall\, a > 0, \ \exists \, C_a, \, \forall \, x \in B ( 0 , t -
%C_a ) \,, \ y \in B ( 0 , a ) \setminus  {\mathcal O}_j  \,, \ \ \ 
%U ( t, x , y ) \in \CI \,. \]
This 
follows from the singularities propagate along
reflected and glancing rays and that there are no trapped rays in the case 
of one convex obstacle -- see \cite[\S 24.4]{Hor2}. 

Applying \cite[(3.35)]{SjN} or \cite[Proposition 3.1]{TZ} to \eqref{eq:Ujt} gives 
\eqref{eq:Sz}, and thus completes the proof of \eqref{eq:S2}.
\end{proof}

We now set up some notations concerning the symplectic relations
associated with our obstacle system. 
For $ i \neq j $ we now define the (open) symplectic 
relations $\cB^\pm_{ij}$, analogues of the relations $\cB_0^{\pm}$
\eqref{e:B_0+} in the Friedlander model. 
For $ x \in \partial \Oo_j $ denote by $ \nu_j ( x) $ the
{\em outward} pointing normal vector to $ \partial \Oo_j $ at 
$ x $. Then 
% in $ B^* \partial \Oo_j \times B^* \partial \Oo_j $:
\begin{gather}
\label{eq:Bij}
\begin{gathered}
 ( \rho , \rho') \in {\mathcal B}^\pm_{ij} \subset B^* \partial \Oo_i \times B^* \partial \Oo_j 
 \\ \Longleftrightarrow \\ \exists \; t > 0 \,, \xi \in \SP^{n-1} \,, \
x  \in {\partial \Oo_j} \,, \  x + t \xi  \in 
{\partial \Oo_i}  \,,  \ \langle \nu_j ( x ) , \xi \rangle > 0 \,, \\ 
\pm \langle \nu_i ( x + t \xi ) , \xi \rangle <  0 \,, \
\pi_j ( x, \xi ) = \rho' \,, \ \ \pi_i ( x + t  \xi , \xi) = \rho \,, 
\end{gathered}
\end{gather}
Notice that $\cB^+_{ij}$ is
equal to the billiard relation $F_{ij}$ defined in \eqref{eq:bill}. 

The relations $\cB^{\pm}_{ij}$ are singular at their boundaries
\be \partial \, { {\mathcal B}^\pm_{ij}}  \, = \,
 \overline{  {\mathcal B}_{ij}^\pm } \, \cap \left( 
B^* \partial \Oo_i \times S^* \partial \Oo_j \cup 
S^* \partial \Oo_i \times B^* \partial \Oo_j  \right) \,,
\ee
which corresponds to the glancing rays on $\Oo_j$ or $\Oo_i$. We
will often use the closures of these relations,
$\overline{\cB^{\pm}_{ij}}$, which include the glancing rays.
The inverse relations ($ {\mathcal C}^t \defeq \{ ( \rho, \rho') : ( \rho', \rho )
\in {\mathcal C} \} $) are obtained by reversing the momenta:
\be\label{e:inverse}
( {\mathcal  B}_{ij}^+ )^t = {\mathcal J} \circ {\mathcal B}_{ji}^+ 
\circ {\mathcal J} \,, \ \ {\mathcal J} ( y , \eta ) \defeq
( y , - \eta ) \,, \ \ ( y , \eta ) \in B^* \partial \Oo  \,. 
\ee
If we define
\begin{equation}
\label{eq:deU}  \UU \defeq \neigh(B^* \partial \Oo) =
\bigsqcup_{j=1}^J \neigh(B^* \partial \Oo_j) \,, \end{equation}
we are in the dynamical setting of \S \ref{2gr}.
\begin{figure}
\begin{center}
\includegraphics[width=0.9\textwidth]{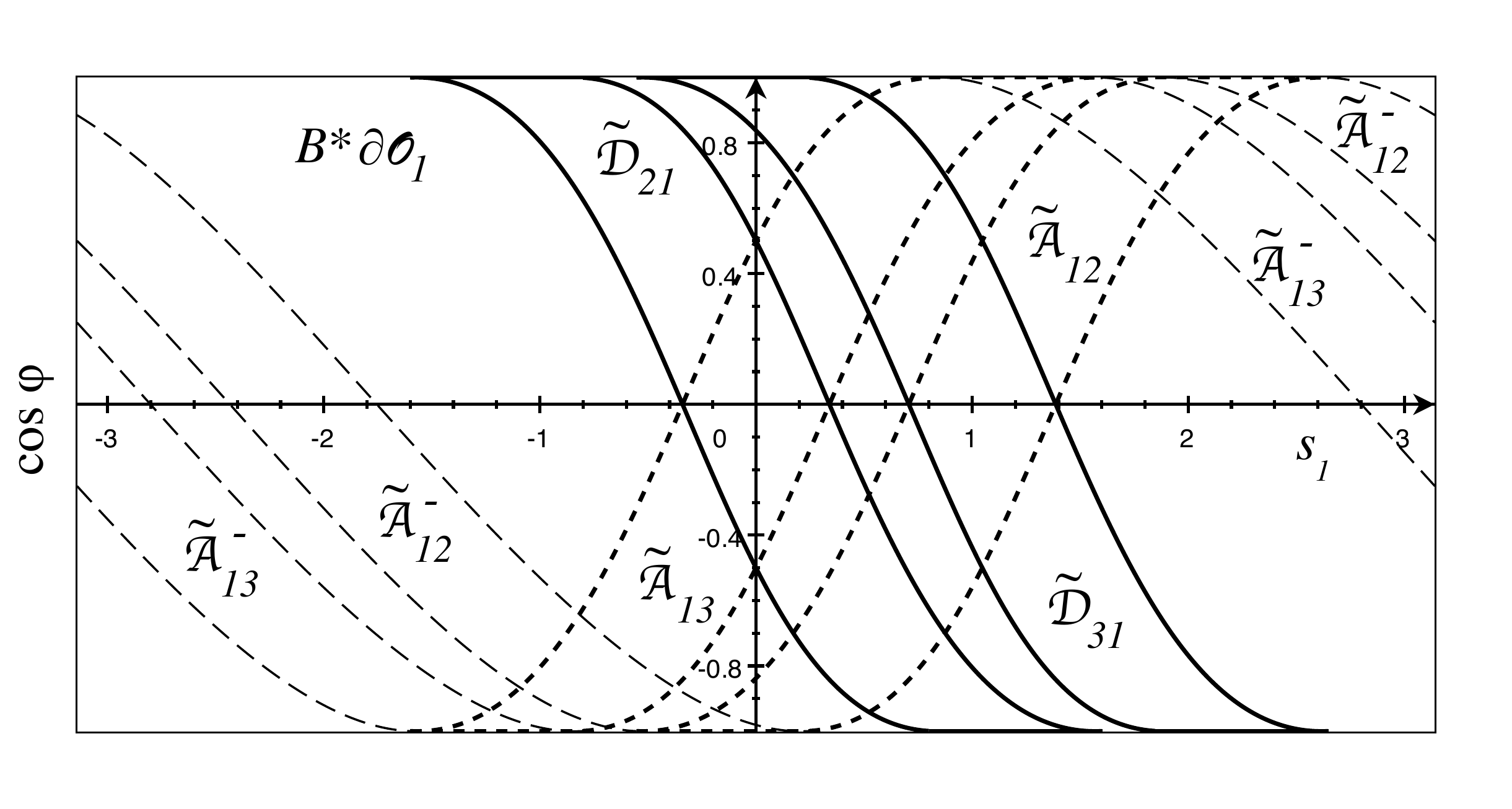}
\caption{\label{f:3disks-AD} (after \cite{Ott}) Partial boundary phase
  space $B^*\Oo_1$ for the symmetric three disk scattering shown in Fig.~\ref{ott}.
We show the boudaries of the departure ($\tD_{i1}$, full lines), arrival
($\tA_{1j}$, dashed lines) and shadow arrival ($\tA_{1j}^-$, long
dashed) sets. }
\end{center}
\end{figure}
Compared with \S\ref{2gr}, we define the arrival and departure sets
from the closure of the relation $F_{ij}$:
\[  \widetilde A_{ij} \defeq  \overline{{\mathcal B}_{ij}^+} (
B^* \partial \Oo_j )\subset B^*\partial\Oo_i \,, 
\ \ \ \ \widetilde D_{ij} \defeq \overline{({\mathcal B}_{ij}^+)^t} ( B^* \partial \Oo_i )\subset B^*\partial\Oo_j \,.
\] 
From \eqref{e:inverse} we check that 
\[  \widetilde D_{ij} =  {\mathcal J} ( \widetilde A_{ji} )   \,. \]
Besides, the {\em shadow} arrival %and departure 
sets are given by 
\[  \tA_{ij}^- \defeq \overline{\cB_{ij}^- } 
( B^* \partial \Oo_j ) \,.
%\ \ \ \ \widetilde D_{ij}^- \defeq \left( {\mathcal 
%J} \circ \overline{ {\mathcal B}_{ji}^- } \right)  ( B^* \partial \Oo_i ) 
% =  \widetilde D_{ij}^- = {\mathcal J} ( \widetilde A_{ji}^- ) 
%\,.
\] 
Also, let us call 
\begin{equation}
\label{eq:defE}
\tA_i^{(-)} \defeq \bigcup_{ i \neq j } \tA_{ij}^{(-)} \,, \quad
\tD_i\defeq \bigcup_{ i \neq j } \tD_{ji}\,.
%\ \ \ \ 
%\widetilde D^-  \defeq {\mathcal J} ( \widetilde A^-) \,.
\end{equation}
%with similar definitions for $ \widetilde A_i $  and $ \widetilde D_i $.
The subsets of glancing rays are denoted by
$$ \tA_i^{\mathcal{G}}\defeq  { \tA_{i} } \cap 
S^* \partial \Oo_i  =  { \tA_{i}^-} 
\cap S^* \partial \Oo_i,\qquad \quad  \tD_i^{\mathcal{G}}\defeq  { \tD_{i} } \cap 
S^* \partial \Oo_i\,.
$$ 
With this notation we can state the most important result of this 
section:
\begin{prop}
\label{p:mt}
For $i\neq j$, let $ H_j ( z )$ and $ \gamma_i $ be as in \S\ref{bvp}, and 
assume that $ \partial \Oo_k $, $ k = i , j $ are strictly convex. 
For any tempered $v\in L^2(\Oo_j)$, we have
\begin{gather}
\label{eq:WF2} 
\begin{gathered}
  \WFh ( \gamma_i H_j ( z ) v ) = \left( \overline{  {\mathcal B}_{ij}^+ \cup
{\mathcal B}_{ij}^- } \right) ( \WFh ( v ) \cap B^* \partial \Oo_j ) \,,
\end{gathered}
\end{gather}
uniformly for 
\be\label{e:Omega_0}
 z \in \Omega_0 \defeq [ - C_0 \log ( 1/ h ) , R_0 ] + i[ - R_0 , R_0
 ] ,\quad C_0
, R_0 > 0 \quad \text{fixed.}
\ee
If $ Q_k \in \Psi_h^{0,-\infty } ( \partial \Oo_k ) $, $ k = i, j $, 
satisfy 
\be\label{e:condQ}
 \WFh ( Q_i ) \cap {\tA_i^{\mathcal{G}}} = \emptyset \,, \ \ 
\WFh ( Q_j ) \cap \tD_j^{\mathcal{G}} = \emptyset \,, 
\ee
 then 
\begin{equation}
\label{eq:fio1} Q_i \gamma_i H_j ( z ) Q_j \in I^0 ( \partial \Oo_i \times \partial \Oo_j , 
({\mathcal B}_{ij}^+)' ) +  I^0 ( \partial \Oo_i \times \partial \Oo_j , 
({\mathcal B}_{ij}^-)' ) \,. 
\end{equation}
Because of the assumptions on $ Q_k$'s, only compact 
subsets of the open relations 
$ {\mathcal B}_{ij}^\pm $ are involved in the definition of the 
classes $ I^0 $. 

We also have, for some $\tau>0$ and $z$ in the above domain, the norm estimate
\begin{equation}
\label{eq:fio11}
\| Q_i \gamma_i H_j ( z ) Q_j  \|_{ L^2 ( \partial \Oo_j)  \rightarrow
  L^2 ( \partial \Oo_i) }   \leq  C( R_0) \exp (\tau  \Re z )  \,.
\end{equation}
\end{prop}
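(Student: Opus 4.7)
The plan is to prove the three claims—the wavefront set identity \eqref{eq:WF2}, the FIO structure \eqref{eq:fio1}, and the norm bound \eqref{eq:fio11}—by reducing to the Friedlander model analysis presented just above Proposition~\ref{p:mt}, using the semiclassical parametrix for $H_j(z)$ constructed in \cite[Appendix]{Ge} and \cite[\S A.2--A.5]{StVo}.

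For \eqref{eq:WF2}, I would combine propagation of singularities for the Dirichlet boundary value problem with the boundary restriction result of Lemma~\ref{l:WFr}. Since $P(z) H_j(z) v = 0$ with outgoing boundary conditions, the semiclassical wavefront set of $H_j(z)v$ in $\RR^n\setminus\Oo_j$ lies in the forward broken bicharacteristic flow-out of $\WFh(v)$. Strict convexity of $\Oo_j$ ensures no rays are trapped on $\partial\Oo_j$, and Lemma~\ref{l:S1} kills the elliptic part; so only covectors with $|\xi|=1$ leaving $\partial\Oo_j$ outwards contribute. Intersecting the flow-out with $S^*_{\partial\Oo_i}\RR^n$ and projecting to $B^*\partial\Oo_i$ via $\pi_i$, we get precisely $\cB_{ij}^+(\WFh(v))$ for transversal hits and $\cB_{ij}^-(\WFh(v))$ for shadow/grazing hits on $\partial\Oo_i$; the closure handles the boundary glancing points.

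For the FIO decomposition \eqref{eq:fio1}, I would use the semiclassical parametrix for $H_j(z)$ in a neighbourhood of the hyperbolic region of $B^*\partial\Oo_j$, which has the Airy-integral form displayed in \eqref{eq:H0} of the Friedlander model (with the actual phase and amplitudes provided by the equivalence-of-glancing-hypersurfaces normal form). The assumption $\WFh(Q_j)\cap \tD_j^{\mathcal G}=\emptyset$ confines the input wavefront to a compact subset of the hyperbolic region, where the asymptotic expansion of the Airy quotient $A_+(\cdot)/A_+(\cdot)$ splits $H_j(z)$ into two WKB oscillatory integrals—the outgoing and the incoming branches, as in the explicit computation leading to \eqref{eq:WF1}. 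Composing with $\gamma_i$ and multiplying by $Q_i$, the condition $\WFh(Q_i)\cap \tA_i^{\mathcal G}=\emptyset$ keeps us away from the fold direction of the projection $\pi_i$, so the compositions yield clean $h$-FIO's associated with the compact subrelations of $\cB_{ij}^{\pm}$ meeting $\WFh(Q_i)\times\WFh(Q_j)$.

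For the norm estimate \eqref{eq:fio11} in the logarithmic strip $\Omega_0$, I would use the representation $\gamma_i H_j(z) = \delta_{ij} - \gamma_i R_j(z)P(z) T_j^h$ from \eqref{eq:Hj}, combined with the logarithmic-neighbourhood resolvent estimate $\|\varphi_i R_j(z)\varphi_j\|=\mathcal O(1/h)$ of \eqref{e:est-R_j} (valid for $\Im\zeta>-C\log(1/h)$ by the Vainberg argument used in Proposition~\ref{p:S2}), together with the semiclassical parametrix of \cite{StVo} which remains valid in this strip. The exponential factor $e^{\tau\Re z}$ arises because $H_j(z)v$ oscillates at wavenumber $\lambda = h^{-1}(1-ihz/2+\mathcal O(h^2z^2))^{1/2}$, whose imaginary part contributes $e^{\tau\Re z}$ over the finite travel time $\tau$ between $\partial\Oo_j$ and $\partial\Oo_i$. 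The main obstacle will be controlling the parametrix and the composition uniformly in a shrinking-but-not-vanishing neighbourhood of the glancing set, which is precisely where the Airy amplitudes are non-smooth; this is handled by keeping $\WFh(Q_i),\WFh(Q_j)$ at a fixed distance from the glancing submanifolds as per \eqref{e:condQ}, so that the Airy asymptotics remain uniform.
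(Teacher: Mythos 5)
Your proposal is correct and follows essentially the same path as the paper: cite G\'erard's wavefront-set description of $H_j(z)$ (Theorem A.II.12) combined with Lemma~\ref{l:WFr} for \eqref{eq:WF2}; use the parametrix of G\'erard/Stefanov--Vodev in the hyperbolic region (which is what the Airy-quotient asymptotics in the Friedlander model encode, transported by the equivalence of glancing hypersurfaces) once the cutoffs $Q_i,Q_j$ remove the glancing contributions; and read off the $e^{\tau\Re z}$ factor from the imaginary part of the eikonal phase, as Stefanov--Vodev already verify in the logarithmic strip. The one small inessential detour is your appeal to the resolvent bound $\|\varphi_k R_j(z)\varphi_j\|=\Oo(1/h)$ for \eqref{eq:fio11}: that estimate is what underlies Proposition~\ref{p:S2} but gives no exponential gain in $\Re z$; the paper instead obtains \eqref{eq:fio11} directly from the parametrix's eikonal solution, which is also the mechanism you correctly name at the end of your argument.
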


 Although we will never have to use any detailed analysis near the glancing
points (that is, points where the trajectories are tangent to the 
boundary) it is essential that we know \eqref{eq:WF2} and that requires
the analysis of diffractive effects. In particular, we have to know
that there will not be any propagation along the boundary. 

\begin{proof} 
For $ z \in  \Omega_0 $,  \cite[Theorem A.II.12]{Ge} gives the 
wave front set properties of $ H_j ( z ) $.  In particular it implies
that for $ \varphi_k \in \CIc ( 
\neigh ( \partial \Oo_k ) ) $, $ \varphi_k = 0 $  near $  \Oo_j$, 
\[ \begin{split}   \WFh ( \varphi_k H_j ( z ) v ) =  \big\{  & ( x + t \xi , \xi ) \; : \; t
> 0 \,, \ x  + t \xi \in \supp \varphi_k \,, \  | \xi | = 1
\,, \\
& \ \    ( x , \pi_j ( \xi ) ) \in \WFh ( v )  \cap B^* \partial \Oo_j
\,,  \ \langle \nu_j ( x ) ,
\xi \rangle \geq 0 \big\} \,. \end{split} \]
This and Lemma \ref{l:WFr} immediately give \eqref{eq:WF2}.

The conditions on $ Q_k$'s appearing in \eqref{eq:fio1} mean that
we are cutting off the contributions of rays satisfying 
$ \langle \nu_j ( x ) , \xi \rangle = 0 $, that is with 
$ \pi_j ( \xi ) \in S^* \partial \Oo_j $ (glancing rays on $\Oo_j$), as well as the contributions
of the rays $ \langle \nu_i ( x+t\xi ) , \xi \rangle = 0 $ (glancing
rays on $\Oo_i$).   
%We note also that
%\[ \begin{split}   \overline{ {\mathcal B}_{ij}^+ ( B^*\partial \Oo_j ) } \cap 
%S^* \partial \Oo_i &  =  \overline{ {\mathcal B}_{ij}^- ( B^*\partial \Oo_j ) } \cap 
%S^* \partial \Oo_i  \,, \\ 
%\overline { \left({\mathcal B}_{ij}^+
%\right)^t ( B^*\partial \Oo_i ) } \cap 
%S^* \partial \Oo_j & = 
%\overline { \left({\mathcal B}_{ij}^-
%\right)^t ( B^*\partial \Oo_i ) } \cap S^* \partial \Oo_j  \,, 
%\end{split} \]
%so that the glancing effects from the $ {\mathcal B}_{ij}^t $ are also excluded.  
This means that the contributions to 
$ Q_i \gamma_i H_j ( z ) Q_j $ only come from the interior of
the hyperbolic regions on the right, $( B^* \partial \Oo_j )^\circ $,
and on the left, $ ( B^* \partial \Oo_i )^\circ $.  The description of 
$ H_j ( z ) $ in the hyperbolic region given in 
\cite[Proposition A.II.3]{Ge} and \cite[\S A.2]{StVo} shows that
it is a sum of zeroth order Fourier integral operators associated to
the relations $ {\mathcal B}^\pm_{ij} $. For $ \Re z < 0 $ the 
forward solution of the eikonal equation 
gives the exponential decay of \eqref{eq:fio11} (where $0<\tau\leq
d_{ji}$, the distance between $\Oo_j$ and $\Oo_i$). 
As pointed out in \cite[\S A.2]{StVo}, this decay is valid for $z$ in the
logarithmic neighbourhood $\Omega_0$.
\end{proof}

\subsection{The microlocal billiard ball map}
\label{damb}
In this section we will show how for several strictly convex
obstacles satisfying Ikawa's condition \eqref{eq:Ik} the operator
$ {\mathcal M} ( z) $ defined in \S \ref{bvp} can be replaced
by an operator satisfying the assumptions of \S \ref{2gr}. This
follows the outline presented in \S \ref{int}.

%\subsubsection{Ikawa condition and hyperbolicity of the flow}
%\label{ik}

%\subsubsection{Quantization of the billiard ball map}
%\label{bil}

Namely, we now show that invertibility of $ \Id - \cM ( z ) $ 
can be reduced to invertibility of $ \Id - M ( z ) $ where 
$ M ( z) $ satisfies the assumptions of \S \ref{2gr}. 
That can only be done after introducing a microlocal 
weight function. 
%: the first reduction does not depend on its specific properties.

We first consider a general weight. Suppose that 
\[ g_0 \in 
S ( T^* \partial \Oo ; \langle \xi \rangle^{-\infty} ) \defeq
\bigcap_{N\geq 0} 
S ( T^* \partial \Oo ; \langle \xi \rangle^{-N} )  \,, \]
or simply that $ g_0 \in \CIc ( T^* \partial \Oo ) $. 
Then for a fixed but {\em large} $ T $ consider 
\be
\label{eq:g0g}
 g \defeq T \log \left( \frac 1 h \right) \, 
 g_0 \,, \ \ \exp( \pm  g^w ( x , h D  ) )  \in 
\Psi^{ T C_0 , 0 }_{0+} ( \partial \Oo )  \,, \ \ 
C_0 = \max_{T^* \partial \Oo } |g_0|  \,. \ee
We remark that the operators $ \exp ( \pm g^w ( x, h D ) )  $ are
pseudo-local in the sense that
\begin{equation}
\label{eq:WFg}  \WF_h ( e^{ \pm g ^w ( x , h D ) } v ) = \WF_h ( v ) 
\end{equation}
(we see the inclusion from the pseudodifferential nature of $ \exp ( \pm
g^w ) $, and the equality from their invertibility).

For $ k \neq j $, we will write
\be\label{e:M_g}
   (\cM_g)_{kj} (z) \defeq e^{-g^w_k ( x, h D) }\, {\mathcal M}_{kj}(z)\,  e^{g^w_j ( x, h D) }\,. 
\ee
As before, we consider these operators as a matrix acting on $ L^2
( \partial \Oo ) = \bigoplus_{j=1}^J L^2 ( 
\partial \Oo_j ) $.
Using Proposition \ref{p:S2}
we see that $ \cM ( z ) $ is  bounded by 
$ C/ h $ as an operator on $L^2 ( \partial \Oo ) $, uniformly for $ z \in \Omega_0$ (see \eqref{e:Omega_0}). 
Since $ \exp ( \pm g^w ( x, h D ) ) = 
\Oo_{L^2 \rightarrow L^2 } ( h^{-T C_0 })  $ the conjugated
operator satisfies $
\cM_g ( z ) = {\mathcal O}_{L^2 \rightarrow L^2 } ( h^{- 2 T C_0 -1})$.

We want to reduce the invertibility of $ I-\cM_g(z) $ to that
of $I- M_g(z) $, where $ M(z) $ is a Fourier integral operator. That means
eliminating the glancing contributions in $ \cM(z) $
(see \eqref{eq:fio1}).

We start with a simple lemma which shows that Ikawa's condition \eqref{eq:Ik}
eliminates glancing rays and the restrictions to shadows:
\begin{lem}
\label{l:Ik}
Suppose that for some $ j \neq i \neq k$ 
\[  \overline {\mathcal O}_{i } 
\cap {\text{\rm{convex hull}}}( 
\overline {\mathcal O}_j \cup \overline {\mathcal O}_k ) 
= \emptyset \,.  \]
Then, in the notation of Proposition \ref{p:mt}, 
\begin{equation}
\label{eq:lik1}
\overline{  {\mathcal B}^\pm_{ki} } \circ \overline{ {\mathcal B}_{ij}^- } 
 = \emptyset \,.
%\ \text{ and } \ 
%{\mathcal B}_{ki}^+ \left( {\mathcal B}_{ij}^+ ( B^* \partial 
%\Oo_j ) \cap S^* \partial \Oo_i \right)  = \emptyset
\end{equation}
\end{lem}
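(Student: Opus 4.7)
The plan is to assume the composition $\overline{\cB_{ki}^\pm}\circ\overline{\cB_{ij}^-}$ contains a pair $(\rho,\rho')$ and derive a contradiction with the Ikawa non-eclipse condition \eqref{eq:Ik}. First, by picking an intermediate $\rho''\in B^*\partial\Oo_i$ with $(\rho'',\rho')\in\overline{\cB_{ij}^-}$ and $(\rho,\rho'')\in\overline{\cB_{ki}^\pm}$ and unpacking \eqref{eq:Bij}, together with its closure, I obtain points $x\in\partial\Oo_j$, $y,y'\in\partial\Oo_i$, $z\in\partial\Oo_k$, unit vectors $\xi_1,\xi_2\in\SP^{n-1}$ and times $t,t'>0$ satisfying $y=x+t\xi_1$, $z=y'+t'\xi_2$, the closed sign conditions $\langle\nu_j(x),\xi_1\rangle\geq 0$, $\langle\nu_i(y),\xi_1\rangle\geq 0$, $\langle\nu_i(y'),\xi_2\rangle\geq 0$, $\pm\langle\nu_k(z),\xi_2\rangle\leq 0$, and the matching $\pi_i(y,\xi_1)=\pi_i(y',\xi_2)$.

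Next I would exploit the matching condition. It forces $y=y'$ and $\xi_2-\xi_1\in\RR\,\nu_i(y)$. The unit-length requirement $|\xi_1|=|\xi_2|=1$ then leaves exactly two possibilities: either $\xi_2=\xi_1$, or $\xi_2$ is the reflection $\xi_1-2\langle\nu_i(y),\xi_1\rangle\,\nu_i(y)$ of $\xi_1$ across the tangent plane at $y$. In the reflection case one has $\langle\nu_i(y),\xi_2\rangle=-\langle\nu_i(y),\xi_1\rangle$; since both quantities are nonnegative by the closed conditions inherited from $\cB_{ij}^-$ (outward/tangential arrival at $\partial\Oo_i$) and $\cB_{ki}^\pm$ (outward/tangential departure from $\partial\Oo_i$), they must both vanish, the reflection is trivial, and we are again reduced to $\xi_2=\xi_1$.

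In the remaining case $\xi_2=\xi_1$, the concatenation $\{x+s\xi_1:0\leq s\leq t+t'\}$ is a single straight segment joining $x\in\overline\Oo_j$ to $z=x+(t+t')\xi_1\in\overline\Oo_k$ and containing the intermediate point $y=x+t\xi_1\in\overline\Oo_i$. Hence $y$ belongs to $\overline\Oo_i\cap\mathrm{conv}(\overline\Oo_j\cup\overline\Oo_k)$, contradicting \eqref{eq:Ik}. I do not anticipate any real obstacle: the only delicate point is to check that taking closures does not introduce degenerate configurations such as $t=0$ or $t'=0$, but this is ruled out by $\overline\Oo_i\cap\overline\Oo_j=\emptyset$ and $\overline\Oo_i\cap\overline\Oo_k=\emptyset$, which bound both travel times below by the strictly positive distances between the respective pairs of obstacles.
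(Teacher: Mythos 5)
Your proof is correct and follows the same route as the paper's: a pair in the composition yields a single straight ray leaving $\partial\Oo_j$, touching $\overline{\Oo_i}$, and reaching $\partial\Oo_k$, violating the Ikawa non-eclipse condition. The paper's version is terser (it simply asserts the existence of $x$, $\xi$, $0<t_1<t_2$ with $x+t_1\xi\in\overline{\Oo_i}$ and $x+t_2\xi\in\partial\Oo_k$), whereas you explicitly justify why the two ray directions must coincide — via the matching $\pi_i(y,\xi_1)=\pi_i(y',\xi_2)$ and the sign conditions ruling out the nontrivial reflection — which is the step the paper leaves implicit.
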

\begin{proof}
Suppose that 
\eqref{eq:lik1} does not hold. Then there exists 
$ x \in \partial \Oo_j $, $ \xi \in \SP^{n-1} $, and $ 
0 < t_1 < t_2 $, such that 
$ x + t_1 \xi \in \overline {\Oo_i } $, and 
$ x + t_2 \xi \in \partial \Oo_k $. But this means that $ \overline {\Oo_i}  $ 
intersects the convex hull of $ \partial \Oo_j $ and $ \partial \Oo_k $.

%The second statement means that no interval connecting $ \partial \Oo_k $
%and $ \partial \Oo_j $ and lying outside $ \Oo_j \cup \Oo_k $ is
%
%tangent to $ \partial \Oo_i $, and this also follows from the assumption.
\end{proof}

%The billiard ball relation, $ F $,  was already defined in \S \ref{int}
%and in the notation of \eqref{eq:Bij} we have 
%\[   F_{ij} = \cB_{ij}^+ \,. \]
The trapped set $ \cT $ was defined  using \eqref{eq:defT}, where we
recall that the relation $F=(F_{ij})$ is also given by
$F_{ij}=\cB^+_{ij}$.
Lemma \ref{l:Ik} shows that 
\[    \cT \cap S^* \partial \Oo = \emptyset \,. \]

%%%%%%%%%%%%%%%%%%%%

Lemma \ref{l:Ik} implies that (see Fig.~\ref{f:3disks-AD})
\be
% \overline{ {\mathcal B}_{ki}^\pm }\, ( \widetilde A_i^- ) = 
%\bigcup_{j \neq i } \overline{ {\mathcal B}_{ki}^\pm } \circ 
%\overline{ {\mathcal B}_{ij }^- } ( B^* \partial \Oo_j ) =
% \emptyset \,,\quad \text{in particular}\quad 
\overline{ {\mathcal B}_{ki}^\pm
}\, ( \tA_i^{\mathcal{G}}) = \emptyset\,,\ \  \ \text{so that}\ \
\tD_i^{\mathcal{G}}\cap \tA_i^{\mathcal{G}}=\emptyset,\ \ \ \text{and similarly}\ \ 
\left( \overline { \mathcal B_{ik}^\pm } \right)^t (\tD_i^{\mathcal{G}}
)=\emptyset\,.
%\\
% \left( \overline { \mathcal B_{ik}^+ } \right)^t (\widetilde D_i^- ) 
%= \bigcup_{j \neq i }  \left( \overline { \mathcal B_{ik}^+ } \right)^t \circ
%{\mathcal J} \circ \overline{ {\mathcal B}_{ij}^- } ( B^* \partial \Oo_j ) 
%= \bigcup_{j \neq i }  {\mathcal J} 
%\circ  \overline { \mathcal B_{ki}^+ }  \circ
% \overline{ {\mathcal B}_{ij}^- } ( B^* \partial \Oo_j ) 
%= \emptyset \,
\ee
%and 
%\[  \left( \overline { \mathcal B_{ik}^- } \right)^t (\widetilde D_i^- 
%\cap S^* \partial \Oo_i )  = 
% \left( \overline { \mathcal B_{ik}^+ } \right)^t (\widetilde D_i^- 
%\cap S^* \partial \Oo_i ) = \emptyset \,. \]
Since the sets $\tA_i^{\mathcal{G}}$ and $  \widetilde D_i^{\mathcal{G}} $  
are closed and disjoint, we can find small neighbourhoods
\be
U_i^A \defeq \neigh(\tA_i^{\mathcal{G}}) \,,\ \ \ U_i^D \defeq
\neigh(\tD_i^{\mathcal{G}})\,,\ \ \  U_i^D \cap U_i^A = \emptyset\,,
\ee
so that
\be\label{eq:nei}
   \overline{ {\mathcal B}_{ki}^\pm }\, ( U_i^A ) = \emptyset  \,,\qquad
 \left( \overline { \mathcal B_{ik}^\pm } \right)^t (
U_i^D ) = \emptyset \,.
\ee
%and in particular 
%\[ {\mathcal T} \cap \left( { \widetilde A^- \cup \widetilde D^- }\right)  = \emptyset \,. \]
%The last property is possible due to the following consequence of Lemma \ref{l:Ik},
%\[ \widetilde  A_i^- \cap \widetilde D_i %\cap S^* \partial \Oo_i 
%= \emptyset \,. \] 

We can now formulate a Grushin problem which will produce the
desired effective Hamiltonian.
For that let $ \chi_{j,A} , \chi_{j,D}  \in 
\CI ( T^* \Oo_j ; \RR ) $ 
satisfy 
\begin{gather}
\label{eq:conc}
\begin{gathered} 
\chi_{j,A}\rest_{\neigh(\tA_j^{\mathcal G}) }  \equiv 1 \,, \ \ 
\ \ \supp \chi_{j,A } \Subset U_j^A \,, \\
\chi_{j,D}\rest_{\neigh(\tD_j^{\mathcal{G}}) }  \equiv 1 \,, \ \ 
\ \ 
\supp \chi_{j,D } \Subset U_j^D \,.
\end{gathered}
\end{gather}
We let $ \tilde \chi_{j,\bullet} $ have the same properties as 
$ \chi_{j,\bullet } $ with $ \chi_{j,\bullet } = 1 $ on 
$ \supp \tilde \chi_{j ,\bullet } $. 
In view of \eqref{eq:WF2} %, the definition \eqref{eq:MM} 
and \eqref{eq:nei}, we have
\begin{equation*}
\begin{split} 
& \cM_{kj}(z)\, \chi_{j,A}^w ( x, h D )  = 
\Oo_{L^2 ( \partial \Oo_j ) \rightarrow \CI ( \partial \Oo_k ) }  
( h^\infty ) \,,  \\
&  \chi_{j,D}^w ( x, h D )\,  \cM_{ji}(z) = 
\Oo_{L^2 ( \partial \Oo_i ) \rightarrow \CI ( \partial \Oo_j ) }  
( h^\infty ) \,,
\end{split}
\end{equation*}
for all $ k \neq j \neq i $, uniformly for $z\in \Omega_0$.
In view of \eqref{eq:WFg}, and using the notations of \eqref{e:M_g},
we have the same properties for the conjugated operator:
\begin{equation}
\label{eq:cru}
\begin{split} 
&  (\cM_g)_{kj}(z)\, \chi_{j,A}
^w ( x, h D ) = 
\Oo_{L^2 ( \partial \Oo_j ) \rightarrow \CI ( \partial \Oo_k ) }  
( h^\infty ) \,,  \\ 
&  \chi_{j,D}^w ( x, h D ) \,
 (\cM_g)_{ji}(z)= 
\Oo_{L^2 ( \partial \Oo_i ) \rightarrow \CI ( \partial \Oo_j ) }  
( h^\infty ) \,.
\end{split}
\end{equation}
For $\bullet=A,D$, let $ \widetilde \Pi_{j,\bullet} $ be an orthogonal finite rank projection on 
$ L^2 ( \partial \Oo_j ) $ such that 
\begin{equation}
\label{eq:prj}  \begin{split}
& \tilde \chi_{j,\bullet}^w \,\widetilde \Pi_{j,\bullet} = 
 \widetilde \Pi_{j,\bullet} \,\tilde \chi_{j,\bullet}^w + {\mathcal O}_{
L^2 \rightarrow \CI }  ( h^\infty ) = \tilde \chi_{j,\bullet}^w + {\mathcal O}_{
L^2 \rightarrow \CI }  ( h^\infty ) \,, \\
&  \chi_{j,\bullet}^w \widetilde \Pi_{j,\bullet} =  \widetilde \Pi_{j,\bullet}  \chi_{j,\bullet}^w  + {\mathcal O}_{
L^2 \rightarrow \CI }  ( h^\infty )  = \widetilde \Pi_{j,\bullet}
+ {\mathcal O}_{L^2 \rightarrow \CI }  ( h^\infty ) \,. 
%\ \ \ \ \bullet = A, D\,. 
\end{split}
\end{equation}
Such projections can be found by constructing a real valued
function $ \psi_{j,\bullet}  \in \CIc ( T^* \partial \Oo_j ) $
satisfying $ \psi_{j,\bullet} \equiv 1 $ on $ \supp ( \tilde \chi_{j,\bullet} ) $ and 
$ \psi_{j,\bullet} \equiv 0 $ on $  \supp ( 1 - \chi_{j,\bullet}) $. 
Then $ \widetilde \Pi_{j,\bullet} \defeq \bbbone_{ \psi_{j,\bullet}^w
  ( x, h D ) \geq 1/2  } $ 
provides a desired projection of rank comparable to 
$ h^{1-n} $.

We need one more orthogonal projector $ \Pi_j^\#$, microlocally
projecting in a neighbourhood of $B^*\partial\Oo_j$. Precisely, we
assume that for some cutoff $ \chi_j\in \CIc(T^*\partial\Oo_j) $
with $\chi_j=1$ near $B^*\partial\Oo_j\cup U^A_j\cup U^D_j$, this
projector satisfies
\begin{equation}
\label{eq:pis}  \Pi_j^\# \,\chi_j^w = \chi_j^w \,\Pi_j^\# + \Oo_{L^2
  \rightarrow \CI } ( h^\infty ) = \Pi_j^\# + \Oo_{L^2
  \rightarrow \CI } ( h^\infty )  \,. 
\end{equation}
From Lemma~\ref{l:S1} we easily get the bounds
\be\label{e:M_gPi}
(\cM_g)_{kj}(z)\,(I-\Pi_j^\#) = \Oo_{L^2
  \rightarrow \CI } ( h^\infty )\,,\qquad (I-\Pi_k^\#)\,(\cM_g)_{kj}(z)=\Oo_{L^2
  \rightarrow \CI } ( h^\infty )\,.
\ee

The operator 
\begin{equation} 
\label{eq:Pj} 
P_j \defeq \widetilde \Pi_{j,A} + \widetilde \Pi_{j,D} 
+ \Id - \Pi_j^\#
\,.
\end{equation}
is not a projection but it can be easily modified to 
yield a projection with desired properties:
%Also define
%\[ \chi_j \defeq \chi_{j,A} + \chi_{j,D} \,, \ \ \ 
%\tilde \chi_j \defeq \tilde \chi_{j,A} + \tilde \chi_{j,D} \,. \]
\begin{lem}
\label{l:proj}
Let $ \gamma $ be positively oriented curve around $ \zeta = 1 $, 
$ \gamma : t \mapsto 1 + \epsilon \exp ( 2 \pi i t ) $, $ \epsilon $
small and fixed.
For $ P_j $ given by \eqref{eq:Pj}  define
\[  \widetilde \Pi_j \defeq \frac 1 { 2 \pi i } \int_\gamma 
( \zeta - P_j )^{-1} d \zeta \,. \]
Then $ \widetilde \Pi_j $ is an orthogonal projection satisfying
\begin{equation}
\label{eq:prjb}  
\widetilde \Pi_j = \tPi_{j,A} + \widetilde \Pi_{j,D}  + \Id
- \Pi_j^\#+ {\mathcal O}_{L^2 \rightarrow \CI }  ( h^\infty ) \,, 
\end{equation}
where $ \widetilde \Pi_{j,\bullet } $, $ \Pi_j^\sharp$  are the
projections in \eqref{eq:prj} and \eqref{eq:pis}.
\end{lem}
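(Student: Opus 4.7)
\medskip
\noindent
\emph{Proof plan for Lemma~\ref{l:proj}.}

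The idea is that $P_j$ is already almost an orthogonal projection, so the contour integral around $\zeta=1$ only needs to correct it by a negligible error. The first step is to expand
\[
P_j^2 = \tPi_{j,A}^2 + \tPi_{j,D}^2 + (\Id - \Pi_j^\#)^2 + \text{(six cross terms)} = P_j + R,
\]
using that each summand of $P_j$ is an orthogonal projection, and then show that $R = \Oo_{L^2\to\CI}(h^\infty)$. The three cross terms involving $\tPi_{j,A}\tPi_{j,D}$, $\tPi_{j,A}(\Id - \Pi_j^\#)$, $\tPi_{j,D}(\Id - \Pi_j^\#)$ (and their adjoints) are all $\Oo(h^\infty)$: in the first one, inserting the cutoffs $\chi_{j,A}^w$ and $\chi_{j,D}^w$ via \eqref{eq:prj} reduces the problem to the product $\chi_{j,A}^w \chi_{j,D}^w$, which is $\Oo_{L^2\to\CI}(h^\infty)$ by symbolic calculus since $\supp\chi_{j,A}\subset U_j^A$ and $\supp\chi_{j,D}\subset U_j^D$ are disjoint; in the other two, \eqref{eq:pis} combined with the fact that $\supp\chi_{j,\bullet}\Subset U_j^\bullet\subset\{\chi_j=1\}$ gives $\tPi_{j,\bullet}(\Id - \Pi_j^\#)=\chi_{j,\bullet}^w\tPi_{j,\bullet}(\Id - \Pi_j^\#)+\Oo(h^\infty)$ and $\chi_{j,\bullet}^w(\Id-\Pi_j^\#)=\Oo_{L^2\to\CI}(h^\infty)$.

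Next, $P_j$ is bounded and self-adjoint as a sum of three orthogonal projections, hence has real spectrum. The identity $P_j(P_j-1) = R$ combined with $R=\Oo(h^\infty)$ and the spectral theorem forces $\spec(P_j)\subset [-Ch^N, Ch^N]\cup [1-Ch^N,1+Ch^N]$ for every $N$ (and $h$ small enough). Therefore, for the fixed small $\epsilon$, the contour $\gamma$ is uniformly separated from $\spec(P_j)$, so $\tPi_j$ is well defined, and by the standard spectral calculus for self-adjoint operators it is an orthogonal projection onto the spectral subspace of $P_j$ associated to the spectrum enclosed by $\gamma$.

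To identify $\tPi_j$ with $P_j$ modulo $\Oo(h^\infty)$, I compute the resolvent explicitly: since $R$ commutes with $P_j$, one checks directly that
\[
(\zeta - P_j)^{-1} = (\zeta - 1 + P_j)\bigl(\zeta(\zeta-1) - R\bigr)^{-1}.
\]
For $\zeta\in\gamma$ one has $|\zeta(\zeta-1)|\ge\epsilon(1-\epsilon)$, so the Neumann expansion
\[
\bigl(\zeta(\zeta-1) - R\bigr)^{-1} = \frac{1}{\zeta(\zeta-1)} + \sum_{k\ge 1} \frac{R^k}{(\zeta(\zeta-1))^{k+1}}
\]
converges, with remainder $\Oo_{L^2\to\CI}(h^\infty)$ (since each $R^k$ is, and the prefactors are uniformly bounded on $\gamma$). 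Substituting and using the partial fraction decomposition $\frac{\zeta-1+P_j}{\zeta(\zeta-1)} = \frac{1}{\zeta} + \frac{P_j}{\zeta(\zeta-1)}$, the residue theorem gives
\[
\tPi_j = \frac{1}{2\pi i}\oint_\gamma\frac{d\zeta}{\zeta} + P_j\cdot\frac{1}{2\pi i}\oint_\gamma\frac{d\zeta}{\zeta(\zeta-1)} + \Oo_{L^2\to\CI}(h^\infty) = P_j + \Oo_{L^2\to\CI}(h^\infty),
\]
since $\gamma$ winds once around $\zeta=1$ and not around $\zeta=0$. This is \eqref{eq:prjb}.

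The only non-routine point is keeping track of the operator norms. The relation \eqref{eq:prj}--\eqref{eq:pis} provides smoothing remainders ($L^2\to\CI$), and I need the Neumann series to preserve this class; this is automatic because $R$ is itself smoothing and $h^\infty$-small, so any composition $R^k$ inherits both properties uniformly in $\zeta\in\gamma$. The rest is formal manipulation with the contour integral.
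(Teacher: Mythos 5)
Your proposal is correct and follows essentially the same route as the paper: establish $P_j^2 - P_j = \Oo_{L^2\to\CI}(h^\infty)$ from the disjointness of the supports of the cutoffs, deduce the spectral localization near $\{0,1\}$, and factor the resolvent (your identity $(\zeta - P_j)^{-1} = (\zeta-1+P_j)(\zeta(\zeta-1) - R)^{-1}$ is the paper's $\zeta^{-1}(\Id + (\zeta-1)^{-1}P_j)(\Id - \zeta^{-1}(\zeta-1)^{-1}(P_j^2 - P_j))^{-1}$ multiplied out) before taking residues at $\zeta = 1$. The only extra care to flag is that your step $\chi_{j,\bullet}^w(\Id-\Pi_j^\#)=\Oo_{L^2\to\CI}(h^\infty)$ needs $\Pi_j^\#$ to be microlocally the identity on $\supp\chi_{j,\bullet}$, which is what the construction $\Pi_j^\# = \bbbone_{\psi_j^w\geq 1/2}$ with $\psi_j\equiv 1$ near $B^*\partial\Oo_j\cup U_j^A\cup U_j^D$ supplies, but is not literally contained in the stated condition \eqref{eq:pis} alone — a minor imprecision you inherited from the text rather than introduced yourself.
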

\begin{proof}
The operator $ P_j $ 
is not a projection but it is self-adjoint and satisfies
\begin{equation}
\label{eq:alm}
P_j^2 = P_j + {\mathcal O}_{L^2 \rightarrow \CI }  ( h^\infty ) \,, 
\end{equation}
which we check using \eqref{eq:prj}, \eqref{eq:pis} and 
the properties of the functions $ \chi_{j,\bullet} $, $\chi_j$.
Hence its spectrum is contained in $ [ 0 , {\mathcal O} ( h^\infty )]
\cup [ 1 - {\mathcal O} ( h^\infty ),  1 +  
{\mathcal O} ( h^\infty ) ] $. For $ \epsilon $ small enough 
and fixed we can take $ \gamma $ including all spectrum near $ 1 $
(the statements implicitly assume that $ h $ is small enough). 
For $ \zeta \in \gamma $ we write 
\[ (\zeta - P_j )^{-1} = \zeta^{-1} ( \Id + ( \zeta - 1 )^{-1} P_j ) 
\left( \Id - \zeta^{-1} ( \zeta - 1 )^{-1} ( P_j^2 - P_j ) \right)^{-1} \,. \]
The inverse on the right hand side exists in view of 
\eqref{eq:alm} and satisfies 
\[ \left( \Id - \zeta^{-1} ( \zeta - 1 )^{-1} ( P_j^2 - P_j ) \right)^{-1}  
= \Id + {\mathcal O}_{L^2 \rightarrow \CI }  ( h^\infty )  \, \]
uniformly on $ \gamma $. Inserting these two formul{\ae} into 
the integral defining  $ \Pi_j $ gives \eqref{eq:prjb}.
\end{proof}

The next lemma provides the property crucial in the
construction of the Grushin problem:
\begin{lem}
\label{l:cru}
%For $ k \neq j $ let
%\[   ({\mathcal M}_g)_{kj} 
%\defeq
%e^{-g^w_k ( x, h D) } {\mathcal M}_{kj}  e^{g^w_j ( x, h D) }\,. \]
For $ \widetilde \Pi_j $ defined above and any 
$ k \neq j \neq i $ we have
\[  ({\mathcal M}_g)_{kj}(z)\, 
\widetilde   \Pi_j \, 
({\mathcal M}_g) _{ji} (z) = 
{\mathcal O} ( h^\infty ) \; : \; L^2( \partial \Oo_i ) 
\longrightarrow  \CI ( \partial \Oo_k ) \,, \]
uniformly for $z\in \Omega_0$.
\end{lem}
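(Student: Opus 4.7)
\noindent\textit{Plan of proof.} The plan is to use the decomposition of $\widetilde\Pi_j$ given by Lemma~\ref{l:proj} and reduce the estimate to the three key microlocal facts already established: the glancing damping property \eqref{eq:cru}, the microlocalization property \eqref{e:M_gPi} of $(\cM_g)_{kj}$ away from $B^*\partial\Oo$, and the projector identities \eqref{eq:prj}, \eqref{eq:pis}.

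\medskip

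\noindent\textbf{Step 1.} By \eqref{eq:prjb}, modulo an $\Oo_{L^2\to\CI}(h^\infty)$ remainder we write
\[
\widetilde\Pi_j = \widetilde\Pi_{j,A} + \widetilde\Pi_{j,D} + (\Id-\Pi_j^\#).
\]
Since $(\cM_g)_{kj}(z)$ and $(\cM_g)_{ji}(z)$ are polynomially bounded on $L^2$, uniformly for $z\in\Omega_0$ (recall the $\Oo(h^{-2TC_0-1})$ bound obtained after conjugation), the contribution of this $\Oo(h^\infty)$ remainder to the triple product is itself $\Oo(h^\infty)$. It thus suffices to bound the three principal pieces.

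\medskip

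\noindent\textbf{Step 2 (Arrival piece).} For the first term, I would exploit the second line of \eqref{eq:prj} for $\bullet=A$, which gives $\widetilde\Pi_{j,A}=\chi_{j,A}^w\,\widetilde\Pi_{j,A} + \Oo_{L^2\to\CI}(h^\infty)$. Then
\[
(\cM_g)_{kj}\,\widetilde\Pi_{j,A}\,(\cM_g)_{ji}
= \bigl((\cM_g)_{kj}\,\chi_{j,A}^w\bigr)\,\widetilde\Pi_{j,A}\,(\cM_g)_{ji} + \Oo(h^\infty),
\]
and the first line of \eqref{eq:cru} makes the bracket $\Oo_{L^2\to\CI}(h^\infty)$, which composed with the polynomially bounded $\widetilde\Pi_{j,A}(\cM_g)_{ji}$ yields $\Oo(h^\infty)$.

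\medskip

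\noindent\textbf{Step 3 (Departure piece).} Symmetrically, using $\widetilde\Pi_{j,D}=\widetilde\Pi_{j,D}\,\chi_{j,D}^w + \Oo(h^\infty)$ (second line of \eqref{eq:prj} for $\bullet=D$),
\[
(\cM_g)_{kj}\,\widetilde\Pi_{j,D}\,(\cM_g)_{ji}
= (\cM_g)_{kj}\,\widetilde\Pi_{j,D}\,\bigl(\chi_{j,D}^w\,(\cM_g)_{ji}\bigr) + \Oo(h^\infty),
\]
and the second line of \eqref{eq:cru} disposes of the bracket.

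\medskip

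\noindent\textbf{Step 4 (Complementary piece).} For the third term I would apply \eqref{e:M_gPi} directly:
\[
(\cM_g)_{kj}\,(\Id-\Pi_j^\#)\,(\cM_g)_{ji}
= \bigl((\cM_g)_{kj}(\Id-\Pi_j^\#)\bigr)\,(\cM_g)_{ji}
= \Oo(h^\infty).
\]

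\medskip

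\noindent\textbf{Main obstacle.} There is no real obstacle; the content is packaged into \eqref{eq:cru} and \eqref{e:M_gPi}, which in turn rest on the wavefront set statement \eqref{eq:WF2} combined with Ikawa's no-return condition \eqref{eq:nei}. The only point requiring minor care is that the operators $(\cM_g)_{kj}(z)$ carry a polynomial loss $h^{-2TC_0-1}$ from the exponential weights $e^{\pm g^w}$; this is harmless because all remainders above are $\Oo(h^\infty)$ in the $L^2\to \CI$ topology, and their composition with such polynomially bounded operators remains $\Oo(h^\infty)$. Uniformity in $z\in\Omega_0$ is inherited from the uniform wavefront set estimates of Proposition~\ref{p:mt} and the fact that the cutoffs $\chi_{j,\bullet}$ and projectors $\widetilde\Pi_{j,\bullet}$, $\Pi_j^\#$ are independent of $z$.
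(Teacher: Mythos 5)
Your argument is correct and follows essentially the same route as the paper: decompose $\widetilde\Pi_j$ via \eqref{eq:prjb}, dispose of the $\Id-\Pi_j^\#$ piece using \eqref{e:M_gPi}, and then kill the $\widetilde\Pi_{j,A}$ and $\widetilde\Pi_{j,D}$ pieces by factoring out $\chi_{j,A}^w$ on the left and $\chi_{j,D}^w$ on the right via \eqref{eq:prj} and invoking \eqref{eq:cru}. The paper packages the $\Oo(h^\infty)$ reduction in one line where you separate the three contributions, but the content and the ingredients are identical.
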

\begin{proof}
To simplify the notation we write $\cM_{ij}$ instead of $(\cM_g)_{ij}(z)$.

Using %\eqref{eq:prj}, 
\eqref{eq:prjb} and
\eqref{e:M_gPi}, 
%and \eqref{e:M_gPi} 
we can write 
\[   {\mathcal M}_{kj}\, \tPi_j \,{\mathcal M}_{ji} 
= {\mathcal M}_{kj} %( \chi_{j,A} ^w  + \chi_{j,D}^w ) 
(\tPi_{j,A} +  \tPi_{j,D}) {\mathcal M}_{ji} 
+  {\mathcal O}_{ L^2 \rightarrow \CI }  ( h^\infty ) \,. 
\]
From \eqref{eq:prj} we use
\[  \tPi_{j,A}=\chi_{j,A}^w  \,\tPi_{j,A}  +\Oo(h^\infty)\,,\qquad 
\tPi_{j,D}=\tPi_{j,D}\,\chi_{j,D}^w  +\Oo(h^\infty)\,,  \]
and hence, using \eqref{eq:cru}, we complete the proof.
\end{proof}

Now define the following orthogonal projection:
\begin{equation}
\label{eq:Pih}   \Pi_h \defeq \diag ( \Id - \widetilde \Pi_j ) \; : \; 
L^2 ( \partial \Oo) \longrightarrow L^2 (  \partial \Oo)  \,.
\end{equation}
Since each $\Id - \tPi_j$ is microlocalized on a compact
neighbourhood of $B^*\partial \Oo_j$, it has a rank
comparable with $h^{1-n}$, and so does $\Pi_h$.

Using this projection we obtain the main result of this
section:
\begin{thm}
\label{th:obs}
Let $ \Pi_h $ be given by \eqref{eq:Pih} and \eqref{eq:prj},
and $ {\mathcal M}_g ( z , h ) $ be defined by \eqref{eq:bmon}
and \eqref{e:M_g}. 

If $ \Oo_j $ are strictly convex and satisfy the Ikawa condition
\eqref{eq:Ik} then the following Grushin problem is well posed
for $ z \in D ( 0 , C ) $:
\[  \left( \begin{array}{ll} 
\Id - {\mathcal M }_g ( z , h )  & \Pi_h^t \\
\ \ \ \ \ \ \ \Pi_h &  \ 0 \end{array} \right) \; : \; L^2 ( \partial 
\Oo) \oplus W_h
\longrightarrow L^2 ( \partial \Oo  ) \oplus W_h \,, 
\ \ W_h \defeq \Pi_h  L^2 ( \partial \Oo )  \,, \]
where $ \Pi^t_h : W_h \hookrightarrow L^2 ( \partial \Oo ) $.

The effective Hamiltonian is given by 
\[ E_{-+} ( z, h ) =  - \Big( \Id_{W_h}  - \Pi_h \big(M_g ( z, h) + R ( z, h ) \big) \Pi_h  \Big) \,,\]
where 
\begin{equation}
\label{eq:fio3}
 M_g ( z , h ) = \varphi^w {\mathcal M }_g ( z , h )  \varphi^w \in 
I^0_{0+} ( \partial \Oo \times \partial \Oo , F' ) \,, \ \ 
 \varphi^w = \diag ( 1- \tilde \chi_{j,A}^w - \tilde \chi_{j,D}^w ) \,,
\end{equation}
with the relation $ F $ given by \eqref{eq:bill}, $ \tilde 
\chi_{j,\bullet}$ satisfying \eqref{eq:conc}, and 
\[  R ( z, h ) = {\mathcal O}_{L^2 ( \partial \Oo ) \rightarrow
\CI ( \partial \Oo ) } ( h^\infty ) \,. \]
\end{thm}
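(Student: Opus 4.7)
The plan is to invert the Grushin matrix by hand, to terminate the relevant Neumann series using Lemma~\ref{l:cru}, and then to identify the block corner as the stated effective Hamiltonian.

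Setting $\widetilde\Pi \defeq \Id - \Pi_h = \diag(\widetilde\Pi_j)$, so $L^2(\partial\Oo) = \widetilde\Pi L^2 \oplus W_h$ orthogonally, the second Grushin equation forces $\Pi_h u = v_+$. Writing $u = \widetilde\Pi u + v_+$, substituting into the first equation, and applying $\widetilde\Pi$ (which annihilates $\Pi_h^t$) yields
\[
(\Id - \widetilde\Pi\,\cM_g\,\widetilde\Pi)\,\widetilde\Pi u = \widetilde\Pi v + \widetilde\Pi\,\cM_g\,v_+ .
\]
Since $\cM_g$ has vanishing diagonal blocks by \eqref{eq:bmon}, the $(k,i)$-entry of $(\widetilde\Pi\cM_g\widetilde\Pi)^2$ equals $\sum_{\ell\neq k,i} \widetilde\Pi_k (\cM_g)_{k\ell} \widetilde\Pi_\ell (\cM_g)_{\ell i} \widetilde\Pi_i$, each summand being $\Oo_{L^2\to\CI}(h^\infty)$ by Lemma~\ref{l:cru}. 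Hence $(\Id - \widetilde\Pi\cM_g\widetilde\Pi)^{-1} = \widetilde\Pi + \widetilde\Pi\cM_g\widetilde\Pi + \Oo(h^\infty)$ on $\widetilde\Pi L^2$, fixing $\widetilde\Pi u$ explicitly. Applying $\Pi_h$ to the first equation produces $u_- = \Pi_h v - v_+ + \Pi_h\cM_g u$; setting $v = 0$, using $\Pi_h\cM_g\widetilde\Pi\cM_g = \Oo(h^\infty)$ (same diagonal-vanishing plus Lemma~\ref{l:cru}), and $v_+ = \Pi_h v_+$ then gives
\[
E_{-+} = -\bigl(\Id_{W_h} - \Pi_h\,\cM_g\,\Pi_h\bigr) + \Oo_{L^2\to\CI}(h^\infty) .
\]

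Next I replace $\cM_g$ with $M_g = \varphi^w\cM_g\varphi^w$ modulo $\Oo(h^\infty)$. From Lemma~\ref{l:proj}, $\Id - \widetilde\Pi_k = \Pi_k^\# - \widetilde\Pi_{k,A} - \widetilde\Pi_{k,D} + \Oo(h^\infty)$. Combining this with the intertwining relations \eqref{eq:prj}, the microlocal disjointness $U_k^A \cap U_k^D = \emptyset$ (a consequence of Ikawa's condition via \eqref{eq:nei}), and the fact that $\Pi_k^\# \equiv \Id$ microlocally on $\supp \tilde\chi_{k,\bullet} \subset U_k^\bullet$, one checks that $\widetilde\Pi_k \tilde\chi_{k,A}^w = \tilde\chi_{k,A}^w + \Oo(h^\infty)$ and similarly for $\tilde\chi_{k,D}^w$. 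This forces $\Pi_h(1 - \varphi^w) = \Oo_{L^2\to\CI}(h^\infty)$; by the adjoint argument, $(1-\varphi^w)\Pi_h = \Oo(h^\infty)$ as well. Composing with the $\Oo(1/h)$-bounded $\cM_g$ yields $\Pi_h\cM_g\Pi_h = \Pi_h M_g\Pi_h + \Oo(h^\infty)$. Proposition~\ref{p:mt}, applied blockwise to $(M_g)_{kj} = \varphi_k^w (\cM_g)_{kj} \varphi_j^w$ (after an auxiliary compact $\xi$-cutoff via Lemma~\ref{l:S1} to land in $\Psi^{0,-\infty}$), exhibits each block as the sum of contributions in $I^0((\cB^+_{kj})')$ and $I^0((\cB^-_{kj})')$. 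The conjugation factors $\exp(\pm g^w) \in \Psi^{TC_0,0}_{0+}$ promote this class to the $0+$ category, placing the $\cB^+$ part in $I^0_{0+}(F')$.

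The principal obstacle is to dispatch the shadow contribution $I^0((\cB^-_{kj})')$ into the $\Oo(h^\infty)$ remainder $R(z,h)$. This requires arguing that the diffractive/creeping-wave part of $H_j(z)$ responsible for $\cB^-_{kj}$ is microlocally concentrated near the arrival glancing set $\tA_k^{\mathcal G}\subset U_k^A$ on which $\varphi_k^w \equiv 0$, beyond the wavefront-set statement of Proposition~\ref{p:mt}; this exploits the Airy-type asymptotics of the parametrix for $H_j(z)$ in the shadow region (cf. the model computation in \S\ref{sep}). Once this is in hand, $M_g \in I^0_{0+}(\partial\Oo\times\partial\Oo, F')$ with $F = \bigsqcup \cB^+_{kj}$, completing the identification.
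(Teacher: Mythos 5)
Your Schur-complement argument for the Grushin inverse matches the paper's in substance: both rest on Lemma~\ref{l:cru} (through the consequence $\cM_g\Pi_h\cM_g = \cM_g^2 + \Oo(h^\infty)$), the paper simply writing down the explicit approximate inverse rather than doing block elimination, and your identification $\Pi_h\cM_g\Pi_h = \Pi_h M_g\Pi_h + \Oo(h^\infty)$ via Lemma~\ref{l:proj}, the relations \eqref{eq:prj}--\eqref{eq:pis}, and the disjointness $U_j^A\cap U_j^D = \emptyset$ is also the route the paper indicates by its citation of \eqref{eq:prj}.

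The flaw is in your treatment of the shadow branch $I^0((\cB^-_{kj})')$. You propose that it is a ``diffractive/creeping-wave'' effect microlocalized near the arrival glancing set $\tA_k^{\mathcal G}\subset U_k^A$, and hence killed by $\varphi_k^w$. This is not the case: $H_j(z)$ solves the free equation on $\RR^n\setminus\Oo_j$ with no boundary condition imposed on $\partial\Oo_k$, so there is no creeping wave along $\partial\Oo_k$; the $\cB^-_{kj}$ branch is simply the trace of the propagating outgoing solution on the far side of $\partial\Oo_k$, a genuine order-zero Fourier integral operator whose image, the shadow arrival set $\tA^-_{kj}$, is a full-dimensional open region of $B^*\partial\Oo_k$, of which $\tA_k^{\mathcal G}=\tA_k\cap S^*\partial\Oo_k = \tA^-_k\cap S^*\partial\Oo_k$ is merely the boundary (the fold locus of the canonical relation, cf.\ \eqref{e:B_0+}). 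The cutoff $\varphi_k^w = 1-\tilde\chi_{k,A}^w-\tilde\chi_{k,D}^w$ removes only a small neighborhood of this fold, not the shadow region itself, so $M_g = \varphi^w\cM_g\varphi^w$ retains a nontrivial $\cB^-$ branch. What actually renders this branch harmless is Lemma~\ref{l:Ik}: Ikawa's condition gives $\overline{\cB^\pm_{ki}}\circ\overline{\cB^-_{ij}} = \emptyset$, so the shadow branch is a dead end under composition, never contributing to iterates of $M_g$; moreover, once the escape weight $g_0$ of Lemma~\ref{lem:G_0} is introduced in the proof of Theorem~\ref{th:1}, the shadow output (landing in $\tA_k^-$, well away from $\cT$) is suppressed when verifying \eqref{eq:Mlo}. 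You have correctly spotted the delicate point --- the statement $M_g\in I^0_{0+}(F')$ with $F = \bigsqcup\cB^+_{kj}$ is more than \eqref{eq:fio1} alone gives, since \eqref{eq:fio1} has both branches --- but your proposed resolution (glancing concentration) is not one that would succeed.
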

\begin{proof}
We first observe that Lemma \ref{l:cru} gives
\begin{equation}
\label{eq:fol}
 {\mathcal M }_g ( z , h )  \Pi_h  {\mathcal M }_g ( z , h )  = 
 {\mathcal M }_g ( z , h )^2 + {\mathcal O}_{L^2 ( \partial \Oo ) \rightarrow
\CI ( \partial \Oo ) } ( h^\infty )  \,. 
\end{equation}
The theorem follows from this.
Indeed, \eqref{eq:fol} implies that, in abbreviated notation,
\begin{gather*} 
  \begin{pmatrix}   \Id - {\mathcal M }_g   & \Pi^t \\
\ \  \ \ \Pi &  \ 0 \end{pmatrix}  
 \begin{pmatrix}  (\Id + ( \Id - \Pi) {\mathcal M}_g ) ( \Id - \Pi) 
  &  \Pi^t + (\Id - \Pi) {\mathcal M}_g \Pi^t
 \\
\ \  \ \ \ \Pi ( \Id + \cM_g ( \Id - \Pi) )  &  \  - (\Id_{W} - \Pi \cM_g \Pi)  \end{pmatrix} 
\\ \ = \, \Id_{ L^2 ( \partial \Oo ) \oplus W_h } + 
\Oo_{ L^2 ( \partial \Oo ) \oplus W_h  \rightarrow 
\CI ( \partial \Oo ) \oplus W_h  } ( h^\infty ) \,.
\end{gather*}
The exact inverse in then obtained by a Neumann series inversion. In
view of \eqref{eq:prj} and 
\eqref{eq:fio1} we obtain \eqref{eq:fio3}.
%The statement \eqref{eq:fol} follows from the definition of $ {\mathcal E} $
%above and the propagation of singularities result, \eqref{eq:WF2}:
%\[  \WFh ( {\mathcal M } ( z , h ) ( I - \Pi_h ) 
%{\mathcal M } ( z , h ) ) = \emptyset \,, \]
%provided that $ \epsilon $ in \eqref{eq:conc} is taken small enough.
\end{proof}

%{\bf We now need to find a weight $ g $ such that $ M_g $ in 
%\eqref{eq:fio3} satisfies the assumptions in \S \ref{2gr}.}

\medskip

\noindent
{\em Proof of Theorem \ref{th:1}.} To apply Theorem \ref{th:bound}
from \S \ref{upbd} we need to show that we can choose $ g $ so that
$ M_g ( z, h ) $ satisfies the conditions in Definition
\ref{def:HQMO}. The only assumption that needs verification is
\eqref{eq:Mlo}.  For that we take $ g_0 $ in \eqref{eq:g0g} 
given in Lemma \ref{lem:G_0} (proved in the appendix),
with the map being the billiard ball relation, $ F $.

Egorov's theorem (Proposition \ref{p:Egorov})  then shows that 
\eqref{eq:Mlo} holds and we can make $ {\mathcal W} $ as 
close to the trapped set as we wish. 
\qed

\vspace{0.5cm}
\begin{center}
\noindent
{\sc  Appendix: {Construction of an escape function for an open map $F$}}
\end{center}
\vspace{0.4cm}
\renewcommand{\theequation}{A.\arabic{equation}}
\refstepcounter{section}
\renewcommand{\thesection}{A}
\setcounter{equation}{0}

%\appendix
%\section{Construction of an escape function for an open map $F$}
%\label{app:G_0}
In this appendix we prove Lemma~\ref{lem:G_0}  by explicitly constructing an escape function $\gz$.
The only assumption we need on $F$ is the fact that the trapped set
$\cT\Subset \tD$, $\cT\Subset \tA$, where $\tD$, resp. $\tA$ are the
(closed) departure and arrival sets of $F$. Our construction is
inspired by  similar constructions 
in \cite[Lemma 4.3]{DatVas10}  and \cite{VasZw00}.
Here we will independently construct functions $g_{\pm}$ with good
escape properties away from the incoming and outgoing tails
$\cT_{\mp}$,  respectively.

Let us start with the function $g_+$.
We can take $V_{\pm}\Subset \cU$, open neighbourhoods of the tails $\cT_{\pm}$, such that 
$$
%\cW_1\Subset V'_-\cap V'_+\quad\text{ and }
\quad V_+\cap V_-\Subset \cW_2\,,
$$
where $\cW_2$ is the neighbourhood of $\cT$ in the
statement of Lemma \ref{lem:G_0}.
We will first construct a function $g_+\in\CIc(T^*Y)$ such that 
\begin{gather}\label{e:g_+}
\begin{gathered}
%\forall \rho\in V'_-,\quad g_+(\rho)\equiv 0,\quad \\
\forall \rho\in \cW_3,\quad g_+ (F(\rho)) - g_+ (\rho) \geq 0,\\
\forall \rho\in \cW_3\setminus V_-,\quad g_+ (F(\rho)) - g_+ (\rho) \geq 1\,.
\end{gathered}
\end{gather}
We will perform a local construction, based on a finite set of points  $\rho\in \tD\setminus V_-$.
\begin{figure}
\begin{center}
\includegraphics[angle=-90,width=0.7\textwidth]{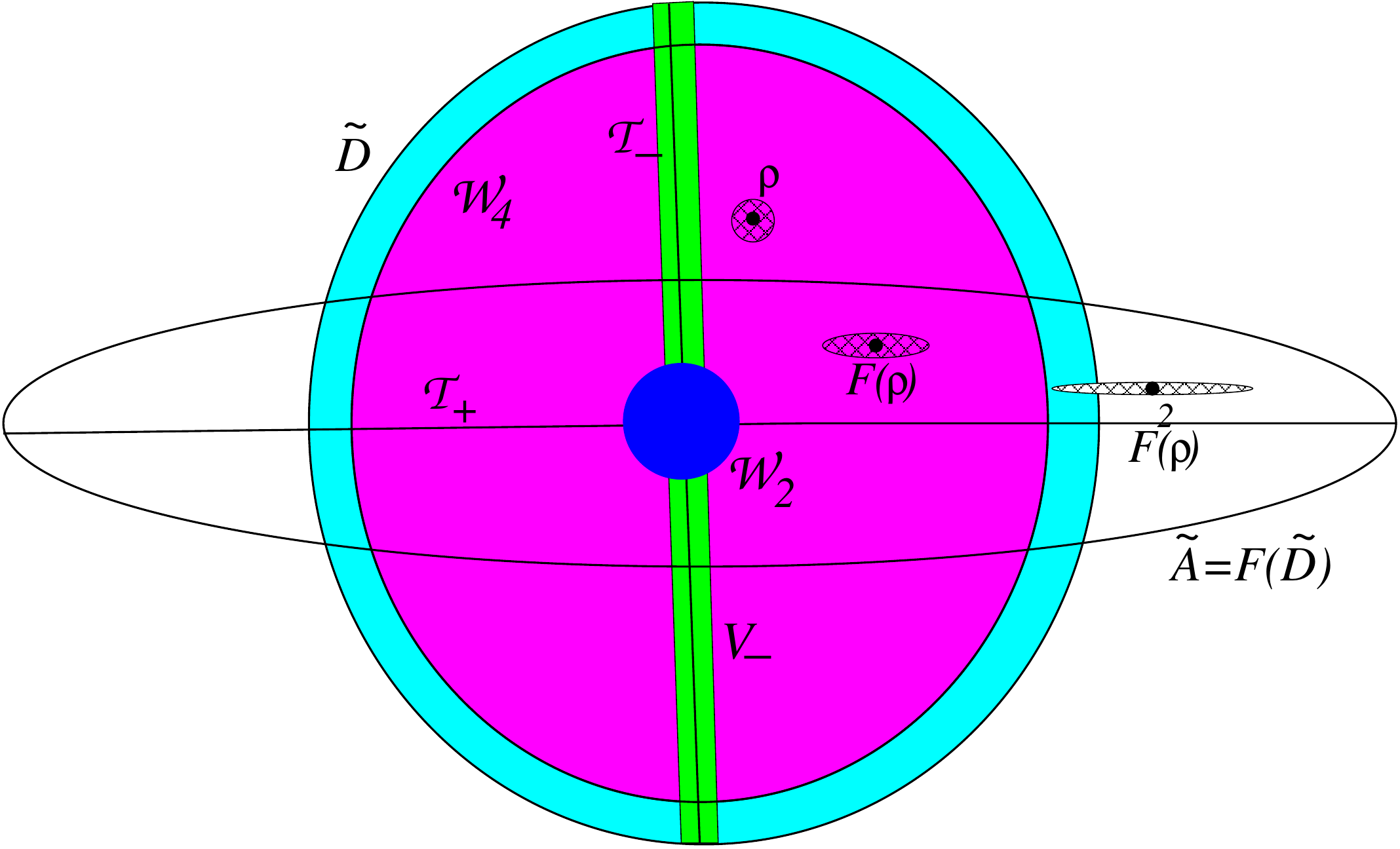}
\caption{Sketch of the construction of an escape function
  $g_{\rho,+}$. The blue (resp. pink) circle denotes the
  neighbourhood $\cW_2$ (resp. $\cW_4$) of the trapped set inside
  $\tD$, the vertical green rectangle is the neighbourhood $V_-$ of
  the incoming tail $\cT_-$. The three ellipses
  with patterns indicate the sets $F^{k}(V_\rho)$ surrounding
  $F^k(\rho)$, $0\leq k\leq 2$. Here the escape time $n_+(\rho)=2$. \label{f:g0-construct}}
\end{center}
\end{figure}
Consider a compact set $\cW_4$ such that $\cW_3\Subset\cW_4\Subset \tD$. Take  any point $\rho\in \cW_4\setminus V_-$. We define its forward escape time by
$$
n_{+}=n_{+}(\rho)\defeq \min\{k\geq 1,\ F^k(\rho)\not\in\cW_4\}\,.
$$
Since $\rho\not\in V_-$, this time is uniformly bounded from
above. Besides, the forward trajectory $\{F^k(\rho),\ 0\leq k\leq n_{+}\}$ is a set of mutually different points, with $F^{n_{+}}(\rho)\in \tA\setminus \cW_4$. 
Since $\tA\setminus\cW_4$ is relatively open, there exists a small
neighbourhood $V_\rho$ of $\rho$, such that the neighbourhoods
$F^k(V_\rho)$, $0\leq k\leq n_{+}-1$, are all inside $\tD\setminus \cT_-$, while  $F^{n_{+}}(V_\rho)\in\tA\setminus \cW_4$ (see Fig.~\ref{f:g0-construct}). 

Take a smooth cutoff $\chi_\rho\in \CIc(V_\rho,[0,1])$, with $\chi_\rho=1$ in a smaller neighbourhood $V'_\rho\Subset V_\rho$ of $\rho$, and consider its push-forwards
$$
\chi_{\rho,k}\defeq \begin{cases}\chi_\rho\circ F^{-k}\quad& \text{on $F^k(V_\rho)$},\\
0&\text{outside $F^k(V_\rho)$},\end{cases}
\quad  0\leq k\leq n_{+}\,.
$$ 
The supports of the $\chi_{\rho,k}$, $0\leq k\leq n_{+}-1$, are all
contained in $\tD\setminus \cT_-$, while $\supp \chi_{\rho,n_{+}}\subset \tA\setminus \cW_4$.
From there, we define
\begin{align*}
\text{the set}\quad W'_{\rho,+} &\defeq \bigcup_{k=0}^{n_{+}-1} F^k(V'_\rho)\subset \tD\,,\\
\text{the function}\quad g_{\rho,+}&\defeq \sum_{k=0}^{n_{+}} (k+1)\chi_{\rho,k}\,.
\end{align*}
The function $g_{\rho,+}$ is smooth, and on $\tD$ it satisfies
$$
g_{\rho,+}\circ F - g_{\rho,+} = \chi_{\rho}\circ F + \sum_{k=0}^{n_{+}-1}\chi_{\rho,k} - (n_{+}+1)\chi_{\rho,n_{+}}\,.
$$ 
The properties of the supports of the $\chi_{\rho,k}$ imply that
$$
g_{\rho,+}\circ F - g_{\rho,+}\geq 0\ \ \text{in $\cW_4$},\qquad 
g_{\rho,+}\circ F - g_{\rho,+}\geq 1\ \ \text{in $W'_{\rho,+}\cap \cW_4$}\,.
$$
Since $\cW_4\setminus V_-$ is a compact set, we may extract a finite set of points $\{\rho_j\in \cW_4\setminus V_-\}_{j=1,\ldots,J}$ such that $\bigcup_{j=1}^J W'_{\rho_j,+}$ is an open cover of $\cW_4\setminus V_-$. The sum
$$
g_+\defeq \sum_{j=1}^J g_{\rho_j,+}
$$
is smooth in $\UU$, and satisfies the properties
\eqref{e:g_+}. Furthermore, for each $\rho_j$ the function
$g_{\rho_j,+}$ vanishes near $\cT_-$, so there exists
$V'_-\Subset V_-$  a neighbourhood of $\cT_-$ such that all
$g_{\rho_j,+}$, and also $g_+$, vanish on $V'_-$.

Applying the same construction in the past direction, we construct a
smooth function $\tilde g_-$ and a neighbourhood $V'_+\Subset V_+$ of $\cT_+$, such that 
\begin{gather*}
\begin{gathered}
\forall \rho\in F(V'_+),\quad \tilde g_-(\rho)\equiv 0,\quad \\
\forall \rho\in F(\cW_3),\quad \tilde g_- (F^{-1}(\rho)) - \tilde g_- (\rho) \geq 0,\\
\forall \rho\in F(\cW_3\setminus V_+),\quad \tilde g_- (F^{-1}(\rho)) - \tilde g_- (\rho) \geq 1\,.
\end{gathered}
\end{gather*}
(notice that the sets $F(V'_+)$, $F(V_+)$ and $\cW_3$ have the
appropriate properties with respect to $F^{-1}:\tA\mapsto \tD$). 
The function $g_-\defeq -\tilde g_-\circ F$ then satisfies 
\begin{gather}
\begin{gathered}
\forall \rho\in V'_+,\quad  g_-(\rho)\equiv 0,\quad \\
\forall \rho\in \cW_3,\quad g_- (F(\rho)) - g_- (\rho) \geq 0,\\
\forall \rho\in \cW_3\setminus V_+,\quad g_- (F(\rho)) - g_- (\rho) \geq 1\,.
\end{gathered}
\end{gather}
The function $g_0\defeq g_+ + g_-$ satisfies conditions in Lemma
\ref{lem:G_0}, and vanish on $\cW_1\defeq V'_+\cap V'_-$.

\bigskip

\noindent
{\sc Acknowledgments.}
We would like to thank the National Science Foundation
for partial support under the grant
DMS-0654436. This article was
completed while the first author was visiting the Institute of Advanced Study in Princeton,
supported by the National Science Foundation under agreement No. 
DMS-0635607. 
The first two authors were also partially supported by
the Agence Nationale de la Recherche under the grant
ANR-09-JCJC-0099-01.
The second author was supported by fellowship FABER of the Conseil
R\'egional de Bourgogne.
Finally, we thank Edward Ott for his permission to include figures from 
\cite{Ott} in our paper.

%%%%%%%%%%%%%%%%%%%%%%%%%%%%%%%%

\end{document}